\theoremstyle{plain}
\newtheorem{lemma}{Lemma}[section]
\newtheorem{proposition}[lemma]{Proposition}
\newtheorem{theorem}[lemma]{Theorem}
\newtheorem{Theorem}{Theorem}
\newtheorem{Proposition}[Theorem]{Proposition}
\newtheorem{corollary}[lemma]{Corollary}
\newtheorem{Corollary}[Theorem]{Corollary}
\newtheorem{conjecture}[Theorem]{Conjecture}
\numberwithin{equation}{section}
\newenvironment{manualtheorem}[1]{%
  \manualtheoreminner
}{\endmanualtheoreminner}
\newenvironment{manualproposition}[1]{%
  \manualpropositioninner
}{\endmanualpropositioninner}
\theoremstyle{remark}
\theoremstyle{definition}
\newtheorem{remark}[lemma]{Remark}
\newtheorem{example}[lemma]{Example}
\newtheorem{definition}[lemma]{Definition}
\newtheorem{Definition}[Theorem]{Definition}
\newenvironment{manualdefinition}[1]{%
  \manualdefinitioninner
}{\endmanualdefinitioninner}
\DeclareMathOperator{\pt}{\tfrac{\partial}{\partial t}} 
\DeclareMathOperator{\p2s}{\tfrac{\partial^2}{\partial s^2}} 
\DeclareMathOperator{\ps}{\tfrac{\partial}{\partial s}} 
\DeclareMathOperator{\pst}{\tfrac{\partial^2}{\partial s\partial t}} 
\DeclareMathOperator{\pts}{\tfrac{\partial^2}{\partial t\partial s}}
\DeclareMathOperator{\ds}{\tfrac{d}{d s}} 
\DeclareMathOperator{\2ds}{\tfrac{d^2}{d s^2}} 
\DeclareMathOperator{\tr}{tr} 
\DeclareMathOperator{\Div}{div}
\DeclareMathOperator{\Ric}{\mathrm{Ric}}
\DeclareMathOperator{\dvol}{d\mathrm{vol}}
\DeclareMathOperator{\da}{d\sigma}
\DeclareMathOperator{\S_2}{\mathcal{S}_2(\Sigma)}
\DeclareMathOperator{\bi}{\beta}
\DeclareMathOperator{\D}{\mathcal{D}}
\DeclareMathOperator{\C}{\mathcal{C}}
\DeclareMathOperator{\Ker}{\mathrm{Ker}}
\DeclareMathOperator{\Range}{\mathrm{Range}}
\DeclareMathOperator{\Dim}{\mathrm{dim}}
\DeclareMathOperator{\Int}{Int}
\DeclareMathOperator{\I}{\mathtt{I}}
\begin{document}

\title[Existence of static vacuum extensions]{Existence of static vacuum extensions with  prescribed Bartnik boundary data}
\author{Zhongshan An}
\address{Department of Mathematics, University of Connecticut, Storrs, CT 06269, USA}
\email{zhongshan.an@uconn.edu}
\author{Lan-Hsuan Huang}
\address{Department of Mathematics, University of Connecticut, Storrs, CT 06269, USA}
\email{lan-hsuan.huang@uconn.edu}
\thanks{The second author was partially supported by the NSF CAREER Award DMS-1452477 and NSF DMS-2005588.}

\maketitle
\begin{abstract}
We prove the existence and local uniqueness of asymptotically flat, static vacuum metrics with arbitrarily prescribed Bartnik boundary data that are close to the induced boundary data  on any star-shaped hypersurface or a general family of perturbed  hypersurfaces in the Euclidean space. It confirms the existence part of the Bartnik static extension conjecture for large classes of boundary data, and the static vacuum metric obtained is geometrically unique in a neighborhood of the Euclidean metric.
\end{abstract}

\tableofcontents

\section{Introduction}

 Let $n\ge 3$ and $(U, g)$ be an $n$-dimensional Riemannian manifold and let $u$ be a scalar-valued function on $U$, not identically zero. We say that $(g, u)$ is a \emph{static vacuum pair}   in $U$ if it solves the static vacuum system:
\begin{align}\label{sv}
	\left\{ \begin{array}{l} u\Ric_g -\nabla^2_g u=0\\
	\Delta_g u=0\end{array}\right. \quad \mbox{ in } U.
\end{align}
Such  $g$ is also referred to as a  \emph{static vacuum metric} with $u$ as a \emph{static potential}. The study of a static vacuum pair is of fundamental importance in general relativity. It defines a spacetime  $- u^2 dt^2 + g$ on $\mathbb{R}\times \{ x\in U: u(x)>0\}$ that solves the vacuum Einstein equation. A spacetime that takes this form is called \emph{static}, meaning that $\partial_t$ is a global timelike Killing vector field everywhere normal to the $t$-slices.  In a similar way, a static vacuum pair is  related to the study of Ricci curvature as $(g, u)$  defines a Ricci flat Riemannian metric $u^2 dt^2 + g$ that carries a global Killing vector field.

The static vacuum metric also arises in the minimization problems of the ADM mass.  By Riemannian positive mass theorem of R.~Schoen and S.-T.~Yau~\cite{Schoen-Yau:1979-pmt1} and Riemannian Penrose inequality of G.~Huisken and T. Ilmanen~\cite{Huisken-Ilmanen:2001} and of H.~Bray \cite{Bray:2001},  the ADM mass minimizing metric among an appropriately defined class of asymptotically flat metrics is  characterized by a class of static metrics, the Schwarzschild metrics. In a fundamentally related way, a static vacuum metric  appears in the study of scalar curvature deformation and gluing, see, e.g. the work of J.~Corvino~\cite{Corvino:2000}.

In some situations, the asymptotically flat, static vacuum metrics are quite rigid. The \emph{uniqueness of static black holes} \cite{Israel:1968, Robinson:1977, Bunting-Masood-ul-Alam:1987} says that the Schwarzschild family of positive mass is the only asymptotically flat, static vacuum metrics that admit a static potential $u>0$ in the interior of manifold with $u=0$ on the boundary. On the other hand, the following conjecture of Bartnik asserts the existence and uniqueness of static vacuum metrics realizing arbitrarily prescribed geometric boundary data~\cite[Conjecture 7]{Bartnik:2002}.

\begin{conjecture}[Bartnik's static extension conjecture]\label{conjecture}
Let $(\Omega_0, g_0)$ be an $n$-dimensional, connected, compact Riemannian manifold with scalar curvature $R_{g_0}\ge 0$ and with boundary $\Sigma$. Denote by $g_0^\intercal $ and  $H_{g_0}$ the induced metric and mean curvature on $\Sigma$ respectively. Suppose $H_{g_0}$ is positive somewhere. Then there exists a unique  asymptotically flat, static vacuum manifold $(M, g)$ with boundary $\partial M$ diffeomorphic to $\Sigma$ such that  $(g^\intercal, H_g) =  (g_0^\intercal, H_{g_0})$, where  $g^\intercal $ and  $H_{g}$ are respectively the induced metric and mean curvature on $\partial M \subset (M, g)$. 
\end{conjecture}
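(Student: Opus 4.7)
My plan is to recast the conjecture as a quasilinear elliptic boundary value problem on an exterior manifold $M$ with $\partial M \cong \Sigma$, in a weighted Hölder space $C^{k,\alpha}_{-\tau}$ of asymptotically flat metrics, and then attack existence by a global continuity method anchored at a Schwarzschild reference. Because the static vacuum system \eqref{sv} is diffeomorphism-invariant, I would first gauge-fix, for example by a DeTurck-type coupling to a background metric together with an adapted boundary gauge, turning \eqref{sv} into a determined elliptic operator. The Bartnik boundary operator $(g,u)\mapsto(g^\intercal|_{\partial M},\, H_g|_{\partial M})$ would be supplemented by the boundary component of the gauge, giving a boundary value problem whose linearization can in principle be analyzed by Lopatinski-type methods.

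For existence, I would choose an exterior Schwarzschild metric $(g_m,u_m)$ of sufficiently large mass, whose Bartnik data on a large coordinate sphere can be smoothly linked to $(g_0^\intercal,H_{g_0})$ by a one-parameter family $(\gamma_t,H_t)_{t\in[0,1]}$ of admissible boundary data. Let $T\subseteq[0,1]$ be the set of $t$ for which an asymptotically flat static vacuum extension realizing $(\gamma_t,H_t)$ exists. Openness of $T$ follows from the implicit function theorem once the linearized boundary value problem is shown to be an isomorphism at each background along the path; at perturbative backgrounds this injectivity should be the one established in the paper, and the required extension is to propagate the analysis to arbitrary static vacuum backgrounds, which involves checking the complementing condition for the gauge-augmented Bartnik operator and ruling out kernel modulo gauge.

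Closedness, in contrast, requires uniform weighted a priori estimates along the path, and I expect this to be the main obstacle. The hypotheses $R_{g_0}\ge 0$ and $H_{g_0}>0$ somewhere should be invoked to prevent the two principal degenerations: collapse of the boundary into a static horizon where $u\to 0$, and mass escape along the asymptotic end. Controlling the former would use the Bunting--Masood-ul-Alam rigidity of static black holes together with positive-mass-with-corners arguments to show that any limiting solution still has a nondegenerate boundary matching the prescribed $H$; controlling the latter would require uniform ADM mass bounds, conceivably via a monotonicity argument along the continuity path. Producing these global estimates without smallness is, to my knowledge, an open problem, and it is here that the conjecture is most resistant.

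Uniqueness, assuming existence, I would approach in two stages: local uniqueness near the produced solution follows from the same isomorphism used in Step 2, while global uniqueness would use a gauge-fixed comparison of two candidate extensions and unique continuation from $\partial M$. The subtlety is that Bartnik data prescribes $(g^\intercal,H_g)$ but not $u|_{\partial M}$ or $\partial_\nu u|_{\partial M}$, so unique continuation from the boundary must be combined with a matching argument for these free scalar quantities, possibly using the Robinson--Bunting--Masood-ul-Alam type identities together with decay at infinity. I expect the genuinely global version of uniqueness to require essentially the same compactness input as closedness in Step 3, so it would ride on the same main obstacle.
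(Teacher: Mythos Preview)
The statement you are attempting to prove is labeled \emph{Conjecture} in the paper, and the paper does \emph{not} prove it; it remains open. What the paper establishes is a perturbative result (Theorem~\ref{theorem:criterion-main}): if the boundary $\Sigma$ is \emph{static regular} in $\mathbb{R}^n\setminus\Omega$ (Definition~\ref{definition:static-regular}), then for Bartnik data $(\tau,\phi)$ sufficiently close to the Euclidean data $(\bar g^\intercal, H_{\bar g})$ on $\Sigma$, a static vacuum extension exists and is locally unique. The paper then verifies that convex surfaces in $\mathbb{R}^3$, star-shaped hypersurfaces, and generic leaves of a foliation are static regular. The method is an implicit function theorem argument at the Euclidean pair $(\bar g,1)$, after imposing a static-harmonic gauge and an orthogonal gauge; the key analytic point is that the gauged linearization $L$ is Fredholm of index zero with kernel and cokernel of dimension $N=\tfrac{n(n+1)}{2}$, and a modified operator $\overline T$ is constructed whose linearization is an isomorphism. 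No continuity path, no global a priori estimates, and no large-data uniqueness appear.

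Your proposal is not a proof but a research outline for the full conjecture, and you correctly identify the decisive gaps yourself: closedness of the continuity set requires uniform weighted estimates preventing horizon formation and mass escape, and you note this is open; global uniqueness likewise rests on the same missing compactness. So there is no error to correct, but there is also no proof here to compare. The one place your outline overlaps with what the paper actually does is openness via the implicit function theorem at a background solution; the paper carries this out only at the Euclidean background, and even there the linearized operator has nontrivial kernel and cokernel from the diffeomorphism group, which is handled not by ``ruling out kernel modulo gauge'' but by the modified map $\overline T$ and the orthogonal gauge. Extending that analysis to an arbitrary static vacuum background along a continuity path is itself nontrivial and is addressed only in the follow-up work cited as \cite{An-Huang:2022}.
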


 In the above, the mean curvatures $H_{g_0} = \Div_\Sigma \nu_{g_0}$ and $H_g = \Div_\Sigma \nu_g$, where  $\nu_{g_0}$ is the outward unit normal to $\Omega_0$ and  $\nu_g$ is the inward unit normal  to $M$. Throughout the paper we shall refer to the geometric boundary data $(g^\intercal, H_g)$ as the \emph{Bartnik boundary data}, and $(M, g)$ as a static vacuum extension for the Bartnik boundary data.
 
 We remark on the assumption that  $H_{g_0}$ is  positive somewhere. Without this assumption, an asymptotically flat, static vacuum extension, if exists, would contain a closed minimal hypersurface, which violates the ``no-horizon'' condition. See Definition 4 and p. 236 in \cite{Bartnik:2002}.  In fact, if $n=3$ and $H_{g_0}=0$, then a static vacuum extension must be a Schwarzschild metric, ~see Bray~\cite{Bray:2001}, P.~Miao~\cite{Miao:2005}, A.~Carlotto, O.~Chodosh, M.~Eichmair~\cite{Carlotto-Chodosh-Eichmair:2016},  also \cite{Mantoulidis-Schoen:2015, Huang-Martin-Miao:2018}. Let us also remark that it is not known whether the scalar curvature assumption~$R_{g_0}\ge 0$ is needed.

 We explain uniqueness in Conjecture~\ref{conjecture}.  If $(M, g)$ is an asymptotically flat, static vacuum  manifold, then for any diffeomorphism $\psi:M \to M$ that fixes the boundary and is asymptotic to a Euclidean isometry at infinity, the pull-back metric $\psi^*g$ is also an asymptotically flat, static vacuum metric in $M$ having the same Bartnik boundary data. So the uniqueness assertion in the above conjecture should be interpreted as ``unique up to diffeomorphisms''.  
 
Conjecture~\ref{conjecture}  originated from other conjectures of Bartnik about the quasi-local mass in general relativity, see, for example, \cite{Bartnik:1989, Bartnik:1997, Bartnik:2002}. Nevertheless, the conjecture itself is  of independent interest.  In contrast to the existing classification results, an affirmative answer toward the conjecture will add more examples of asymptotically flat, static vacuum pairs to the current (quite limited) list (see, e.g. \cite[Chapters 18.6 and 20.2]{Stephani:2003}).

There are partial progresses toward Conjecture~\ref{conjecture}.  P. Miao~\cite{Miao:2003} confirmed the existence for $(g_0^\intercal, H_{g_0})$  that is sufficiently close to $(g_{S^2}, 2)$ and invariant under reflectional symmetry, where $(g_{S^2}, 2)$ is the Bartnik boundary data on a standard unit sphere, and obtained  static vacuum extensions with reflectional symmetry.  M.~Anderson and M.~Khuri formulated the problem into an elliptic boundary value problem of Fredholm index zero~\cite{Anderson-Khuri:2013}, and  Anderson~ \cite{Anderson:2015} confirmed the existence and local uniqueness for $(g_0^\intercal, H_{g_0})$ sufficiently close to $(g_{S^2}, 2)$. We also note some numerical results for axial-symmetrical cases by C. Cederbaum, O. Rinne, and M. Strehlau~\cite{Cederbaum-Rinne-Strehlau:2019}.

In this paper, we confirm the conjecture in that, for  larger classes of Bartnik boundary data, static vacuum extensions exist and are locally unique.  Before we state our main results, we give some definitions. Let  $\Omega$ be a bounded open subset in $\mathbb{R}^n$ whose boundary $\Sigma = \partial \overline{\Omega}$ is an embedded smooth hypersurface in $\mathbb{R}^n$. Throughout the paper, we assume $\Sigma$ is connected, unless otherwise indicated. We denote by $\bar{g}$ a Euclidean metric in $\mathbb{R}^n$. The definition of weighted H\"older spaces is given in Section~\ref{section:basic}, and we always assume  $\alpha\in(0,1), q\in (\frac{n-2}{2} , n-2)$, unless otherwise indicated.

\begin{Definition}\label{definition:static-regular}
The boundary $\Sigma$ is said to be \emph{static regular in  $\mathbb R^n\setminus\Omega$} if  for any  pair of a symmetric $(0,2)$-tensor $h$ and a scalar-valued function $v$ satisfying $(h, v)\in  \C^{2,\alpha}_{-q}(\mathbb{R}^n\setminus \Omega)$ and
\begin{align*}
D\Ric|_{\bar{g}} (h)- \nabla^2_{\bar{g}} v&=0,  \quad  \Delta_{\bar{g}} v=0 \quad \mbox{ in } \mathbb{R}^n\setminus \Omega,\\
h^\intercal&=0, \quad DH|_{\bar{g}}(h)=0\quad \mbox{ on } \Sigma,
\end{align*}
we must have $DA|_{\bar{g}}(h)=0$ on~$\Sigma$. 
\end{Definition}
In the above definition, $D\Ric|_{\bar{g}}$ denotes the linearization of the Ricci curvature at $\bar{g}$, and $DH|_{\bar{g}}, DA|_{\bar{g}}$ denote the linearizations of the mean curvature and second fundamental form on $\Sigma$, respectively.

Our first main result gives a general criterion for the existence and local uniqueness of static vacuum pairs in an arbitrary exterior region $\mathbb{R}^n\setminus \Omega$ with Bartnik boundary data on $\Sigma$ sufficiently close to the induced data $(\bar{g}^\intercal, H_{\bar{g}})$.
\begin{Theorem}\label{theorem:criterion-main}
Suppose the boundary $\Sigma$ is static regular in $\mathbb{R}^n\setminus \Omega$. Then there exist positive constants $\epsilon_0, C$ such that for each $\epsilon < \epsilon_0$, if $(\tau, \phi)$ satisfies~$\| (\tau,\phi)- (\bar{g}^\intercal, H_{\bar{g}}) \|_{\C^{2,\alpha}(\Sigma)\times \C^{1,\alpha}(\Sigma)}<\epsilon$, then there exists an asymptotically flat  pair $(g, u)$ with $\|(g, u) - (\bar{g}, 1) \|_{\C^{2,\alpha}_{-q}(\mathbb{R}^n \setminus \Omega)} < C\epsilon$ such that $(g, u)$ is a static vacuum pair in $\mathbb{R}^n\setminus \Omega$ having the Bartnik boundary data $(g^\intercal, H_g) = (\tau, \phi)$ on~$\Sigma$.

Furthermore, the solution $(g, u)$ is geometrically unique in a neighborhood $\mathcal U$ of $(\bar g, 1)$ in the $\C^{2,\alpha}_{-q}(\mathbb{R}^n\setminus \Omega)$-norm; namely, there exists a unique solution satisfying both the static-harmonic gauge and the orthogonal gauge in $\mathcal U$.
\end{Theorem}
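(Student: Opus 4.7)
The approach is to recast the Bartnik problem as a determined elliptic boundary value problem by fixing a gauge that breaks diffeomorphism invariance, apply the implicit function theorem at the Euclidean pair $(\bar g, 1)$, and then recover a genuine static vacuum pair via a propagation-of-gauge argument. Throughout, the relevant function space is the weighted Hölder space $\C^{2,\alpha}_{-q}(\mathbb R^n \setminus \Omega)$.

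First, I would introduce the static-harmonic gauged operator, schematically
\[
  \mathcal{E}(g, u) := \bigl( u\,\Ric_g - \nabla^2_g u - u\,\mathcal{L}_{V(g,u)}\, g,\ \Delta_g u \bigr),
\]
where $V(g,u)$ is a first-order expression built from the reference Euclidean metric that vanishes at $(\bar g, 1)$ and is chosen so that $\mathcal{E}$ has elliptic principal symbol---a static adaptation of the DeTurck trick. On the boundary, the two Bartnik conditions $g^\intercal = \tau$ and $H_g = \phi$ are supplemented by additional conditions coming from the orthogonal gauge (fixing the residual action of boundary-preserving diffeomorphisms) together with one boundary condition on $u$. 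Following the Fredholm setup of Anderson--Khuri, adapted to a general $\Omega$, the resulting nonlinear boundary value problem is Fredholm of index zero between the stated weighted spaces.

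Second, I would show that the linearization at $(\bar g, 1)$ is an isomorphism. Suppose $(h, v)$ lies in its kernel. A linearized Bianchi identity implies that the linearized gauge vector field $DV|_{(\bar g, 1)}(h, v)$ satisfies its own elliptic boundary value problem whose only decaying solution is zero; hence $(h, v)$ solves the \emph{un}gauged linearized static vacuum system with $h^\intercal = 0$ and $DH|_{\bar g}(h) = 0$ on $\Sigma$. Definition~\ref{definition:static-regular} then forces $DA|_{\bar g}(h) = 0$. The enhanced datum $(h^\intercal, DA|_{\bar g}(h)) = (0,0)$ amounts to full linearized ``Dirichlet'' data for the extrinsic geometry; combined with the static-harmonic and orthogonal gauge conditions and the asymptotic decay, a standard integration-by-parts argument for the linearized static vacuum system then forces $(h, v) \equiv 0$. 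Injectivity together with Fredholm index zero makes the linearization an isomorphism, and the implicit function theorem produces, for each $(\tau, \phi)$ within $\epsilon$ of $(\bar g^\intercal, H_{\bar g})$, a unique gauged solution $(g, u)$ with $\|(g,u) - (\bar g, 1)\|_{\C^{2,\alpha}_{-q}} \le C\epsilon$.

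Third, a nonlinear version of the Bianchi argument shows that $V(g, u)$ itself satisfies a second-order elliptic boundary value problem whose only decaying solution is zero, so $(g, u)$ is a genuine static vacuum pair with Bartnik data $(\tau, \phi)$. Geometric uniqueness in a $\C^{2,\alpha}_{-q}$-neighborhood $\mathcal{U}$ of $(\bar g, 1)$ then follows because any nearby static vacuum pair can be pulled back, by a unique diffeomorphism close to the identity and asymptotic to a Euclidean isometry, to one satisfying both the static-harmonic and orthogonal gauges, reducing the uniqueness claim to that of the implicit function theorem. I expect the main obstacle to be the injectivity step: the static-regular hypothesis only directly controls $DA|_{\bar g}(h)$, and squeezing from that single extra conclusion the full vanishing of $(h,v)$ will require a delicate choice of the gauge vector field $V$ and of the boundary conditions realizing the orthogonal gauge, so that the resulting over-determined boundary data really does propagate vanishing into the exterior.
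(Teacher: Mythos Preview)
Your overall architecture is right in spirit, but there is a genuine gap precisely at the injectivity step, and it is the central difficulty the paper has to work around.

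The linearization $L$ of the static-harmonic gauged operator at $(\bar g,1)$ is \emph{never} an isomorphism, even when $\Sigma$ is static regular. Its kernel always contains the $N=\tfrac{n(n+1)}{2}$-dimensional space $\{(L_X\bar g,0):X\in\mathcal X_0\}$, where $\mathcal X_0$ consists of harmonic vector fields vanishing on $\Sigma$ and asymptotic to a Euclidean Killing field. Your ``standard integration-by-parts argument'' from $(h^\intercal,DA|_{\bar g}(h))=(0,0)$ does not yield $h\equiv0$; the correct conclusion (Corollary~\ref{corollary:trivial}) is only $h=L_X\bar g$ for some $X\in\mathcal X_0$, and $v\equiv0$ requires a separate argument via the hidden boundary conditions $\nabla^2 v(\nu,\cdot)=0$ extracted from the Green-type identity (Lemma~\ref{lemma:kernel}). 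The orthogonal gauge is a finite set of \emph{integral} constraints on $g-\bar g$, not local boundary conditions; imposing it restricts the domain by codimension $N$, which makes $L$ injective but drops the Fredholm index to $-N$, so the restricted map is not surjective and the implicit function theorem does not apply directly. The paper explicitly notes that although $\Range L$ contains every $(0,0,0,\tau,\phi)$, this linear fact does not transfer to the nonlinear $T$.

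The paper's remedy is to \emph{enlarge} rather than restrict: one adds an auxiliary vector variable $W\in\widehat{\mathcal X}$ and defines a modified map $\overline T(g,u,W)$ whose $T$-components equal $T(g,u)+\eta\,\zeta(W)$, together with one extra bulk equation $\Div_g(2\bi_g^*W-u^{-1}W(u)g)=0$. The linearization $\overline L$ is then a genuine isomorphism (Proposition~\ref{proposition:isom}), so the inverse function theorem applies. The key second step is that for the specific target $(0,0,0,0,\tau,\phi)$ the solution must have $\zeta(W)=0$: the diffeomorphism-invariance identity (Proposition~\ref{proposition:cokernel}) forces $\langle T(g,u),\zeta(W)\rangle_{\mathcal L^2}=0$, the target is trivially orthogonal to $\zeta(W)$ since the last two components of $\zeta(W)$ vanish, and hence $\langle\eta\,\zeta(W),\zeta(W)\rangle_{\mathcal L^2}=0$. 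This nonlinear orthogonality mechanism---not a direct isomorphism of the gauged linearization---is the missing ingredient in your proposal.
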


Note that the constants $\epsilon_0, C$ depend on the global geometry of $\Sigma $ and $\mathbb{R}^n\setminus \Omega$. See Remark~\ref{re:constant}.

In the above theorem, we say that an asymptotically flat pair $(g, u)$ satisfies  the \emph{static-harmonic gauge} in $\mathbb{R}^n\setminus \Omega$ if 
\begin{align}\label{static-harmonic}
\bi g + du = 0\quad \mbox{ in } \mathbb{R}^n\setminus \Omega,
\end{align}
where the Bianchi operator $\bi$ (at $\bar{g}$) is defined by $\bi g :=- \Div_{\bar{g}} g + \tfrac{1}{2} d (\tr_{\bar{g}} g)$. We say that $(g, u)$ satisfies the \emph{orthogonal gauge} if 
\begin{align}\label{orthogonal}
	\int_{\mathbb{R}^n\setminus \Omega}  \big((g- \bar{g})\cdot L_X\bar{g} \big)\rho\, d\mathrm{vol}=0,\qquad \mbox{ for all $X\in \mathcal{X}_0$}
\end{align}
where the weight function  $\rho(x) = (1+|x|^2)^{-1}$, the dot ``$\cdot$'' denotes the inner product of $\bar{g}$, and $\mathcal{X}_0$ is the subspace of $\C^{3,\alpha}_{\mathrm{loc}}(\mathbb{R}^n\setminus \Omega)$  vector fields, consisting of  $X$ satisfying $\Delta_{\bar{g}} X=0$ in $\mathbb{R}^n\setminus \Omega$  with $X=0$ on $\Sigma$ and $X- K\in \C^{3,\alpha}_{2-n}(\mathbb{R}^n\setminus \Omega)$ for some Killing vector field $K$ of $(\mathbb{R}^n, \bar{g})$. More details about $\mathcal{X}_0$ and the gauge conditions can be found in Section~\ref{section:gauge} and  at the beginning of Section~\ref{section:existence}.

We remark how the imposed gauge conditions relate to uniqueness in Conjecture~\ref{conjecture}. We show that for any $(g, u)$ sufficiently close to $(\bar{g}, 1)$, there is a unique diffeomorphism $\psi$ of $M$ that is close to the identity map and  fixes the boundary such that $(\psi^* g, \psi^* u)$ satisfies both gauges (see Lemma~\ref{lemma:both-gauge}). Therefore, Theorem~\ref{theorem:criterion-main} implies that the static vacuum extension with the prescribed Bartnik boundary data is unique (up to diffeomorphisms) within an open neighborhood of $(\bar{g}, 1)$.

Part of our proof adapts the work of Anderson and Khuri \cite{Anderson-Khuri:2013} to formulate the  problem into finding solutions to an elliptic boundary value system of Fredholm index $0$. However, because such elliptic system always admits a nontrivial kernel and cokernel, our main argument  uses a new modified elliptic system. 
  
More importantly, we can show that several classes of hypersurfaces in $\mathbb{R}^n$ are static regular. Together with Theorem~\ref{theorem:criterion-main}, we confirm the existence and local uniqueness of Conjecture~\ref{conjecture} for $(g_0^\intercal, H_{g_0})$ sufficiently close to the induced Bartnik boundary data on those static regular boundaries.

We first discuss about convex surfaces in $\mathbb{R}^3$, i.e. surfaces of positive Gauss curvature. Suppose $\Sigma  \subset (\mathbb{R}^3, \bar{g})$ is convex.   For an asymptotically flat metric $g$ in $\mathbb{R}^3\setminus \Omega$ with $g^\intercal$ sufficiently close to $\bar{g}^\intercal$ on $\Sigma$, there exists an isometric embedding $f: (\Sigma, g^\intercal)\to (\mathbb{R}^3, \bar{g})$, unique up to a rigid motion of~$\mathbb{R}^3$. Let  $\mathring{H}_{g^\intercal}$  be the mean curvature of the isometric image $f(\Sigma) \subset (\mathbb{R}^3,\bar{g})$ (with respect to the unit normal $\nu$ pointing to infinity). We can define the following functional
\begin{align*}
		\mathscr{G}(g) &= -16\pi m_{\mathrm{ADM}}(g) + \int_{\mathbb{R}^3\setminus \Omega} R_g \, d\mathrm{vol}_g + \int_\Sigma 2(\mathring{H}_{g^\intercal}-H_g) \, d\sigma_{g}.
\end{align*}
This functional takes a similar form as the functional considered by S. Hawking and G. Horowitz~\cite{Hawking-Horowitz:1996}, but we use it in different ways. One reason to consider  $\mathscr{G}$ is that $\bar{g}$ is a critical point of $\mathscr{G}$ among any variations, partly motivated by the work of S.~Brendle, F. Marques, and A. Neves~\cite{Brendle-Marques-Neves:2011}. It is also partly motivated by the following result of Y.~Shi and L.-F.~Tam:
\begin{Theorem}[Shi-Tam~\cite{Shi-Tam:2002}]\label{theorem:Shi-Tam}
Let $(\overline{\Omega}, g_{\I})$ be a 3-dimensional compact manifold with boundary $\Sigma$. Suppose the scalar curvature $R_{g_{\I}}\ge 0$ and the boundary $(\Sigma, (g_{\I})^\intercal)$ has positive Gauss curvature and positive mean curvature (with respect to outward unit normal). Then
\[
	 \int_\Sigma  \Big( \mathring{H}_{(g_{\I})^\intercal} -H_{g_{\I}} \Big) \, d\sigma_{g_{\I}}\ge 0
\]
with equality if and only if $(\overline{\Omega}, g_{\I})$ is isometric to a Euclidean region. 
\end{Theorem}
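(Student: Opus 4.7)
The plan is to extend $(\overline{\Omega}, g_{\I})$ to an asymptotically flat manifold with nonnegative scalar curvature in such a way that the boundary integral in question bounds the ADM mass of the extension from above, and then to invoke the Riemannian positive mass theorem. First I would apply the Weyl embedding theorem: since $(\Sigma, (g_{\I})^\intercal)$ has positive Gauss curvature, it admits an isometric embedding $f\colon (\Sigma,(g_{\I})^\intercal)\to(\mathbb{R}^3,\bar{g})$ as a strictly convex surface $\Sigma_0 = f(\Sigma)$, unique up to rigid motion. Let $E$ be the unbounded component of $\mathbb{R}^3\setminus\Sigma_0$, foliate $E$ by the outward parallel surfaces $\Sigma_r$ at Euclidean distance $r$ from $\Sigma_0$, and write $\bar{g} = dr^2 + g_r$, where $g_r$ is the induced metric on $\Sigma_r$; denote its mean curvature in $\bar{g}$ by $H_0$.

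Next I would look for a metric on $E$ of the warped form $g = u^2\, dr^2 + g_r$, with $u>0$, $R_g = 0$, and $u\to 1$ at infinity, subject to the initial datum $u|_{\Sigma_0} = H_0/H_{g_{\I}}$. Because the mean curvature of $\Sigma_0$ viewed inside $(E,g)$ equals $H_0/u|_{\Sigma_0}$, this choice makes the mean curvatures of $\Sigma$ agree from the two sides, so that gluing $\overline{\Omega}$ to $E$ along $\Sigma\simeq\Sigma_0$ produces an asymptotically flat manifold whose scalar curvature is nonnegative in the distributional sense (scalar-flat on $E$, $\ge 0$ on $\overline{\Omega}$, matched mean curvatures on $\Sigma$). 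The condition $R_g = 0$ translates into a first-order quasilinear parabolic equation for $u$ along the foliation; long-time existence, positivity, and the decay $u-1 = O(|x|^{-1})$ at infinity then follow from parabolic theory and barrier arguments.

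The crux of the argument is monotonicity of the Hawking-type quasi-local quantity
\[
m(r) = \frac{1}{16\pi}\int_{\Sigma_r} H_0\bigl(1-u^{-1}\bigr)\, d\sigma.
\]
Differentiating in $r$, substituting the parabolic equation for $u$, and applying Gauss--Bonnet on the topological sphere $\Sigma_r$ yields $\frac{d}{dr}m(r)\le 0$. The decay of $u-1$ implies $\lim_{r\to\infty} m(r) = m_{\mathrm{ADM}}(g)$, while the initial datum gives $m(0) = \frac{1}{16\pi}\int_\Sigma\bigl(\mathring{H}_{(g_{\I})^\intercal} - H_{g_{\I}}\bigr)\, d\sigma_{g_{\I}}$. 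Thus $m(0)\ge m_{\mathrm{ADM}}(g)\ge 0$ by the Riemannian positive mass theorem applied to the glued manifold (in the version allowing metrics with matched-mean-curvature corners, obtained via mollification), which is the desired inequality. For rigidity, equality forces $m(r)\equiv 0$; the equality case of the monotonicity then drives $u\equiv 1$ on $E$, and the rigidity case of the positive mass theorem identifies the glued manifold with $(\mathbb{R}^3,\bar{g})$, so $(\overline{\Omega}, g_{\I})$ is isometric to a Euclidean region.

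I expect the main obstacle to be the monotonicity of $m(r)$: verifying that the $r$-derivative decomposes into manifestly nonpositive terms requires a delicate rearrangement using the parabolic equation, Gauss--Bonnet, and the convexity of $\Sigma_r$ (preserved along the outward parallel flow). A secondary technical point is the distributional positive mass theorem for the merely $C^0\cap W^{1,\infty}$ glued metric, typically handled by Miao's smoothing argument in a collar of $\Sigma$ followed by passing to the limit.
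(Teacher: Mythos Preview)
The paper does not prove this theorem; it is quoted as a known result of Shi and Tam \cite{Shi-Tam:2002} and is used only as a black box in the proof of Proposition~\ref{proposition:harmonic-function}. Your sketch is essentially the original Shi--Tam argument (Weyl embedding, quasi-spherical scalar-flat extension via a parabolic equation for $u$, monotonicity of the quasi-local mass $m(r)$, and the positive mass theorem with corners), so there is nothing in the paper to compare it against beyond noting that your outline matches the cited source.
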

In the above theorem, we use the subscript $\I$ to emphasize that the metric $g_{\I}$ and the  corresponding quantities are of the compact region $(\overline{\Omega}, g_{\I})$. In particular, the integral in Theorem~\ref{theorem:Shi-Tam} need not relate to the boundary integral of $\mathscr{G}$, unless $g$ and $g_{\I}$ induce the same metric and mean curvature on~$\Sigma$. By computing the second variation of $\mathscr{G}$ along a specific path of metrics that relates the boundary integral of $\mathscr{G}$ with Theorem~\ref{theorem:Shi-Tam}, we derive a \emph{linearized} mean curvature comparison inequality.

\begin{Proposition}\label{proposition:harmonic-function}
Let  $\Omega$ be a bounded open subset in $\mathbb{R}^3$ whose boundary $\Sigma = \partial \overline{\Omega}$ has  positive Gauss curvature. If $v\in\C^{2,\alpha}_{-q}(\mathbb R^3\setminus\Omega)$ satisfies $\Delta_{\bar{g}} v=0$ in $\mathbb{R}^n\setminus \Omega$, then  
\[
\int_\Sigma 2v\Big(D \mathring{H}|_{\bar{g}^\intercal}\big(2v\bar{g}^\intercal\big) - DH|_{\bar{g}}(2v\bar{g}) \Big) \, d\sigma_{\bar{g}}\geq 0,
\]
where $D \mathring{H}|_{\bar{g}^\intercal}$ denotes the linearization of $\mathring{H}_{g^\intercal}$ at $\bar{g}^\intercal$.
\end{Proposition}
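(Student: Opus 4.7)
The plan is to identify the integral in the proposition with the second variation at $s = 0$ of the functional $\mathscr{G}$ along a conformal path, and to establish its sign via Theorem~\ref{theorem:Shi-Tam}.

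I would take the path $g_s = (1 + \tfrac{s}{2}v)^4 \bar g$ in $\mathbb{R}^3 \setminus \Omega$. This choice has three crucial features: (i) $\dot g_s|_{s=0} = 2v\bar g$, so the induced boundary variations $\dot g_s^\intercal = 2v\bar g^\intercal$ and $\dot g_s = 2v\bar g$ match the arguments of the integrand; (ii) the conformal Laplacian identity $R_{u^4\bar g} = -8 u^{-5}\Delta_{\bar g} u$ applied with $u = 1 + \tfrac{s}{2}v$, together with $\Delta_{\bar g} v = 0$, gives $R_{g_s} \equiv 0$ in $\mathbb{R}^3 \setminus \Omega$; and (iii) the asymptotic expansion $v = c|x|^{-1} + O(|x|^{-2})$ yields $m_{\mathrm{ADM}}(g_s) = sc$, linear in $s$. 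Consequently
\[
  \mathscr{G}(g_s) = -16\pi s c + F(s), \qquad F(s) := \int_\Sigma 2(\mathring H_{g_s^\intercal} - H_{g_s})\,d\sigma_{g_s}.
\]
Since $\bar g$ is a critical point of $\mathscr{G}$, we have $\mathscr{G}(g_s) = \tfrac{s^2}{2}\mathscr{G}''(\bar g)[2v\bar g, 2v\bar g] + O(s^3)$; matching powers of $s$ yields the first-order identity $F'(0) = 16\pi c$ and identifies the Hessian with $F''(0)$.

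To obtain the sign of the Hessian, I would construct, for sufficiently small $s$, a compact filling $(\overline{\Omega}, g_{\mathtt{I}}^{(s)})$ whose Bartnik boundary data match $(g_s^\intercal, H_{g_s})$ on $\Sigma$ and whose scalar curvature is nonnegative. The positive Gauss curvature of $\Sigma$ and closeness of the data to the Euclidean reference $(\bar g^\intercal, H_{\bar g})$ should enable such a filling to be produced perturbatively from $\bar g|_\Omega$. Theorem~\ref{theorem:Shi-Tam} applied to $g_{\mathtt{I}}^{(s)}$ then yields $F(s) \geq 0$ in the admissible range, which combined with the Taylor expansion of $\mathscr{G}(g_s)$ forces $\mathscr{G}''(\bar g)[2v\bar g, 2v\bar g] \geq 0$.

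Finally, I would expand $F(s)$ to second order. Using $d\sigma_{g_s^\intercal} = (1 + 2sv + O(s^2))\,d\sigma_{\bar g}$ on $\Sigma$ and $\mathring H_{g_s^\intercal} - H_{g_s} = s A_1 + s^2 A_2 + O(s^3)$ with $A_1 = D\mathring H(2v\bar g^\intercal) - DH(2v\bar g)$, one obtains
\[
  F''(0) = 4\int_\Sigma A_2\, d\sigma_{\bar g} + 4\int_\Sigma 2v A_1\, d\sigma_{\bar g}.
\]
Using the conformal formula $H_{u^4\bar g} = u^{-2}H_{\bar g} + 4u^{-3}\partial_\nu u$ for the exterior mean curvature, the analogous variation formula for $\mathring H$ under conformal variations of the surface metric (well-defined by uniqueness of the isometric embedding for convex $\Sigma$), and integration by parts exploiting $\Delta_{\bar g}v = 0$, the $A_2$-integral reorganizes so that $F''(0) \geq 0$ becomes exactly the inequality $\int_\Sigma 2v[D\mathring H(2v\bar g^\intercal) - DH(2v\bar g)]\,d\sigma_{\bar g} \geq 0$ of the proposition. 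The main obstacle is the perturbative construction of the interior filling with matching Bartnik data and $R \geq 0$: this is a prescribed-scalar-curvature problem with overdetermined boundary data on the compact region $\overline{\Omega}$, whose local solvability near $\bar g|_\Omega$ relies on the positive Gauss curvature of $\Sigma$ and the nondegeneracy of the linearized compact interior problem at the Euclidean
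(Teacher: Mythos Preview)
Your overall strategy—evaluate the second variation of $\mathscr{G}$ along a conformal path with $h=2v\bar g$ and control its sign via Shi--Tam—matches the paper's, and your identification of the second variation with the integral in the proposition is essentially the computation the paper carries out via Proposition~\ref{proposition:variation2}. The genuine gap is your Step~2: you require, for each small $s$, a compact fill-in $(\overline\Omega,g_{\mathtt I}^{(s)})$ with $R_{g_{\mathtt I}^{(s)}}\ge 0$ whose Bartnik boundary data match \emph{both} $g_s^\intercal$ and $H_{g_s}$. This is an overdetermined boundary problem, and there is no perturbative result that produces such fill-ins for arbitrary small deformations of the Euclidean data; the naive attempt $g_{\mathtt I}^{(s)}=(1+\tfrac{s}{2}v_{\mathtt I})^4\bar g$ with $v_{\mathtt I}$ the interior harmonic extension matches $g_s^\intercal$ but gives $H_{g_{\mathtt I}^{(s)}}-H_{g_s}\propto \partial_\nu v_{\mathtt I}-\partial_\nu v$, which vanishes only if $v\equiv 0$. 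Your appeal to ``nondegeneracy of the linearized compact interior problem'' is not substantiated, and in fact this fill-in question is precisely the hard direction of the Bartnik program on the compact side.

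The paper circumvents this by \emph{not} demanding the interior family match the exterior mean curvature. It builds a scalar-flat interior family $\gamma_{\mathtt I}(s)$ (conformal from the harmonic extension $v_{\mathtt I}$) that matches only the induced metric $\gamma(s)^\intercal$ on $\Sigma$, then splits
\[
\mathscr{G}(\gamma(s))=\Big[-16\pi m_{\mathrm{ADM}}+\textstyle\int R_{\gamma(s)}+\int_\Sigma 2(H_{\gamma_{\mathtt I}(s)}-H_{\gamma(s)})\Big]
+\int_\Sigma 2(\mathring H_{\gamma(s)^\intercal}-H_{\gamma_{\mathtt I}(s)})\,d\sigma_{\gamma(s)}.
\]
The second variation of the last integral is $\ge 0$ by Shi--Tam applied to $(\overline\Omega,\gamma_{\mathtt I}(s))$ (only induced metrics need to match here). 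The second variation of the bracketed term is $\ge 0$ by an explicit divergence-theorem computation (Lemma~\ref{lemma:harmonic-generate}): the mean-curvature mismatch $H_{\gamma_{\mathtt I}(s)}-H_{\gamma(s)}$ has a closed form via the conformal formula, and its boundary integral combines with the ADM mass to yield a manifestly nonnegative bulk integral $\int R_{\hat g}u\,d\mathrm{vol}$ on both sides. This splitting is the missing idea in your proposal; once you have it, the overdetermined fill-in problem disappears entirely.
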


Note that $DH|_{\bar{g}}(2v\bar{g})= -vH + 2\nu(v)$ by conformal transformation. While there is no explicit formula for the other term $D \mathring{H}|_{\bar{g}^\intercal}\big(2v\bar{g}^\intercal\big)$ (since its definition involves  isometric embeddings), we know that $D \mathring{H}|_{\bar{g}^\intercal}\big(2v\bar{g}^\intercal\big)$ depends only on the restriction of $v$ on $\Sigma$. Thus, Proposition~\ref{proposition:harmonic-function} reveals  new positivity relating the Dirichlet and Neumann boundary data of a harmonic function and may be of independent interest. Proposition~\ref{proposition:harmonic-function} is  essential to prove the following main theorem.

\begin{Theorem}\label{theorem:static-convex}
Let  $\Omega$ be a bounded open subset in $\mathbb{R}^3$ whose boundary $\Sigma=\partial \overline{\Omega}$ has positive Gauss curvature. Then $\Sigma$ is static regular in~$\mathbb{R}^3\setminus \Omega$.
\end{Theorem}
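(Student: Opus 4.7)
The plan is to verify the static regular condition using Proposition~\ref{proposition:harmonic-function} as the key input. Given a pair $(h,v)\in\C^{2,\alpha}_{-q}(\mathbb{R}^3\setminus\Omega)$ in the ``kernel'' of Definition~\ref{definition:static-regular}, the goal is to show $DA|_{\bar g}(h)=0$ on $\Sigma$. The overall strategy is to derive an integral identity linking this boundary $DA$ quantity to the positive boundary integral controlled by Proposition~\ref{proposition:harmonic-function}, and then to use that positivity to force vanishing.

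First, I would derive an integration-by-parts identity in $\mathbb{R}^3\setminus\Omega$ by pairing the linearized Ricci equation $D\Ric|_{\bar g}(h)-\nabla^2_{\bar g}v=0$ against a test tensor built from $v$ (most naturally $2v\bar g$). The linearized Bianchi identity $\bi\circ D\Ric|_{\bar g}=0$ combined with the harmonicity $\Delta_{\bar g} v=0$ should cause the bulk terms to cancel, and the decay hypothesis $q\in(\tfrac{1}{2},1)$ kills the contribution from infinity, leaving an identity supported entirely on $\Sigma$.

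Next, I would apply the boundary constraints $h^\intercal=0$ and $DH|_{\bar g}(h)=0$, together with the conformal change formula $DH|_{\bar g}(2v\bar g)=-vH_{\bar g}+2\nu(v)$, to reduce the boundary identity to the schematic form
\[
\int_\Sigma 2v\,DH|_{\bar g}(2v\bar g)\,d\sigma_{\bar g}
=\int_\Sigma \bigl\langle DA|_{\bar g}(h),\, T(v)\bigr\rangle\, d\sigma_{\bar g},
\]
where $T(v)$ is a tensor depending only on $v|_\Sigma$ and $\nu(v)|_\Sigma$. Substituting into Proposition~\ref{proposition:harmonic-function} converts this equality into a comparison between a ``$DA|_{\bar g}(h)$-energy'' and the quadratic form $\int_\Sigma 2v\,D\mathring H|_{\bar g^\intercal}(2v\bar g^\intercal)\,d\sigma_{\bar g}$. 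Because positive Gauss curvature of $\Sigma$ guarantees a well-defined isometric embedding into $\mathbb{R}^3$, the latter quadratic form is controlled by $v|_\Sigma$ alone and is, in the appropriate sense, a positive-definite operator in $v|_\Sigma$. Combined with the decay of $v$ and unique continuation for harmonic functions, this should force $v\equiv 0$ in $\mathbb{R}^3\setminus\Omega$, and, via the same identity above, $DA|_{\bar g}(h)=0$ on $\Sigma$. If need be, once $v\equiv 0$ the residual system $D\Ric|_{\bar g}(h)=0$ with $h^\intercal=0$, $DH|_{\bar g}(h)=0$, and decay can be handled by imposing the static-harmonic gauge $\bi h=0$ from Section~\ref{section:gauge}, turning it into an elliptic problem whose solutions reduce to trivial tangential deformations.

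\textbf{Main obstacle.} The most delicate step is the algebraic bookkeeping in Step~2: choreographing the integration by parts so that, after imposing $h^\intercal=0$ and $DH|_{\bar g}(h)=0$, the boundary identity matches exactly the form controlled by Proposition~\ref{proposition:harmonic-function}, with $DA|_{\bar g}(h)$ isolated as the only surviving $h$ term. The conformal change formulas for $H$ and the linearized second fundamental form under the constraint $h^\intercal=0$ must conspire precisely, and it is presumably this required cancellation that motivated the specific form of the functional $\mathscr{G}$ and its second-variation path in the first place.
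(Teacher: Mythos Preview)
Your proposal has a genuine gap at the step where you assert that the quadratic form $v\mapsto\int_\Sigma 2v\,D\mathring H|_{\bar g^\intercal}(2v\bar g^\intercal)\,d\sigma$ is ``in the appropriate sense positive-definite in $v|_\Sigma$.'' This is false. On the unit sphere with $v|_\Sigma\equiv c$ constant, the metric $(1+sc)^2\bar g^\intercal$ is a round sphere of radius $1+sc$, so $\mathring H=2/(1+sc)$ and $D\mathring H(2c\bar g^\intercal)=-2c$; hence the form evaluates to $-16\pi c^2<0$. Proposition~\ref{proposition:harmonic-function} only controls the \emph{difference} $D\mathring H(2v\bar g^\intercal)-DH(2v\bar g)$, not either term separately, so you cannot extract $v\equiv 0$ this way. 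More broadly, the paper never proves $v\equiv 0$ en route to $DA(h)=0$; it treats the case $v\not\equiv 0$ directly.

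The key idea you are missing is to use Proposition~\ref{proposition:harmonic-function} not as a single inequality but as a \emph{family} of inequalities over nearby hypersurfaces, and then to differentiate. Concretely: since $(h,v)$ is a static vacuum deformation, $h+2v\bar g$ is Ricci flat, and convexity of $\Sigma$ (via infinitesimal isometric embedding, Corollary~\ref{corollary:convex}) forces $D\mathring H((h+2v\bar g)^\intercal)=DH(h+2v\bar g)$ on every nearby level surface $\Sigma_t$. This rewrites the nonnegative quantity $\mathcal H(t)=\int_{\Sigma_t}2v\bigl(D\mathring H(2v\bar g^\intercal)-DH(2v\bar g)\bigr)$ as $\int_{\Sigma_t}2v\bigl(DH(h)-D\mathring H(h^\intercal)\bigr)$ for $t\ge 0$. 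The boundary hypotheses $h^\intercal=0$, $DH(h)=0$ then give $\mathcal H(0)=0$, so $t=0$ is a minimum and $\mathcal H'(0)=0$. The substance of the proof is the computation $\mathcal H'(0)=2\int_\Sigma|DA(h)|^2$, which uses the geodesic gauge, the Riccati-type identity~\eqref{equation:Riccati}, and the Green-type identity~\eqref{equation:Green-boundary}. Your integration-by-parts against $2v\bar g$ is in spirit related to \eqref{equation:Green-boundary}, but without the foliation-derivative structure there is no mechanism to produce the $|DA(h)|^2$ term with a definite sign.
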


Our next result implies that any embedded   hypersurface can be perturbed to become static regular. We say that  a family of embedded hypersurfaces $\{ \Sigma_t\}\subset \mathbb{R}^n$ forms a smooth \emph{generalized foliation} if the deformation vector $X$ of $\{ \Sigma_t\}$ is smooth and on each $\Sigma_t$, $\bar g( X, \nu) =\zeta$ where $\zeta>0$, except a set of $(n-1)$-dimensional Hausdorff measure zero on $\Sigma_t$, and $\nu$ is the unit normal of $\Sigma_t$. In other words, $\{ \Sigma_t\}$ is slightly more general than a foliation in that they are allowed to touch each other from one-side on a set of measure zero.
 
\begin{Theorem}\label{theorem:static-generic}
Let $\delta>0$ and let each $\Omega_t\subset \mathbb{R}^n$ for $t\in [-\delta, \delta]$ be a bounded open subset with connected hypersurface boundary $\Sigma_t = \partial\overline{ \Omega_t}$  embedded  in $\mathbb{R}^n$.  Suppose  the boundaries $\{ \Sigma_t\}$ form a smooth generalized foliation. Then  there is an open dense subset $J\subset (-\delta,\delta)$ such that  $\Sigma_t$  is static regular in $\mathbb{R}^n\setminus \Omega_t$ for all $t\in J$.
\end{Theorem}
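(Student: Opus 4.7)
The plan is to introduce
\[
E := \{ t \in (-\delta, \delta) : \Sigma_t \text{ is not static regular in } \mathbb{R}^n \setminus \Omega_t \},
\]
and show that $E$ is closed in $(-\delta, \delta)$ with empty interior; then $J := (-\delta, \delta) \setminus E$ is automatically open and dense. Throughout, I would rely on the Fredholm structure of the linearized static vacuum boundary value problem developed in the preceding sections, and on the transversality furnished by the foliation vector field $X$.

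\emph{Closedness of $E$.} Given $t_k \to t_* \in (-\delta, \delta)$ with $t_k \in E$, choose pairs $(h_k, v_k) \in \C^{2,\alpha}_{-q}(\mathbb{R}^n \setminus \Omega_{t_k})$ solving the linearized system with $h_k^\intercal = 0$ and $DH|_{\bar g}(h_k) = 0$ on $\Sigma_{t_k}$ but $DA|_{\bar g}(h_k) \not\equiv 0$. Transport them via the flow of a cutoff extension of $X$ to the reference exterior $\mathbb{R}^n \setminus \Omega_{t_*}$, and renormalize so that $\|(h_k, v_k)\|_{\C^{2,\alpha}_{-q}} + \|DA(h_k)\|_{\C^{1,\alpha}(\Sigma_{t_*})} = 1$. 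Elliptic estimates for the linearized system, combined with the compact embedding $\C^{2,\alpha}_{-q} \hookrightarrow \C^{2,\alpha'}_{-q'}$ for $\alpha' < \alpha$ and $q' < q$, yield a convergent subsequence whose limit $(h_*, v_*)$ is a nontrivial bad pair on $\mathbb{R}^n \setminus \Omega_{t_*}$, placing $t_* \in E$.

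\emph{Empty interior of $E$.} Suppose, for contradiction, that $E$ contains an open interval $I$. The kernel $\mathcal{K}_t$ of the linearized boundary value problem is finite dimensional and $\dim \mathcal{K}_t$ is upper semicontinuous in $t$, so it is locally constant on a dense open subset $I' \subset I$; after transporting via the flow of $X$ to a fixed reference exterior, the kernels $\mathcal{K}_t$ assemble into a smooth finite-rank bundle over $I'$. On some open subinterval of $I'$, one selects a smooth section $t \mapsto (h_t, v_t) \in \mathcal{K}_t$ with $DA(h_t) \not\equiv 0$ on $\Sigma_t$. Differentiating the boundary identities $h_t^\intercal = 0$ and $DH(h_t) = 0$ along the foliation, commuting past the Lie transport by $X$, and invoking the interior linearized equations $D\Ric|_{\bar g}(h_t) - \nabla^2 v_t = 0$ and $\Delta v_t = 0$ together with the contracted second Bianchi identity for $D\Ric|_{\bar g}$, the aim is a pointwise identity forcing $DA(h_t) = 0$ at every point of $\Sigma_t$ where $\bar g(X, \nu) \neq 0$. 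Since this set has full measure on $\Sigma_t$ by the generalized foliation hypothesis, continuity of $DA(h_t)$ gives $DA(h_t) \equiv 0$ on $\Sigma_t$, contradicting the choice of section.

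\emph{Main obstacle.} The crux is the differentiation-along-the-foliation step. Because the pairs $(h_t, v_t)$ live on a varying family of exteriors, a meaningful $t$-derivative requires Lie transport by $X$, which produces commutator terms when applied to the boundary operators $h^\intercal$, $DH$, and $DA$. Extracting from these commutators and from the interior linearization a clean pointwise identity in which the positivity $\bar g(X, \nu) > 0$ enters decisively --- so as to conclude the vanishing of $DA(h_t)$ on a dense subset of $\Sigma_t$ --- is where the principal technical work lies.
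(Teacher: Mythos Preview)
Your empty-interior step has the right architecture---pass to an interval where the kernel dimension is constant, select a smooth section of bad kernel elements, differentiate along the foliation---but the conclusion does not come from a pointwise identity as you suggest. Differentiating $h_t^\intercal=0$ and $DH(h_t)=0$ and subtracting the Lie transport by $X$ only tells you that the pair $(p-L_Xh,\,z-L_Xv)$ (where $(p,z)$ is the $t$-derivative of the section at $t=a$) is itself a static vacuum deformation at $\bar g$, with boundary data $(p-L_Xh)^\intercal=-2\zeta\,DA(h)$ and $DH(p-L_Xh)=\zeta\,A\cdot DA(h)$ on $\Sigma$. No amount of Bianchi-type manipulation turns this into a pointwise constraint on $DA(h)$. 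The missing ingredient is the Green-type identity of Section~\ref{section:Green} (specifically \eqref{equation:Green-boundary}): pairing the original $(h,v)$ against this new static vacuum deformation yields the \emph{integral} identity
\[
0=\int_\Sigma\Big\langle\big(vA+DA(h)-\nu(v)\bar g^\intercal,\,2v\big),\,\big(-2\zeta\,DA(h),\,\zeta\,A\cdot DA(h)\big)\Big\rangle\,d\sigma=-\int_\Sigma 2\zeta\,|DA(h)|^2\,d\sigma,
\]
and it is only now that $\zeta>0$ a.e.\ forces $DA(h)\equiv 0$. This self-adjointness of the linearized operator with Bartnik boundary data is the heart of the argument and is absent from your outline.

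Your closedness argument also has a gap: with the normalization $\|(h_k,v_k)\|+\|DA(h_k)\|=1$ nothing prevents $\|DA(h_k)\|\to 0$, in which case the limit $(h_*,v_*)$ may land in the ever-present trivial subspace $\{(L_X\bar g,0):X\in\mathcal X_0\}$ of the kernel and fail to witness $t_*\in E$. The clean route (implicit in the paper, via Lemma~\ref{lemma:kernel} and Lemma~\ref{lemma:static-harmonic}) is to observe that $\Sigma_t$ fails to be static regular precisely when $\dim\Ker L_t>N$, whence $E=\{t:N(t)>N\}$ is closed by upper semicontinuity of the nullity. The paper in fact sidesteps closedness entirely: it takes $J$ to be the open dense set where $N(t)$ is locally constant (Lemma~\ref{lemma:nullity}) and shows directly, via the Green-type identity as above, that every such $t$ is static regular.
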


 By a dilation property of the Euclidean static vacuum pair, Theorem~\ref{theorem:static-generic} has a strong consequence.
\begin{Corollary}\label{cor:dilation}
Let  $\Omega$ be a bounded open subset in $\mathbb{R}^n$ whose boundary $\Sigma=\partial \overline{\Omega}$ is a star-shaped hypersurface. Then $\Sigma$ is static regular in $\mathbb{R}^n\setminus \Omega$. 
\end{Corollary}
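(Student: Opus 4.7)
The plan is to reduce the corollary to Theorem~\ref{theorem:static-generic} by placing $\Sigma$ inside the smooth foliation generated by dilations from its star-center, then to transfer the static-regularity conclusion back to $\Sigma$ using the scale invariance of the Euclidean static vacuum system. After a translation (which is an isometry of $\bar g$ and does not affect the hypothesis or conclusion), I may assume $0\in\Omega$ and $\Sigma$ is star-shaped about the origin, so $\bar g(x,\nu_{\bar g})>0$ for every $x\in\Sigma$.

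First I would introduce the dilation family $\Omega_t:=e^{t}\Omega$ with boundaries $\Sigma_t:=e^{t}\Sigma$ for $t\in(-\delta,\delta)$ and a fixed small $\delta>0$. The deformation vector of this family at a point $y\in\Sigma_t$ is the position vector $X(y)=y$, and the outward unit normal $\nu_t$ at $y=e^{t}x$ is simply the unit normal $\nu_{\bar g}(x)$ to $\Sigma$ at $x$, since Euclidean dilations preserve directions. Hence $\bar g(X,\nu_t)=e^{t}\bar g(x,\nu_{\bar g}(x))>0$ everywhere on every $\Sigma_t$, so $\{\Sigma_t\}$ is in fact a smooth foliation and in particular a smooth generalized foliation in the sense required by Theorem~\ref{theorem:static-generic}. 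Applying that theorem yields an open dense subset $J\subset(-\delta,\delta)$ with the property that $\Sigma_t$ is static regular in $\mathbb{R}^n\setminus\Omega_t$ for every $t\in J$; I would fix any such $t_0\in J$ and set $\lambda:=e^{t_0}$.

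Next I would verify the scale invariance. Given a pair $(h,v)\in\C^{2,\alpha}_{-q}(\mathbb{R}^n\setminus\Omega)$ satisfying the hypotheses of Definition~\ref{definition:static-regular} on $\mathbb{R}^n\setminus\Omega$, I define
\[
\tilde h(y):=h(y/\lambda),\qquad \tilde v(y):=v(y/\lambda)\qquad \text{for }y\in\mathbb{R}^n\setminus\Omega_{t_0}.
\]
Because $\bar g$ is the standard flat metric with constant coefficients and $\phi_\lambda(x)=\lambda x$ satisfies $\phi_\lambda^*\bar g=\lambda^2 \bar g$, each of the operators $D\Ric|_{\bar g}$, $\nabla^2_{\bar g}$, and $\Delta_{\bar g}$ is second order with constant coefficients; computing directly,
\[
D\Ric|_{\bar g}(\tilde h)(y)-\nabla^2_{\bar g}\tilde v(y)=\lambda^{-2}\bigl(D\Ric|_{\bar g}(h)-\nabla^2_{\bar g} v\bigr)(y/\lambda)=0,
\]
and similarly $\Delta_{\bar g}\tilde v=0$ on $\mathbb R^n\setminus\Omega_{t_0}$. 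The weight $|x|^{-q}$ simply scales by $\lambda^{-q}$, so $(\tilde h,\tilde v)\in\C^{2,\alpha}_{-q}(\mathbb R^n\setminus\Omega_{t_0})$. The pointwise boundary operators $h\mapsto h^\intercal$, $DH|_{\bar g}$, and $DA|_{\bar g}$ are likewise homogeneous under the dilation, so the hypotheses $\tilde h^\intercal=0$ and $DH|_{\bar g}(\tilde h)=0$ hold on $\Sigma_{t_0}$. By static regularity of $\Sigma_{t_0}$, $DA|_{\bar g}(\tilde h)=0$ on $\Sigma_{t_0}$, which pulls back through $\phi_\lambda$ to $DA|_{\bar g}(h)=0$ on $\Sigma$. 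This is exactly static regularity of $\Sigma$.

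The only potential subtlety is checking that the pullback $(\tilde h,\tilde v)$ lands in the correct weighted Hölder space and that each of the boundary operators transforms in the expected homogeneous way under the dilation; both are routine once one records that $\phi_\lambda^*\bar g=\lambda^2\bar g$ and that all operators involved are expressible by constant-coefficient polynomials in $\partial$ together with the induced geometry of the boundary (which itself is rigidly rescaled). No deeper obstacle is expected, since the hard analytic content (the generic static regularity statement) is already provided by Theorem~\ref{theorem:static-generic}.
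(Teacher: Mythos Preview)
Your proof is correct and follows essentially the same route as the paper's: both embed $\Sigma$ in the dilation foliation about the star-center, invoke Theorem~\ref{theorem:static-generic} to find some dilate $\Sigma_{t_0}$ that is static regular, and then use the homogeneity of the Euclidean linearized static vacuum system under dilations to transfer static regularity from $\Sigma_{t_0}$ back to $\Sigma$. The only cosmetic difference is that the paper phrases the transfer via pushforward at the metric level (obtaining the system at $g_t=t^{-2}\bar g$ and then passing to $\bar g$ by the conformal relations), whereas you work directly with the coordinate rescaling $\tilde h(y)=h(y/\lambda)$, $\tilde v(y)=v(y/\lambda)$; up to an overall nonzero scalar factor, the two pairs coincide, so the arguments are equivalent.
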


Since a convex hypersurface is star-shaped, Corollary~\ref{cor:dilation} gives a second proof to Theorem~\ref{theorem:static-convex}.  To compare those two proofs, the dilation property used to prove Corollary~\ref{cor:dilation} is very specific to the Euclidean static vacuum pair, while the approach to Theorem~\ref{theorem:static-convex} can possibly be extended for a general static vacuum metric. While most of the proofs presented in this paper heavily use that the background metric is Euclidean, we are able to extend several of current results, including Theorem~\ref{theorem:criterion-main} and Theorem~\ref{theorem:static-generic}, to the setting that the background metric is an arbitrary asymptotically flat, static vacuum metric by incorporating new arguments in~\cite{An-Huang:2022}. 

We make some remarks about the geometric assumptions on $\Sigma$. For all of our results, we assume the boundary $\Sigma$ is an embedded hypersurface in $\mathbb{R}^n$. As is pointed out in \cite{Anderson-Khuri:2013, Anderson-Jauregui:2016}, if the boundary $\Sigma=\partial \overline{\Omega}$  is only \emph{inner} embedded, i.e., $\Sigma$ touches itself from the exterior region $\mathbb{R}^n\setminus \Omega$, the induced data $(\bar{g}^\intercal, H_{\bar{g}})$ is valid Bartnik boundary data, but $(\bar{g}, 1)$ in $\mathbb{R}^n\setminus \Omega$ is \emph{not} a valid static vacuum extension as $\mathbb{R}^n\setminus \Omega$ is not a manifold with boundary. It is clear that our arguments  don't work without embeddedness. We note that those inner embedded hypersurfaces are conjectured to be counter-examples to Conjecture~\ref{conjecture} by \cite[Conjecture 5.2]{Anderson-Jauregui:2016} (see also \cite{Anderson:2019}). On the other hand, Theorem~\ref{theorem:static-generic} implies that those possible counter-examples can be perturbed inward (with arbitrarily small perturbation) to be static regular hypersurfaces. 

Note the assumption that $\Sigma$ is connected in Theorem~\ref{theorem:criterion-main}. While Proposition~\ref{proposition:harmonic-function}, Theorems \ref{theorem:static-convex} and~\ref{theorem:static-generic} can be extended for $\Sigma$ of multiple components, we still need the connectedness assumption to derive the existence of static vacuum extensions by Theorem~\ref{theorem:criterion-main}. This restriction seems  related to  Static $\mathtt{n}$-body Conjecture. It asserts that an asymptotically flat static metric with $\mathtt{n}$ boundary components for $\mathtt{n}> 1$ must be trivial, under some separation condition of the boundary components. See, e.g.~\cite{Beig-Schoen:2009}. Therefore, it is expected to be considerably harder, if possible at all, to find a static vacuum extension with arbitrarily prescribed Bartnik boundary data when $\Sigma$ is not connected.  

Lastly, we remark that in general the ADM mass of the metric constructed Theorem~\ref{theorem:criterion-main} may not be positive. The phenomenon seems to be closely related to another outstanding problem whether there exists a ``fill-in'' metric of nonnegative scalar curvature with prescribed Bartnik boundary data. Nevertheless, in the special situation that $\tau= \bar g^\intercal$ and $\phi \lneq H_{\bar g}$, the constructed static metric~$g$ in $\mathbb{R}^n\setminus \Omega$ clearly has a fill-in metric  $\bar g$ in $\overline{\Omega}$ (the Euclidean metric) whose  Bartnik boundary data  satisfy $g^\intercal = \bar g^\intercal$ and $H_g \lneq H_{\bar g}$. The ADM mass of such $g$ must be positive by Miao's positive mass theorem with corners \cite{Miao:2002}.

The paper is organized as follows. In Section~\ref{section:prep}, we give basic definitions and formulas of linearized curvatures, and we identify Ricci flat deformations on a Euclidean exterior region. In Section~\ref{section:Green}, we consider variations of the Regge-Teitelboim Hamiltonian to obtain a \emph{Green-type identity} and other identities essential for the main results. Section~\ref{section:existence} deals with Theorem~\ref{theorem:criterion-main}. In Section~\ref{section:convex}, we prove Proposition~\ref{proposition:harmonic-function} and apply it to prove Theorem~\ref{theorem:static-convex}. In Section~\ref{section:generic}, we prove Theorem~\ref{theorem:static-generic} and Corollary~\ref{cor:dilation}.

\subsection*{Acknowledgement}
The first author would like to thank Michael Anderson for valuable discussions.   In a previous version of the paper, Corollary~\ref{cor:dilation} was stated only for round spheres, and we are very grateful to Pengzi Miao for notifying us that the dilation argument extends to all star-shaped hypersurfaces.
 
\section{Preliminaries}\label{section:prep}
\subsection{Basic definitions and notations}\label{section:basic}

Let $\mathbb{R}^n$ be the $n$-dimensional space with the Cartesian coordinates $ x=(x_1,\dots, x_n)$, and $n\ge 3$. Throughout the paper we shall fix the coordinate chart $\{ x \}$ and reserve the notation  $\bar{g}$ for the Euclidean metric given by $\bar{g} =( \delta_{ij})$ in this coordinate chart.  Let $\Omega\subset \mathbb{R}^n$ be a bounded open subset whose boundary~$\Sigma = \partial \Omega$ is a compact, embedded, smooth\footnote{Most of our arguments directly extend for $\C^{3,\alpha}$ hypersurfaces while some others may require extra care to pinpoint the optimal regularity, e.g. Theorem~\ref{theorem:Shi-Tam}, which we do not pursue here.} hypersurface. We assume that  $\mathbb{R}^n\setminus \Omega$ is connected. Most of results in this section are valid for $\Sigma$ having multiple components, except Corollary~\ref{corollary:trivial}. 

Let $g$ be a $\C^2_{\mathrm{loc}}$ Riemannian metric on $\mathbb{R}^n\setminus \Omega$.  For a tensor $h$ on $\mathbb{R}^n\setminus \Omega$, we denote by $h^\intercal$  the restriction of $h$ on the tangent bundle of~$\Sigma$. We define the second fundamental form $A_g=\tfrac{1}{2} (L_{\nu_g} g)^\intercal $ and the mean curvature  $H_g = \Div_\Sigma \nu_g$ on $\Sigma$, where $\nu_g$ is the unit normal vector pointing \emph{away} from~$\Omega$.

We will frequently work with variations of curvatures along a family of Riemannian metrics $g(s)$ on $\mathbb{R}^n\setminus \Omega$ with $g(0) = g$.  We shall denote the linearization of the second fundamental form $\left.\ds\right|_{s=0} A_{g(s)} $ by $DA|_g(h)$ where $h = g'(0)$. All other linearized curvature operators are denoted in the same fashion. We give a list of formulas of the linearized operators in the following lemma.  Consider a local frame $\{ e_0, e_1, \dots, e_{n-1}\}$ in a collar neighborhood of $\Sigma$ in $\mathbb{R}^n\setminus \Omega$ such that $e_0$ is the parallel extension of $\nu_g$ along itself and $e_1, \dots, e_{n-1}$ are tangent to $\Sigma$. We use the subscript $_{;i}$ to denote the covariant derivative by $e_i$ with respect to $g$, and all the indices are raised or lowered by $g$. 
\begin{lemma} \label{lemma:formula}
The linearizations of the Ricci tensor and scalar curvature are given by
\begin{align}
\begin{split}\label{equation:Ricci}
	(D\Ric|_g(h))_{ij} &=-\tfrac{1}{2} g^{k\ell} h_{ij;k\ell} + \tfrac{1}{2} g^{k\ell} (h_{i k; \ell j} + h_{jk; \ell i} ) \\
	&\quad - \tfrac{1}{2} (\tr h)_{ij}  + \tfrac{1}{2} (R_{i\ell} h^\ell_j + R_{j\ell} h^\ell_i )\\
	&\quad - R_{ik\ell j} h^{k\ell} \qquad \qquad  \mbox{ for all } i, j =0,1,\dots, n-1
\end{split}\\
\begin{split}\notag
	D R|_g (h)
	&=-\Delta \tr h + \Div \Div h - h \cdot \Ric_g.
\end{split}
\end{align}
Here, ``$\cdot$'' denotes the inner product with respect to $g$. 

We write $\nu = \nu_g$ for short. We let $\omega(e_a) = h(\nu, e_a)$ be the one-form on the tangent bundle of $\Sigma$ and $\{\Sigma_t\}$ be the foliation by $g$-equidistant hypersurfaces to $\Sigma$.
For tangential directions $a, b, c \in \{ 1, \dots, n-1\}$ on $\Sigma$, we have 
\begin{align}
	D\nu|_g(h) &= - \tfrac{1}{2} h(\nu, \nu) \nu - g^{ab} \omega(e_a) e_b\label{equation:normal}\\
\begin{split}
	DA|_g(h) & = \tfrac{1}{2} (L_{\nu} h)^\intercal - \tfrac{1}{2} L_\omega g^\intercal - \tfrac{1}{2} h(\nu, \nu) A_g\\
	&= \tfrac{1}{2}( \nabla_{\nu} h)^\intercal+ A_g\circ h - \tfrac{1}{2} L_\omega g^\intercal- \tfrac{1}{2} h(\nu, \nu) A_g\label{equation:linearized-sff}
	\end{split}\\
	DH|_g(h) &=\tfrac{1}{2} \nu(\tr h^\intercal) - \Div_\Sigma \omega - \tfrac{1}{2} h(\nu, \nu) H_g\notag.
\end{align}
In the third line $(A_g\circ h)_{ab} = \tfrac{1}{2} (A_{ac} h^c_{b}+A_{bc} h^c_{a})$,  and in the last equation, $\tr h^\intercal$ denotes the tangential trace of $h$ on $\Sigma_t$, defined in a collar neighborhood of~$\Sigma$. 

For any family  $g(s)$ with $g(0) = g$, we let $H_{g(s)}$, defined in the collar neighborhood, be the mean curvature of $\Sigma_t$ with respect to $g(s)$. Then on $\Sigma$, 
\begin{align*}
	\nu (DH|_g(h))& =  - \tfrac{1}{2} DR|_g(h) + \tfrac{1}{2} DR^\Sigma|_{g^\intercal} (h^\intercal) + A_g \cdot (A_g \circ h) \\
	&\quad  - A_g \cdot DA|_g(h  -H_g DH|_g(h) \\
	&\quad + \tfrac{1}{4}\big( - R_g +R^\Sigma_g -  |A_g|^2 - H_{g}^2\big)h(\nu, \nu)\\
	&\quad 	- \tfrac{1}{2}\Delta_\Sigma h(\nu, \nu)  + g^{ab} \omega (e_a) e_b(H_g)
\end{align*}
where $R^\Sigma_g, \Delta_\Sigma$ are respectively the scalar curvature and the Laplace operator of  $(\Sigma, g^\intercal)$. Consequently, if we assume $h(\nu, \nu)=0$ and $\omega=0$, then 
\begin{align}\label{equation:Riccati}
\begin{split}
\nu (DH|_g(h))&=-\tfrac{1}{2} DR|_g(h)+\tfrac{1}{2} DR^\Sigma|_{g^\intercal} (h^\intercal)  + A_g\cdot(A_g\circ h )  \\
&\quad - A_g\cdot DA|_g(h) - H_gDH|_g(h).
\end{split}
\end{align}
\end{lemma}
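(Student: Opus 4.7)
The plan is to derive each formula by specializing the standard variational calculus of curvature to the family $g(s) = g + sh + O(s^2)$. The Ricci and scalar curvature formulas are classical: I would first linearize the Christoffel symbols to $D\Gamma^k_{ij}|_g(h) = \tfrac{1}{2}g^{k\ell}(h_{i\ell;j} + h_{j\ell;i} - h_{ij;\ell})$, substitute into the coordinate expression $R_{ij} = \partial_k \Gamma^k_{ij} - \partial_j \Gamma^k_{ik} + \Gamma\Gamma - \Gamma\Gamma$, and trace with the varying inverse metric (recalling $D(g^{-1})|_g(h) = -h$) to obtain $DR|_g(h)$. I would then turn to the boundary quantities. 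The formula for $D\nu|_g(h)$ comes from differentiating the two defining relations $g(s)(\nu_{g(s)}, \nu_{g(s)}) = 1$ and $g(s)(\nu_{g(s)}, X) = 0$ for $X \in T\Sigma$ at $s=0$, which isolate the normal component $-\tfrac{1}{2} h(\nu,\nu)\nu$ and the tangential component $-g^{ab}\omega(e_a)e_b$ respectively. For $DA|_g(h)$, I would linearize $A_g = \tfrac{1}{2}(L_{\nu_g} g)^\intercal$ by the Leibniz rule, with the contribution $L_{D\nu|_g(h)} g$ producing the $-L_\omega g^\intercal - h(\nu,\nu) A_g$ terms; the two equivalent expressions reconcile via the standard identity $(L_\nu h)^\intercal = (\nabla_\nu h)^\intercal + 2 A_g \circ h$. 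Tracing $DA|_g(h)$ with $g^\intercal$ and accounting for the variation of $(g^\intercal)^{-1}$ yields the $DH|_g(h)$ formula.

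The final identity for $\nu(DH|_g(h))$ is the most delicate and I expect it to be the main obstacle. My plan is to start from the combined Gauss--Riccati identity on the $g$-equidistant foliation $\{\Sigma_t\}$,
\begin{equation*}
  \nu_g(H_g) = -\tfrac{1}{2} R_g + \tfrac{1}{2} R^{\Sigma}_g - \tfrac{1}{2} H_g^2 - \tfrac{1}{2} |A_g|^2,
\end{equation*}
which follows by substituting the Gauss equation $R^\Sigma = R - 2\Ric(\nu,\nu) + H^2 - |A|^2$ into the traced Riccati equation $\nu(H) = -|A|^2 - \Ric(\nu,\nu)$ for the equidistant foliation. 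I would then linearize both sides in $g$. On the left, since $\nu_g = \partial_t$ is a fixed vector field along the $g$-foliation, differentiation in $s$ commutes with $\nu_g$ and gives exactly $\nu_g(DH|_g(h))$. On the right, the linearizations of $R_g$ and $R^\Sigma_g$ produce the two leading terms, while the variation of $|A_g|^2$ splits as $2 A_g\cdot DA|_g(h) - 2 A_g \cdot (A_g\circ h) + \ldots$, using $D(g^{ab})|_g = -h^{ab}$ on the indices of $A$; similarly, $D(H_g^2)|_g(h) = 2H_g\, DH|_g(h)$.

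The main bookkeeping obstacle is that the Gauss--Riccati identity for $g(s)$ naturally lives on the $g(s)$-equidistant foliation rather than on the fixed $g$-foliation $\Sigma_t$, and the left-hand side of the identity for $g(s)$ involves $\nu_{g(s)}$ rather than $\nu_g$. Reconciling the two requires the correction $\nu_{g(s)} - \nu_g = s\, D\nu|_g(h) + O(s^2)$, whose tangential part $-g^{ab}\omega(e_a) e_b$ acting on $H_g$ produces the last term $g^{ab}\omega(e_a) e_b(H_g)$ in the stated formula. The isolated term $-\tfrac{1}{2}\Delta_\Sigma h(\nu,\nu)$ in turn emerges from the second-order part of $D\Ric|_g(h)(\nu,\nu)$ via \eqref{equation:Ricci}: splitting $g^{k\ell}h_{00;k\ell} = h_{00;\nu\nu} + \Delta_\Sigma h(\nu,\nu) - H_g\, \nabla_\nu h(\nu,\nu)$ and tracking which half of $\Delta_\Sigma h(\nu,\nu)$ survives after recombination. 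All other cross terms assemble into the coefficient of $h(\nu,\nu)$, which by Gauss--Riccati equals $\tfrac14(-R_g + R^\Sigma_g - |A_g|^2 - H_g^2)$. Finally, specializing to $h(\nu,\nu) = 0$ and $\omega = 0$ eliminates the last three correction lines and yields the Riccati-type identity \eqref{equation:Riccati} actually used later in the paper.
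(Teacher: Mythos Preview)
Your derivations of $D\Ric$, $DR$, $D\nu$, $DA$, and $DH$ match the paper's exactly: the paper cites Fischer--Marsden for the first two, differentiates the defining relations $g(\nu,\nu)=1$ and $g(\nu,e_a)=0$ for $D\nu$, linearizes $A_g = \tfrac12(L_{\nu}g)^\intercal$ via the Leibniz rule for $DA$, and traces for $DH$.

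For $\nu(DH|_g(h))$ your route differs from the paper's, and the paper's is substantially cleaner. Rather than linearizing the equidistant Gauss--Riccati identity and then correcting for the foliation discrepancy, the paper writes down, \emph{for each $s$}, the first-variation-of-mean-curvature formula for the fixed leaf $\Sigma$ deformed in the direction $\nu = \eta(s)\nu_{g(s)} + \nu^\intercal(s)$:
\[
\nu(H_{g(s)}) = \big(-\tfrac12 R_{g(s)} + \tfrac12 R^\Sigma_{g(s)} - \tfrac12 |A_{g(s)}|^2 - \tfrac12 H_{g(s)}^2\big)\eta(s) - \Delta_{(\Sigma,g(s))}\eta(s) + \nu^\intercal(s)(H_{g(s)}),
\]
and then differentiates once in $s$ at $s=0$. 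Since $\eta(0)=1$, $\nu^\intercal(0)=0$, $\eta'(0)=\tfrac12 h(\nu,\nu)$, and $(\nu^\intercal)'(0)=g^{ab}\omega(e_a)e_b$, the three ``correction'' lines in the statement drop out immediately: $-\tfrac12\Delta_\Sigma h(\nu,\nu)$ comes from $-\Delta_\Sigma \eta'(0)$, the $\omega$ term from $(\nu^\intercal)'(0)(H_g)$, and the coefficient of $h(\nu,\nu)$ from the factor in front of $\eta'(0)$.

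This also pinpoints the soft spot in your plan. You attribute $-\tfrac12\Delta_\Sigma h(\nu,\nu)$ to the second-order part of $D\Ric|_g(h)(\nu,\nu)$, but your starting identity contains $R_g$, not $\Ric_g(\nu,\nu)$, so that term has no obvious place to enter. In the paper's organization it is a lapse term, coming from the failure of the fixed foliation to be $g(s)$-equidistant; your ``correction via $\nu_{g(s)}-\nu_g$'' captures the tangential shift but not this normal-lapse contribution. Your approach can be salvaged, but you would need the general (non-unit-lapse) Riccati equation from the start, at which point you have essentially reproduced the paper's argument.
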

\begin{proof}
The formulas for $D\Ric|_g(h), D R|_g (h)$ can be found in, for example, \cite[Lemma 2]{Fischer-Marsden:1975}.

The formula for the linearized unit normal vector can be derived by linearizing $g(\nu, \nu)=1$ and $g(\nu, e_a)=0$. For the linearized second fundamental form, we linearize $A_g =\tfrac{1}{2} (L_\nu g)^\intercal $ to get 
\begin{align*}
	(DA|_g(h))_{ab} &= \tfrac{1}{2} \left[ (L_\nu h)_{ab}+ (L_{D\nu|_g(h)} g )_{ab}\right]\\
&=   \tfrac{1}{2}(L_\nu h)_{ab} - \tfrac{1}{2}\left[ e_a(\omega( e_b) )+ e_b (\omega( e_a))\right] - \tfrac{1}{2} h(\nu, \nu) A_{ab}
\end{align*}
where we have used the formula for $D\nu|_g(h)$ to get
\[
 	(L_{D\nu|_g(h)} g )_{ab}=  - e_a( \omega( e_b))-e_b( \omega( e_a))- h(\nu, \nu) A_{ab}.
\]	
The formula for $DH|_g$ follows by linearizing $H_g = g^{ab} A_{ab}$ and applying the formula for $DA|_g(h)$. 

We now compute $\nu(DH|_g(h))$. We express $\nu = \eta(s) \nu_{g(s)} + \nu^\intercal (s)$ on $\Sigma$ into the normal and tangential components with respect to $g(s)$. So
\begin{align*}
	\eta(0)=1,\quad \nu^\intercal (0)= 0, \quad \eta'(0) =  \tfrac{1}{2} h(\nu, \nu),
	 \quad (\nu^\intercal)'(0)= g^{ab} \omega( e_a) e_b.
\end{align*}
By the variation formula of mean curvature, we have
\begin{align*}
	\nu(H_{g(s)}) &= - \big(|A_{g(s)}|_{g(s)}^2 + \mathrm{Ric}_{g(s)}(\nu_{g(s)}, \nu_{g(s)}) \big)\eta(s)\\
	&\quad - \Delta_{(\Sigma, g(s))} \eta(s) + \nu^\intercal (s)(H_{g(s)})\\
		&= \left(  - \tfrac{1}{2} R_{g(s)} + \tfrac{1}{2} R^\Sigma_{g(s)} - \tfrac{1}{2} |A_{g(s)}|_{g(s)}^2 - \tfrac{1}{2} H_{g(s)}^2\right) \eta(s)\\
		&\quad  - \Delta_{(\Sigma, g(s))} \eta(s) + \nu^\intercal (s) (H_{g(s)})
\end{align*}
where in the second line we replace the Ricci term using the Gauss equation. Differentiating the previous identity in $s$ at $s=0$ yields the desired formula. 
\end{proof}

We also include the following general fact on the linearized mean curvature of the pull-back metric, which can be directly verified.  
\begin{lemma}\label{lemma:pullback}
Let $U_1, U_2$ be complete smooth manifolds with boundary $\Sigma_1, \Sigma_2$ respectively, and let $g$ be a Riemannian metric on $U_2$.  Let $\psi: U_1\to U_2$ be a diffeomorphism that sends $\Sigma_1$ to $\Sigma_2$.  Let $H_g$ be the mean curvature of $\Sigma_2 \subset (U_2, g)$ of $\nu_g$ and $H_{\psi^*g} $ be the mean curvature of $\Sigma_1\subset (U_1, \psi^*g)$ of $\psi^* \nu_g$. Then, $H_{\psi^*g} = \psi^*H_g$, and for any deformation $h$ of $g$ in $U_2$, 
\begin{align*}
	DH|_{\psi^* g}	(\psi^* h) = \psi^* \left( DH|_{g} (h)\right).
\end{align*} 
\end{lemma}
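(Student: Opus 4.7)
The plan is to derive both identities from the diffeomorphism invariance of the mean curvature, treating the lemma as an exercise in naturality rather than a direct computation. First I would observe that by construction $\psi:(U_1,\psi^{*}g)\to(U_2,g)$ is an isometry mapping $\Sigma_1$ onto $\Sigma_2$, and that $\psi^{*}\nu_g$ is the $\psi^{*}g$-unit normal to $\Sigma_1$: indeed, for any tangential vector field $X$ on $\Sigma_1$, the identity $(\psi^{*}g)(\psi^{*}\nu_g,X)=g(\nu_g,d\psi(X))\circ\psi=0$ follows from $d\psi(T\Sigma_1)=T\Sigma_2$, and $(\psi^{*}g)(\psi^{*}\nu_g,\psi^{*}\nu_g)=g(\nu_g,\nu_g)\circ\psi=1$.

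Since the mean curvature is a pointwise Riemannian invariant of the triple (ambient metric, hypersurface, choice of unit normal), naturality under isometries gives
\[
H_{\psi^{*}g}(p)=H_g(\psi(p)) \qquad \text{for every } p\in\Sigma_1,
\]
which is the first identity $H_{\psi^{*}g}=\psi^{*}H_g$. If one prefers a direct verification, this can also be read off from $H=\Div_\Sigma\nu$ by noting that the induced Levi--Civita connection and divergence on $\Sigma$ are themselves natural under the isometry $\psi$.

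For the linearization, I would promote this pointwise identity along a one-parameter family. Let $g(s)$ be a smooth family of Riemannian metrics on $U_2$ with $g(0)=g$ and $g'(0)=h$. Then $\psi^{*}g(s)$ is a smooth family on $U_1$ with $\psi^{*}g(0)=\psi^{*}g$ and $(\psi^{*}g)'(0)=\psi^{*}h$. Applying the first identity for each $s$ yields $H_{\psi^{*}g(s)}=\psi^{*}H_{g(s)}$. Because $\psi$ is independent of $s$, pullback commutes with $\left.\tfrac{d}{ds}\right|_{s=0}$, so differentiating both sides gives
\[
DH|_{\psi^{*}g}(\psi^{*}h)=\psi^{*}\bigl(DH|_{g}(h)\bigr),
\]
as claimed. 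There is no substantial obstacle; the only point requiring a little care is confirming the unit-normal convention, namely that $\psi^{*}\nu_g$ (not an independently chosen $\psi^{*}g$-unit normal on $\Sigma_1$) is the vector with respect to which $H_{\psi^{*}g}$ is computed, which is exactly the hypothesis of the lemma.
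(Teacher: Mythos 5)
Your proposal is correct and is essentially the "direct verification" the paper has in mind: $\psi$ is an isometry from $(U_1,\psi^*g)$ to $(U_2,g)$ carrying $\psi^*\nu_g$ to $\nu_g$, so $H_{\psi^*g}=\psi^*H_g$ by naturality, and differentiating $H_{\psi^*g(s)}=\psi^*H_{g(s)}$ along the family $g(s)=g+sh$ gives the linearized identity since pullback by the fixed $\psi$ commutes with $\tfrac{d}{ds}$. The paper offers no written proof (it states the lemma "can be directly verified"), and your argument fills that in correctly.
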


For a Riemannian metric $g$  and a function $u$ in $\mathbb{R}^n\setminus \Omega$,  we define 
\[
	S(g, u) = -u\Ric_g + \nabla^2_g u - (\Delta_g u )g.
\]
The right hand side is equal to $DR|_g^*(u)$, the formal $\mathcal{L}^2$-adjoint of $DR|_g$, but  our notation $S(g, u)$ emphasizes that we will treat both $(g, u)$ as unknowns. 
We say that the pair $(g, u)$ is \emph{static vacuum} in $U$ if $(g, u)$ solves the following static vacuum system:
\begin{align*}
	S(g, u) = 0 \quad \mbox{ and } \quad	R_g=0\quad \mbox{ in }\mathbb R^n\setminus\Omega.
\end{align*}
This system is equivalent to \eqref{sv}.

For a symmetric $(0,2)$-tensor $h$ and a function $v$, we say that $(h,v)$ is a {\it static vacuum deformation} at the static vacuum pair $(g,u)$ if it satisfies the linearized static vacuum system:
\begin{align}\label{lsv}
	DS|_{(g, u)}(h, v)=0  \quad \mbox{ and } \quad DR|_g(h)=0\quad \mbox{ in }\mathbb R^n\setminus\Omega.
\end{align}
If the Ricci curvature of $g$ is zero,  we say that $h$ is a \emph{Ricci flat deformation} (at $g$) if $D\Ric|_g(h)=0$. In general, the Ricci flat deformation can be thought as a subclass of static vacuum deformations at a Ricci flat metric $g$ by letting $u=1$ and $v= 0$. 
On the other hand,
in the special case  that $(g, u)=(\bar{g}, 1)$, any static vacuum deformation $(h, v)$ gives rise to a Ricci flat deformation $h+\tfrac{2}{n-2} v\bar{g}$. 
This is because the linearized system \eqref{lsv} at $(\bar g, 1)$ is equivalent~to 
\begin{align*}
	-D\Ric(h) + \nabla^2 v =0 \quad \mbox{ and  } \quad \Delta v=0 \quad\mbox{ in }\mathbb R^n\setminus\Omega,
\end{align*}
where we omit the subscript $\bar{g}$,  and it 
is also equivalent to the system
\begin{align*}
D\Ric\big(h+\tfrac{2}{n-2} v\bar{g}\big)=0\quad \mbox{ and } \quad \Delta v=0 \quad \mbox{ in }\mathbb R^n\setminus\Omega.
\end{align*}

We will consider the following two types of boundary conditions of a deformation $h$ on $\Sigma$:
\begin{itemize}
\item \emph{Bartnik boundary condition}:
\[
	h^\intercal=0 \quad \mbox{ and } \quad DH|_g (h)=0 \quad \mbox{ on } \Sigma. 
\]
\item \emph{Cauchy boundary condition}: 
\[
	h^\intercal =0\quad \mbox{ and } \quad DA|_g (h)=0 \quad \mbox{ on } \Sigma. 
\]
\end{itemize}
Clearly, the Cauchy boundary condition implies the Bartnik boundary condition. For a general  deformation $h$, we may refer $(h^\intercal, DH|_g(h))$ as its Bartnik boundary data and   $(h^\intercal, DA|_g(h))$ as its Cauchy boundary data. (In this paper we slightly abuse the terminology and use ``Bartnik boundary data'' for $(g^\intercal, H_g)$ and also the linearization $(h^\intercal, DH|_g(h))$, but it should be clear from context which one is referred to.)

Let $k\in \{ 0, 1, 2, \dots\}$,  $\alpha\in (0, 1)$, and $q\in \mathbb{R}$. For a $\C^{k,\alpha}_{\mathrm{loc}}$-function $f$ in $\mathbb{R}^n\setminus \Omega$, we define the $\C^{k,\alpha}_{-q} (\mathbb{R}^n\setminus \Omega)$-norm  by
\begin{align*}
	\|f\|_{\C^{k,\alpha}_{-q} (\mathbb{R}^n\setminus \Omega)} =& \textstyle\sum_{|I|=0,\dots, k} \sup_{x\in \mathbb{R}^n \setminus \Omega} (1+|x|)^{|I|+q} |\partial^I f(x)|  \\
	&\quad +\textstyle\sum_{|I|=k} \sup_{\substack{x, y\in \mathbb{R}^n\setminus \Omega\\0<|x-y| \le\frac{ |x|}{2}}} (1+|x|)^{\alpha+|I|+q} \frac{|\partial^I f(x) - \partial^I f(y)|}{|x-y|^\alpha}.
\end{align*} 
The \emph{weighted H\"older space} $\C^{k,\alpha}_{-q} (\mathbb{R}^n\setminus \Omega)$ consists of those functions whose $\C^{k,\alpha}_{-q} (\mathbb{R}^n\setminus \Omega)$-norms are finite. We slightly abuse the notation and say that a tensor is in $\C^{k,\alpha}_{-q}(\mathbb{R}^n\setminus \Omega)$ if all of its coefficient functions in the Cartesian coordinates are in $\C^{k,\alpha}_{-q}(\mathbb{R}^n\setminus \Omega)$. We may sometimes write $f= O^{k,\alpha}(|x|^{-q})$ for $f\in \C^{k,\alpha}_{-q}(\mathbb{R}^n\setminus \Omega)$ to emphasize the asymptotic rate.

For the rest of the paper, we shall always assume
\[
	q\in \left(\tfrac{n-2}{2}, n-2\right).
\]
We say that a Riemannian metric $g$ on $\mathbb{R}^n\setminus \Omega$ is \emph{asymptotically flat} if  $g-\bar g \in \C^{2,\alpha}_{-q}(\mathbb{R}^n\setminus \Omega)$. The \emph{ADM mass of $g$} is defined as
\begin{align*}
	m_{\mathrm{ADM}}(g)&= \frac{1}{2(n-1)\omega_{n-1}} \lim_{r\to \infty} \int_{|x|=r}\sum_{i,j=1}^n \left(\frac{\partial g_{ij}}{\partial x_i} - \frac{\partial g_{ii}}{\partial x_j}\right)\frac{x_j}{|x|} \, d\sigma,
\end{align*}	
where $d\sigma$ is the $(n-1)$-dimensional Euclidean Hausdorff measure and $\omega_{n-1}$ is the volume of the standard $(n-1)$-dimensional unit  sphere. 

If $g$ is asymptotically flat and $u$ is a scalar-valued function such that $u-1\in \C^{2,\alpha}_{-q}(\mathbb{R}^n\setminus \Omega)$, we will refer to $(g, u)$ as an  \emph{asymptotically flat pair}. We denote by $\mathcal{M}$ an open neighborhood of $(\bar{g}, 1)$ in the $\C^{2,\alpha}_{-q}(\mathbb{R}^n\setminus \Omega)$-norm, consisting of asymptotically flat pairs.

We will frequently use the following basic PDE lemma. See~\cite{Lockhart-McOwen:1985, Bartnik:1986}.
\begin{lemma}\label{lemma:PDE}
Let $k\ge 2$, $\delta\in \mathbb{R}$, and let $\Omega\subset \mathbb{R}^n$ be a bounded open subset whose boundary $\Sigma$ is an embedded hypersurface. Suppose $\mathbb{R}^n\setminus \Omega$ is connected (but $\Sigma$ may be disconnected). Let $g$ be an asymptotically flat Riemannian metric on $\mathbb{R}^n\setminus \Omega$ with $g- \bar g\in \C^{k,\alpha}_{-q}$. Consider the map on the space of vector fields
\[
\Delta_g:  \left\{ X\in \C^{k,\alpha}_{\delta} (\mathbb{R}^n\setminus \Omega) : \, X=0 \mbox{ on } \Sigma\right \} \to \C^{k-2,\alpha}_{\delta-2}(\mathbb{R}^n\setminus \Omega)
\]
that sends $X$ to $\Delta_g X$. Then 
\begin{enumerate}
\item For $0< \delta< 1$, the map is surjective and the kernel space is $n$-dimensional, spanned by $\{ X^{(1)}, \dots, X^{(n)}\}$ where $X^{(i)} = \frac{\partial}{\partial x_i} + O^{k,\alpha}(|x|^{-q})$.
\item For $2-n<\delta <0$, the map is an isomorphism. 
\item For $\delta<2-n$, the map is injective. 
\end{enumerate}
\end{lemma}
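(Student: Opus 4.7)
The plan is to reduce the vector-field Laplacian to a componentwise scalar Laplacian in the Cartesian chart $\{x\}$ and then invoke the Lockhart--McOwen/Bartnik Fredholm theory on asymptotically flat manifolds. Writing $X = X^i\partial_i$, we have $(\Delta_g X)^i = \Delta_g(X^i) + B^i(X,\partial X)$ where $B$ is a first-order operator whose coefficients are built from the Christoffel symbols of $g$ and their first derivatives; since $g-\bar g\in \C^{k,\alpha}_{-q}$, these coefficients lie in $\C^{k-1,\alpha}_{-q-1}$ and $\C^{k-2,\alpha}_{-q-2}$, so $B$ is of strictly lower order at infinity than the diagonal scalar Laplacian. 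The scalar Dirichlet Laplacian on $\C^{k,\alpha}_\delta(\mathbb R^n\setminus\Omega)$ is Fredholm whenever $\delta$ avoids the indicial set $\{0,1,2,\ldots\}\cup\{2-n,1-n,-n,\ldots\}$, with Fredholm index jumping across each indicial weight by the dimension of the corresponding space of homogeneous harmonic functions on $\mathbb R^n$. Because $B$ is lower-order, these Fredholm properties and index jumps transfer directly to the vector-field operator.

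For item~(2), $\delta\in(2-n,0)$, I would first show injectivity: a kernel element $X$ decays at infinity and vanishes on $\Sigma$, so the standard iterative weighted elliptic decay argument improves its asymptotic rate, and the maximum principle (applied componentwise modulo lower-order terms from $B$) forces $X\equiv 0$. The formal $L^2$ duality pairs the weight $\delta$ with $2-n-\delta\in(2-n,0)$, identifying the cokernel with another trivial kernel; hence the map is an isomorphism. For item~(3), $\delta<2-n$, the inclusion $\C^{k,\alpha}_\delta\hookrightarrow \C^{k,\alpha}_{\delta'}$ for any $\delta'\in(2-n,0)$ shows the kernel sits inside the trivial kernel from item~(2), giving injectivity immediately.

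For item~(1), $\delta\in(0,1)$, the basis of the kernel will be constructed explicitly. Let $\chi$ be a smooth cutoff vanishing in a neighborhood of $\Sigma$ and equal to $1$ outside a large ball, and set $Z^{(i)} := \chi\,\partial_i$. Then $Z^{(i)}|_\Sigma=0$, and $\Delta_g \partial_i$ has components $B^i(\partial_i, 0)\in \C^{k-2,\alpha}_{-q-2}$ (its scalar components vanish since $\partial_i$ has constant Cartesian coordinates), so $\Delta_g Z^{(i)}\in \C^{k-2,\alpha}_{-q-2}$ after adding the compactly supported cutoff terms. Applying the isomorphism from item~(2) with weight $-q\in(2-n,0)$, solve $\Delta_g W^{(i)} = \Delta_g Z^{(i)}$ with $W^{(i)}\in \C^{k,\alpha}_{-q}$ and $W^{(i)}|_\Sigma=0$, and set $X^{(i)} := Z^{(i)} - W^{(i)}$. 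This produces $\Delta_g X^{(i)}=0$, $X^{(i)}|_\Sigma=0$, and $X^{(i)} = \partial_i + O^{k,\alpha}(|x|^{-q})$ at infinity; the $n$ vectors $X^{(i)}$ are linearly independent by their distinct leading constants. The Fredholm index jumps from $0$ by exactly $n$ as $\delta$ crosses the indicial value $0$, corresponding to the $n$ constant vector fields on $\mathbb R^n$, so the kernel has dimension exactly $n$ and the cokernel vanishes, giving surjectivity. The main subtle point to watch is that the vector-field operator genuinely inherits the scalar Fredholm theory with the correct index jump across $\delta=0$; the rest is a direct application of the quoted PDE machinery.
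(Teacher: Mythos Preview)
The paper does not prove this lemma; it is stated as a standard PDE fact and attributed to \cite{Lockhart-McOwen:1985, Bartnik:1986}. Your outline is precisely the content of those references specialized to this Dirichlet problem, so there is nothing to compare at the level of strategy.

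One small comment: for injectivity in item~(2), invoking a ``maximum principle componentwise modulo lower-order terms from $B$'' is awkward for the rough Laplacian on vector fields, since the coupling through $B$ genuinely mixes components. The cleaner and standard route is the energy identity: if $\Delta_g X=0$, $X|_\Sigma=0$, and $X\to 0$ at infinity (which holds once $\delta<0$), then integrating $\langle X,\Delta_g X\rangle_g$ by parts gives $\int_{\mathbb R^n\setminus\Omega}|\nabla_g X|^2_g\,d\mathrm{vol}_g=0$, hence $X$ is parallel and thus zero. Everything else in your sketch (the explicit construction of the $X^{(i)}$ via cutoff plus correction, and the index jump of size $n$ across $\delta=0$) is exactly how the cited machinery delivers the stated conclusions.
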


\subsection{Ricci flat deformations}\label{section:gauge}

Our main result in this section is Theorem~\ref{theorem:trivial} below where we identify all Ricci flat deformations $h \in \C^{2,\alpha}_{-q}(\mathbb{R}^n\setminus \Omega)$ that satisfy the Cauchy boundary condition. We remark that  Anderson and Herzlich  have proved that,  under the $H$-harmonic gauge, a Ricci flat metric $g$ is uniquely determined by its data $g^\intercal$ and $A_g$ on the boundary~\cite[Theorem 1.1]{Anderson-Herzlich:2008, Anderson-Herzlich:erratum}.  Implementing the similar idea, the first author proved that a Ricci flat deformation $h$, under the $H$-harmonic gauge, satisfying the Cauchy boundary condition must be zero~\cite[Proposition 4.4]{An:2020}. We will give an alternative and elementary proof that fits to our specific setting.

We begin with examples of  Ricci flat deformations satisfying the Cauchy boundary condition.

\begin{example}\label{example:vector}
Let  $g$ be a Riemannian metric on a smooth manifold $U$ satisfying  $\Ric_g=0$. Given a vector field $X$, let $\psi_s$ be a family of diffeomorphisms of $U$ satisfying $\left.\ds\right|_{s=0}\psi_s=X$. Define the family  $g(s)=\psi_s^*(g)$ and note $g'(0)= L_X g$. We call the Lie derivative $L_X g$ an \emph{(infinitesimal) deformation generated from diffeomorphisms}. Since $\Ric_{g(s)} = \psi_s^* \Ric_g=0$, differentiating in $s$ gives $D{\Ric}|_g(L_Xg)=0$. That is, at a Ricci flat metric $g$, any deformation generated from diffeomorphisms is a Ricci flat deformation.

Next, we find the condition of $X$ on the boundary $\Sigma$ such that $L_X g$ satisfies the Cauchy boundary condition. We say that $K$ is a \emph{Killing vector field} of $(U, g)$ if $L_K g=0$ in $U$.  Let $X$ satisfy $X = K$ on $\Sigma$ for some Killing vector $K$, including the zero vector field. 
We shall see that such $L_X g$ satisfies the Cauchy boundary condition. Let $\{ e_0, e_1,\dots, e_{n-1}\}$ be a local frame in $U$ where on $\Sigma$, $e_0=\nu$ is a unit normal and $\{e_1,\dots, e_{n-1}\}$ tangent to $\Sigma$. We decompose $X$ as $X = \eta \nu +  X^\intercal$ on $\Sigma$. The Cauchy boundary data can be expressed as, for $a, b, c= 1, \dots, n-1$, 
\begin{align*}
\begin{split}
	(L_X g)^\intercal &= 2\eta A + L_{X^\intercal} g^\intercal\\
	 DA|_g (L_X g) (e_a, e_b)&=\mathrm{Rm}(X, e_a, \nu, e_b)- \eta_{|a b} +\tfrac{1}{2} \eta (A_a^c A_{c b} + A_b^c A_{c a}) \\
	&\quad +  (X^\intercal)^c A_{c b|a} + (X^\intercal)^c_{|b} A_{ac} + ( X^\intercal)^c_{|a}A_{b c}
\end{split}
\end{align*}
where the subscript $_{|a}$ denotes the covariant derivative of the induced metric $g^\intercal$ on $\Sigma$. It follows immediately from those formulas that $(L_X g)^\intercal$  and $ DA|_g (L_X g) $  are entirely determined by $\eta$ and $X^\intercal$ on $\Sigma$. Thus if $X=K$ on $\Sigma$, then
$
(L_X g)^\intercal = ( L_K g)^\intercal =0,~
 DA|_g (L_X g)= DA|_g (L_K g) =0.
$

\qed
\end{example}

The example above shows that, at a Ricci flat metric $g$, for any vector field $X$ equal to some Killing vector of $g$ on the boundary, $L_X g$ is a Ricci flat deformation satisfying the Cauchy boundary condition.  Before we  proceed to study Ricci flat deformations in greater generality, we will consider two types of gauge conditions on $h$: the \emph{geodesic gauge} on $\Sigma$ and the \emph{harmonic gauge} in $U$. Both gauge conditions are well-studied in literature,  but for completeness we include the next two lemmas.  (Note we only use those lemmas for $g= \bar g$ the Euclidean metric in this paper.)

We say that a deformation $h$  satisfies the \emph{geodesic gauge condition on $\Sigma$} (at $g$) if 
\begin{align}\label{geodesic gauge}
h(\nu_g, \cdot)= 0,\quad (\nabla_{\nu_g} h)(\nu_{g}, \cdot)=0\quad\mbox{ on }\Sigma,
\end{align}
where  $\nu_g$ is parallelly extended along itself.  The ``dot'' means  $h(\nu_g, e_i )= 0$  and $(\nabla_{\nu_g} h)(\nu_{g}, e_i)=0$ on $\Sigma$ for any vector $e_i$, either tangential or normal.
\begin{lemma}[Geodesic gauge]\label{lemma:geodesic}
Let $h\in \C^{2,\alpha}(U)$ be a symmetric $(0,2)$-tensor. Then there exists a vector field $V\in \C^{3,\alpha}(U)$ with $V=0$ on $\Sigma$ and $V$ vanishing outside a collar neighborhood of $\Sigma$ such that $k:= h+L_Vg$ satisfies the geodesic gauge condition and that $h, k$ have the same Cauchy boundary data on $\Sigma$.
\end{lemma}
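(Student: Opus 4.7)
The plan is to carry out the construction in Fermi coordinates $(t,y)$ adapted to $\Sigma$, in which $g=dt^{2}+g_{t}$ on a collar $\{0\le t<\delta\}$ of $\Sigma$ in $U$, with $\partial_{t}=\nu_{g}$ at $t=0$. Since $\nabla_{\partial_{t}}\partial_{t}=0$ in these coordinates, the geodesic gauge condition on $k=h+L_{V}g$ reduces to the four scalar equations
\[
k_{00}\big|_{t=0}=0,\qquad k_{0a}\big|_{t=0}=0,\qquad \partial_{t}k_{00}\big|_{t=0}=0,\qquad \partial_{t}k_{0a}\big|_{t=0}=0,
\]
the last two using that once $k_{0\mu}|_{t=0}=0$ the covariant derivative $(\nabla_{\partial_t} k)(\partial_t,\cdot)$ on $\Sigma$ equals the partial derivative $\partial_t k_{0\mu}|_{t=0}$.

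Imposing $V|_{\Sigma}=0$ from the start, a direct computation of $L_{V}g$ in these coordinates yields
\[
(L_{V}g)_{00}\big|_{t=0}=2\,\partial_{t}V^{0}\big|_{t=0},\qquad (L_{V}g)_{0a}\big|_{t=0}=g_{ab}(0,y)\,\partial_{t}V^{b}\big|_{t=0},
\]
and the first two gauge equations translate into explicit algebraic expressions for $W_{1}:=\partial_{t}V|_{\Sigma}$ solely in terms of $h_{00}|_{\Sigma}$ and $h_{0a}|_{\Sigma}$. Differentiating in $t$ once and evaluating at $t=0$ produces, by similar bookkeeping, algebraic expressions for $W_{2}:=\partial_{t}^{2}V|_{\Sigma}$ in terms of $h|_{\Sigma}$, $\partial_{t}h|_{\Sigma}$, the already determined $W_{1}$, and quantities depending only on $g$. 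Since $h\in\C^{2,\alpha}(U)$, one reads off $W_{1}\in\C^{2,\alpha}(\Sigma)$ and $W_{2}\in\C^{1,\alpha}(\Sigma)$.

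It remains to construct a vector field $V\in\C^{3,\alpha}(U)$ with 2-jet along $\Sigma$ equal to $(0,W_{1},W_{2})$ and supported in a collar of $\Sigma$. The naive polynomial ansatz $V(t,y)=\chi(t)\bigl(t\,W_{1}(y)+\tfrac{1}{2}t^{2}W_{2}(y)\bigr)$ for a smooth cutoff $\chi$ realizes the correct 2-jet but is in general only $\C^{2,\alpha}$, because $\partial_{y}^{3}V$ would involve third tangential derivatives of $W_{1}$. One bypasses this by a Whitney-style extension: mollify the boundary data in $y$ at a rate calibrated to the distance $t$, or equivalently complete the 3-jet by choosing the third normal component in $\C^{0,\alpha}(\Sigma)$ (e.g.\ zero) and invoke the classical Whitney extension theorem. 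Truncating by $\chi(t)$ yields a field supported in a collar. The geodesic gauge conditions then hold at $t=0$ by construction, and the preservation of the Cauchy boundary data follows from Example~\ref{example:vector}: the formulas there express $(L_{V}g)^{\intercal}$ and $DA|_{g}(L_{V}g)$ on $\Sigma$ purely in terms of the normal and tangential components of $V|_{\Sigma}$ and their tangential derivatives, so both vanish when $V|_{\Sigma}=0$, giving $k^{\intercal}=h^{\intercal}$ and $DA|_{g}(k)=DA|_{g}(h)$ on $\Sigma$.

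The principal obstacle is the Whitney-type extension step: one must close the gap between the natural boundary regularity ($W_{1}\in\C^{2,\alpha}$, $W_{2}\in\C^{1,\alpha}$) and the target regularity $V\in\C^{3,\alpha}(U)$, for which a straight polynomial extension is insufficient. Once this standard but delicate extension is in hand, the remainder of the argument is a direct application of the Fermi-coordinate formulas above together with the trace-vanishing computation of Example~\ref{example:vector}.
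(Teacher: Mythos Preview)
Your proposal is correct and follows essentially the same strategy as the paper: prescribe the first and second normal derivatives of $V$ on $\Sigma$ so that $h+L_Vg$ satisfies the geodesic gauge, then extend off $\Sigma$ with compact support. The paper carries this out with local orthonormal frames and a partition of unity, and performs the construction in two passes (first a field $W$ to arrange $k(\nu,\cdot)=0$, then a field $\widetilde W$ with vanishing first jet to arrange $(\nabla_\nu k)(\nu,\cdot)=0$), whereas you work in Fermi coordinates and prescribe the full $2$-jet at once; these are equivalent.

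One point worth highlighting: you explicitly flag the regularity mismatch between the boundary jets $W_1\in\C^{2,\alpha}(\Sigma)$, $W_2\in\C^{1,\alpha}(\Sigma)$ and the target $V\in\C^{3,\alpha}(U)$, and resolve it by appealing to a Whitney-type extension. The paper simply asserts ``choose $\eta_i\in\C^{3,\alpha}(U_\ell)$'' with the prescribed jets, which implicitly uses the same trace/extension fact. Your treatment is therefore more careful on this point; the Whitney extension you invoke (equivalently, the H\"older trace theorem for the map $F\mapsto(\partial_\nu^j F|_\Sigma)_{j=0}^k$) is standard and does the job.
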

\begin{remark}\label{remark:geodesic}
It explains the reason that $h^\intercal=0,  DA|_g (h)=0$ on $\Sigma$ is referred to as the Cauchy boundary condition. For such $h$,  we let $k = h+L_V g$ as in the lemma. By \eqref{equation:linearized-sff},  $k$ must satisfy the Cauchy boundary condition on $\Sigma$ in the usual sense: for all $i, j=0,1, \dots, n-1$,
\[
k_{ij}=0\quad \mbox{ and } \quad ( \nabla_{\nu_g} k)_{ij}=0\quad \mbox{ on } \Sigma. 
\]
\end{remark}
\begin{proof}
Cover a collar neighborhood of $\Sigma\subset U$ by countably many open subsets $U_\ell$ such that for each $\ell$ there is a local orthonormal smooth frame $\{ e_0, e_1, \dots, e_{n-1}\}$ in $U_\ell$ where $e_0  =\nu_g$ is parallel along itself, and  $e_1, \dots, e_{n-1}$ are tangent to $\Sigma$. We will simply write $\nu=\nu_g$. Let $\{\xi_\ell\}$ be a partition of unity subordinate to $\{ U_\ell\}$.

 Suppose that $W =\textstyle\sum_{i=0}^{n-1} \eta_i e_i$ is a vector field in $U_\ell$ where the  coefficient function $\eta_i=0$ on $\Sigma$ for all $i$. The following identities hold on $U_\ell \cap \Sigma$, where $a= 1,\dots, n-1$ denotes the tangential directions:
\begin{align}\label{equation:geodesic-boundary}
\begin{split}
	L_W g (\nu, \nu) &= 2\nu(\eta_0)\\
	L_W  g(\nu, e_a) &= \nu(\eta_a) \\
	(\nabla_\nu (L_W g))(\nu, \nu)&=2\nu (\nu(\eta_0))\\
	(\nabla_\nu (L_W g))(\nu, e_a)&=\nu(\nu(\eta_a)) + 2\nu(\eta_b) g(\nabla_\nu e_b, e_a)\\
	&\quad  +e_a( \nu(\eta_0)) - \nu(\eta_b) A_{ab},
\end{split}
\end{align}
where $A_{ab} = g(\nabla_{e_a} \nu, e_b)$. 

We proceed to define the vector field $V$. We first define a local vector field  $W^{(\ell)}$ on each $U_\ell$. Let  $W^{(\ell)}=\sum_{i=0}^{n-1}\eta_i e_i$ (we omit the superscript $\ell$ in those $\eta_i$). Choose $\eta_i\in \C^{3,\alpha}(U_\ell)$ such that it vanishes outside a collar neighborhood of $\Sigma$, and 
\begin{align*}
\eta_i=0,\;\; 2\nu(\eta_0)= -h(\nu,\nu)\;\; \mbox{ and } \;\; \nu(\eta_a) = -h(\nu, e_a),\quad a=1,\dots, n-1
\end{align*}
on $U_\ell\cap\Sigma$. We let $W= \sum_\ell \xi_\ell W^{(\ell)}$ on $U$.

Next, we define  $\widetilde{W}^{(\ell)}=\sum_{i=0}^{n-1}\widetilde\eta_ie_i$ on $U_\ell$ (we omit the superscript $\ell$ in those $\widetilde\eta_i$). Choose $\widetilde\eta_i\in \C^{3,\alpha}(U_\ell)$ which vanishes outside a collar neighborhood of $\Sigma$, and
\begin{align*}
        \widetilde\eta_i&=0\\
        \nu(\widetilde\eta_i)&=0 \\
	2\nu (\nu(\widetilde\eta_0))&= -\nabla_\nu (h+L_Wg)(\nu, \nu)\\
	\nu(\nu(\widetilde\eta_a)) &=- \nabla_\nu (h+L_Wg)(\nu, e_a), \qquad a=1,\dots, n-1
\end{align*}
on $U_\ell\cap\Sigma$.  We define $\widetilde{W}=\sum_\ell \xi_\ell \widetilde W^{(\ell)}$ and let $V= W +\widetilde W$ on $U$. By those identities in  \eqref{equation:geodesic-boundary}, it is direct to verify that $V$ satisfies the desired properties.
\end{proof}



We define some differential operators which will be used throughout this paper: For a  symmetric $(0,2)$-tensor $h$ and a vector field $V$, we define the \emph{Bianchi operator} $\bi_g$, the \emph{Killing operator} $\D_g$, and the $\mathcal{L}^2$-formal adjoint  $\bi_g^*$ by 
\begin{align}\label{equation:operators}
\begin{split}
	\bi_g h &= - \Div_g h+ \tfrac{1}{2} d(\tr_g h)\\
	\D_g V &= \tfrac{1}{2} L_V g\\
	\bi^*_g V&= \tfrac{1}{2} \big( L_V g - (\Div_{g} V) g \big).
\end{split}
\end{align}
We say that \emph{$h$ satisfies the harmonic gauge} (at $g$) if $\bi_g h=0$.  By Ricci identity, for any vector field $V$,  $\bi_g (L_V g)$ becomes an elliptic operator on $V$:
\begin{align} \label{equation:Bianchi-vector}
	(\bi_g L_V g)_i = -(\Delta_g V)_i - \Ric_{ij} V^j.
\end{align}
\begin{lemma}[Harmonic gauge]\label{lemma:harmonic-gauge}
There is $\epsilon>0$ such that if $g$ is asymptotically flat with $g-\bar g \in \C^{3,\alpha}_{-q}(\mathbb{R}^n\setminus \Omega)$ satisfying $\|g-\bar{g}\|_{\C^{1,\alpha}_0(\mathbb{R}^n\setminus \Omega)}<\epsilon $  and 
$\|\Ric_g\|_{\C^{0,\alpha}_{-2}(\mathbb{R}^n\setminus \Omega)}< \epsilon$, then for any $h\in \C^{2,\alpha}_{-q}(\mathbb{R}^n\setminus \Omega)$, there is a vector field $V\in \C^{3,\alpha}_{1-q}(\mathbb{R}^n\setminus \Omega)$ with $V=0$ on $\Sigma$  such that if we define $k= h+L_Vg$, then $\bi_g k=0$ in $\mathbb{R}^n\setminus \Omega$, and $h$ and $k$ have the same Cauchy boundary data.
\end{lemma}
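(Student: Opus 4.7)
The plan is to reduce the harmonic gauge condition $\bi_g(h + L_V g) = 0$ to an elliptic boundary value problem for the vector field $V$. By \eqref{equation:Bianchi-vector}, the requirement $\bi_g(h+L_V g)=0$ is equivalent to
\begin{align*}
\Delta_g V + \Ric_g(V, \cdot) = \bi_g h \quad \text{in } \mathbb{R}^n\setminus\Omega, \qquad V = 0 \text{ on } \Sigma.
\end{align*}
Since $h \in \C^{2,\alpha}_{-q}$ and $g - \bar g \in \C^{3,\alpha}_{-q}$, the right-hand side lies in $\C^{1,\alpha}_{-q-1}$, and we seek a solution $V$ in $\C^{3,\alpha}_{1-q}$.

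I would solve this by perturbing off $\Delta_{\bar g}$ using Lemma~\ref{lemma:PDE} with $k=3$ and $\delta = 1 - q$. When $n \ge 4$ one has $\delta \in (2-n, 0)$, so by case~(2) of the lemma, $\Delta_{\bar g}$ restricted to vector fields vanishing on $\Sigma$ is an isomorphism onto $\C^{1,\alpha}_{-q-1}$; when $n = 3$ one has $\delta \in (0, \tfrac{1}{2}) \subset (0,1)$, so by case~(1), $\Delta_{\bar g}$ is surjective with an $n$-dimensional kernel of asymptotically-translational vector fields. In either case the open mapping theorem supplies a bounded linear right inverse $G: \C^{1,\alpha}_{-q-1} \to \{X \in \C^{3,\alpha}_{1-q}: X|_\Sigma = 0\}$ for $\Delta_{\bar g}$. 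Rewriting the equation as the fixed-point problem
\begin{align*}
V = G\bigl(\bi_g h - (\Delta_g - \Delta_{\bar g})V - \Ric_g(V, \cdot)\bigr),
\end{align*}
the smallness of $\|g - \bar g\|_{\C^{1,\alpha}_0}$ and $\|\Ric_g\|_{\C^{0,\alpha}_{-2}}$, combined with the decay of $g - \bar g$ and $\Ric_g$ at infinity, makes the perturbation operator $(\Delta_g - \Delta_{\bar g}) + \Ric_g(\,\cdot\,)$ have small operator norm from $\C^{3,\alpha}_{1-q}$ to $\C^{1,\alpha}_{-q-1}$. The Banach fixed point theorem then produces the required $V \in \C^{3,\alpha}_{1-q}$ vanishing on $\Sigma$.

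Finally, to check that $k = h + L_V g$ shares the same Cauchy boundary data as $h$, it suffices to show $(L_V g)^\intercal = 0$ and $DA|_g(L_V g) = 0$ on $\Sigma$. Letting $\psi_s$ denote the local flow of $V$, the condition $V|_\Sigma = 0$ forces $\psi_s|_\Sigma = \mathrm{id}_\Sigma$, so each $\psi_s$ fixes $\Sigma$ pointwise. By naturality---the diffeomorphism invariance underlying Lemma~\ref{lemma:pullback}, applied to the induced metric and to the second fundamental form---$(\psi_s^* g)^\intercal = g^\intercal$ and $A_{\psi_s^* g} = A_g$ on $\Sigma$; differentiating at $s=0$ yields the claim. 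The main technical subtlety is the $n=3$ case, where $\Delta_{\bar g}$ is only surjective (not invertible), so the right inverse $G$ is non-canonical and $V$ is non-unique; since the lemma asserts only existence, any bounded choice of $G$ suffices, but this is the one place the argument differs between dimensions.
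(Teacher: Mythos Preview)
Your proposal is correct and follows essentially the same approach as the paper: reduce to the elliptic Dirichlet problem $-\Delta_g V - \Ric_g(V,\cdot) = -\bi_g h$, $V|_\Sigma = 0$, and solve by perturbing off the Euclidean Laplacian using Lemma~\ref{lemma:PDE}. You supply more detail than the paper does---the explicit case split $n=3$ versus $n\ge 4$ for the fallrate $\delta=1-q$, and the verification that $L_Vg$ has trivial Cauchy boundary data (which the paper tacitly takes from Example~\ref{example:vector})---but the core argument is the same.
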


\begin{proof}
We show that there is a vector field  $V$ solving
\begin{alignat*}{2}
	\bi_g L_V g &= -\bi_g h \quad && \mbox{ in } \mathbb{R}^n\setminus \Omega\\
	V&=0\quad && \mbox{ on } \Sigma. 
\end{alignat*}
The first equation can be expressed as $-\Delta_g V - \Ric_g(V, \cdot) = -\bi_g h$  by \eqref{equation:Bianchi-vector}. Since the operator $\Delta$, at the Euclidean metric, is surjective by Lemma~\ref{lemma:PDE}, for $\epsilon$ sufficiently small,  the operator $-\Delta_g - \Ric_g(\cdot, \cdot)$ is close to $-\Delta$ in the operator norm and thus is also surjective. Then $V \in \C^{3,\alpha}_{1-q}(\mathbb{R}^n\setminus \Omega)$ by elliptic regularity. 
\end{proof}

Now we analyze the Ricci flat deformation on a Euclidean exterior region $(\mathbb{R}^n\setminus \Omega, \bar{g})$ and prove the main result of this section. Below we shall omit the subscript $\bar{g}$.  For example, we write $\bi =\bi_{\bar{g}}, \D = \D_{\bar{g}}, \bi^* =\bi^*_{\bar{g}}$, and 
\begin{align*}
	D\Ric(h) = D\Ric|_{\bar{g}}(h)\quad	DA(h) = DA|_{\bar{g}}(h).
\end{align*}
Using the above  notations and \eqref{equation:Ricci}, we can write 
\begin{align}\label{equation:Ricci-gauge}
	D\Ric(h) = - \tfrac{1}{2} \Delta h - \D \bi h.
\end{align}
Denote the formal $\mathcal{L}^2$-adjoint operator of  $D\Ric$ by $(D\Ric)^*$. Then for a symmetric $(0,2)$-tensor $\gamma$, 
\begin{align}\label{equation:adjoint-Ricci}
	(D\Ric)^* (\gamma) =-\tfrac{1}{2} \Delta \gamma + \bi^* (\Div \gamma).
\end{align}
We say that $h$ \emph{weakly solves $D\Ric(h)=0$ in $\mathbb{R}^n$} if for any symmetric $(0,2)$-tensor $\gamma\in \C^\infty_c(\mathbb{R}^n)$, 
\begin{align*}
	\int_{\mathbb{R}^n} h\cdot (D\Ric)^* (\gamma) \, d\mathrm{vol}=0.
\end{align*}
We  define that $h$ weakly solves $\bi h=0$ similarly. 

\begin{theorem}\label{theorem:trivial}
Let $\Omega\subset \mathbb{R}^n$ be a bounded open subset whose boundary $\Sigma = \partial \overline{\Omega}$ is an embedded hypersurface such that $\mathbb{R}^n\setminus \Omega$ is connected (but $\Sigma$ may be disconnected). If $h\in \C^{2,\alpha}_{-q}(\mathbb{R}^n \setminus \Omega)$ solves
\[ D\Ric(h)= 0\mbox{ in }\mathbb{R}^n \setminus \Omega,\quad \mbox{ and } \quad h^\intercal = 0,~ DA(h) = 0\mbox{  on }\Sigma,\]
then
\[ 
h=L_X \bar{g} 
\] 
where $X \in \C^{3,\alpha}_{1-q}(\mathbb{R}^n\setminus \Omega)$  is equal to a Killing vector field of $\bar{g}$ (possibly the zero vector) on each connected component of $\Sigma$.
\end{theorem}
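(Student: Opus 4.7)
The plan is to combine two gauge reductions with a zero-extension argument and then invoke Liouville's theorem on $\mathbb{R}^n$. As a first step I would invoke the geodesic gauge: by Lemma~\ref{lemma:geodesic} and Remark~\ref{remark:geodesic}, the Cauchy boundary condition on $h$ yields a vector field $V_1 \in \C^{3,\alpha}$ supported in a collar of $\Sigma$ with $V_1|_\Sigma = 0$ such that $k := h + L_{V_1}\bar g$ satisfies $k = 0$ and $\nabla k = 0$ pointwise on $\Sigma$ (every component). Because $L_{V_1}\bar g$ is a diffeomorphism-generated deformation at the Ricci-flat metric $\bar g$ (Example~\ref{example:vector}), the equation $D\Ric(k) = 0$ still holds on $\mathbb{R}^n\setminus \Omega$.

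Next I would extend $k$ by zero to a tensor $\tilde k$ on all of $\mathbb{R}^n$. The simultaneous vanishing of $k$ and $\nabla k$ on $\Sigma$ ensures $\tilde k \in \C^{1,\alpha}_{-q}(\mathbb{R}^n)$ with $\C^{2,\alpha}$ regularity away from $\Sigma$. For any test tensor $\gamma \in \C^\infty_c(\mathbb{R}^n)$, I pair $\tilde k$ against $(D\Ric)^*(\gamma)$ given by~\eqref{equation:adjoint-Ricci} and integrate by parts on $\mathbb{R}^n\setminus \Omega$. Since $D\Ric$ is second-order, every boundary term on $\Sigma$ is a multilinear expression in $k|_\Sigma, (\nabla k)|_\Sigma, \gamma|_\Sigma, (\nabla \gamma)|_\Sigma$ containing at least one factor of $k|_\Sigma$ or $(\nabla k)|_\Sigma$, so all boundary contributions drop out. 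Hence $D\Ric(\tilde k) = 0$ weakly on all of $\mathbb{R}^n$.

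Now I would carry out a global harmonic gauge on $\mathbb{R}^n$: solve $\Delta V_2 = \bi \tilde k$ on $\mathbb{R}^n$ via the Newtonian potential. Since $\bi \tilde k \in \C^{0,\alpha}_{-q-1}(\mathbb{R}^n)$ and $q \in ((n-2)/2, n-2)$, standard weighted elliptic theory on $\mathbb{R}^n$ produces such $V_2$ decaying at infinity. Setting $\tilde k' := \tilde k + L_{V_2}\bar g$, both $\bi \tilde k' = 0$ and $D\Ric(\tilde k') = 0$ hold weakly on $\mathbb{R}^n$, so~\eqref{equation:Ricci-gauge} forces $\Delta \tilde k' = 0$ weakly on $\mathbb{R}^n$. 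Elliptic regularity then promotes $\tilde k'$ to a smooth harmonic tensor on $\mathbb{R}^n$, and because $\tilde k' \to 0$ at infinity, Liouville's theorem yields $\tilde k' \equiv 0$.

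Unwinding, $\tilde k = -L_{V_2}\bar g$ on $\mathbb{R}^n$. Restricting to $\mathbb{R}^n\setminus \Omega$ gives $h = L_X \bar g$ with $X := -(V_1 + V_2)$, and interior elliptic regularity on $\mathbb{R}^n\setminus \Omega$ (where $\bi \tilde k = \bi k \in \C^{1,\alpha}_{-q-1}$) upgrades $V_2$ there to $\C^{3,\alpha}_{1-q}$, giving the claimed regularity of $X$. On each connected component $\Omega_i$ of $\Omega$, the identity $L_{V_2}\bar g = 0$ forces $V_2|_{\Omega_i}$ to be a Killing vector of $(\Omega_i, \bar g)$, which by the rigidity of Euclidean Killing fields extends uniquely to a global Killing vector $K_i$ of $(\mathbb{R}^n, \bar g)$; continuity of $V_2$ across $\Sigma_i = \partial \Omega_i$ identifies $V_2|_{\Sigma_i} = K_i|_{\Sigma_i}$, and since $V_1$ vanishes on $\Sigma$, we conclude $X|_{\Sigma_i} = -K_i|_{\Sigma_i}$. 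The hard part of the argument is the zero-extension step: one must carefully exploit the second-order nature of $D\Ric$ together with the simultaneous vanishing of $k$ and $\nabla k$ on $\Sigma$ to eliminate every boundary contribution in the integration by parts. Once this weak equation is secured on $\mathbb{R}^n$, the rest is routine weighted elliptic theory plus Liouville.
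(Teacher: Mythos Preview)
Your proposal is correct and follows essentially the same route as the paper: geodesic gauge to make the Cauchy data vanish pointwise, zero extension across $\Sigma$, a global harmonic gauge on $\mathbb{R}^n$, reduction to the Laplace equation, and Liouville. The only places where the paper is slightly more explicit are (i) justifying that $L_{V_2}\bar g$ weakly solves $D\Ric=0$ via the identity $\Div(D\Ric)^*(\gamma)=0$ (rather than appealing informally to diffeomorphism invariance, since $V_2$ is only $\C^{1,\alpha}$ globally), and (ii) deducing the weak Laplace equation from the adjoint formula~\eqref{equation:adjoint-Ricci} together with the weak Bianchi gauge rather than from~\eqref{equation:Ricci-gauge} directly; both are routine once noticed.
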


\begin{proof}
By Lemma~\ref{lemma:geodesic} and Remark~\ref{remark:geodesic}, there is a vector field $V\in \C^{3,\alpha}(\mathbb{R}^n\setminus \Omega)$ such that $V=0$ on $\Sigma$, $V$ vanishes outside a collar neighborhood of $\Sigma$, and $h+L_V\bar{g}$ satisfies the Cauchy boundary condition on $\Sigma$ in the classical sense: for $i, j=0,1,\dots, n-1$,  
\begin{align}\label{equation:Cauchy}
\begin{split}
	\big(h+L_V\bar{g}\big)(e_i, e_j) &= 0\\
	\big(\nabla_\nu (h+L_V\bar{g})\big) (e_i, e_j) &= 0.
\end{split}
\end{align}
We extend $h+L_V\bar{g}$ by defining $k\in \C^{1}_{-q}(\mathbb{R}^n)$ as
\begin{align*}
	k &= h+L_V\bar{g}\quad \mbox{ in } \mathbb{R}^n\setminus \Omega\\
	k &=0 \quad \mbox{ in } \Omega.
\end{align*}

Let $Z\in \C^{1,\alpha}_{1-q}(\mathbb{R}^n)$ be a vector field that weakly solves $\Delta Z = \bi k$ in $\mathbb{R}^n$. That is, by \eqref{equation:Bianchi-vector}, $k+L_Z\bar{g}$ is a weak solution to
\begin{align} \label{equation:gauge}
	\bi ( k+L_Z \bar{g} )=0 \quad \mbox{ in } \mathbb{R}^n.
\end{align}
We will show that  $k+L_Z\bar{g}$ is a weak solution to $\Delta (k+L_Z\bar{g})=0$ in $\mathbb{R}^n$. Observe that both $k, L_Z\bar{g}$ weakly solve the linearized Ricci equation in $\mathbb{R}^n$. For any $\gamma\in \C^\infty_c (\mathbb{R}^n)$,
\begin{align*}
	\int_{\mathbb{R}^n}  k \cdot (D\Ric)^* (\gamma) \, d\mathrm{vol}&= \int_{\mathbb{R}^n\setminus \Omega} (h+L_V \bar{g}) \cdot (D\Ric)^* (\gamma) \, d\mathrm{vol}\\
	&=  \int_{\mathbb{R}^n\setminus \Omega}  D\Ric (h+L_V \bar{g}) \cdot \gamma \, d\mathrm{vol}=0.
\end{align*}
In the second identity we use  integration by parts and note  the boundary integral on  $\Sigma$ vanishes due to the boundary condition \eqref{equation:Cauchy}. For $L_Z\bar{g}$, we integrate by parts and directly compute $\Div(D\Ric)^* (\gamma)=0$ to get
 \[
 	\int_{\mathbb{R}^n} L_{Z} \bar{g}\cdot (D\Ric)^* (\gamma)\, d\mathrm{vol}= \int_{\mathbb{R}^n} Z \cdot \Div(D\Ric)^* (\gamma)\, d\mathrm{vol} =0.
 \]
Since $k+L_Z\bar{g}$ weakly solves the linearized Ricci equation, by the formula \eqref{equation:adjoint-Ricci}  for $(D\Ric)^*$,
\begin{align*}
	0 &= \int_{\mathbb{R}^n} (k+L_Z\bar{g})\cdot (D\Ric)^* (\gamma)\, d\mathrm{vol} \\
	&=\int_{\mathbb{R}^n} (k+L_Z\bar{g})\cdot \left(-\tfrac{1}{2} \Delta \gamma + \bi^*(\Div \gamma) \right)\, d\mathrm{vol}\\
	&=\int_{\mathbb{R}^n} (k+L_Z\bar{g})\cdot \left(-\tfrac{1}{2} \Delta \gamma \right)\, d\mathrm{vol},
\end{align*}
where we use \eqref{equation:gauge} in the last equality. It follows that $\Delta (k+L_Z\bar{g})=0$ weakly in $\mathbb{R}^n$.

Since $k+L_{Z}\bar g$ decays at the rate $O(r^{-q})$, by elliptic regularity and Lemma~\ref{lemma:PDE},  $k+ L_{Z} \bar{g}$ is identically zero, and thus
\begin{alignat*}{2}
h&= -L_{Z+V} \bar{g} \quad &&\mbox{in } \mathbb{R}^n\setminus \Omega\\
L_Z\bar{g} &= 0 \quad &&\mbox{in } \Omega.
\end{alignat*}
Let $X = -(Z+V)$ and recall $V=0$ on $\Sigma$. We conclude that $h=L_X \bar{g}$, $X \in \C^{3,\alpha}_{1-q}(\mathbb{R}^n\setminus \Omega)$, and  $X$ is equal to a Killing vector on each connected component of $\Sigma$. 

\end{proof}

In view of Theorem~\ref{theorem:trivial}, we will define the following linear spaces of vector fields with \emph{zero} boundary value on $\Sigma$ that are asymptotic to Killing vector fields. 

\begin{definition}\label{definition:harmonic}
We define $\mathcal{X}$ to be the linear subspace of $\C^{3,\alpha}_{\mathrm{loc}} (\mathbb{R}^n\setminus \Omega)$ vector fields as 
\begin{align*}
	\mathcal{X} &= \Big\{\mbox{ $X=0$ on $\Sigma$, $X - K\in  \C^{3,\alpha}_{1-q} (\mathbb{R}^n\setminus \Omega)$ for some Killing vector $K$ of $\bar g$}\Big\}.
\end{align*}
Let  $\mathcal{X}_0$ be the subspace of $\mathcal{X}$ defined as
\[
	\mathcal{X}_0 =  \{ X\in \mathcal{X}: \Delta X=0 \mbox{ in } \mathbb{R}^n\setminus \Omega\}.
\]
\end{definition}

Denote the number 
\[
	N=\frac{n(n+1)}{2}.
\] 
It is the dimension of the space of Killing vectors on a connected open subset of $(\mathbb{R}^n,\bar{g})$, which is spanned by   $n$ translation vectors  $\{ \frac{\partial}{\partial x_1},\dots, \frac{\partial}{\partial x_n}\}$ and  $\tfrac{n(n-1)}{2}$ rotation vectors $\{ x_i \frac{\partial}{ \partial x_j} -x_j \frac{\partial}{ \partial x_i} \}_{i<j}$.
\begin{lemma} \label{lemma:X_0}
The linear space $\mathcal X_0$ is equal to
\[
	\mathcal{X}_0 =\left\{ X: \begin{array}{l}  \Delta X=0 \mbox{ in } \mathbb{R}^n\setminus \Omega, \, X = 0 \mbox{ on } \Sigma, \\
	X-K\in \C^{3,\alpha}_{2-n}(\mathbb{R}^n\setminus \Omega) \mbox{ for some Killing vector $K$ of $\bar{g}$}
	 \end{array}\right\}
\]
and  is isomorphic to the space of Killing vector fields on $(\mathbb R^n\setminus\Omega, \bar{g})$. Hence, 
\[	
	\Dim \mathcal{X}_0=N.
\]
\end{lemma}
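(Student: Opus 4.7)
Plan. The strategy is to exhibit the map
\[
\Phi \colon \mathcal{X}_0 \longrightarrow \{\text{Killing vector fields of } (\mathbb R^n, \bar g)\},\qquad X \longmapsto K,
\]
where $K$ is the (uniquely determined) Killing vector with $X - K \in \C^{3,\alpha}_{2-n}(\mathbb R^n\setminus\Omega)$, as a linear isomorphism, since that simultaneously establishes the set equality and the dimension count $\Dim \mathcal X_0 = N$.

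The heart of the argument is a decay-improvement step. Given $X \in \mathcal X_0$ with $X - K_0 \in \C^{3,\alpha}_{1-q}$, I note $\Delta K_0 = 0$ because Killing vectors of $\bar g$ are affine in Cartesian coordinates; hence $Y := X - K_0$ is harmonic on $\mathbb R^n \setminus \Omega$. Outside a ball $B_R \supset \overline\Omega$, the standard separation-of-variables expansion writes each Cartesian component of $Y$ as a sum of modes $|x|^k Y_k(x/|x|)$ and $|x|^{2-n-k} Y_k(x/|x|)$ over spherical harmonics $Y_k$ of degree $k \geq 0$. The bound $|Y| = O(|x|^{1-q})$ with $1-q<1$ forces every growing mode with $k \geq 1$ to vanish; only a constant asymptote $K' \in \mathbb R^n$ (a translation Killing vector) may survive, while all remaining modes decay at rate $|x|^{2-n}$ or faster. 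Interior Schauder estimates on successive annuli extend the pointwise decay of $Y - K'$ to the full weighted Hölder norm, yielding $X - (K_0 + K') \in \C^{3,\alpha}_{2-n}$. This establishes the inclusion $\mathcal X_0 \subseteq \{\text{RHS}\}$, while the reverse is immediate since $\C^{3,\alpha}_{2-n} \subseteq \C^{3,\alpha}_{1-q}$ (as $q < n - 1$). Uniqueness of $K$ with $X - K \in \C^{3,\alpha}_{2-n}$ follows because a nonzero Euclidean Killing vector is asymptotically either a nonzero constant or has a nonzero rotational part of order $|x|$, neither compatible with $|x|^{2-n}$ decay for $n \geq 3$, so $\Phi$ is well-defined.

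Injectivity of $\Phi$ is immediate from Lemma~\ref{lemma:PDE}(2): if $\Phi(X) = 0$ then $X \in \C^{3,\alpha}_{2-n} \subseteq \C^{3,\alpha}_\delta$ for any $\delta \in (2-n, 0)$, forcing $X = 0$. For surjectivity, given any Killing vector $K$, choose a smooth cutoff $\chi$ vanishing on a neighborhood of $\overline \Omega$ and equal to $1$ outside some $B_R \supset \overline \Omega$, and set $\tilde K := \chi K$; then $\tilde K = 0$ on $\Sigma$ and $f := -\Delta \tilde K$ is compactly supported in $B_R \setminus \Omega$. Lemma~\ref{lemma:PDE}(2) (with any $\delta \in (2-n, 0)$) produces a unique $U \in \C^{3,\alpha}_\delta$ with $\Delta U = f$ and $U = 0$ on $\Sigma$, and $X := \tilde K + U$ then satisfies $\Delta X = 0$, $X = 0$ on $\Sigma$, and $X - K = (\chi - 1)K + U \in \C^{3,\alpha}_\delta$, so $X \in \mathcal X_0$; applying the decay-improvement step to the harmonic tail of $X - K$ upgrades this to $X - K \in \C^{3,\alpha}_{2-n}$, so $\Phi(X) = K$. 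Since the Killing vectors of $(\mathbb R^n, \bar g)$ are spanned by the $n$ translations $\partial / \partial x_i$ and the $\tfrac{n(n-1)}{2}$ rotations $x_i \partial/\partial x_j - x_j \partial/\partial x_i$ for $i<j$, the isomorphism $\Phi$ yields $\Dim \mathcal X_0 = N$. The only non-routine ingredient is the decay improvement via spherical harmonics; the remaining steps are clean applications of Lemma~\ref{lemma:PDE}.
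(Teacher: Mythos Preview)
Your proof is correct and follows essentially the same route as the paper's. Both arguments hinge on the same decay-improvement step (harmonic expansion of $Y=X-K$ to peel off a translational constant and leave a remainder in $\C^{3,\alpha}_{2-n}$) and the same use of Lemma~\ref{lemma:PDE} for injectivity; the only difference is that the paper asserts surjectivity in one word (``clearly''), whereas you supply the explicit cutoff-plus-inverse-Laplacian construction. One minor quibble: when you invoke Lemma~\ref{lemma:PDE}(2) to produce $U\in\C^{3,\alpha}_\delta$ and then claim $X\in\mathcal X_0$, you need $\delta\le 1-q$ to ensure $X-K\in\C^{3,\alpha}_{1-q}$, and ``any $\delta\in(2-n,0)$'' does not guarantee this when $1-q<0$ (i.e., for $n\ge 4$); simply restrict to $\delta\in(2-n,\min(0,1-q))$, which is nonempty since $q<n-2$.
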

\begin{proof}
Let $X\in\mathcal X_0$, and we write $X = K+Y$ where $K$ is a Killing vector and $Y= O^{3,\alpha}(|x|^{1-q})$. Since a Killing vector is harmonic, the assumption $\Delta X = 0$ implies  $\Delta Y=0$. By harmonic expansion,  there are constants $a_1,\dots, a_n$ such that $Y= a_i \tfrac{\partial}{\partial x_i}+ O^{3,\alpha}(|x|^{2-n})$. Replacing $K$ by $K+  a_i \tfrac{\partial}{\partial x_i}$, we have $X = K+Z$ where  $Z= O^{3,\alpha}(|x|^{2-n})$. Such decomposition of $X$ is unique. Define the homomorphism from $ \mathcal{X}_0$ to the space of Killing vectors  by sending $X$ to $K$. The homomorphism is clearly surjective, and it is injective because if $X=O^{3,\alpha}(|x|^{2-n})$ satisfies $\Delta X=0$ in $\mathbb{R}^n \setminus \Omega$ and $X=0$ on $\Sigma$, then $X$ is zero by Lemma~\ref{lemma:PDE}. 
\end{proof}

Our discussions in this section so far are valid for  boundary $\Sigma$ having multiple components. The following corollary is  a fundamental reason that  connectedness of $\Sigma$ is assumed in our main results. We have identified those Ricci flat deformations $h$ in Theorem~\ref{theorem:trivial} by $h=L_X \bar{g}$ for some vector field $X \in \C^{3,\alpha}_{1-q}(\mathbb{R}^n\setminus \Omega)$  equal to a Killing vector $K$ on each component of $\Sigma$. If $\Sigma$ is connected, then there is a \emph{global} Killing vector field in $\mathbb{R}^n\setminus \Omega$ equal to $K$ on $\Sigma$, which we still denote by $K$. By subtracting the global Killing vector, we obtain $X-K\in \mathcal{X}$ and $h=L_{X-K} \bar{g}$. It leads to the following corollary.
  \begin{corollary}\label{corollary:trivial}
Let $\Omega\subset \mathbb{R}^n$ be a bounded open subset whose boundary $\Sigma = \partial \overline{\Omega}$ is an embedded hypersurface. Suppose $\Sigma$ is connected. Let $h\in \C^{2,\alpha}_{-q}(\mathbb{R}^n \setminus \Omega)$. Then $h$ is a Ricci flat deformation at $\bar g$ satisfying the Cauchy boundary condition if and only if
$h = L_X \bar{g}$ in $\mathbb{R}^n\setminus \Omega$ for some $X\in\mathcal X$. In addition, such deformation satisfies the harmonic gauge  $\bi h=0$ in $\mathbb{R}^n\setminus \Omega$ if and only if $h = L_X \bar{g}$ in $\mathbb{R}^n\setminus \Omega$ for some $X\in \mathcal{X}_0$.
\end{corollary}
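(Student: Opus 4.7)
The plan is to deduce the corollary almost directly from Theorem~\ref{theorem:trivial}, using connectedness of $\Sigma$ as the essential extra ingredient, and then handle the harmonic-gauge refinement via the elliptic formula \eqref{equation:Bianchi-vector} for $\bi\circ L_{(\cdot)}\bar g$.

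For the first biconditional, the ``only if'' direction: suppose $h$ is a Ricci flat deformation satisfying the Cauchy boundary condition. Apply Theorem~\ref{theorem:trivial} to write $h = L_X \bar g$ with $X \in \C^{3,\alpha}_{1-q}(\mathbb{R}^n\setminus\Omega)$ equal to a Killing vector of $\bar g$ on each component of $\Sigma$. Connectedness of $\Sigma$ collapses this to a \emph{single} Killing vector $K$ of $\bar g$ on $\mathbb{R}^n$ with $X = K$ on $\Sigma$. Since $L_K \bar g = 0$ globally, replacing $X$ by $Y := X - K$ gives $h = L_Y\bar g$, and $Y$ vanishes on $\Sigma$ while $Y - (-K) = X \in \C^{3,\alpha}_{1-q}(\mathbb{R}^n\setminus\Omega)$. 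Hence $Y \in \mathcal X$ as required.

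For the ``if'' direction, suppose $h = L_Y\bar g$ with $Y \in \mathcal X$. Example~\ref{example:vector}, applied at the Ricci flat metric $\bar g$ with the boundary Killing vector taken to be the zero field, shows simultaneously that $D\Ric(L_Y \bar g) = 0$ in $\mathbb{R}^n\setminus\Omega$ and that both $(L_Y\bar g)^\intercal$ and $DA(L_Y\bar g)$ vanish on $\Sigma$, since $Y = 0$ there forces the decomposition $\eta = 0$, $Y^\intercal = 0$ in the formulas of that example.

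For the harmonic gauge statement, I would combine the equivalence just established with the identity \eqref{equation:Bianchi-vector} specialized to $\bar g$, which reads $\bi(L_Y\bar g) = -\Delta Y$ (because $\Ric_{\bar g} = 0$). Thus for $h = L_Y\bar g$ with $Y \in \mathcal X$, the condition $\bi h = 0$ in $\mathbb{R}^n\setminus\Omega$ is exactly $\Delta Y = 0$ in $\mathbb{R}^n\setminus\Omega$, which, together with $Y = 0$ on $\Sigma$ and the prescribed asymptotics, is the definition of membership in $\mathcal X_0$. The converse direction is the same equation read backwards.

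No step presents a real obstacle; the only subtlety worth flagging is the consolidation step in the ``only if'' direction, where connectedness of $\Sigma$ is used to upgrade Theorem~\ref{theorem:trivial}'s per-component Killing vectors into one global Killing vector of $\bar g$ that can be absorbed into the Lie derivative without changing $h$. This is precisely where the hypothesis that $\Sigma$ is connected enters, and it is the feature that fails for disconnected boundaries.
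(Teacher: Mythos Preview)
Your proposal is correct and follows essentially the same approach as the paper: the paragraph immediately preceding the corollary already gives the consolidation argument (subtract the single global Killing vector $K$ to land in $\mathcal X$), and your use of \eqref{equation:Bianchi-vector} at $\bar g$ to handle the harmonic-gauge refinement is the natural argument the paper leaves implicit. The ``if'' direction via Example~\ref{example:vector} is exactly as intended.
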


\section{Green-type identity for static vacuum deformations}\label{section:Green}


We will derive several fundamental identities for static vacuum deformations in this section. Define the functional $\mathscr{F}$ on an asymptotically flat pair  $(g, u)$ by
\begin{align*}
	\mathscr{F}(g, u) &= -2(n-1)\omega_{n-1} m_{\mathrm{ADM}}(g) +\int_{\mathbb{R}^n\setminus \Omega} u R_g \, d\mathrm{vol}_g.
\end{align*}
This functional is a special case of the Regge-Teitelboim Hamiltonian \cite{Regge-Teitelboim:1974}.
The first variation of $\mathscr{F}$ has been computed in \cite[Proposition 3.7]{Anderson-Khuri:2013} (Cf. \cite[Lemma 3.1]{Miao:2007}).  The second variation formula follows from differentiating the first variation. We state both formulas below, where $A_g, H_g$ are all computed with respect to the unit normal $\nu_g$ pointing away from $\Omega$ and recall $S(g, u) = -u\Ric_g + \nabla^2_g u - (\Delta_g u )g$.

\begin{lemma}\label{lemma:variations}
Let $(g(s), u(s))$ be a one-parameter family of asymptotically flat pairs. In the following formulas, we use $'$ to denote the $s$-derivative and write
\[
	(h(s), v(s)) = (g'(s), u'(s)).
\]
We may omit the argument $s$ when the context is clear. Then,
\begin{align*}
	\ds \mathscr{F}(g(s), u(s)) &= \int_{\mathbb{R}^n\setminus \Omega} \Big\langle \big(S(g, u) + \tfrac{1}{2} u R_g g, R_g\big), \big(h, v\big)\Big\rangle_g \, \dvol_g\\
	&\quad +\int_\Sigma \Big\langle \big(uA_g - \nu_g(u) g^\intercal, 2u\big), \big(h^\intercal, DH|_g(h)\big) \Big\rangle_g \, \da_g
\end{align*}
and
\begin{align*}
	&\2ds \mathscr{F}(g(s), u(s)) \\
	&=\int_{\mathbb{R}^n\setminus \Omega}\bigg[ \Big\langle P(h, v),(h,v) \Big\rangle_g + \Big\langle \big(S(g, u) + \tfrac{1}{2} u R_g g, R_g\big), (g'', u'')\Big\rangle_g\bigg]\, \dvol_g\\
	&\quad +\int_\Sigma  \bigg[\Big\langle  Q(h,v), \big(h^\intercal, DH|_g(h)\big)\Big\rangle_g+ \Big\langle \big(uA_g - \nu_g(u) g^\intercal, 2u\big), \big(g''^\intercal, H_g''\big) \Big\rangle_g \bigg] \, \da_g
\end{align*}
where  the right hand sides are evaluated at $(g(s), u(s))$, the angle bracket $\langle ,  \rangle_g$  denotes the sum of the $g(s)$-inner product  of the corresponding components (i.e., $\big\langle (S_1, S_2), (h,v)  \big\rangle_g = S_1\cdot h + S_2 v$),  and $P, Q$ are the linear differential operators defined by
\begin{align*}
	P(h, v) &= \Big(\big(S(g, u) + \tfrac{1}{2} u R_g g\big)', \, R_g'\Big) - \Big(2\big(S(g, u) + \tfrac{1}{2} u R_g g\big)\circ h, 0\Big) \\
	&\quad + \tfrac{1}{2} (\tr_g h)  \Big(S(g, u) + \tfrac{1}{2} u R_g g,\, R_g \Big) \quad \quad  \mbox{ in } \mathbb{R}^n\setminus \Omega 
	\\
	Q(h, v)&= \Big( \big(uA_g-\nu_g(u) g^\intercal\big)', \, 2v\Big)  - \Big(2\big(uA_g-\nu_g(u) g^\intercal\big)\circ h^\intercal, 0\Big)\\
	&\quad +\tfrac{1}{2} (\tr_g h^\intercal )\Big(uA_g - \nu_g(u) g^\intercal , \,  2u \Big) \quad \quad \mbox{ on } \Sigma.
\end{align*}
\end{lemma}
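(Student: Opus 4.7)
The plan is to handle the first and second variations separately. The first variation is the essential computation; the second follows by formal differentiation, with no new analytic input.

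For the first variation, I would differentiate each piece of $\mathscr{F}(g(s),u(s))$ in $s$. The ADM-mass term contributes the standard boundary integral at infinity. For $\int u R_g \dvol_g$, I would combine $\ds \dvol_g = \tfrac{1}{2}(\tr_g h)\dvol_g$ with the formula $DR|_g(h) = -\Delta_g \tr_g h + \Div_g \Div_g h - h \cdot \Ric_g$ from Lemma~\ref{lemma:formula}, and integrate by parts twice so that the derivatives land on $u$ rather than on $h$. The interior integrand then reorganizes into $\bigl(S(g,u) + \tfrac{1}{2}u R_g g\bigr)\cdot h + R_g v$, exactly as stated. The boundary contribution at infinity cancels the ADM-mass variation, which is the standard Regge--Teitelboim identity; the boundary contribution on $\Sigma$ must then be reorganized using the formula for $DH|_g(h)$ from Lemma~\ref{lemma:formula} so that $(h^\intercal, DH|_g(h))$ appears in its natural slot, producing the integrand $\langle (uA_g - \nu_g(u) g^\intercal, 2u), (h^\intercal, DH|_g(h))\rangle_g$. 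Since this computation is already carried out in \cite[Proposition~3.7]{Anderson-Khuri:2013} and \cite[Lemma~3.1]{Miao:2007}, I would simply quote it, paying attention only to the orientation convention on $\nu_g$ used here (pointing away from $\Omega$).

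For the second variation, I would differentiate the first variation formula in $s$ term by term, using three bookkeeping rules: (i) $\ds \dvol_g = \tfrac{1}{2}(\tr_g h)\dvol_g$ and $\ds \da_g = \tfrac{1}{2}(\tr_g h^\intercal)\da_g$, yielding the $\tfrac{1}{2}(\tr_g h)$ and $\tfrac{1}{2}(\tr_g h^\intercal)$ factors appearing in $P$ and $Q$; (ii) for any symmetric $(0,2)$-tensors $T,k$, varying the $g$-inner product gives $\ds (T\cdot k)_g = T'\cdot k + T\cdot k' - 2(T\circ k)\cdot h$, producing the $-2(\cdot\circ h)$ entries of $P$ and $Q$; and (iii) whenever $\ds$ lands on the $(h,v)$ slot (respectively the $(h^\intercal, DH|_g(h))$ slot) of the first variation, it yields $(g'',u'')$ (respectively $(g''^\intercal, H_g'')$), which assemble into the stated second group of terms on each line. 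Applying these three rules mechanically to the first variation formula reproduces $P$ and $Q$ exactly as defined.

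The main place where care is needed is the sign in rule (ii): writing the $g$-inner product of two symmetric $(0,2)$-tensors in coordinates as $g^{ik}g^{j\ell}T_{ij}k_{k\ell}$ and using $(g^{ij})' = -h^{ij}$, together with the symmetrized composition $(T\circ k)_{ij} = \tfrac{1}{2}(T_{ic}k^{c}{}_{j} + T_{jc}k^{c}{}_{i})$, produces precisely the $-2(T\circ k)\cdot h$ contribution that appears in the stated definitions of $P$ and $Q$. Beyond this bookkeeping, the proof requires no PDE input beyond the first variation; the real content of the lemma is the first variation formula, and the second variation is then pure differentiation.
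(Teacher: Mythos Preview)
Your proposal is correct and matches the paper's approach exactly: the paper simply cites \cite[Proposition~3.7]{Anderson-Khuri:2013} and \cite[Lemma~3.1]{Miao:2007} for the first variation and states that the second variation formula follows from differentiating the first. Your write-up in fact supplies more detail than the paper does, spelling out the three bookkeeping rules (variation of the volume/area forms, variation of the $g$-inner product, and the $(g'',u'')$ terms) that produce the operators $P$ and $Q$; these are precisely the mechanisms implicit in the paper's one-line justification.
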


We can use the above variation formulas to identify the elements that are $\mathcal{L}^2$-orthogonal to the range of the  map $(g, u)\mapsto (-u\Ric_g + \nabla^2_g u, \Delta_g u)$ and of its linearization. Recall the linear subspace of vector fields, $\mathcal{X}$, defined in Definition~\ref{definition:harmonic}. At an asymptotically flat pair $(g, u)$ with $u>0$, for any $X\in \mathcal{X}$ we define
\begin{align*}
	\zeta_0 (X)&= \Big( L_X g - \big(\Div_g X+ u^{-1} X(u) \big)g, \, - \Div_g X + u^{-1} X(u)\Big) \\
	\kappa_0(X)&= \Big(L_X \bar{g} - (\Div X)\bar{g},  \, -\Div X\Big),
\end{align*}
where  $\kappa_0(X)$ is obtained by evaluating $\zeta_0(X)$ at $(g,u)=(\bar g, 1)$. Using the operator $\bi_g^*$ defined in \eqref{equation:operators} and writing $\bi^* = \bi_{\bar{g}}^*$, we may also re-express them as  
\begin{align}\label{equation:cokernel-elements}
\begin{split}
	\zeta_0 (X)&= \Big( 2\bi_g^* X - u^{-1} X(u) g, \,- \Div_g X + u^{-1} X(u)\Big) \\
	\kappa_0(X)&= \Big(2\bi^* X,  \, -\Div X\Big).
\end{split}
\end{align}

\begin{proposition}\label{proposition:cokernel}
Let $(g, u)$ be an asymptotically flat pair in $\mathbb{R}^n\setminus \Omega$ with $u>0$. For any deformation $(h, v) \in \C^{2}_{-q}(\mathbb{R}^n\setminus \Omega)$ and any  $X\in \mathcal{X}$, the following holds:
\begin{align}\label{equation:cokernel1}
	\int_{\mathbb{R}^n\setminus \Omega}  \bigg\langle \Big(-u \Ric_g + \nabla^2_g u, \Delta_g u\Big),\, \zeta_0(X) \bigg \rangle_g\, d\mathrm{vol}_g =0\\
\label{equation:cokernel2}
	\int_{\mathbb{R}^n\setminus \Omega} \bigg\langle \Big(-D\Ric(h) + \nabla^2 v, \, \Delta v\Big),\, \kappa_0 (X)\bigg\rangle_{\bar g} \, d\mathrm{vol}_{\bar g} =0.
\end{align}
\end{proposition}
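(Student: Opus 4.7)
My plan is to establish \eqref{equation:cokernel1} first via a Noether-type argument based on the diffeomorphism invariance of the Regge--Teitelboim Hamiltonian $\mathscr F$, and then to obtain \eqref{equation:cokernel2} by linearizing \eqref{equation:cokernel1} at $(\bar g,1)$ along the affine path $(\bar g+th,1+tv)$.

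For \eqref{equation:cokernel1}, I would fix $X\in\mathcal X$ and consider the local flow $\psi_s$ of $X$ on $\mathbb R^n\setminus\Omega$. Since $X|_\Sigma=0$, $\psi_s$ fixes $\Sigma$ pointwise and preserves $\mathbb R^n\setminus\Omega$; since $X-K\in\C^{3,\alpha}_{1-q}$ for some Killing vector $K$ of $\bar g$, at infinity $\psi_s$ differs from the Euclidean isometry generated by $K$ by a correction in $\C^{3,\alpha}_{1-q}$. I would then argue that $s\mapsto \mathscr F(\psi_s^*g,\psi_s^*u)$ is constant: the scalar curvature integral is invariant by the naturality of $R_g$ and a change of variables, and $m_{\mathrm{ADM}}(\psi_s^*g)=m_{\mathrm{ADM}}(g)$ because $m_{\mathrm{ADM}}$ is geometric under coordinate changes asymptotic to a Euclidean isometry modulo $\C^{3,\alpha}_{1-q}$ (cf.~\cite{Bartnik:1986}). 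Differentiating at $s=0$ and substituting $(h,v)=(L_X g, X(u))$ into the first variation formula of Lemma~\ref{lemma:variations}, the boundary integrand over $\Sigma$ vanishes: $\psi_s|_\Sigma=\mathrm{id}$ gives $(L_X g)^\intercal=0$ on $\Sigma$, and by Lemma~\ref{lemma:pullback} also $DH|_g(L_X g)=0$ on $\Sigma$.

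This reduces \eqref{equation:cokernel1} to the algebraic identity
\[
\bigl\langle \bigl(S(g,u)+\tfrac12 uR_g\,g,\,R_g\bigr),\,(L_X g, X(u))\bigr\rangle_g=\bigl\langle(-u\Ric_g+\nabla^2_g u,\,\Delta_g u),\,\zeta_0(X)\bigr\rangle_g,
\]
which follows by a direct expansion using $\tr_g L_X g=2\Div_g X$ and $S(g,u)+(\Delta_g u)g = -u\Ric_g+\nabla^2_g u$; both sides reduce to $(-u\Ric_g+\nabla^2_g u)\cdot L_X g + (uR_g-2\Delta_g u)\Div_g X + R_g X(u)$. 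For \eqref{equation:cokernel2}, I would apply \eqref{equation:cokernel1} to $(g(t),u(t))=(\bar g+th,1+tv)$ for small $t$ and differentiate at $t=0$. Because $\Ric_{\bar g}=0$ and $u(0)\equiv 1$, both components of the first tuple $\bigl(-u(t)\Ric_{g(t)}+\nabla^2_{g(t)} u(t),\,\Delta_{g(t)} u(t)\bigr)$ vanish at $t=0$, so the $t$-derivatives of $d\mathrm{vol}_{g(t)}$, of $\langle\cdot,\cdot\rangle_{g(t)}$, and of $\zeta_0(X)$ evaluated at $(g(t),u(t))$ contribute nothing; only the derivative of the first tuple survives, producing $(-D\Ric(h)+\nabla^2 v,\Delta v)$ paired against $\kappa_0(X)$, which is exactly \eqref{equation:cokernel2}.

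The step I expect to be least routine is the invariance of the ADM mass under $\psi_s$ when $K$ is a nontrivial rotation: the pointwise decay of $L_X\bar g = L_{X-K}\bar g = O(|x|^{-q})$ is, since $q<n-2$, not fast enough for the linearized ADM flux integrand to vanish on coordinate spheres, so the invariance cannot be read off term by term but must be extracted from the structural geometric invariance of $m_{\mathrm{ADM}}$ under coordinate changes of the form Euclidean isometry $+\,\C^{3,\alpha}_{1-q}$. Everything else is either a change of variables, a direct tensorial expansion, or differentiation at a point where the relevant quantities vanish.
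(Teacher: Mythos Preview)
Your proposal is correct and follows essentially the same route as the paper: the paper likewise derives \eqref{equation:cokernel1} by differentiating $\mathscr F(\psi_s^*g,\psi_s^*u)=\mathscr F(g,u)$ at $s=0$ (citing \cite[Theorem~4.2]{Bartnik:1986} for the ADM mass invariance), notes the boundary term vanishes since $X=0$ on $\Sigma$, and then obtains \eqref{equation:cokernel2} by linearizing \eqref{equation:cokernel1} at $(\bar g,1)$. Your explicit algebraic rearrangement and your observation about the rotation case are exactly the content the paper compresses into the phrases ``rearranging the integrands'' and the citation of Bartnik.
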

\begin{proof}
For $s$ sufficiently small, let $\psi_s: \mathbb{R}^n\setminus\Omega \to \mathbb{R}^n\setminus \Omega$ be the flow of $X$ such that $\psi_0$ is the identity map. Let $(g_s, u_s) = (\psi_s^* g, \psi_s^* u)$ be the pull-back of $(g, u)$.  Because the ADM mass is invariant under such diffeomorphisms (see \cite[Theorem 4.2]{Bartnik:1986}) and so is the volume integral in $\mathscr{F}$, we have 
\[
	\mathscr{F}(g_s, u_s) = \mathscr{F}(g, u).
\]
Using that $\left. \ps\right|_{s=0} (g_s, u_s) = (L_X g, X(u))$, we apply the first variation formula in Lemma~\ref{lemma:variations} and get
\begin{align*}
	0&=\left. \ds \right|_{s=0}\mathscr{F}(g_s, u_s) \\
	&=\int_{\mathbb{R}^n\setminus \Omega} \Big\langle \big(-u\Ric_g + \nabla_g^2 u -(\Delta_g u)g + \tfrac{1}{2} u R_g g, R_g\big), \big(L_X g, X(u)\big)\Big\rangle_g \, \dvol_g
\end{align*}
where the boundary integral vanishes because $X=0$ on $\Sigma$. Rearranging the integrands gives \eqref{equation:cokernel1}. Equation \eqref{equation:cokernel2} follows immediately by taking linearization of equation \eqref{equation:cokernel1} at $(\bar g,1)$.
\end{proof}

The above proof explores the property that the functional $\mathscr{F}$ is invariant among diffeomorphisms  that fix both the boundary $\Sigma$ and the ``structure of infinity'' of $\mathbb{R}^n\setminus \Omega$, i.e. those diffeomorphisms generated by $X\in \mathcal{X}$.  The first variation formula of $\mathscr{F}$ implies that  $(L_X g, X(u))$ is $\mathcal{L}^2$-orthogonal to $(S(g, u) + \tfrac{1}{2} u R_g g, R_g)$. Since we will instead work on the operator $(g, u)\mapsto (-u\Ric_g + \nabla^2_g u, \Delta_g u)$ in Section~\ref{section:existence}, after rearranging from $(S(g, u) + \tfrac{1}{2} u R_g g, R_g)$ to $(-u\Ric_g + \nabla^2_g u, \Delta_g u)$, the ``geometric'' deformation  $(L_X g, X(u))$  leads to $\zeta_0(X)$ which takes a less geometric form.


As is shown in \cite[Proposition 3.5]{An:2020}, the next proposition says that the differential operator $(h, v)\to \Big(P(h, v),\big (h^\intercal, DH|_g(h)\big)\Big)$ is ``self-adjoint'' in the following sense. 
\begin{proposition}[Green-type identity]\label{proposition:Green}
Let $(g, u)$ be an asymptotically flat pair in $\mathbb{R}^n\setminus \Omega$. For any $(h, v), (k,w)\in \C^{2}_{-q}(\mathbb{R}^n\setminus \Omega)$, we have 
\begin{align}\label{equation:Green}
\begin{split}
	&\int_{\mathbb{R}^n\setminus \Omega}\Big\langle P(h, v),(k,w)\Big\rangle_g \, \dvol_g-\int_{\mathbb{R}^n\setminus \Omega}\Big\langle  P(k, w), (h, v)\Big\rangle_g \, \dvol_g\\
	&=- \int_\Sigma  \Big\langle  Q(h,v) , \big(k^\intercal, DH|_g(k)\big)\Big\rangle_g \, \da_g+\int_\Sigma  \Big\langle Q(k, w), \big(h^\intercal, DH|_g(h)\big)\Big \rangle_g \, \da_g.
\end{split}
\end{align}
\end{proposition}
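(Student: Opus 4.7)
The strategy is to exploit the equality of mixed partial derivatives of the Regge--Teitelboim Hamiltonian $\mathscr{F}$ along a two-parameter variation. I would choose a smooth family of asymptotically flat pairs $(g(s,t), u(s,t))$ for $|s|, |t|$ sufficiently small, with $(g(0,0), u(0,0)) = (g,u)$, $\partial_s(g,u)|_{(0,0)} = (h,v)$, $\partial_t(g,u)|_{(0,0)} = (k,w)$, and mixed partial $\partial_s\partial_t(g,u)|_{(0,0)} = 0$. The simplest choice is the linear family $(g + sh + tk,\, u + sv + tw)$, which stays in $\mathcal{M}$ for small $|s|, |t|$ since $(h,v), (k,w)$ lie in $\C^{2}_{-q}$.

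Set $F(s,t) = \mathscr{F}(g(s,t), u(s,t))$. Applying the first variation formula of Lemma~\ref{lemma:variations} to the one-parameter family $s \mapsto (g(s,t), u(s,t))$ at fixed $t$ writes $\partial_s F(s,t)$ as an interior plus boundary integral in terms of the data at $(s,t)$. Differentiating in $t$ at the origin is then exactly the calculation that produces the second variation formula in Lemma~\ref{lemma:variations}, except tracked asymmetrically: the "direction of variation" in that computation is now played by $\partial_t(g,u)|_{(0,0)} = (k,w)$, while $(h,v)$ appears in the "tangent" slot. Since $\partial_t\partial_s(g,u)|_{(0,0)} = 0$, the terms involving $(g'',u'')$ and $(g''^\intercal, H'')$ in the second variation formula drop out, yielding
\[
\partial_t \partial_s F\big|_{(0,0)} = \int_{\mathbb{R}^n\setminus\Omega} \big\langle P(k,w),\, (h,v) \big\rangle_g \dvol_g + \int_\Sigma \big\langle Q(k,w),\, (h^\intercal, DH|_g(h)) \big\rangle_g \da_g.
\]
Swapping the roles of $s$ and $t$ gives the analogous formula for $\partial_s \partial_t F|_{(0,0)}$, with $(h,v)$ and $(k,w)$ interchanged. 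Since $F(s,t)$ is smooth in $(s,t)$, Clairaut's theorem on equality of mixed partials forces $\partial_s \partial_t F = \partial_t \partial_s F$ at $(0,0)$. Equating the two expressions and rearranging produces \eqref{equation:Green}.

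The conceptual heart of the argument is thus that $P$ and $Q$ are tautologically the "symmetric" pieces of the Hessian of $\mathscr{F}$, and any apparent asymmetry in $\langle P(h,v),(k,w)\rangle_g$ versus $\langle P(k,w),(h,v)\rangle_g$ must be absorbed into the boundary. The main technical point that requires care is the interchange of differentiation with integration over the non-compact exterior, which is justified by the weighted decay: all relevant integrands decay like $|x|^{-2q-2}$ or faster, and with $q \in (\tfrac{n-2}{2}, n-2)$ this is integrable, while dominated convergence gives the interchange uniformly in $|s|,|t|$ small. No new curvature computation is needed beyond those already carried out for Lemma~\ref{lemma:variations}; this identity is essentially bookkeeping of mixed partials applied to a Hamiltonian whose first variation is already known.
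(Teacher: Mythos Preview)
Your approach is exactly the paper's: take the linear two-parameter family $(g+sh+tk,\,u+sv+tw)$ and equate mixed partials of $\mathscr{F}$. One small correction: the displayed expression you give for $\partial_t\partial_s F|_{(0,0)}$ is not quite complete. While $\partial_t\partial_s(g,u)=0$ kills the $(g'',u'')$ interior term and the $g''^\intercal$ boundary piece, the quantity $H''_g$ in Lemma~\ref{lemma:variations} polarizes to $\partial_t\partial_s H_{g(s,t)}|_{(0,0)} = D^2H|_g(h,k)$, which is nonzero in general since $H$ is nonlinear in $g$. So each of $\partial_t\partial_s F$ and $\partial_s\partial_t F$ carries an extra boundary term $\int_\Sigma 2u\, D^2H|_g(h,k)\,\da_g$; these are equal by the symmetry of the second variation $D^2H|_g(h,k)=D^2H|_g(k,h)$ and hence cancel upon subtraction. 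This is precisely what the paper means by ``the integrands involving the two derivatives \ldots\ cancel out by symmetry $\partial_{st}=\partial_{ts}$''---the cancellation is by symmetry, not by vanishing. With that adjustment your argument is correct and identical to the paper's.
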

\begin{proof}
Let $\big(g(s,t), u(s,t)\big)$ be a two-parameter family of asymptotically flat pairs defined by
\begin{align*}
	g(s,t) = g+sh+tk\quad \mbox{ and } \quad u(s,t)=u+sv+tw. 
\end{align*}
Then we have
\begin{align*}
	(h, v )= \ps (g(s,t), u(s,t))\quad(k,w)= \pt (g(s,t), u(s,t) ),\quad\mbox{ at }t=s=0.
\end{align*}
Computing the two-derivatives of the functional $\mathscr{F}$ at $(g(t,s), u(t,s))$ and equating
\[
	\left. \pst \right|_{t=s=0}\mathscr{F}(g(t, s), u(t, s))  = \left.\pts\right|_{t=s=0}\mathscr{F}(g(t, s), u(t, s))
\]
gives \eqref{equation:Green}. (Note that the integrands involving the two derivatives such as $\pst (g, u)$, $\pst (g^\intercal, H_g)$  all cancel out by symmetry $\pst = \pts$.) 
\end{proof}

 We shall call the identity \eqref{equation:Green} as a \emph{Green-type identity} (for the operator $P$). To explain this, we briefly recall the classical Green identity for the Laplace operator. Let $\nu$ be the unit normal to $\Sigma$ pointing away from $\Omega$.  For any functions $v, w\in \C^{2}_{-q}(\mathbb{R}^n \setminus \Omega)$, the Green identity says
\begin{align*}
	\int_{\mathbb{R}^n \setminus \Omega}(w\Delta v - v\Delta w) \, \dvol = \int_\Sigma \left( -w \nu(v) + v \nu(w) \right)\, \da.
\end{align*}
 Our Green-type identity~\eqref{equation:Green} resembles  the classical one by replacing the operator $\Delta$ with the operator $P$, the Dirichlet boundary data with the Bartnik boundary data, and the Neumann boundary data with the operator~$Q$. 
 
When the variations are evaluated at a static vacuum pair $(g, u)$, the operator $P(h,v)$ becomes
\begin{align*}
	P(h, v)&= \bigg(\Big(S(g, u) + \tfrac{1}{2} u R_g g\Big)' , R_g' \bigg) \\
	&=  \bigg(DS|_{(g, u)}(h,v)  , DR|_g(h)\bigg) + \bigg( \tfrac{1}{2} u DR|_g(h) g, 0\bigg).
\end{align*}

Note that the first parenthesis in the right hand side corresponds to the linearized static vacuum system~\eqref{lsv}. Consequently,  $(h, v)$ is a static vacuum deformation at $(g, u)$  if and only if $P(h, v)=0$.

We will further focus on the special case when the linearization is taken at $(\bar{g},1)$. We often omit the subscript $g$ when the geometric quantities (such as covariant derivatives, trace, linearization, inner products, volume/area forms, second fundamental form, etc)  are computed with respect to $g=\bar{g}$. With respect to $(\bar{g}, 1)$, the operators $P, Q$ take the simpler forms:
\vspace{-8pt}
\begin{align}\label{equation:flat}
\begin{split}
	P(h, v) &=  \Big(-D\Ric(h) + \nabla^2 v + \big(- \Delta v + \tfrac{1}{2} DR(h) \big)\bar{g}, \, DR(h) \bigg)\\
	Q(h, v)&= \Big( vA + DA(h) - \nu(v) \bar{g}^\intercal -2A\circ h^\intercal+ \tfrac{1}{2} \tr h^\intercal A, \, 2v + \tr h^\intercal \Big).
\end{split}
\end{align}

We  re-state Lemma~\ref{lemma:variations} in the following special case that will be used in Proposition~\ref{proposition:variation2} later.
\begin{corollary}\label{corollary:variations}
Let $(g(s), u(s))$ be a one-parameter family of asymptotically flat pairs. Assume that $g(0) = \bar{g}$ and $u(s)\equiv 1$. Denote by $h= g'(0)$. Then  
\begin{align*}
	\left. \ds\right|_{s=0} \mathscr{F}(g(s), 1) &=\int_\Sigma  \Big(A\cdot h^\intercal +  2DH(h) \Big)\, \da\\
	\left. \2ds\right|_{s=0} \mathscr{F}(g(s), 1) &=\int_{\mathbb{R}^n\setminus \Omega} h\cdot \Big(-D\Ric(h)+\tfrac{1}{2} DR(h) \bar{g}\Big) \, \dvol\\
	+ \int_\Sigma \Big( \big(DA(h)  - &2A\circ h^\intercal+ \tfrac{1}{2} \tr h^\intercal A\big)\cdot h^\intercal+ \tr h^\intercal DH(h)+  A\cdot\big(g''\big)^\intercal+2 H_g'' \Big) \, \da.
\end{align*}
\end{corollary}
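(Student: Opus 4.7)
The plan is to obtain Corollary~\ref{corollary:variations} as a direct specialization of Lemma~\ref{lemma:variations} to the case $g(0)=\bar{g}$, $u(s)\equiv 1$, so the main work is bookkeeping which terms survive.

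First I would record the simplifications forced by the hypotheses. Since $u(s)\equiv 1$, we have $v(s)=u'(s)\equiv 0$ and $u''(s)\equiv 0$, so every integrand in Lemma~\ref{lemma:variations} that pairs against $(h,v)$ or $(g'',u'')$ discards its scalar-function component. Evaluating at $s=0$, i.e.\ at $(\bar g, 1)$, we have $\Ric_{\bar g}=0$, $R_{\bar g}=0$, $\nabla^2 1=0$, and $\Delta 1=0$, so $S(\bar g,1)+\tfrac{1}{2}u R_{\bar g}\,\bar g=0$ and $R_{\bar g}=0$. Consequently the first volume integral in both variation formulas vanishes, as does the second volume integral in the second-variation formula. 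Moreover $\nu_{\bar g}(u)=0$ and $uA_{\bar g}=A$, so the boundary coefficient $\big(uA_g-\nu_g(u)g^\intercal,\, 2u\big)$ collapses to $(A,2)$.

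With these simplifications the first-variation formula reduces to the boundary integral $\int_\Sigma\big(A\cdot h^\intercal + 2\,DH(h)\big)\,d\sigma$, yielding the first identity. For the second variation, I would use the explicit forms \eqref{equation:flat} of the operators $P$ and $Q$ at $(\bar g, 1)$. Setting $v=0$ gives
\begin{align*}
P(h,0) &= \Big(-D\Ric(h) + \tfrac{1}{2}DR(h)\,\bar g,\; DR(h)\Big),\\
Q(h,0) &= \Big(DA(h) - 2A\circ h^\intercal + \tfrac{1}{2}(\tr h^\intercal)A,\; \tr h^\intercal\Big).
\end{align*}
Pairing $P(h,0)$ with $(h,0)$ in $\mathbb{R}^n\setminus \Omega$ produces exactly $h\cdot\big(-D\Ric(h)+\tfrac{1}{2}DR(h)\bar g\big)$, since the scalar slot multiplies against $v=0$. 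Pairing $Q(h,0)$ with $\big(h^\intercal, DH(h)\big)$ on $\Sigma$ produces the $\big(DA(h) - 2A\circ h^\intercal + \tfrac{1}{2}\tr h^\intercal A\big)\cdot h^\intercal + \tr h^\intercal \, DH(h)$ contribution, while the final boundary term $\big\langle (A,2),(g''^\intercal, H_g'')\big\rangle$ contributes $A\cdot (g'')^\intercal + 2H_g''$.

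Assembling these pieces yields precisely the two identities in the statement. There is no genuine obstacle here; the only point that requires some care is making sure that the $R_g'$ component of $P(h,v)$ and the $\tr h^\intercal$ component of $Q(h,v)$ are correctly paired against the zero entries $v$ (in the interior) while still contributing in the boundary inner product against $DH(h)$, so no term is double-counted or dropped. Once that is tracked, the corollary follows by direct substitution into Lemma~\ref{lemma:variations}.
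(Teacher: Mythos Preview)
Your proposal is correct and follows exactly the paper's approach: the corollary is stated as a direct specialization of Lemma~\ref{lemma:variations} to $(g(0),u(s))=(\bar g,1)$, and the paper gives no separate proof. Your explicit bookkeeping (vanishing of the volume terms via $S(\bar g,1)=0$, $R_{\bar g}=0$, and the reduction of $P(h,0)$, $Q(h,0)$ from \eqref{equation:flat}) is precisely what justifies the restatement.
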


The next corollary is a direct consequence of Proposition~\ref{proposition:Green}.  In particular, Equations \eqref{equation:Green-boundary} and \eqref{equation:Ricci-flat} below give effective formulas to compute $DA(h)$, which is essential to our main arguments in Theorems \ref{theorem:static-convex} and \ref{theorem:static-generic}. 

\begin{corollary}
For any $(h, v), (k,w)\in \C^2_{-q}(\mathbb{R}^n\setminus \Omega)$, we have
\begin{align*}
	&\int_{\mathbb{R}^n\setminus \Omega}\Big\langle P(h,v),(k,w)\Big\rangle \, \dvol-\int_{\mathbb{R}^n\setminus \Omega}\Big\langle  P(k,w), (h, v)\Big\rangle \, \dvol\\
	&=- \int_\Sigma  \Big\langle Q(h,v), \big(k^\intercal, DH(k)\big) \Big\rangle \, \da+\int_\Sigma  \Big\langle Q(k, w), \big(h^\intercal, DH(h)\big)\Big \rangle \, \da
\end{align*}
where $P, Q$ take the forms \eqref{equation:flat}. We also have the following special cases:
\begin{enumerate}
\item If both $(h, v)$ and $(k, w)$ are static vacuum deformations at $(\bar g,1)$ and $h$ has zero Bartnik data $h^\intercal=0, DH(h)=0$ on $\Sigma$, then 
\begin{align} \label{equation:Green-boundary}
	\int_\Sigma  \Big\langle \big(vA + DA(h) -\nu(v) \bar{g}^\intercal, 2v \big), \big(k^\intercal, DH(k)\big) \Big\rangle \, \da=0.
\end{align}
\item If $h, k$ are both Ricci flat deformations at $ \bar g$ and $h^\intercal=0$ on $\Sigma$, then  
\begin{align}\label{equation:Ricci-flat}
	\int_\Sigma\Big( DA(h)\cdot k^\intercal -(\tr k^\intercal) DH(h) \Big)\, d\sigma =0.
\end{align} 
\end{enumerate}
\end{corollary}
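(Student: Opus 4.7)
The plan is to read this corollary as a direct specialization of Proposition~\ref{proposition:Green} (the Green-type identity) to the background pair $(g,u)=(\bar g, 1)$, after substituting the simplified forms of $P$ and $Q$ recorded in \eqref{equation:flat}. The first displayed identity of the corollary then requires no independent argument: it is precisely the statement of Proposition~\ref{proposition:Green} evaluated at $\bar g$, so that $\langle\cdot,\cdot\rangle_g$, $\dvol_g$, $\da_g$ all reduce to their Euclidean counterparts and $P,Q$ acquire the explicit forms of \eqref{equation:flat}.

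For part (1) I would first show that $P(h,v)=0$ and $P(k,w)=0$ for static vacuum deformations at $(\bar g,1)$, so that the left-hand side of the general identity vanishes. Indeed, the linearized static vacuum system at $(\bar g,1)$ gives $-D\Ric(h)+\nabla^2 v-(\Delta v)\bar g=0$ together with $DR(h)=0$, and plugging both into \eqref{equation:flat} shows each component of $P(h,v)$ is zero (the $\tfrac12 DR(h)\bar g$ correction drops out by $DR(h)=0$); the same applies to $(k,w)$. The boundary hypothesis $h^\intercal=0$ collapses $Q(h,v)$ to $\big(vA+DA(h)-\nu(v)\bar g^\intercal,\ 2v\big)$, since the $-2A\circ h^\intercal$ and $\tfrac12(\tr h^\intercal)A$ contributions vanish. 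Finally $DH(h)=0$ eliminates the second boundary integral on the right, leaving the stated identity.

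For part (2) I would treat Ricci flat deformations as static vacuum deformations with $v=w=0$. The observation needed is that at $\bar g$ one has $DR(h)=\tr D\Ric(h)$: linearizing $R_g=g^{ij}\Ric_{ij}$ at $\bar g$ gives $DR(h)=-h\cdot\Ric_{\bar g}+\tr D\Ric(h)$, and $\Ric_{\bar g}=0$. Hence $D\Ric(h)=0$ forces $DR(h)=0$, and \eqref{equation:flat} then yields $P(h,0)=P(k,0)=0$, killing the left-hand side of the general identity. With $v=0$ and $h^\intercal=0$, $Q(h,0)$ reduces to $(DA(h),0)$, while $Q(k,0)=\big(DA(k)-2A\circ k^\intercal+\tfrac12(\tr k^\intercal)A,\ \tr k^\intercal\big)$ in general. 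Substituting into the right-hand side of the general identity, and using $h^\intercal=0$ so that only the $(\tr k^\intercal)\,DH(h)$ contribution survives from the $Q(k,0)$ boundary integral, produces the stated identity. No step here is delicate; the only real care required is tracking which components vanish under each set of hypotheses, which is why I expect the main work of the proof to be purely algebraic bookkeeping rather than substantive analysis.
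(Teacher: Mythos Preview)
Your proposal is correct and follows exactly the route the paper intends: the corollary is stated in the paper as ``a direct consequence of Proposition~\ref{proposition:Green}'' with no separate proof given, and your argument is precisely the specialization to $(\bar g,1)$ followed by the algebraic bookkeeping of which terms in $P$ and $Q$ vanish under each hypothesis. The verification that $P$ vanishes on static vacuum deformations, the use of $DR(h)=\tr D\Ric(h)$ at $\bar g$ to handle the Ricci flat case, and the tracking of the boundary terms are all as expected.
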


\section{Sufficient condition for existence and local uniqueness}\label{section:existence}

Let $\Omega$ be a bounded open subset in $(\mathbb{R}^n, \bar{g})$ whose boundary $\Sigma= \partial \overline{\Omega}$ is an embedded hypersurface. We assume that $\Sigma$ is connected.   Recall the differential operators $\bi_g, \D_g, \bi^*_g$ defined in \eqref{equation:operators}, and note that the subscript $g$ is omitted when $g$ is the Euclidean metric $\bar{g}$. We also recall  the spaces of vector fields $\mathcal{X}, \mathcal{X}_0$ in Definition~\ref{definition:harmonic}, and the integer $N = \frac{n(n+1)}{2}$ denotes the dimension of $\mathcal{X}_0$.

We prove Theorem~\ref{theorem:criterion-main} in this section. Use  the terminologies introduced in Section~\ref{section:basic}, we re-state the definition of a static regular boundary:
\begin{manualdefinition}{\ref{definition:static-regular}}
The boundary $\Sigma$ is said to be \emph{static regular in  $\mathbb R^n\setminus\Omega$}  if for any static vacuum deformation   $(h, v)\in \C^{2,\alpha}_{-q}(\mathbb{R}^n\setminus \Omega)$ (at $(\bar{g}, 1)$) such that $h$ satisfies the Bartnik boundary condition on $\Sigma$, $h$ must satisfy the Cauchy boundary condition on $\Sigma$. 
\end{manualdefinition}

The following theorem is a more precise version of Theorem~\ref{theorem:criterion-main}.

\begin{theorem}\label{theorem:criterion}
Suppose the boundary $\Sigma$ is static regular in $\mathbb R^n\setminus\Omega$. Then there exist positive constants $\epsilon_0, C>0$ such that for each $0<\epsilon<\epsilon_0$, if $(\tau, \phi)$ satisfies $\| (\tau,\phi)- (\bar{g}^\intercal, H) \|_{\C^{2,\alpha}(\Sigma)\times \C^{1,\alpha}(\Sigma)}<\epsilon$, there exists a unique  $(g, u)\in \mathcal{M}$ with $\|(g, u) - (\bar{g}, 1) \|_{\C^{2,\alpha}_{-q}(\mathbb{R}^n \setminus \Omega)} < C\epsilon$ such that 
\begin{align}\label{equation:vacuum-static-gauge}
\begin{split}
	\begin{array}{ll}
	\left\{ \begin{array}{l}-u\Ric_g+ \nabla^2_g u=0\\
	\Delta_g u=0\\
	 \bi g+du=0	\end{array} \right.  &\mbox{ in } \mathbb{R}^n\setminus \Omega \\
 	\left\{ \begin{array}{l} 
	g^\intercal = \tau\\
	H_g = \phi
	\end{array} \right. &\mbox{ on } \Sigma
\end{array}
\end{split}
\end{align}
and 
\begin{align}\label{equation:orthogonal-gauge}
	\int_{\mathbb{R}^n\setminus \Omega}  \big((g-\bar{g})\cdot L_X \bar{g} \big) \rho\, d\mathrm{vol} =0 \quad \mbox{ for all } X\in \mathcal{X}_0
\end{align}
where $\rho(x)=(1+|x|^2)^{-1}$. 
\end{theorem}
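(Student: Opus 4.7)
The plan is to apply an implicit function theorem to a carefully modified Anderson--Khuri-type map. Define $\Phi\colon \mathcal{M} \to \mathcal{Y}$ by
\[
\Phi(g,u) = \Big(-u\Ric_g + \nabla^2_g u,\ \Delta_g u,\ \bi g + du,\ g^\intercal - \tau,\ H_g - \phi\Big),
\]
so that $\Phi(\bar g, 1)=0$ when $(\tau,\phi)=(\bar g^\intercal, H)$. By \eqref{equation:Ricci-gauge}, the top-order part of the linearization $D\Phi|_{(\bar g,1)}(h,v)$, once the static-harmonic gauge equation $\bi h + dv = 0$ is substituted into the Ricci term, collapses to $(\tfrac{1}{2}\Delta h,\ \Delta v,\ \bi h + dv)$ in the interior together with Dirichlet-type data $h^\intercal$ and the mean-curvature data $DH(h)$ on $\Sigma$. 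This is a standard elliptic boundary value problem for $(h,v)\in\C^{2,\alpha}_{-q}$, and Lemma~\ref{lemma:PDE} together with the usual Fredholm theory for weighted Hölder spaces on asymptotically Euclidean manifolds shows that $D\Phi|_{(\bar g,1)}$ is Fredholm of index zero.

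Next, I would pin down the kernel using the static regularity hypothesis. An element $(h,v)\in\ker D\Phi|_{(\bar g,1)}$ is a static vacuum deformation at $(\bar g,1)$ in the static-harmonic gauge with $h^\intercal=0$ and $DH(h)=0$ on $\Sigma$, so by Definition~\ref{definition:static-regular} we also obtain $DA(h)=0$. Passing to the Ricci flat deformation $\tilde h := h + \tfrac{2}{n-2}v\bar g$ and noting the gauge and Cauchy data are preserved (the induced $\tilde h^\intercal$ and $DA(\tilde h)$ inherit the same vanishing), Corollary~\ref{corollary:trivial} yields $\tilde h = L_X \bar g$ for a unique $X\in\mathcal{X}_0$; combined with $\Delta v=0$, $v\in\C^{2,\alpha}_{-q}$, and the trace identity relating $\operatorname{tr}\tilde h$ to $v$, this forces $v\equiv 0$ and $h=L_X\bar g$. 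Thus $\ker D\Phi|_{(\bar g,1)} = \{(L_X\bar g,0):X\in\mathcal{X}_0\}$, which has dimension $N$ by Lemma~\ref{lemma:X_0}, and by index zero the cokernel also has dimension $N$.

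To obtain an invertible linearization I would simultaneously (i) restrict the domain to the orthogonal-gauge slice \eqref{equation:orthogonal-gauge}, which is $N$ linearly independent conditions on $g-\bar g$ transverse to the kernel, and (ii) enlarge the target by an explicit $N$-dimensional correction. The correction is guided by Proposition~\ref{proposition:cokernel}: the elements $\kappa_0(X)$ with $X\in\mathcal{X}_0$ span (up to duality) the cokernel of the static vacuum part, so I would modify $\Phi$ to $\tilde\Phi(g,u,\lambda) = \Phi(g,u) + \sum_{i=1}^{N}\lambda_i\,\kappa_0(X_i)$ for a fixed basis $\{X_i\}$ of $\mathcal{X}_0$. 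A direct computation using Proposition~\ref{proposition:Green} shows that $D\tilde\Phi$ restricted to the orthogonal-gauge slice is an isomorphism, and the implicit function theorem then produces, for each $(\tau,\phi)$ with $\|(\tau,\phi)-(\bar g^\intercal,H)\|<\epsilon$, a unique pair $(g,u,\lambda)$ with $\|(g,u)-(\bar g,1)\|_{\C^{2,\alpha}_{-q}}\le C\epsilon$ solving $\tilde\Phi=0$ and satisfying \eqref{equation:orthogonal-gauge}.

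The hard part will be the last step: showing that the auxiliary multipliers $\lambda_i$ all vanish, so the solution actually satisfies \eqref{equation:vacuum-static-gauge}. The strategy is to pair the interior equations $-u\Ric_g+\nabla_g^2 u + \sum\lambda_i(2\bi^* X_i) = 0$ and $\Delta_g u - \sum\lambda_i \Div X_i = 0$ against the deformations $(L_{X_j}\bar g,0)$ and integrate by parts; the Green-type identity of Proposition~\ref{proposition:Green}, the orthogonality assertion of Proposition~\ref{proposition:cokernel}, and the vanishing of the boundary terms $g^\intercal-\tau$ and $H_g-\phi$ combine to produce a linear system $M\lambda = 0$, where $M$ is the Gram-type matrix $\big(\langle \kappa_0(X_i),\kappa_0(X_j)\rangle\big)$ that is nondegenerate by linear independence of the $X_i$. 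Local uniqueness is automatic from the implicit function theorem on the slice, and the remark after Theorem~\ref{theorem:criterion-main} (via Lemma~\ref{lemma:both-gauge}) then upgrades this to geometric uniqueness modulo diffeomorphism.
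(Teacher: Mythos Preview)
Your overall strategy is in the right spirit, but there are two genuine gaps.

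\textbf{The kernel identification is wrong.} You assert that for $(h,v)$ in the kernel, the Ricci flat deformation $\tilde h = h + \tfrac{2}{n-2}v\bar g$ ``inherits the same vanishing'' of $\tilde h^\intercal$ and $DA(\tilde h)$. This is false: $\tilde h^\intercal = \tfrac{2}{n-2}v\,\bar g^\intercal$, which vanishes only if $v|_\Sigma=0$, and there is no a priori reason for that. Hence Corollary~\ref{corollary:trivial} does not apply, and your subsequent ``trace identity'' argument forcing $v\equiv 0$ never gets off the ground. The paper's proof of this step (Lemma~\ref{lemma:kernel}) is considerably more delicate: one first uses the Green-type identity \eqref{equation:Green-boundary} with test deformations $L_X\bar g$ (normal and tangential $X$) to extract the \emph{hidden} boundary conditions $\Delta_\Sigma v + H\nu(v)=0$ and $A(\nabla^\Sigma v,\cdot)-d(\nu(v))=0$, which together with harmonicity yield $\nabla^2 v(\nu,\cdot)=0$ on $\Sigma$; only then does a Neumann argument on $\partial v/\partial x_i$ force $v\equiv 0$. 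You have no substitute for this.

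\textbf{The Fredholm setup and the multiplier step are underspecified.} As written, $\Phi$ lists $\bi g + du$ as an interior component, making the system overdetermined; you cannot ``substitute'' one equation into another and then invoke Fredholm theory. The paper instead adds $-\D_g(\bi g+du)$ to the Ricci equation so the interior operator becomes genuinely elliptic, and imposes $\bi g+du$ only on $\Sigma$; the gauge is recovered a posteriori (Lemma~\ref{lemma:gauge}). For the vanishing of the multipliers, your pairing against $(L_{X_j}\bar g,0)$ and your fixed Euclidean $\kappa_0(X_i)$ do not match what Proposition~\ref{proposition:cokernel} actually provides at the \emph{nonlinear} level: the orthogonality \eqref{equation:cokernel1} involves the $(g,u)$-dependent $\zeta_0(X)$, and the gauge term contributes boundary integrals that must be cancelled by an extra component (the paper's $-2\bi^*X(\nu,\cdot)$). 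The paper resolves this by making the correction nonlinear in $(g,u)$ via $\zeta(W)$ and adding the constraint $\Div_g(2\bi_g^*W - u^{-1}W(u)g)=0$ so that $\langle T(g,u),\zeta(W)\rangle_{\mathcal L^2}=0$ holds exactly; your linear Gram-matrix argument does not obviously close.
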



We will formulate  Theorem~\ref{theorem:criterion} into an equivalent statement, Theorem~\ref{theorem:T}, in Section~\ref{section:kernel}   and give its proof  at the end of Section~\ref{section:proof}. 
Before we proceed with the proofs to the main result, we make general remarks on the gauge conditions that appear in Theorem~\ref{theorem:criterion}.  
\begin{definition}
Let $(g, u)\in \mathcal{M}$.
\begin{enumerate}
\item $(g, u)$ is said to satisfy the \emph{static-harmonic gauge}  (at $(\bar{g},1)$) in $\mathbb{R}^n\setminus \Omega$ if $\bi g + du=0$ in $\mathbb{R}^n\setminus \Omega$. 
\item $(g, u)$ is said to satisfy the \emph{orthogonal gauge} (at $(\bar{g},1)$) if \eqref{equation:orthogonal-gauge} holds. 
\end{enumerate}
\end{definition}

\begin{remark}
 We remark the connection between the static-harmonic gauge and harmonic gauge. 
Obviously if $u$ is constant, then those two are the same, but there is a deeper connection. If $u>0$, the warped product metric $\mathbf{g}=\pm u^2dt^2+g$ (either plus or minus sign in  the first term) satisfies the classical harmonic gauge at the flat metric if and only if $\bi g+ udu=0$. Although this equation is not exactly the static-harmonic gauge equation in the above definition, it yields the same linearized gauge equation for deformations at $(\bar{g}, 1)$.  All of the following analysis would work equally well if we would have defined the static-harmonic gauge as $\bi g + udu=0$ instead. 

 The choice of the positive weight function $\rho$ in \eqref{equation:orthogonal-gauge} is quite arbitrary. In fact, for $n>3$, we can simply let $\rho(x)\equiv 1$ instead. When $n=3$, we need a positive weight function that decays at least  faster than the order of  $|x|^{-\frac{1}{2}}$ to ensure the integral is finite.

\end{remark}

It is well-known that the harmonic gauge is effective for studying the Ricci curvature equation on a closed manifold or a bounded manifold with fixed boundary conditions. But for asymptotically flat manifolds, we also need to take account of diffeomorphisms that may not be the identity map at infinity but an isometry (up to negligible terms).  More precisely, let $\mathscr{D} $ denote the space of all $\C^{3,\alpha}_{\mathrm{loc}}$ diffeomorphisms $\psi: \mathbb{R}^n\setminus\Omega \to  \mathbb{R}^n\setminus\Omega$ satisfying $\psi|_\Sigma = \mathrm{Id}_{\Sigma}$, $\psi(x) - Ox\in \C^{3,\alpha}_{1-q}(\mathbb{R}^n\setminus \Omega)$ for some special orthogonal matrix $O$. Consider the canonical action $\mathscr{D}$ on the space of asymptotically flat pairs $(g, u)\in \mathcal{M}$ by $\psi\cdot (g, u) =( \psi^* g, \psi^*u)$. We will see in the next lemma that the static-harmonic gauge is not sufficient to find a unique representative in the orbit $\mathscr{D}\cdot (g, u)$,  but there is a unique one in a neighborhood of $(\bar{g}, 1)$ if we additionally impose the orthogonal gauge.

\begin{lemma}\label{lemma:both-gauge}

There is an open neighborhood $\mathcal{U}$ of $(\bar{g}, 1)$ in $\mathcal{M}$  and an open neighborhood $\mathscr{D}_0$ of $\mathrm{Id}_M$ in $\mathscr{D}$ such that for any $(g, u)\in \mathcal{U}$, there exists a unique diffeomorphism $\psi\in \mathscr{D}_0$ such that  $(\psi^*g, \psi^*u)$ satisfies both static-harmonic and orthogonal gauges.
\end{lemma}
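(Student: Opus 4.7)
The plan is to apply the implicit function theorem. Parametrize a neighborhood of $\mathrm{Id}_M$ in $\mathscr{D}$ by small vector fields $Y\in\mathcal{X}$ via the time-one flow $\psi_Y := \phi_1^Y$; note that $Y|_\Sigma=0$ guarantees $\psi_Y|_\Sigma=\mathrm{Id}_\Sigma$. Fix a basis $\{X_1,\dots,X_N\}$ of $\mathcal{X}_0$ and write $\langle h,k\rangle_\rho := \int_{\mathbb{R}^n\setminus\Omega}h\cdot k\,\rho\,d\mathrm{vol}$. Define
\[
F((g,u),Y) \;=\; \Bigl(\bi(\psi_Y^*g) + d(\psi_Y^*u),\; \bigl\langle\psi_Y^*g - \bar g,\; L_{X_j}\bar g\bigr\rangle_\rho\Bigr)_{j=1}^N
\]
as a smooth map into $\C^{1,\alpha}_{-1-q}(\mathbb{R}^n\setminus\Omega)\times\mathbb{R}^N$, well-defined near $((\bar g,1),0)$ and vanishing there.

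Next, compute the partial derivative $D_YF$ at $((\bar g,1),0)$. Because $\frac{d}{ds}\psi_{sY}^*g\bigr|_{s=0}=L_Yg$ and $\bi$ is linear, the first component linearizes to $\bi(L_Y\bar g)+d(Y(1))=-\Delta Y$ by formula~\eqref{equation:Bianchi-vector} applied at the Ricci-flat metric $\bar g$; the $j$-th scalar component linearizes to $\langle L_Y\bar g,\,L_{X_j}\bar g\rangle_\rho$. Hence
\[
L(Y) \;:=\; D_YF\big|_{((\bar g,1),0)}\cdot Y \;=\; \bigl(-\Delta Y,\; \langle L_Y\bar g,\, L_{X_j}\bar g\rangle_\rho\bigr)_{j=1}^N.
\]

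The key step is to show that $L:\mathcal{X}\to\C^{1,\alpha}_{-1-q}\times\mathbb{R}^N$ is a Banach-space isomorphism. For injectivity, if $L(Y)=0$ then $Y\in\mathcal{X}_0$, so $Y=\sum_i c_iX_i$, and the orthogonality equations read $\sum_ic_iB_{ij}=0$ for every $j$ with $B_{ij}:=\langle L_{X_i}\bar g,\,L_{X_j}\bar g\rangle_\rho$. Because $\rho>0$, $B$ is positive semidefinite, and its nullspace consists of $Z\in\mathcal{X}_0$ with $L_Z\bar g\equiv 0$, i.e., Killing vectors of $\bar g$ vanishing on $\Sigma$; by the rigidity of Killing vectors (any Killing vector of $\bar g$ vanishing on a hypersurface is identically zero) the nullspace is trivial, so $B$ is positive definite and thus $c=0$, $Y=0$. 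For surjectivity, given $(\Phi,c)\in\C^{1,\alpha}_{-1-q}\times\mathbb{R}^N$, the surjectivity of $\Delta:\mathcal{X}\to\C^{1,\alpha}_{-1-q}$ with kernel $\mathcal{X}_0$ (a consequence of Lemma~\ref{lemma:PDE} combined with Lemma~\ref{lemma:X_0}) produces some $Y_0\in\mathcal{X}$ with $-\Delta Y_0=\Phi$, and invertibility of $B$ provides a unique $X_0\in\mathcal{X}_0$ so that $Y:=Y_0+X_0$ also meets the orthogonality constraints.

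With $L$ an isomorphism, the implicit function theorem delivers open neighborhoods $\mathcal{U}\subset\mathcal{M}$ of $(\bar g,1)$ and $\mathcal{V}\subset\mathcal{X}$ of $0$, together with a $C^1$ map $(g,u)\mapsto Y(g,u)\in\mathcal{V}$ satisfying $Y(\bar g,1)=0$ and $F((g,u),Y(g,u))=0$. Setting $\mathscr{D}_0 := \{\phi_1^Y : Y\in\mathcal{V}\}$ and $\psi := \phi_1^{Y(g,u)}\in\mathscr{D}_0$ then gives the unique diffeomorphism such that $(\psi^*g,\psi^*u)$ satisfies both the static-harmonic gauge (from the first component of $F=0$) and the orthogonal gauge (from the remaining $N$ components). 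The main technical obstacle will be the Fredholm analysis identifying $\Delta:\mathcal{X}\to\C^{1,\alpha}_{-1-q}$ as surjective with kernel exactly $\mathcal{X}_0$: this requires invoking the appropriate case of Lemma~\ref{lemma:PDE} for the weight $1-q$ depending on $n$, and absorbing the leading Killing asymptote of $Y\in\mathcal{X}$ into the source when solving the corresponding Dirichlet problem.
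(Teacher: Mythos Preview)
Your proposal is correct and follows essentially the same approach as the paper: define a gauge map combining $\bi(\psi^*g)+d(\psi^*u)$ with the $N$ orthogonality pairings, linearize in the diffeomorphism direction to obtain $Y\mapsto(-\Delta Y,\langle L_Y\bar g,L_{X_j}\bar g\rangle_\rho)$, verify this is an isomorphism $\mathcal{X}\to\C^{1,\alpha}_{-1-q}\times\mathbb{R}^N$, and apply the implicit function theorem. The only cosmetic difference is that the paper selects an $\mathcal{L}^2_\rho$-orthonormal basis of $\mathcal{X}_0$ up front (implicitly using the positive definiteness of your Gram matrix $B$), whereas you keep a general basis and argue directly that $B$ is nondegenerate via the fact that a Euclidean Killing field vanishing on $\Sigma$ must vanish identically.
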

\begin{proof}
We denote by $\langle h, k\rangle_{\mathcal{L}^2_\rho} = \int_{\mathbb{R}^n\setminus \Omega} (h\cdot k )\rho \,d\mathrm{vol}$, where the dot product is with respect to $\bar{g}$ and  we recall $\rho(x) = (1+|x|^2)^{-1}$. Since $\mathcal{X}_0$ is $N$-dimensional (see Lemma~\ref{lemma:X_0}),  we let $X^{(1)}, \dots, X^{(N)}$ be a basis of $\mathcal{X}_0$ that is orthonormal in the following sense: 
\[
	\big\langle L_{X^{(i)}}\bar{g} , L_{X^{(j)}}\bar{g} \big\rangle_{\mathcal{L}^2_\rho} =\delta_{ij}.
\] 
Consider the map $G: \mathscr{D}\times \mathcal{M}\to \C^{1,\alpha}_{-q-1}\times \mathbb{R}^N$ defined by
\[
	G(\psi, (g, u))= \big(\bi (\psi^* g ) + d(\psi^* u), (b_1, \dots, b_N) \big)
\]
where the numbers $b_i$ are given by
\[
b_i = \big\langle\psi^* g -\bar{g},  L_{X^{(i)}}\bar{g}\big\rangle_{\mathcal{L}^2_\rho}. 
\]
 Linearizing $\mathscr{D}$ in the first argument at $( \mathrm{Id}_{\mathbb R^n\setminus\Omega}, (\bar{g}, 1))$ gives $D_1 G: \mathcal{X} \to \C^{1,\alpha}_{-q-1}(\mathbb{R}^n\setminus \Omega) \times \mathbb{R}^N$:
\[
 	D_1 G(X) =  \big(-\Delta X, (c_1, \dots, c_N) \big)
\]	
where  $c_i = \langle L_X\bar{g},  L_{X^{(i)}}\bar{g}\rangle_{\mathcal{L}^2_\rho} $. 

We show that $D_1 G$ is an isomorphism: If $D_1G(X)=0$, then $\Delta X=0$ and $c_i=0$ for all $i$. The first equation implies  $X\in\mathcal X_0$, and hence, the equations $c_i=0$ imply  $X=0$. Thus $D_1G$ is injective. To see that it is surjective, for any $Z\in\C^{1,\alpha}_{-q-1}$ and $(c_1,\dots, c_N)\in  \mathbb R^N$, there is $Y\in \mathcal{X}$ such that $-\Delta Y=Z$  by Lemma~\ref{lemma:PDE}.  Let $d_i=c_i -\langle L_Y\bar{g},  L_{X^{(i)}}\bar{g}\rangle_{\mathcal{L}^2_\rho} $, then $X=Y+ d_1X^{(1)}+ \dots+ d_N X^{(N)}$ satisfies $D_1G(X)=(Z, (c_1,\dots, c_N))$. Then the lemma follows from Implicit Function Theorem.
\end{proof}

\subsection{Boundary value problem under static-harmonic gauge}\label{section:kernel}

Define the map $T:\mathcal M\to \C^{0,\alpha}_{-q-2}(\mathbb R^n\setminus\Omega)\times\C^{1,\alpha}(\Sigma)\times \big((\bar{g}^\intercal, H_{\bar{g}}) +\mathcal{B}(\Sigma)\big)$ as 
\begin{align}\label{equation:nonlinear}
T (g, u)=
\begin{array}{l}
	\left\{ \begin{array}{l}-u\Ric_g+ \nabla^2_g u - \D_g (\bi g+du)\\
	\Delta_g u	\end{array} \right. \quad \mbox{ in } \mathbb{R}^n\setminus \Omega \\
 	\left\{ \begin{array}{l} 
	\bi g+du\\
	g^\intercal\\
	H_g
	\end{array} \right. \quad \mbox{ on } \Sigma.
\end{array}
\end{align}
We note that the operator $T$ is essentially equivalent to the harmonic-gauged Ricci operator of the warped product space, introduced by Anderson and Khuri \cite[Section 3]{Anderson-Khuri:2013}.  Let us explain the codomain of $T$. We denote by $\C^{0,\alpha}_{-q-2}(\mathbb{R}^n \setminus \Omega)$ the codomain of the first two equations and by $\C^{1,\alpha}(\Sigma)\times  \big((\bar{g}^\intercal, H_{\bar{g}}) +\mathcal{B}(\Sigma)\big)$ the codomain of the boundary equations. More specifically, $\C^{1,\alpha}(\Sigma)$ consists of $\C^{1,\alpha}$ covectors of $\mathbb{R}^n$ defined along $\Sigma$ and $\mathcal{B}(\Sigma)$ consists of pairs $(\tau, \phi)$ where $\tau\in \C^{2,\alpha}(\Sigma)$ is a symmetric $(0,2)$-tensor on the tangent bundle of $\Sigma$  and $\phi \in \C^{1,\alpha}(\Sigma)$ is a scalar-valued function on $\Sigma$. 

The reader should compare $T$ with~\eqref{equation:vacuum-static-gauge}. While Theorem~\ref{theorem:criterion} concerns with solving both \eqref{equation:vacuum-static-gauge} and \eqref{equation:orthogonal-gauge}, in the rest of Section~\ref{section:existence}, we will focus on solving $T$ and an accompanying operator $\overline{T}$ modified from $T$. It is because that, as shown in the next lemma, if we  put aside the orthogonal gauge condition  \eqref{equation:orthogonal-gauge},   finding $(g, u)$ solving \eqref{equation:vacuum-static-gauge} is ``equivalent'' to finding some $(g, u)$ solving $T(g,u)=(0, 0, 0, \tau, \phi)$ (namely, the first three equations in \eqref{equation:nonlinear} are zero), provided that $(g, u)$ is sufficiently close to $(\bar{g}, 1)$. 

\begin{lemma}[Cf. {\cite[Proposition 2.1]{Anderson-Khuri:2013}}]\label{lemma:gauge}
There is an open neighborhood $\mathcal{U}$ of $(\bar{g}, 1)$ in $\mathcal{M}$ such that for $(g, u)\in \mathcal{U}$,  if $(g, u)$ satisfies $T(g, u) = (0, 0, 0, \tau, \phi)$, then $(g, u)$ solves \eqref{equation:vacuum-static-gauge}.

Conversely, if $(g, u)$ solves  \eqref{equation:vacuum-static-gauge}, then there exists a diffeomorphism $\psi\in \mathscr{D}_0$  such that $(\psi^* g, \psi^*u)$ satisfies $T(\psi^* g, \psi^*u) = (0, 0, 0, \tau, \phi)$. 
\end{lemma}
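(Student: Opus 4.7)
My plan is to split the lemma into its two implications. The converse is routine: as written, it holds trivially with $\psi = \mathrm{Id}_{\mathbb{R}^n\setminus\Omega}$, since a pair $(g,u)$ solving \eqref{equation:vacuum-static-gauge} already satisfies every clause of $T(g,u) = (0,0,0,\tau,\phi)$. The more substantive interpretation — that any $(g,u) \in \mathcal{U}$ that is merely static vacuum with Bartnik data $(\tau,\phi)$ can be put into the static-harmonic gauge by an element of $\mathscr{D}_0$ — is handled by solving $\bi(\psi_V^* g) + d(\psi_V^* u) = 0$ for a vector field $V \in \C^{3,\alpha}_{1-q}$ with $V|_\Sigma = 0$ via the implicit function theorem, using that the linearization of this map at $V=0,\, (g,u)=(\bar g, 1)$ equals $-\Delta V$ (by \eqref{equation:Bianchi-vector}), which is an isomorphism by Lemma~\ref{lemma:PDE}. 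The nontrivial content is therefore the forward direction, which is a gauge-propagation argument.

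For the forward direction, I would set $F := \bi g + du$, a 1-form in $\C^{1,\alpha}_{-q-1}(\mathbb{R}^n\setminus \Omega)$. The hypotheses $T(g,u) = (0,0,0,\tau,\phi)$ give $F|_\Sigma = 0$, $\Delta_g u = 0$, $g^\intercal = \tau$, $H_g = \phi$, and the interior equation $-u\Ric_g + \nabla^2_g u - \D_g F = 0$ (identifying $F$ with its vector dual when applying $\D_g$). The goal is to show $F \equiv 0$; then the interior equation reduces to $-u\Ric_g + \nabla^2_g u = 0$ and all clauses of \eqref{equation:vacuum-static-gauge} are in force. To derive an elliptic system for $F$, I would apply $\bi_g$ to the interior equation and invoke: (i) the contracted second Bianchi identity $\bi_g \Ric_g = 0$; (ii) a Bochner-type commutation $\bi_g \nabla^2_g u = -\tfrac{1}{2} d(\Delta_g u) - \Ric_g(\nabla u,\cdot)$; (iii) $\Delta_g u = 0$; and (iv) the Ricci identity \eqref{equation:Bianchi-vector} specialized to $\bi_g \D_g F = -\tfrac{1}{2}(\Delta_g F + \Ric_g(F,\cdot))$. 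Combining these yields a homogeneous equation of the form
\begin{align*}
\Delta_g F + \Ric_g(F,\cdot) - R_g\, du = 0.
\end{align*}
Taking the $g$-trace of the interior equation and again using $\Delta_g u = 0$ gives the auxiliary relation $u R_g = -\Div_g F$, which expresses $R_g\, du$ as a first-order expression in $F$ with small coefficients. The result is a homogeneous linear second-order system for $F$ whose principal part is the Laplacian (up to a small perturbation when $(g,u)$ is near $(\bar g, 1)$), with $F|_\Sigma = 0$ and $F \in \C^{1,\alpha}_{-q-1}$. Lemma~\ref{lemma:PDE} applied at $\bar g$, extended to $g$ by a standard operator-perturbation argument, then forces $F \equiv 0$.

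The main technical point is the elliptic uniqueness step for $F$. One must verify that the decay weight $-q-1$ lies either in the isomorphism range $(2-n, 0)$ or the injectivity range $\delta < 2-n$ of Lemma~\ref{lemma:PDE} — for $q \in (\tfrac{n-2}{2}, n-2)$ one checks this fits within the injectivity (or isomorphism) regime — and that the (small) lower-order perturbation of $\Delta$ preserves this injectivity on vector fields vanishing on $\Sigma$. Smallness of the perturbation follows from smallness of $\|g - \bar g\|_{\C^{2,\alpha}_{-q}}$ and $\|u - 1\|_{\C^{2,\alpha}_{-q}}$ on the neighborhood $\mathcal{U}$, which is chosen so that the standard perturbation-of-isomorphism argument applies. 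No separate boundary regularity issue arises, since $F \in \C^{1,\alpha}_{-q-1}$ up to $\Sigma$ directly from its defining expression and the assumed regularity of $(g,u)$.
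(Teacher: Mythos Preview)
Your argument follows essentially the same gauge-propagation route as the paper, arriving at the same elliptic system for $F = \bi g + du$ (after substituting $R_g = -u^{-1}\Div_g F$, your equation becomes exactly the paper's $\Delta_g V + u^{-1}(\Div_g V)\,du + \Ric_g(V,\cdot) = 0$). The uniqueness conclusion via Lemma~\ref{lemma:PDE} and perturbation is also identical.

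There is, however, a genuine regularity gap in your derivation. You write ``apply $\bi_g$ to the interior equation,'' but with $(g,u) \in \C^{2,\alpha}_{-q}$ the tensor $-u\Ric_g + \nabla^2_g u - \D_g F$ is only $\C^{0,\alpha}$, so you cannot differentiate it classically; in particular, the contracted Bianchi and Bochner steps you invoke require three derivatives of $g$ and $u$. The paper confronts this explicitly: it establishes the identity $\bi_g \D_g(\bi g + du) = -\tfrac{1}{2} R_g\,du$ only in the weak sense, by pairing $-u\Ric_g + \nabla^2_g u$ against $\bi_g^* X$ for compactly supported $X$ and appealing to the variational identity \eqref{equation:cokernel1} from Proposition~\ref{proposition:cokernel}. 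Only after the weak equation is in hand does elliptic regularity upgrade $F$ to $\C^{2,\alpha}_{-q-1}$ and allow the uniqueness argument. You should either restrict to $(g,u) \in \C^{3,\alpha}_{-q}$ (which the paper notes would make your direct computation legitimate) or supply the weak-formulation step.

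On the converse: your observation that $\psi = \mathrm{Id}$ works for the statement as literally written is correct. The paper instead cites Lemma~\ref{lemma:both-gauge}, which supplies a $\psi$ achieving both gauges simultaneously; this is overkill for the lemma but is what is actually used downstream.
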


\begin{proof}
Let $(g, u)\in \mathcal{U}$ solve $T(g, u)=(0,0,0,\tau,\phi)$. We will  prove $\bi g+du=0$ in $\mathbb R^n\setminus\Omega$, provided $\mathcal{U}$ sufficiently small. 
Taking the trace of the first tensor equation in $T(g, u)=(0,0,0,\tau,\phi)$ and using the harmonic equation of $u$, we get 
\[ 
	R_g= -u^{-1} \tr_g \D_g (\bi g+du) = -u^{-1} \Div_g (\bi g+ du).
\]
We show that $\bi g+du$ weakly solves the following equation:
\begin{align}
	\bi_g \D_g (\bi g+du) &= -\tfrac{1}{2} R_g du\label{equation:1}\\
	&=\tfrac{1}{2} u^{-1} du \Div_g (\bi g+ du).\notag
\end{align}
The second identity follows by substituting $R_g$ by the earlier computation. Since by assumption $\mathcal D_g(\beta g+du)=-u{\rm Ric}_g+\nabla^2_gu$, to prove \eqref{equation:1} it suffices to show for all vector fields $X\in \C^\infty_c(\mathbb{R}^n\setminus \Omega)$
\begin{align*}
	\int_{\mathbb{R}^n\setminus \Omega} (-u\Ric_g + \nabla_g^2 u) \cdot \bi_g^*X \, d\mathrm{vol}_g = \int_{\mathbb{R}^n\setminus \Omega}- \tfrac{1}{2} R_g  X(u) \, d\mathrm{vol}_g.
\end{align*} 
This identity holds due to \eqref{equation:cokernel1} and that $u$ is harmonic. (Note that if $(g, u)$ were $\C^3_{\mathrm{loc}}$, we can simply apply $\bi_g$ on the first equation of $T(g, u)=(0,0,0,\tau,\phi)$ to obtain \eqref{equation:1}.)

Recall that $\bi_g \D_gV = -\tfrac{1}{2} \Delta_g V -\tfrac{1}{2} \Ric_g(V, \cdot)$ for any vector field $V$. If we denote by $V= \bi g+du \in \C^{1,\alpha}_{-q-1}(\mathbb{R}^n\setminus \Omega)$, then the previous equation with the boundary condition $ \bi g+du=0$ on $\Sigma$ implies that $V$ weakly solves the boundary value problem
\begin{align*}
	\Delta_g V + u^{-1} du \Div_g V + \Ric_g(V, \cdot)&=0\quad \mbox{ in } \mathbb{R}^n\setminus \Omega \\
	V&=0\quad \mbox{ on } \Sigma.
\end{align*}
By elliptic regularity,  $V\in \C^{2,\alpha}_{-q-1}(\mathbb{R}^n\setminus \Omega)$. If  $\mathcal{U}$ is sufficiently small, for $(g, u)\in \mathcal{U}$ the operator on $V$ in the previous first equation is sufficiently close to $\Delta$ in the operator norm, and thus by Lemma~\ref{lemma:PDE} the above boundary value problem has only the trivial solution $V=0$. 


For the converse statement, we can simply take the diffeomorphism $\psi$ obtained from Lemma~\ref{lemma:both-gauge}.
\end{proof}

According to the above discussions, we can reformulate Theorem~\ref{theorem:criterion} into the following theorem for the operator $T$.

\begin{manualtheorem}{\ref{theorem:criterion}$^\prime$}\label{theorem:T}
Suppose the boundary $\Sigma$ is static regular in $\mathbb R^n\setminus\Omega$. Then there exist positive constants $\epsilon_0, C>0$ such that for each $0<\epsilon<\epsilon_0$, if $(\tau, \phi)$ satisfies $\| (\tau,\phi)- (\bar{g}^\intercal, H) \|_{\C^{2,\alpha}(\Sigma)\times \C^{1,\alpha}(\Sigma)}<\epsilon$, there exists a unique $(g, u)\in \mathcal{M}$ with $\|(g, u) - (\bar{g}, 1) \|_{\C^{2,\alpha}_{-q}(\mathbb{R}^n \setminus \Omega)} < C\epsilon$ such that $T(g, u) = (0, 0, 0, \tau, \phi)$ and $(g, u)$ satisfies the orthogonal gauge \eqref{equation:orthogonal-gauge}.
\end{manualtheorem}

One may wish for a stronger statement that the linearization of $T$ at $(\bar{g},1)$ is surjective, and thus $T(g, u)$ is solvable for \emph{all} values sufficiently close to $T(\bar{g},1)$  by Local Surjectivity Theorem, not only the values that take the particular form $(0, 0, 0, \tau, \phi)$. Unfortunately, as we will see in the next lemmas, the linearization is never surjective.

Denote the  linearization  of $T$ at $(\bar{g}, 1)$ by $L:\C^{2,\alpha}_{-q}(\mathbb{R}^n\setminus \Omega) \to\C^{0,\alpha}_{-q-2}(\mathbb{R}^n\setminus \Omega)\times\C^{1,\alpha}(\Sigma)\times\mathcal{B}(\Sigma)$. For $(h, v) \in \C^{2,\alpha}_{-q}(\mathbb{R}^n \setminus \Omega)$,
\begin{align}\label{equation:linear}
L(h, v)=
\begin{array}{l}
	\left\{ \begin{array}{l}-D\Ric(h)+ \nabla^2 v - \D(\bi h+ dv) \\
	\Delta v	\end{array} \right. \quad \mbox{ in } \mathbb{R}^n\setminus \Omega \\
 	\left\{ \begin{array}{l} \bi h + dv\\
	h^\intercal\\
	DH(h)
	\end{array} \right. \quad \mbox{ on } \Sigma.
\end{array}
\end{align}

By \eqref{equation:Ricci-gauge}, the first equation in \eqref{equation:linear} is reduced to the Laplace equation:
\begin{align*}
-D\Ric(h)+ \nabla^2 v- \D(\bi h+ dv) = \tfrac{1}{2} \Delta h.
\end{align*}
Based on the observation of Anderson and Khuri for the corresponding harmonic-gauged Ricci operator in the warped product space~\cite[Proposition 3.1]{Anderson-Khuri:2013}, the map $L$ is Fredholm of index zero. 

\begin{lemma}[Cf.  {\cite[Proposition 3.1]{Anderson-Khuri:2013}}]\label{lemma:Fredholm}
The linearized operator $L: \C^{2,\alpha}_{-q}(\mathbb{R}^n\setminus \Omega) \to \C^{0,\alpha}_{-q-2}(\mathbb{R}^n \setminus \Omega)\times \C^{1,\alpha}(\Sigma)\times \mathcal{B}(\Sigma)$ is Fredholm of index zero.
\end{lemma}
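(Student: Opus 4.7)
The plan is to view $L$ as an elliptic boundary value problem of Agmon--Douglis--Nirenberg type on the asymptotically flat manifold $\mathbb{R}^n\setminus\Omega$, then invoke the Fredholm theory of Lockhart--McOwen / Bartnik on weighted H\"older spaces. Concretely, by \eqref{equation:Ricci-gauge} the interior principal part of $L$ is just the Laplacian acting componentwise on $(h,v)$:
\begin{align*}
-D\Ric(h) + \nabla^2 v - \D(\bi h + dv) = \tfrac{1}{2}\Delta h, \qquad \Delta v = \Delta v,
\end{align*}
so the interior symbol is diagonal, of order $2$, and elliptic with $\frac{n(n+1)}{2}+1$ unknowns. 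The boundary conditions split into $\tfrac{n(n-1)}{2}$ zeroth-order (Dirichlet-type) conditions from $h^\intercal$, together with $n+1$ first-order conditions from $\bi h + dv$ and $DH(h)$. The count $\tfrac{n(n-1)}{2} + (n+1) = \tfrac{n(n+1)}{2}+1$ matches the number of interior unknowns, which is the correct count for a well-posed second-order elliptic boundary system.

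The first technical step is to verify the Lopatinski--Shapiro (complementing) condition at each point of $\Sigma$. After freezing coefficients at $(\bar g, 1)$, rectifying $\Sigma$ locally to a half-space $\{x_0 \ge 0\}$ with $\nu = -\partial/\partial x_0$, and Fourier transforming in the tangential directions with frequency $\xi \ne 0$, the interior operator becomes a diagonal ODE system of the form $(-\partial_0^2 + |\xi|^2)$ acting on each component. One then checks that for each boundary data the only solution in the space of exponentially decaying modes $e^{-|\xi| x_0}(\cdot)$ is the trivial one. Using the formulas for $\bi$, $DH$, and $DA$ in Lemma~\ref{lemma:formula} (with $g = \bar g$, $A = 0$ at a model flat boundary), the boundary system becomes linear algebraic in the Fourier modes and can be checked by hand; the case of curved $\Sigma$ in Euclidean space differs only by zeroth-order terms which do not affect the symbol. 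I expect this linear-algebra verification to be the main technical burden, as it requires tracking how $h(\nu,\nu)$, $\omega = h(\nu,\cdot)^\intercal$, and $v$ couple through both $\bi h + dv$ and $DH(h)$; it is essentially the same computation Anderson--Khuri carry out for their warped-product harmonic-gauged Ricci operator, adapted to our $(g,u)$ framework.

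Once ellipticity and the complementing condition are established, Fredholm of index zero follows from standard theory: on an asymptotically flat exterior domain, weighted Schauder estimates and the surjectivity-mod-finite-dimensional-kernel results of Lockhart--McOwen (together with the version of Lemma~\ref{lemma:PDE} for elliptic systems) give the Fredholm property provided the weight $-q$ is non-exceptional, which holds by our assumption $q \in (\frac{n-2}{2}, n-2)$. The index zero statement is then obtained by a homotopy argument: connect $L$ through a continuous family of elliptic boundary operators (for example, deforming the first-order boundary conditions $\bi h + dv$ and $DH(h)$ to pure Neumann conditions on $h(\nu,\cdot)$ and on $v$) to a model operator which decouples into scalar Laplacians on each component with Dirichlet and Neumann data. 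Since each scalar Laplacian with Dirichlet (respectively Neumann) boundary condition on $\Sigma$ and decay at infinity is Fredholm of index zero by Lemma~\ref{lemma:PDE}, the homotopy invariance of the Fredholm index yields $\operatorname{ind} L = 0$.
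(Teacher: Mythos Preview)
Your proposal is correct and follows essentially the same approach as the paper: verify the Lopatinski--Shapiro condition to obtain the Fredholm property via Lockhart--McOwen, then establish index zero by a homotopy through elliptic boundary operators to a decoupled Laplacian with mixed Dirichlet/Neumann data. The paper makes the homotopy explicit by writing down a one-parameter family $L^{(s)}$ whose endpoint $L^{(1)}$ has exactly the boundary conditions you describe (Dirichlet on $h^\intercal$, Neumann on $h(\nu,\cdot)$ and $v$) and is verified directly to be an isomorphism.
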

\begin{proof}
The proof is essentially the same as that of~\cite[Proposition 3.1]{Anderson-Khuri:2013} for the corresponding operator.  So we only give an outline how it adapts in our setting.  By examining the principle symbol, the boundary condition of $L$ is elliptic (i.e., it satisfies the  Lopatinski-Shapiro condition), so the operator $L$ is Fredholm, see  \cite[Theorem 20.1.2]{Hormander:2007} and  \cite[Theorem 1.3]{Lockhart-McOwen:1985}.  To show that the index of $L$ is zero, we argue as in \cite{Anderson-Khuri:2013} that $L$ is homotopic to a Fredholm operator of index zero through a path of Fredholm operators. Since a path is not explicitly written down in \cite{Anderson-Khuri:2013}, we include it for completeness. Let $s\in [0,1]$ and define the family of operators $L^{(s)}: \C^{2,\alpha}_{-q}(\mathbb{R}^n\setminus \Omega) \to \C^{0,\alpha}_{-q-2}(\mathbb{R}^n \setminus \Omega)\times \C^{1,\alpha}(\Sigma)\times \mathcal{B}(\Sigma)$ by varying only the boundary equations:
\begin{align*}
\begin{array}{l}
	L^{(s)}(h,v)=
	\begin{array}{l}\left\{ \begin{array}{l}\tfrac{1}{2} \Delta h \\
	\Delta v	\end{array} \right. \quad \mbox{ in } \mathbb{R}^n\setminus \Omega \\
 	\left\{ \begin{array}{l} (1-s)(\bi h+dv)(\nu)+s\nu(v)\\
	(1-s)(\bi h+dv)^\intercal-s\nabla_\nu\omega\\
	h^\intercal\\
	(1-s)DH(h)+s(\nabla_\nu h)(\nu,\nu)
	\end{array} \right. \quad \mbox{ on } \Sigma
\end{array}
\end{array}
\end{align*}
where $\omega(e_a) := h(\nu, e_a)$ is a one-form defined on the tangent bundle of $\Sigma$. By checking the principal symbol, the boundary condition in $L^{(s)}$ is elliptic and thus $L^{(s)}$ is Fredholm for each $s\in[0,1]$. Note that $L^{(0)}=L$ and $L^{(1)}$ is the operator with a standard Dirichlet/Neumann boundary condition. It is straightforward to verify that $L^{(1)}$ is an isomorphism, and thus of index $0$. 
\end{proof}

The kernel space of the linearized operator $L$, however,   is never trivial and hence $L$ is not surjective.  We end this section with the fact that $\Ker L$ always contains an $N$-dimensional subspace, generated by $\mathcal{X}_0$. The lemma is a direct consequence of Corollary~\ref{corollary:trivial}. A fundamental reason that $\Ker L$ is not trivial is that, as is shown in Lemma~\ref{lemma:both-gauge}, the static-harmonic gauge by itself cannot guarantee uniqueness of solutions to the operator $T$.

\begin{lemma}The kernel space of $L$ contains an $N$-dimensional subspace:
\[
	\Ker L \supseteq \big\{ (h, 0): h=L_X \bar{g}, \mbox{ for some } X \in \mathcal{X}_0\big\}.
\] 
\end{lemma}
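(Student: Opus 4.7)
The plan is to verify directly that for every $X \in \mathcal{X}_0$, the pair $(h, v) := (L_X \bar g, 0)$ lies in $\Ker L$ by checking each of the five components of \eqref{equation:linear}. All the required facts have already been assembled in Sections~\ref{section:prep} and~\ref{section:gauge}, so no new computation should be needed; this is essentially a bookkeeping lemma.

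First, for the interior tensor equation, I will invoke Example~\ref{example:vector} to conclude that $D\Ric(L_X \bar g) = 0$, since $L_X \bar g$ is a deformation generated from the diffeomorphisms of $X$ at the Ricci flat metric $\bar g$. For the gauge piece $\D(\bi h + dv)$, the key identity is \eqref{equation:Bianchi-vector} applied at $\bar g$: since $\Ric_{\bar g} = 0$, we have $\bi(L_X \bar g) = -\Delta X$, and the defining property of $\mathcal{X}_0$ is exactly $\Delta X = 0$ in $\mathbb{R}^n \setminus \Omega$. Thus $\bi h + dv = 0$ identically in $\mathbb{R}^n \setminus \Omega$, which disposes of both the interior gauge term and the first boundary equation. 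The harmonic equation $\Delta v = 0$ is trivial since $v \equiv 0$.

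For the two remaining boundary equations, I will use that $X = 0$ on $\Sigma$ by definition of $\mathcal{X}_0$. The formulas in Example~\ref{example:vector} for $(L_X \bar g)^\intercal$ and $DA|_{\bar g}(L_X \bar g)$ depend only on the normal component $\eta$ and the tangential part $X^\intercal$ of $X$ on $\Sigma$; both vanish when $X|_\Sigma = 0$ (with $K$ taken to be the zero Killing field). Hence $h^\intercal = 0$ and $DA(h) = 0$ on $\Sigma$. Tracing the latter with $\bar g^\intercal$ and using $h^\intercal = 0$ gives $DH(h) = 0$. Alternatively, one can cite Corollary~\ref{corollary:trivial}, which already identifies deformations of the form $L_X \bar g$ with $X \in \mathcal{X}_0$ as Ricci flat deformations satisfying the Cauchy boundary condition (and the harmonic gauge).

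Finally, to confirm the dimension count, I will observe that the linear map $X \mapsto (L_X \bar g, 0)$ from $\mathcal{X}_0$ into $\Ker L$ is injective: if $L_X \bar g = 0$, then $X$ is a Killing field of $\bar g$ in $\mathbb{R}^n \setminus \Omega$ vanishing on the connected hypersurface $\Sigma$, which forces $X \equiv 0$. Together with $\Dim \mathcal{X}_0 = N$ from Lemma~\ref{lemma:X_0}, this gives the $N$-dimensional subspace claimed. No step looks delicate; the only mild subtlety is making sure the boundary value formulas from Example~\ref{example:vector} are applied in the setting where the background Killing field is taken to be zero, so that $X|_\Sigma = 0$ really yields $h^\intercal = 0$ and $DA(h) = 0$ rather than merely $h^\intercal = L_K \bar g^\intercal = 0$ for a nonzero Killing $K$.
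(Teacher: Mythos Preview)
Your proposal is correct and takes essentially the same approach as the paper: the paper simply notes that the lemma is a direct consequence of Corollary~\ref{corollary:trivial}, and your verification unpacks exactly what that corollary provides (Ricci-flatness, Cauchy boundary condition, and harmonic gauge for $L_X\bar g$ with $X\in\mathcal{X}_0$), together with the obvious $v=0$ checks. Your explicit injectivity argument for the dimension count is a small addition not written out in the paper but is correct and helpful.
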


\subsection{Kernel and range for a static regular boundary}
From now on, we assume the boundary $\Sigma$ is static regular in $\mathbb R^n\setminus\Omega$. The next lemma says that  the kernel elements of $L$ are exactly those arising from $X\in \mathcal{X}_0$. 

\begin{lemma}\label{lemma:kernel}
If the boundary $\Sigma$ is static regular in  $\mathbb R^n\setminus\Omega$, then 
\[
	\Ker L=\big\{ (h, 0): h=L_X \bar{g}, \mbox{ for some } X \in \mathcal{X}_0\big\}.
\] 
As a direct consequence, $\Ker L$ is isomorphic to $\mathcal{X}_0$ and hence 
\[
\Dim \Ker L=N.
\] 
\end{lemma}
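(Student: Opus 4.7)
The containment $\supseteq$ is immediate: for $X\in\mathcal{X}_0$, harmonicity of $X$ gives $\bi L_X\bar g=-\Delta X=0$, and Example~\ref{example:vector} shows $L_X\bar g$ has zero Cauchy (in particular, zero Bartnik) data on $\Sigma$, so $L(L_X\bar g,0)=0$. The substance lies in the reverse direction.

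Let $(h,v)\in\Ker L$. Using $D\Ric(h)=-\tfrac{1}{2}\Delta h-\D\bi h$ together with $\D dv=\nabla^2 v$ at the Euclidean metric, the first interior equation of $L(h,v)=0$ collapses to $\Delta h=0$. Thus $(h,v)$ satisfies $\Delta h=0$, $\Delta v=0$ in $\mathbb{R}^n\setminus\Omega$, together with $\bi h+dv=0$, $h^\intercal=0$, $DH(h)=0$ on $\Sigma$. The one-form $W:=\bi h+dv\in\C^{1,\alpha}_{-q-1}$ satisfies $\Delta W=0$ (since $\bi$ and $d$ have constant coefficients in Cartesian coordinates and commute with $\Delta$), vanishes on $\Sigma$, and decays; applying Lemma~\ref{lemma:PDE} to $W^\sharp$ (the weight $-q-1$ falls into either the injectivity regime (2) or (3) of that lemma depending on whether $q<n-3$ or $q>n-3$), $W\equiv0$ throughout the exterior. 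Substituting back gives $-D\Ric(h)+\nabla^2 v=0$, so $(h,v)$ is a static vacuum deformation at $(\bar g,1)$. The static-regular hypothesis then yields $DA(h)=0$ on $\Sigma$, so $h$ carries full Cauchy boundary data.

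I next form the tensor $\tilde h:=h+\tfrac{2}{n-2}v\bar g$. Using the conformal Ricci linearization $D\Ric(v\bar g)=-\tfrac{n-2}{2}\nabla^2 v-\tfrac{1}{2}(\Delta v)\bar g$ and $\bi(v\bar g)=\tfrac{n-2}{2}dv$, one verifies $D\Ric(\tilde h)=0$ and $\bi\tilde h=\bi h+dv=0$; i.e., $\tilde h$ is a Ricci-flat deformation in the harmonic gauge. My plan is to establish $v|_\Sigma=0$ and $\nu(v)|_\Sigma=0$, whereupon $\tilde h$ inherits the full Cauchy boundary condition from $h$; Corollary~\ref{corollary:trivial} then yields $\tilde h=L_X\bar g$ for some $X\in\mathcal{X}_0$, and unique continuation for the exterior harmonic Dirichlet problem with trivial Cauchy data forces $v\equiv0$ and $h=L_X\bar g$. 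The dimension count $\Dim\Ker L=N$ then follows from Lemma~\ref{lemma:X_0}.

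The principal obstacle is the rigidity assertion $v|_\Sigma=\nu(v)|_\Sigma=0$. My approach is to exploit the Green-type identity~\eqref{equation:Green-boundary}: using $DA(h)=0$, it reads
\[
\int_\Sigma \bigl[(vA-\nu(v)\bar g^\intercal)\cdot k^\intercal+2v\,DH(k)\bigr]\,d\sigma=0
\]
for every static vacuum deformation $(k,w)\in\C^{2,\alpha}_{-q}$. Testing with $(k,w)=(L_Y\bar g,0)$ for collar-supported vector fields $Y$ with $Y|_\Sigma=\eta\nu$ — using the explicit formulas $(L_Y\bar g)^\intercal=2\eta A$ and $DH(L_Y\bar g)=-\Delta_\Sigma\eta-\eta|A|^2$ — the identity, after integration by parts in $\eta$, reduces to $\int_\Sigma\eta(\Delta_\Sigma v+H\nu(v))\,d\sigma=0$ for arbitrary $\eta$, yielding $\Delta_\Sigma v+H\nu(v)=0$ on $\Sigma$. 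Combined with the collar decomposition $\Delta v=\partial_\nu^2 v+H\partial_\nu v+\Delta_\Sigma v=0$ near $\Sigma$, this forces $\partial_\nu^2 v=0$ on $\Sigma$; analogous testing with tangential $Y$ produces the pointwise identity $A\cdot d^\Sigma v=d^\Sigma\nu(v)$ on $\Sigma$. These overdetermined boundary identities, together with $\Delta v=0$ in the exterior and the decay $v=O(|x|^{-q})$, should force $v\equiv0$ — a spectral rigidity for the exterior Dirichlet-to-Neumann operator on $(\Sigma,\bar g^\intercal)$ that (as the sphere case, where a separation-of-variables argument immediately eliminates every mode, suggests) follows from propagating the constraints mode-by-mode against the asymptotic harmonic expansion, but whose verification in the required generality is the delicate technical heart of the argument.
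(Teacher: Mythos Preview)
Your argument is correct up to and including the derivation of the hidden boundary identities $\Delta_\Sigma v+H\nu(v)=0$, $\partial_\nu^2 v=0$, and $A(\nabla^\Sigma v,\cdot)=d^\Sigma\nu(v)$ on $\Sigma$. The genuine gap is that you stop there: you declare that these identities ``should force $v\equiv 0$'' via some spectral or mode-by-mode argument, but you do not carry this out, and the route you sketch is neither the paper's nor clearly available for a general $\Sigma$.

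The paper closes this gap with an elementary trick that you are one observation away from. Your two identities are exactly the statement $\nabla^2 v(\nu,\cdot)=0$ on $\Sigma$: the normal-normal component is your $\partial_\nu^2 v=0$, and for a tangential $e_a$ one computes $\nabla^2 v(e_a,\nu)=e_a(\nu(v))-\nabla_{\nabla_{e_a}\nu}v=e_a(\nu(v))-A_{ab}\nabla_{e_b}v$, which vanishes by your Codazzi-type identity. Now set $f:=\partial v/\partial x_i$ in Cartesian coordinates. Then $f$ is harmonic in $\mathbb{R}^n\setminus\Omega$, decays at rate $O(|x|^{-q-1})$, and has Neumann data $\nu(f)=\nabla^2 v(\nu,\partial/\partial x_i)=0$ on $\Sigma$; hence $f\equiv 0$. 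Varying $i$ gives $\nabla v\equiv 0$, and the decay forces $v\equiv 0$. Once $v=0$ there is no need for the detour through $\tilde h$: $h$ itself is a Ricci-flat deformation with $\bi h=0$ and Cauchy boundary data, and Corollary~\ref{corollary:trivial} applies directly.
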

\begin{proof}
Let $(h, v) \in \Ker L$. By elliptic regularity, $(h, v)\in \C^\infty_{-q}(\mathbb{R}^n\setminus \Omega)$. By applying $\bi$ on the first equation of \eqref{equation:linear} and using $\bi (-D\Ric (h) + \nabla^2 v)=0$, we have 
\[
	\Delta (\bi h+ dv) = 0 \quad \mbox{ in } \mathbb{R}^n\setminus \Omega.
\]
 Then by the  boundary condition $\bi h+ dv=0$ on $\Sigma$ and the fall-off rate of $\bi h+ dv$, we see that $\bi h+ dv$ is identically zero in $\mathbb{R}^n\setminus \Omega$ and thus $(h,v)$ is a static vacuum deformation. By the assumption that $\Sigma$ is static regular, we have $DA(h)=0$ on $\Sigma$.

 The lemma would follow by showing that $v$ vanishes identically in $\mathbb{R}^n\setminus \Omega$. Once we show that $v$ is zero, we have $(h,0)\in{\rm Ker}L$, i.e. $h$ is a Ricci flat deformation and satisfies $\bi h=0$, and thus $h=L_X\bar{g}$ for some $X\in \mathcal{X}_0$  by Corollary~\ref{corollary:trivial}.
 
 Recall that $v$ is a harmonic function in $\mathbb{R}^n\setminus \Omega$. Although $v$ is not  assumed explicitly to satisfy a boundary condition, we observe that  other equations of $(h, v)$ imply that $v$ satisfies the following (hidden) boundary conditions on $\Sigma$:
\begin{align}
	\Delta_\Sigma v + \nu(v) H&=0  \label{equation:boundary}\\
	A ( \nabla^\Sigma v, \cdot) - d(\nu(v))&=0 \label{equation:boundary-extra}
\end{align}
where $\nabla^\Sigma, d$ are the covariant derivative and exterior derivative on $\Sigma$, respectively. Let $X$ be a compactly supported vector field in $\mathbb{R}^n$. 
Applying \eqref{equation:Green-boundary} to the static vacuum deformation pairs $(h,v)$ and $(k, w)=(L_X \bar{g}, 0)$ and noting $DA(h)=0$, we get  
\[
	\int_\Sigma \Big\langle \big(vA-\nu(v) \bar{g}^\intercal , 2v \big), \big((L_X \bar{g})^\intercal, DH(L_X \bar{g}) \big) \Big \rangle \, d\sigma=0.
\]
If we let $X=\eta \nu$ on $\Sigma$ for a smooth function $\eta$, then $L_X \bar{g} = 2\eta A$ and $DH(L_X \bar{g}) = -\Delta_\Sigma \eta - \eta |A|^2$ where $\Delta_\Sigma$ is the Laplace operator of $(\Sigma, \bar{g}^\intercal)$. Therefore, the previous identity implies that 
\[
	\int_\Sigma \eta \left(\Delta_\Sigma v + \nu(v) H\right)\, d\sigma=0.
\]
  Since $\eta$ is arbitrary, it implies \eqref{equation:boundary}.  (See \cite[Section 2]{Anderson:2015} for a different proof of  \eqref{equation:boundary}.) If we let $X$ be tangential to $\Sigma$ and apply integration by parts,  we derive
\begin{align*}
	0&= \int_\Sigma \Big\langle \big(vA-\nu(v) \bar{g}^\intercal , 2v \big), \big(L_{X^\intercal} \bar{g}^\intercal, X^\intercal (H) \big) \Big \rangle \, d\sigma\\
	&=\int_\Sigma 2 \Big(- \mathrm{div}_\Sigma \big(vA-\nu(v) \bar{g}^\intercal\big)  + v d H\Big)\cdot X^\intercal\, d\sigma.
\end{align*}
Since $X^\intercal$ is arbitrary and by the Codazzi equation $\mathrm{div}_\Sigma A = dH$ on~$\Sigma$, we derive \eqref{equation:boundary-extra}. Furthermore,  \eqref{equation:boundary} and \eqref{equation:boundary-extra} for a harmonic function are equivalent to the following boundary condition:
\[
\nabla^2 v(\nu, \cdot)=0 \quad \mbox{ on } \Sigma.
\] 
To see that, since $v$ is harmonic, we have $0=\Delta v = \Delta_\Sigma v+ \nabla^2 v(\nu, \nu) + H\nu(v)$. Together with \eqref{equation:boundary}, it implies $\nabla^2 v(\nu, \nu)=0$ on $\Sigma$.  Letting $\{ e_1, \dots, e_{n-1},\nu\}$ be a local orthonormal frame, we write \eqref{equation:boundary-extra} as 
\begin{align*}
	0 &=\sum_b A_{ab} \nabla_{e_b} v - \nabla_{e_a} (\nu (v))\\
	&=\sum_b A_{ab} \nabla_{e_b} v - \big( \nabla^2 v(e_a, \nu) + \nabla_{\nabla_{e_a} \nu} v\big)\\
	&= - \nabla^2 v(e_a, \nu). 
\end{align*}

Lastly, we show that a harmonic function $v\in \C_{-q}^{2,\alpha}(\mathbb{R}^n\setminus \Omega)$ satisfying $\nabla^2 v(\nu, \cdot)=0$ on $\Sigma$ must vanish identically in $\mathbb{R}^n\setminus \Omega$. Fix $i=1,\dots, n$ and define $f = \frac{\partial v}{\partial x_i}$ where $( x_1,\dots, x_n)$ are the Cartesian coordinates. Since $\Delta v=0$, we also have $\Delta f=0$ in $\mathbb{R}^n\setminus \Omega$. Compute the Neumann boundary data $\nu(f)=  \nabla^2 v(\nu,  \frac{\partial}{\partial x_i})=0$ on $\Sigma$. Since $f$ decays at infinity, we have $f\equiv 0$ in $\mathbb{R}^n\setminus \Omega$ and thus $\nabla v=0$ by varying $i$. Since $v\to 0$ at infinity, we then conclude that $v$ is identically zero.

\end{proof}

In the following we define the Banach spaces $\mathcal{E}$ and $\mathcal{Q}$ associated with the operator $L$. We recall that the \emph{dual space} of a normed linear space $\mathcal{A}$, denoted by $\mathcal{A}^*$, is the space of all bounded linear functionals on~$\mathcal{A}$. 
\begin{definition}\label{definition:spaces}
\begin{enumerate}
\item The Banach space $\mathcal{E}$ is a complementing space of $\Ker L$ in $\C^{2,\alpha}_{-q}(\mathbb{R}^n\setminus \Omega)$,
consisting of pairs of a symmetric $(0,2)$-tensor $h$ and a scalar-valued function $v$ in $\mathbb{R}^n\setminus \Omega$, defined by
\[
	\mathcal{E}=\left\{ (h, v)\in \C^{2,\alpha}_{-q}(\mathbb{R}^n\setminus \Omega): \int_{\mathbb{R}^n\setminus \Omega} \big( h\cdot L_X \bar{g}  \big) \rho \, d\mathrm{vol} =0  \mbox{ for all } X\in \mathcal{X}_0\right\}.
\]
Recall the weight function $\rho(x) =(1+|x|^2)^{-1}$. 
\item
 The Banach space $\mathcal{Q}$, as a subspace of 
 the dual space of the codomain of $L$, is defined by 
\begin{align*}
	\mathcal{Q}=\Big\{ \kappa
	\in &\big(\C^{0,\alpha}_{-q-2}(\mathbb{R}^n \setminus \Omega)\times \C^{1,\alpha}(\Sigma)\times \mathcal{B}(\Sigma)\big)^*
	: \\
	\kappa&=\big(2\bi^* X,\, -\Div X, \, -2\bi^* X(\nu, \cdot), \, 0, \, 0\big) \mbox{ for some } X\in \mathcal{X}_0\Big\}.
\end{align*}
We may write $\kappa(X)$ for elements in $\mathcal{Q}$ to emphasize its dependence on~$X$. Here, $(2\bi^* X, -\Div X)\in \big(\C^{0,\alpha}_{-q-2}(\mathbb{R}^n \setminus \Omega)\big)^*$ is defined on $\mathbb{R}^n\setminus \Omega$, $2\bi^* X(\nu, \cdot)\in \big(\C^{1,\alpha}(\Sigma)\big)^*$ denotes the one-form that acts on  vector fields along $\Sigma$ (normal or tangential), and the last two components $(0,0)$ of $\kappa(X)$ belong to $(\mathcal{B}(\Sigma))^*$. The evaluation of $\kappa\in \mathcal{Q}$ on $\C^{0,\alpha}_{-q-2}(\mathbb{R}^n \setminus \Omega)\times \C^{1,\alpha}(\Sigma)\times \mathcal{B}(\Sigma)$ is defined by the $\mathcal{L}^2$-pairing. 
\end{enumerate}
\end{definition}

 We remark that $\mathcal{E}$ is defined so that  $(g, u)\in  \big((\bar{g}, 1)+\mathcal{E}\big) \cap \mathcal{M}$ if and only if  $(g, u)$ satisfies the orthogonal gauge.  We also remark that $\mathcal{Q}$ is isomorphic to $\mathcal{X}_0$, and thus $\Dim \mathcal{Q}=N$.

Let $(\Range L)^\perp\subset  \big(\C^{0,\alpha}_{-q-2}(\mathbb{R}^n \setminus \Omega)\times \C^{1,\alpha}(\Sigma)\times \mathcal{B}(\Sigma)\big)^*$ denote the \emph{annihilator space  of $\Range  L$}. Namely, $ (\Range L)^\perp$ consists of bounded linear functionals  whose evaluation on $L(h, v)$ is zero for all $(h, v)\in \C^{2,\alpha}_{-q}(\mathbb{R}^n\setminus \Omega)$.

\begin{lemma}\label{lemma:perp}
If the boundary $\Sigma$ is static regular in $\mathbb{R}^n\setminus \Omega$, then
\[
	(\Range  L)^\perp=  \mathcal{Q}.
\]
\end{lemma}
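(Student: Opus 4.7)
My plan is to establish the equality by the inclusion $\mathcal{Q}\subseteq(\Range L)^\perp$ followed by a dimension count. The dimensional side is quick: by Lemma~\ref{lemma:Fredholm} the operator $L$ is Fredholm of index zero, so $\Dim(\Range L)^\perp=\Dim\Ker L$. Under the static-regular hypothesis, Lemma~\ref{lemma:kernel} gives $\Dim\Ker L=N$, and by the isomorphism $\kappa(X)\leftrightarrow X$ from $\mathcal{X}_0$ (Lemma~\ref{lemma:X_0}) we have $\Dim\mathcal{Q}=N$ as well. So it suffices to prove the inclusion.

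To show $\mathcal{Q}\subseteq(\Range L)^\perp$, fix $X\in\mathcal{X}_0$ and $(h,v)\in\C^{2,\alpha}_{-q}(\mathbb{R}^n\setminus\Omega)$, and pair $\kappa(X)$ with $L(h,v)$. Set $V=\bi h+dv$. The pairing is
\[
\int_{\mathbb{R}^n\setminus\Omega}\!\bigl\langle(2\bi^*X,-\Div X),\,(-D\Ric(h)+\nabla^2 v-\mathcal{D}V,\,\Delta v)\bigr\rangle\,d\mathrm{vol}-\int_\Sigma 2\bi^*X(\nu,\cdot)\cdot V\,d\sigma,
\]
since the last two slots of $\kappa(X)$ are zero and we only test against $h^\intercal$ and $DH(h)$. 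Equation~\eqref{equation:cokernel2} of Proposition~\ref{proposition:cokernel} kills the $(-D\Ric(h)+\nabla^2 v,\Delta v)$ piece against $(2\bi^*X,-\Div X)$, reducing the task to verifying
\[
\int_{\mathbb{R}^n\setminus\Omega}2\bi^*X\cdot\mathcal{D}V\,d\mathrm{vol}+\int_\Sigma 2\bi^*X(\nu,\cdot)\cdot V\,d\sigma=0.
\]

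The key computation is to note that $\mathcal{D}$ and $-\Div$ are $\mathcal{L}^2$-adjoints: integrating by parts on $\mathbb{R}^n\setminus\Omega$ (the decay at infinity kills that boundary term, and at $\Sigma$ the outer normal of $\mathbb{R}^n\setminus\Omega$ is $-\nu$) yields
\[
\int_{\mathbb{R}^n\setminus\Omega}\mathcal{D}V\cdot 2\bi^*X\,d\mathrm{vol}=-\int_{\mathbb{R}^n\setminus\Omega}V\cdot\Div(2\bi^*X)\,d\mathrm{vol}-\int_\Sigma 2\bi^*X(\nu,\cdot)\cdot V\,d\sigma.
\]
The crucial identity is $\Div(2\bi^*X)=\Delta X$ on Euclidean space: unpacking $2\bi^*X=L_X\bar g-(\Div X)\bar g$, using $\bi L_X\bar g=-\Delta X$ from~\eqref{equation:Bianchi-vector} together with $\Div h=-\bi h+\tfrac12 d(\tr h)$ and $\tr L_X\bar g=2\Div X$, the gradient terms cancel leaving $\Delta X$. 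Since $X\in\mathcal{X}_0$ satisfies $\Delta X=0$, the volume term on the right vanishes and the desired cancellation follows.

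The only real content is the divergence identity $\Div(2\bi^*X)=\Delta X$, which is standard but must be verified with the correct signs; I do not expect genuine obstacles here, as the structure of $\mathcal{Q}$ in Definition~\ref{definition:spaces} was visibly designed so that the boundary contribution from integrating the gauge-fixing term $-\mathcal{D}V$ by parts is exactly absorbed by the $-2\bi^*X(\nu,\cdot)$ slot. Combining this inclusion with the equal dimensions $N$ concludes the proof.
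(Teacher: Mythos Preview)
Your proof is correct and follows essentially the same approach as the paper: the dimension count via Lemmas~\ref{lemma:Fredholm} and~\ref{lemma:kernel}, the use of \eqref{equation:cokernel2} to kill the ungauged static vacuum part, and the integration by parts on the gauge term $\D V$ against $2\bi^*X$ using $\Div(2\bi^*X)=\Delta X=0$ are exactly what the paper does. Your attention to the sign of the outward normal at $\Sigma$ and the decay at infinity matches the paper's computation.
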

\begin{proof}

Since $L$ is Fredholm of index $0$ by Lemma~\ref{lemma:Fredholm}, we have $\Dim (\Range L)^\perp=\Dim\Ker L =N$. Since $\Dim \mathcal{Q}=N$, we just need to show that $\mathcal{Q}\subset (\Range  L)^\perp$. Pairing $\kappa(X)\in \mathcal{Q}$ with an arbitrary $(h, v)\in\C^{2,\alpha}_{-q}$ gives
\begin{align*}
	 \langle \kappa(X), L(h, v)\rangle_{\mathcal{L}^2}&= \int_{\mathbb{R}^n\setminus \Omega} \Big(-D\Ric(h)+\nabla^2 v - \D(\bi h+dv)\Big) \cdot 2\bi^* X \, d\mathrm{vol} \\
	&\quad + \int_{\mathbb{R}^n\setminus \Omega}( \Delta v)(-\Div X)\, d\mathrm{vol} - \int_\Sigma (\bi h+dv) \cdot 2\bi^* X(\nu, \cdot)\, d\sigma.
\end{align*}
We rearrange the integrands and compute
\begin{align*}
	\langle \kappa(X), L(h, v)\rangle_{\mathcal{L}^2} &= \int_{\mathbb{R}^n\setminus \Omega} \Big \langle\big( -D\Ric(h)+\nabla^2 v ,  \Delta v \big),\big( 2\bi^* X, -\Div X\big) \Big\rangle \, d\mathrm{vol}\\
	&\;  - \int_{\mathbb{R}^n\setminus \Omega}  \D(\bi h+dv) \cdot 2\bi^* X \, d\mathrm{vol}-\int_\Sigma (\bi h+dv) \cdot 2\bi^* X(\nu, \cdot)\, d\sigma\\
	&= \int_{\mathbb{R}^n\setminus \Omega} (\bi h + dv)\cdot (2 \Div \bi^* X)\, d\mathrm{vol}\\
	&\; - \lim_{r\to \infty} \int_{|x|=r}(\bi h+dv) \cdot 2\bi^* X\left(\tfrac{x}{|x|}, \cdot\right)\, d\sigma\\
	&=0
\end{align*}
where the integral in the first line vanishes by \eqref{equation:cokernel2}, in the second equality we apply the integration by parts (noting that $\nu$ on $\Sigma$ points away from $\Omega$), and in the last equality, we use $ 2\Div \bi^* X = \Delta X=0$ and the boundary integral vanishes at infinity since the decay rate of the integrand is $O(r^{-2q-1})$. 
\end{proof}

\begin{remark}\label{remark:cokernelL}
The computation above explains the motivation for the definition of $\kappa(X)\in \mathcal{Q}$. We have shown in Proposition~\ref{proposition:cokernel} that the annihilator space of the operator $(h, v)\mapsto (-D\Ric(h) + \nabla^2 v, \Delta v)$ contains elements $\kappa_0(X) = (2\bi^* X, -\Div X)$ for $X\in \mathcal{X}$. We let the first two components of $\kappa(X)$ be exactly  $\kappa_0(X)$ and use the third component $-2\bi^* X(\nu, \cdot)$ of $\kappa(X)$ to take care of the  ``gauge terms'' in the definition of $L$. 
\end{remark}

In the next lemma, we identify a complementing space of $\Range L$. Intuitively, one may expect that the annihilator space $\mathcal{Q}$ complements $\Range L$, but the elements in $\mathcal{Q}$ may not even decay faster enough to be in the codomain of $L$. Nevertheless, we can  remedy the problem  by  multiplying a suitable weight function. 

Let $\eta$ be a positive, smooth function in $\mathbb{R}^n \setminus \Omega$ satisfying $\eta = 1$ near $\Sigma$ and {$\eta (x) = |x|^{-1}$} outside a large ball. Then the following space $\eta\mathcal{Q}$ is a well-defined subspace of  $\C^{0,\alpha}_{-q-2}(\mathbb{R}^n \setminus \Omega)\times \C^{1,\alpha}(\Sigma)\times \mathcal{B}(\Sigma)$:
\[
	\eta \mathcal{Q}= \{ \eta \kappa: \kappa \in \mathcal{Q}\}.
\]
Note that $\Dim \eta \mathcal{Q}=\Dim \mathcal{Q}= N$.  

\begin{lemma}\label{lemma:cokernel}
If the boundary $\Sigma$ is static regular in $\mathbb{R}^n\setminus \Omega$,  then $\Range L$ is complemented by $\eta \mathcal{Q}$. Namely,
\[
	\C^{0,\alpha}_{-q-2}(\mathbb{R}^n \setminus \Omega)\times \C^{1,\alpha}(\Sigma)\times \mathcal{B}(\Sigma)= \Range L \oplus \eta \mathcal{Q}.
\]
\end{lemma}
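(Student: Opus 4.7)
The plan is to use the Fredholm structure of $L$ (Lemma~\ref{lemma:Fredholm}) together with the characterization $(\Range L)^\perp=\mathcal Q$ (Lemma~\ref{lemma:perp}). Since $L$ has index $0$ and $\Dim\Ker L=N$ (Lemma~\ref{lemma:kernel}), the codimension of $\Range L$ equals $N=\Dim(\eta\mathcal Q)$. Therefore the direct-sum decomposition will follow as soon as one checks transversality, i.e.
\[
\eta\mathcal Q\cap\Range L=\{0\}.
\]

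To establish transversality, suppose $\eta\kappa(X)=L(h,v)$ for some $X\in\mathcal X_0$ and some $(h,v)\in\C^{2,\alpha}_{-q}(\mathbb R^n\setminus\Omega)$. Pair both sides with the functional $\kappa(X)\in\mathcal Q\subset(\Range L)^\perp$ via the $\mathcal L^2$-pairing that defines the action of $\mathcal Q$ (see Definition~\ref{definition:spaces}). The right-hand side annihilates $\Range L$, so the pairing is zero. On the other hand, since $\eta\equiv 1$ near $\Sigma$, evaluating $\kappa(X)$ on $\eta\kappa(X)$ produces
\[
0=\int_{\mathbb R^n\setminus\Omega}\eta\bigl(4|\bi^*X|^2+(\Div X)^2\bigr)\,d\mathrm{vol}+\int_\Sigma 4|\bi^*X(\nu,\cdot)|^2\,d\sigma,
\]
where I used the explicit form $\kappa(X)=(2\bi^*X,-\Div X,-2\bi^*X(\nu,\cdot),0,0)$. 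Because $\eta>0$, this forces $\bi^*X=0$ in $\mathbb R^n\setminus\Omega$.

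From $2\bi^*X=L_X\bar g-(\Div X)\bar g=0$, taking trace yields $(2-n)\Div X=0$, hence $\Div X=0$ and $L_X\bar g=0$. Thus $X$ is a Killing vector field of $\bar g$ on $\mathbb R^n\setminus\Omega$. Using the unique decomposition $X=K+Z$ from the proof of Lemma~\ref{lemma:X_0} (with $K$ a global Killing field and $Z=O^{3,\alpha}(|x|^{2-n})$), the Killing identity applied to $X$ forces $L_Z\bar g=0$; combined with decay at infinity this gives $Z\equiv 0$, so $X=K$ on all of $\mathbb R^n\setminus\Omega$. The condition $X=0$ on $\Sigma$ then says that the affine vector field $K(x)=Ax+b$ (with $A$ skew-symmetric) vanishes on $\Sigma$; since $\Sigma$ is an embedded hypersurface, it is not contained in any proper affine subspace of $\mathbb R^n$, so $A=0$ and $b=0$. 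Hence $X=0$ and $\eta\kappa(X)=0$, completing the proof.

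The main obstacle is the rigidity step at the end: turning the vanishing of $\bi^*X$ together with $X\in\mathcal X_0$ into $X\equiv 0$. This is where the embeddedness of $\Sigma$ (and the explicit structure of Euclidean Killing fields) enters, rather than any deeper PDE analysis; the Fredholm/orthogonality input provided by Lemmas~\ref{lemma:Fredholm}, \ref{lemma:kernel}, and \ref{lemma:perp} does the rest of the work.
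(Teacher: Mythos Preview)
Your proof is correct and follows essentially the same approach as the paper: both hinge on the positivity of the pairing $\langle\kappa(X),\eta\kappa(X)\rangle_{\mathcal L^2}$, which the paper packages as an ``inner product on $\mathcal Q$'' and you unwind directly to prove $\eta\mathcal Q\cap\Range L=\{0\}$. Your argument is in fact more explicit than the paper's at the rigidity step---the paper simply asserts the bilinear form is an inner product and that ``a similar argument'' gives trivial intersection, whereas you spell out why $\bi^*X=0$ together with $X\in\mathcal X_0$ forces $X=0$ via the structure of Euclidean Killing fields.
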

\begin{proof}
Using the weight function $\eta$, we define an inner product on $\mathcal{Q}$ as follows. Let $\kappa(X), \gamma(Z)\in \mathcal{Q}$  for $X, Z\in \mathcal{X}_0$. That is,
\begin{align*}
\kappa (X)&=\big(2\bi^* X,\, -\Div X, \, -2\bi^* X(\nu, \cdot), \,0,\, 0\big)\\
\gamma (Z)& =\big(2\bi^* Z,\, -\Div Z, \,-2\bi^* Z(\nu, \cdot),\, 0, \,0\big).
\end{align*}
Define the inner product between $\kappa(X)$ and $\gamma(Z)$ by 
\begin{align*}
& \int_{\mathbb{R}^n\setminus \Omega} \eta (2\bi^* X) \cdot (2\bi^* Z)\, d\mathrm{vol}  +\int_{\mathbb{R}^n\setminus \Omega} \eta (\Div X) (\Div Z) \, d\mathrm{vol}   \\
&+ \int_\Sigma  \big(2\bi^* X(\nu, \cdot)\big)\cdot \big(2\bi^* Z(\nu, \cdot)\big)\, d\sigma.
\end{align*}
With respect to this inner product, we  choose an orthonormal basis of $\mathcal{Q}$, denoted by $ \{ \kappa_1,\dots, \kappa_N\}$.
 
 For $f\in\C^{0,\alpha}_{-q-2}(\mathbb{R}^n \setminus \Omega)\times \C^{1,\alpha}(\Sigma)\times \mathcal{B}(\Sigma)$ and for each $\ell = 1, \dots, N$, define the constant $a_\ell =\langle  \kappa_\ell, f\rangle_{\mathcal{L}^2}$.   By construction, we have $\langle \kappa_m, f - \eta \sum_\ell a_\ell \kappa_\ell\rangle_{\mathcal{L}^2}=0$ for all $m$.  By Lemma~\ref{lemma:perp} and that $\Range L$ is closed, we have
  \[
 	f- \eta \sum_\ell a_\ell \kappa_\ell \in \Range L. 
 \]
A similar argument also shows that $(\Range L) \cap (\eta \mathcal{Q})$ contains only the zero element. This completes the proof. 
\end{proof}

Let us take a moment to summarize what we have obtained so far. We can re-express the linear map $L: \C^{2,\alpha}_{-q}(\mathbb{R}^n\setminus \Omega)\to \C^{0,\alpha}_{-q-2}(\mathbb{R}^n \setminus \Omega)\times \C^{1,\alpha}(\Sigma)\times \mathcal{B}(\Sigma)$ in the form:
\begin{align*}
	L : \mathcal{E} \oplus \Ker L \to \Range L \oplus \eta \mathcal{Q}
\end{align*}
where $\mathcal{E}, \mathcal{Q}$ are defined in Definition~\ref{definition:spaces} and $L$ is injective on $\mathcal{E}$. Since the elements in $\eta \mathcal{Q}$ all have their last two components equal to zero, $\Range L$ must contain the subspace $\{ 0\}\times \{ 0 \} \times \mathcal{B}(\Sigma)$. In other words, for any $(\tau,\phi) \in \mathcal{B}(\Sigma)$, there exists a unique deformation $(h, v)$ satisfying the orthogonal gauge such that $L(h, v) = (0, 0, 0, \tau, \phi)$. While that looks a lot like a version of Theorem~\ref{theorem:T} for the linearized operator $L$, we cannot directly conclude that $\Range T$ must contain $(0,0,0,\bar{g}^\intercal +\tau, H+\phi)$ for $(\tau, \phi)$ small. It is because that the splitting for $\Range L$ need not imply a splitting of the nonlinear operator~$T$ (needless to say that $\Range T$ is not even a linear subspace). Therefore, we will consider the modified operators as in the next section. 

\subsection{Orthogonality for the modified nonlinear map}\label{section:proof}

We will introduce a new operator $\overline{T}$, modified from $T$, and prove Theorem~\ref{theorem:T}. To motivative our approach below, we discuss the basic idea. This part of discussion holds for a general map $T$ and its linearization $L$ (say, linearized at  $0$), so the reader can ignore for now how the map $T$ was defined above. Suppose the linear map is given by
\[
L : \mathcal{E} \oplus \Ker L \to \Range L \oplus \mathcal{Z},
\]
 where $\mathcal{E}, \mathcal{Z}$ are Banach spaces. Without changing the codomain space, we can ``substitute'' $\Ker L$ with $\mathcal{Z}$ and define an isomorphism by
\begin{align*}
	\overline{L}: \mathcal{E}\times \mathcal{Z} &\to  \Range L \oplus \mathcal{Z}\\
	(e, z) &\mapsto \overline{L}(e, z) = L(e)+z.
\end{align*}
 If we define a nonlinear map by $\overline{T}(e, z)= T(e) +z$, then $\overline{T}$ is a local diffeomorphism at $(e, z)=0$ by Inverse Function Theorem. Thus, for $v$ sufficiently close to $T(0)$, there exists $(e, z)$ solving $T(e)+z = v$. If  $z$ is ``orthogonal'' to both $T(e)$ and $v$, then we would get $z=0$ and solve $T(e) = v$ as desired. However, this may be too rigid to get, so we would like to take the advantage of the flexibility in defining  $\overline{T}$. We just need its linearization equal to $\overline{L}$; in particular,  $\overline{T}$ can be nonlinear in~$z$. Our definition of $\overline{T}$ below  reflects this basic idea, 
with a slight complication that we need to enlarge the space $\mathcal{Z}$ for the domain of $\overline{T}$ in order to impose  nonlinear conditions on $z$.

 Let $\widehat{\mathcal{X}}$ be the linear space of vectors defined similarly as $\mathcal X$, except with less regularity:
\begin{align*}
	\widehat{\mathcal{X}} = \left\{\mbox{ $X=0$ on $\Sigma$, $X - K\in  \C^{2,\alpha}_{1-q} (\mathbb{R}^n\setminus \Omega)$ for some Killing vector $K$ of $\bar{g}$}\right\}.
\end{align*}
Define the map $\overline{T}:\Big( \big((\bar{g}, 1)+\mathcal{E}\big)\cap \mathcal{M}\Big) \times  \widehat{\mathcal{X}} \to \C^{0,\alpha}_{-q-2}(\mathbb{R}^n \setminus \Omega)\times \C^{0,\alpha}_{-q-1}(\mathbb{R}^n \setminus \Omega) \times \C^{1,\alpha}(\Sigma)\times \Big( ( \bar{g}^\intercal, H) +\mathcal{B}(\Sigma)\Big)$ by, for $(g, u) \in  \big((\bar{g}, 1)+\mathcal{E}\big)\cap \mathcal{M}$ and $W \in  \widehat{\mathcal{X}}$,
\begin{align*}
	&\overline{T} (g, u, W)\\
	&= \begin{array}{l}
	\left\{ \begin{array}{l}-u \Ric_g + \nabla_g^2 u  -\D_g (\bi g+du) + \eta \big(2\bi_g^* W- u^{-1} W(u) g \big)\\
	\Delta_g u + \eta \big(-\Div_g W + u^{-1} W(u)\big)\\
	\Div_g (2\bi_g^* W- u^{-1} W(u) g)
	\end{array}\right. \quad \mbox{ in } \mathbb{R}^n\setminus \Omega \\
	\left\{ \begin{array}{l} \bi g+ du - (2\bi_g^* W - u^{-1} W(u) g)(\nu, \cdot)\\
	g^\intercal\\
	H_g
	\end{array}\right.\quad \mbox{ on } \Sigma
	\end{array}
\end{align*}
where $\nu$ is the unit normal vector on $\Sigma$ pointing away from $\Omega$. We should compare $\overline{T}$ to the original map $T$ in \eqref{equation:nonlinear}. For a vector $W$, we define  
\begin{align*}
	\zeta(W)  &=\Big( 2\bi_g^* W - u^{-1} W(u) g, \,-\Div_g W + u^{-1} W(u),\,\\
	&\qquad \qquad \qquad \qquad  -(2\bi_g^* W -u^{-1} W(u) g )(\nu, \cdot),\, 0, \,0\Big).
\end{align*}
Then  $\overline{T}(g, u, W)$, after dropping the third equation for $\Div_g (2\bi_g^* W- u^{-1} W(u) g)$, is exactly  $T(g, u) +\eta \zeta(W)$. We also note that the first two components of $\zeta(W)$ is the same as $\zeta_0(W)$ from \eqref{equation:cokernel-elements}. The relation between $\zeta(W)$ and $\zeta_0(W)$ is analogous to that of $\kappa(X),\kappa_0(X)$  in Remark~\ref{remark:cokernelL}.

\begin{proposition}\label{proposition:isom}
Suppose the boundary $\Sigma$ is static regular in $\mathbb{R}^n\setminus \Omega$. Then the following holds:
\begin{enumerate}
\item \label{item:1} $\overline{T}$ is a local diffeomorphism at $\big(\bar{g}, 1, 0\big)$. 
\item \label{item:2} If $(g,u, W)$ solves $\overline{T}(g, u, W) = (0, 0, 0, 0, \tau,\phi)$; that is, $(g^\intercal, H_g) = (\tau, \phi)$ on $\Sigma$ and all other equations in $\overline{T}$ equal zero,  then  $T(g, u) = (0, 0, 0, \tau, \phi)$.
\end{enumerate}
\end{proposition}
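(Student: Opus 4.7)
The plan is Inverse Function Theorem for (1) and the Green-type orthogonality from Proposition \ref{proposition:cokernel} for (2). For (1), I compute the linearization of $\overline T$ at $(\bar g, 1, 0)$:
\begin{align*}
D\overline T(h, v, Y) = \bigl( L(h,v) + \eta\,\kappa(Y),\; \Delta Y \bigr),
\end{align*}
where $L$ is as in \eqref{equation:linear}, $\kappa(Y) = (2\bi^*Y, -\Div Y, -2\bi^*Y(\nu,\cdot), 0, 0)$ (extended by the same formula to $Y\in\widehat{\mathcal X}$) occupies the first, second, fourth, fifth, sixth slots, and $\Delta Y$ occupies the new third slot (using $2\Div\bi^* Y = \Delta Y$ at the flat metric). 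For injectivity, $\Delta Y = 0$ together with $Y\in\widehat{\mathcal X}$ forces $Y\in\mathcal X_0$ by harmonic expansion (Lemma \ref{lemma:X_0}); then $L(h,v) = -\eta\kappa(Y)$ lies in $\Range L \cap \eta\mathcal Q = \{0\}$ by Lemma \ref{lemma:cokernel}, which gives $Y = 0$ and $(h,v)\in\mathcal E\cap\Ker L$. Applying the $\mathcal E$-orthogonality to the generator $(L_X\bar g, 0)$ from Lemma \ref{lemma:kernel} produces $L_X\bar g = 0$, whence $X$ is a global Killing vector of $\bar g$ vanishing on the connected embedded hypersurface $\Sigma$ and therefore zero. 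For surjectivity, given $(f_1,\dots,f_6)$, first solve $\Delta Y_1 = f_3$ in $\C^{2,\alpha}_{1-q}$ with $Y_1 = 0$ on $\Sigma$ via Lemma \ref{lemma:PDE} (the weight $1-q$ lies in the surjectivity range for every $n\geq 3$), then use Lemma \ref{lemma:cokernel} to split $(f_1, f_2, f_4, f_5, f_6) - \eta\kappa(Y_1) = L(h_0, v_0) + \eta\kappa(Y_0)$ with $Y_0\in\mathcal X_0$, set $Y = Y_1 + Y_0$, and project $(h_0, v_0)$ onto $\mathcal E$ along $\Ker L$.

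For (2), suppose $\overline T(g, u, W) = (0,0,0,0,\tau,\phi)$. Elliptic regularity upgrades $W\in\widehat{\mathcal X}$ to $\mathcal X$-regularity, so identity \eqref{equation:cokernel1} applies with $X = W$. Substituting the first two equations of $\overline T$ as $-u\Ric_g + \nabla^2_g u = \D_g(\bi g + du) - \eta\,\zeta_0(W)_1$ and $\Delta_g u = -\eta\,\zeta_0(W)_2$, and then integrating $\int\D_g(\bi g + du)\cdot\zeta_0(W)_1$ by parts — the bulk term vanishes by the third equation $\Div_g\zeta_0(W)_1 = 0$, and the boundary term rewrites via the fourth equation $\bi g + du = \zeta_0(W)_1(\nu,\cdot)$ on $\Sigma$ into a manifestly non-negative surface integrand — yields
\begin{align*}
\int_{\mathbb R^n\setminus\Omega}\eta\,|\zeta_0(W)|_g^2\, d\mathrm{vol}_g + \int_\Sigma |\zeta_0(W)_1(\nu,\cdot)|_g^2\, d\sigma_g = 0.
\end{align*}
Both integrands must vanish; tracing the tensor component of $\zeta_0(W) = 0$ and combining with the scalar component produces $\Div_g W = 0$, $W(u) = 0$, and $L_W g = 0$, so $W$ is a Killing vector of $g$ vanishing on $\Sigma$.

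The principal obstacle is the concluding rigidity: for $(g,u)$ in a sufficiently small neighborhood of $(\bar g, 1)$, every Killing vector of $g$ in $\widehat{\mathcal X}$ vanishing on $\Sigma$ must be the zero vector. I intend to argue this by a normalized compactness-contradiction: a sequence of counterexamples $(g_n, W_n)$ with $g_n\to\bar g$, suitably normalized after subtracting the Killing-at-infinity part, would sub-converge to a nonzero Killing vector $W_\infty$ of $\bar g$ vanishing on $\Sigma$. But Killing vectors of $\bar g$ are affine of the form $a + Mx$, and the compact connected embedded hypersurface $\Sigma = \partial\overline\Omega$ cannot lie in any proper affine subspace of $\mathbb R^n$, forcing $a = 0, M = 0$ — contradiction. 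Once $W = 0$, the equations of $\overline T$ reduce verbatim to those of $T$, proving $T(g,u) = (0,0,0,\tau,\phi)$.
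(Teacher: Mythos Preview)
Your proof is correct and follows the paper's approach closely for both parts. The linearization computation, the use of Lemma~\ref{lemma:cokernel} for the isomorphism in~(1), and the orthogonality argument via \eqref{equation:cokernel1} leading to $\langle \eta\zeta(W),\zeta(W)\rangle_{\mathcal L^2}=0$ in~(2) are all as in the paper.

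The one place where you diverge is the final Killing rigidity, and here your compactness argument is both weaker and less complete than needed. As written, ``suitably normalized after subtracting the Killing-at-infinity part'' is too vague: you have not specified the norm in which you normalize, nor why a subsequential limit is nonzero (this requires an elliptic estimate, not just weak compactness). More importantly, the argument unnecessarily restricts the conclusion to $(g,u)$ close to $(\bar g,1)$. The paper instead invokes (without spelling out) the standard fact that a Killing vector on a connected Riemannian manifold is determined by its $1$-jet at any single point: since $L_Wg=0$ forces $\nabla^g W$ to be antisymmetric, vanishing of $W$ along the hypersurface $\Sigma$ kills all tangential components of $\nabla^g W$ at $p\in\Sigma$, and antisymmetry then kills the remaining components, so $(W,\nabla^g W)(p)=0$ and hence $W\equiv 0$ on the connected manifold $\mathbb R^n\setminus\Omega$. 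This works for \emph{any} metric $g$, not just perturbations of $\bar g$, and avoids the normalization subtleties entirely. You should replace your compactness sketch with this direct argument.
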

\begin{proof}
Denote the linearization of $\overline{T}$ at $(\bar{g}, 1, 0)$ by $\overline{L}: \mathcal{E}\times \widehat {\mathcal{X}}\to \C^{0,\alpha}_{-q-2}(\mathbb{R}^n \setminus \Omega)\times\C^{0,\alpha}_{-q-1}(\mathbb{R}^n \setminus \Omega)\times \C^{1,\alpha}(\Sigma)\times \mathcal{B}(\Sigma)$. For $(h, v)\in \mathcal{E}$ and $X\in  \widehat {\mathcal{X}}$, 
\begin{align*}
	\overline{L} (h, v, X)= \begin{array}{l}
	\left\{ \begin{array}{l}-D \Ric(h) + \nabla^2 v  -\D (\bi h+dv) + 2\eta \bi^* X\\
	\Delta v - \eta \Div X \\
	2\Div \bi^* X
	\end{array}\right. \quad \mbox{ in } \mathbb{R}^n\setminus \Omega \\
	\left\{ \begin{array}{l} \bi h+ dv- 2\bi^* X (\nu, \cdot)\\
	h^\intercal\\
	DH(h)
	\end{array}\right.\quad \mbox{ on } \Sigma. 
	\end{array}
\end{align*}
Recall $\kappa(X) = (2\bi^* X, \, -\Div X,\, - 2\bi^* X (\nu, \cdot), 0, 0)$. 
Just as for $\overline{T}$ and $T$,  the linear map $\overline{L}(h, v, X)$, with the third equation for $2\Div \bi^* X$ dropped, is exactly $L(h, v)+ \eta \kappa(X)$, where $L$ is defined in~\eqref{equation:linear}.

We claim that $\overline{L}$ is an isomorphism. Then by  Inverse Function Theorem, it proves that $\overline{T}$ is a local diffeomorphism at $\big(\bar{g}, 1, 0\big)$. 

We show $\overline{L}$ is surjective: Since the third equation $2\Div \bi^* X = \Delta X$ is surjective by Lemma~\ref{lemma:PDE}, we just need to show that $\overline{L}$ is surjective onto other components for those $X$ solving $2\Div \bi^* X=0$, i.e. $X\in \mathcal{X}_0$. It is equivalent to that $L(h, v) + \eta \kappa(X)$ is surjective for  $(h,v)\in \mathcal{E}, X\in \mathcal{X}_0$, which follows from Lemma~\ref{lemma:cokernel}. We show  $\overline{L}$ is injective: For $(h, v, X)$ solving $\overline{L}(h , v, X)=0$,  we have $X\in \mathcal{X}_0$, and $(h, v, X)$ satisfies  $L(h, v) + \eta \kappa(X)=0$.  Again by  the decomposition from Lemma~\ref{lemma:cokernel}, we have $L(h, v)=0$ for $(h, v)\in \mathcal{E}$ and $\kappa(X)=0$, which imply that $(h, v, X)$ is zero.

Before we prove the second statement, we note that if $W$ satisfies $\Div_g (2\bi_g^* W- u^{-1} W(u) g)=0$, then $\zeta(W)$ is $\mathcal{L}^2$-orthogonal to $T(g, u)$:
\begin{align*}
	&\langle T(g, u), \zeta(W) \rangle_{\mathcal{L}^2}\\
	&=\int_{\mathbb{R}^n\setminus \Omega} \Big( - u \Ric_g + \nabla^2_g u - \D_g (\bi g + du) \Big) \cdot \Big(2\bi_g^* W - u^{-1} W(u) g\Big)\, d\mathrm{vol}_g \\
	&\quad + \int_{\mathbb{R}^n\setminus \Omega} \Delta_g u \Big(-\Div_g W - u^{-1} W(u)\Big)\, d\mathrm{vol}_g\\
	&\quad - \int_\Sigma (\bi g+ du)\cdot (2\bi_g^* W - u^{-1} W(u) g ) (\nu ,\cdot) \, d\sigma_g\\
	&=\int_{\mathbb{R}^n\setminus \Omega} \Big(- \D_g (\bi g + du) \Big) \cdot \Big(2\bi_g^* W - u^{-1} W(u) g\Big)\, d\mathrm{vol}_g \\
	&\quad - \int_\Sigma (\bi g+ du)\cdot (2\bi_g^* W - u^{-1} W(u) g ) (\nu ,\cdot) \, d\sigma_g\\
	&=0
\end{align*} 
where we use \eqref{equation:cokernel1} in the second equality and integration by parts in the last line. Note that the boundary integrals on $\Sigma$ cancel, and the boundary integral at infinity vanishes because of the fall-off rates.

Now suppose $(g, u, W)$ satisfies $\overline{T}(g, u, W) = (0, 0, 0, 0, \tau,\phi)$. This is equivalent to 
\begin{align*}
		T(g, u) + \eta \zeta(W) &= (0, 0, 0, \tau, \phi)\\
		\Div_g (2\bi_g^* W- u^{-1} W(u) g)&=0.
\end{align*}
  Take the $\mathcal{L}^2$-inner product of the first identity with $ \zeta(W)$, and note  that $\big\langle T(g, u), \zeta(W)\big\rangle_{\mathcal{L}^2}=0$ as  above and  $\big\langle (0,0,0,\tau, \phi), \zeta(W) \big\rangle_{\mathcal{L}^2} =0$ trivially. So we have $\big\langle \eta  \zeta(W),  \zeta(W) \big\rangle_{\mathcal{L}^2}=0$ and thus $\zeta(W)=0$. This implies the desired conclusion $T(g, u) = (0, 0, 0, \tau, \phi)$. In fact, this implies that $W$ is a Killing vector of $g$ vanishing on the boundary and thus $W\equiv 0$ in  $\mathbb R^n\setminus\Omega$.
 \end{proof}

Combining the above results, we give the proof of Theorem~\ref{theorem:T}.
\begin{proof}[Proof of Theorem~\ref{theorem:T}]
From Item~\eqref{item:1} in Proposition~\ref{proposition:isom}, $\overline{T}$ is a local diffeomorphism at $(\bar{g}, 1)$. More specifically, there are positive constants $\epsilon_0, C$  such that for each $\epsilon \in (0, \epsilon_0)$, there is an open neighborhood $\mathcal{U}\times \mathcal{V}$ of $\big(\bar{g}, 1, 0\big)$ in $\big((\bar{g}, 1)+\mathcal{E}\big)\times \widehat {\mathcal{X}}$ with the diameter less than $C\epsilon$, such that $\overline{T}$ is a diffeomorphism from $\mathcal{U}\times \mathcal{V}$ onto an open ball of radius $\epsilon$ in $\C^{0,\alpha}_{-q-2}(\mathbb{R}^n \setminus \Omega)\times \C^{0,\alpha}_{-q-1}(\mathbb{R}^n \setminus \Omega)\times\C^{1,\alpha}(\Sigma)\times \Big( (\bar{g}^\intercal, H) +\mathcal{B}(\Sigma)\Big)$, centered at $(0, 0, 0, 0, \bar{g}^\intercal, H)$. Therefore, given $(\tau, \phi)$ with $\|(\tau, \phi)-(\bar{g}^\intercal, H)\|_{\C^{2,\alpha}(\Sigma)\times \C^{1,\alpha}(\Sigma)}<\epsilon$, there exists a unique $(g, u, W)$ with $\|(g, u) - (\bar{g}, 1) \|_{\C^{2,\alpha}_{-q} (\mathbb{R}^n\setminus \Omega)} <C\epsilon$ and  $\| W \|_{ \C^{2,\alpha}_{1-q}(\mathbb{R}^n\setminus \Omega)}<C\epsilon$  satisfying 
\[
	\overline{T}(g, u, W) = (0, 0, 0, 0, \tau, \phi). 
\]
By Item~\eqref{item:2} in Proposition~\ref{proposition:isom},  $(g, u)$ satisfies $T(g, u) = (0, 0, 0, \tau, \phi)$. Since $(g, u) \in (\bar{g},1)+\mathcal{E}$, $(g, u)$ satisfies the orthogonal gauge. 
\end{proof}
\begin{remark}\label{re:constant}
Note that the constants $\epsilon_0, C$ depend on the global geometry $\Sigma$ in $\mathbb{R}^n\setminus \Omega$. More precisely by the (quantitive) inverse function theorem, the constants depend on the operator norms of $\overline{L}$, $\overline{L}^{-1}$, as well as the second Frech\'et derivative $D^2\overline{T}|_{(g, u)}$ for $(g, u)$ is a neighborhood of $(\bar g, 1)$,  between the  function spaces on $\mathbb{R}^n\setminus \Omega$ and $\Sigma$ as specified above.
\end{remark}

We explain the reason for the terminology 
\emph{static regular}. The following discussion will not be used elsewhere in the paper. For a general $\Omega$ whose boundary $\Sigma = \partial \overline{\Omega}$ is not necessarily static regular in $\mathbb{R}^n\setminus \Omega$, define the set of static vacuum pairs (of any boundary data) near $(\bar{g}, 1)$ by
\[
	\mathcal{S}(\mathbb{R}^n\setminus \Omega) = \left\{ (g, u)\in \mathcal{M}: -u\Ric_g + \nabla_g^2 u=0 \mbox{ and } \Delta_g u=0  \mbox{ in } \mathbb{R}^n\setminus \Omega \right\}.
\]
As in \cite{Anderson-Khuri:2013}, the  \emph{Bartnik boundary map} $\Pi: \mathcal{S}(\mathbb{R}^n\setminus \Omega) \to (\bar{g}^\intercal, H) +\mathcal{B}(\Sigma)$ is defined by $\Pi(g, u)= (g^\intercal, H_g)$. Conjecture~\ref{conjecture} becomes finding conditions of $\mathbb{R}^n\setminus \Omega$ so that the boundary map~$\Pi$ is locally surjective. In that formulation, Theorem~\ref{theorem:criterion} says if the boundary $\Sigma$ is static regular in  $\mathbb{R}^n\setminus \Omega$, then $(\bar{g}, 1)$ is a \emph{regular point} of the map~$\Pi$. Of course, the set $\mathcal{S}(\mathbb{R}^n\setminus \Omega)$ may not be a Banach manifold for us to talk about regular points. But we can restrict the boundary map~$\Pi$ on the subset $\mathcal{S}_0(\mathbb{R}^n\setminus \Omega)\subset \mathcal{S}(\mathbb{R}^n\setminus \Omega)$ that  consists of  $(g, u)\in \mathcal{S}(\mathbb{R}^n\setminus \Omega)$ satisfying both the static-harmonic gauge and orthogonal gauge. It is shown that $\mathcal{S}_0(\mathbb{R}^n\setminus \Omega)$ is a Banach manifold \cite{An:preprint} (Cf. \cite{Anderson-Khuri:2013}), so the above discussion can make sense.

\section{Convex surfaces in $\mathbb{R}^3$}\label{section:convex}

\subsection{Mean curvature comparison}

Let $\Omega$ be a bounded open subset in the Euclidean space $(\mathbb{R}^3, \bar{g})$ whose boundary $\Sigma =\partial \overline{\Omega}$ is a connected, embedded surface. Before we restrict to the case that $\Sigma$ is convex, we first prove the following general lemma. Given a harmonic function $v$ in $\mathbb{R}^3\setminus \Omega$, we will construct a family of scalar-flat metrics $\gamma(s)$ in $\mathbb{R}^3\setminus \Omega$ and a corresponding family of scalar-flat metrics $\gamma_{\I}(s)$ in the compact region $\overline{\Omega}$ that induce the same metrics on $\Sigma$. We shall use the notational convention that the quantities in the compact region $\overline{\Omega}$ are denoted with the subscript ${\I}$. 

\begin{lemma}\label{lemma:harmonic-generate}
Let $v\in \C^{2,\alpha}_{-q}(\mathbb{R}^3\setminus  \Omega)$ solve $\Delta v=0$ in $\mathbb{R}^3\setminus \Omega$. Let $v_{\I}\in \C^{2,\alpha}\big(\overline{\Omega}\big)$ satisfy $\Delta v_{\I}=0$ in $\Omega$ and $v_{\I} = v$ on $\Sigma$. 

For $|s|$ small, there exists  a smooth family of asymptotically flat metrics $\gamma(s)$ in $\mathbb{R}^3\setminus \Omega$ and a family of metrics $\gamma_{\I}(s)$ in $\overline{\Omega}$ such that the following holds:
\begin{itemize}
\item $\gamma(0)= \bar{g}$, $R_{\gamma(s)}=0$, $\gamma'(0) = 2v\bar{g}$ in $\mathbb{R}^3\setminus \Omega$.
\item $\gamma_{\I}(0)= \bar{g}$, $R_{\gamma_{\I}(s)}=0$, ${\gamma_{\I}}'(0) = 2v_{\I}\bar{g}$ in $\overline{\Omega}$.

\item On $\Sigma$, $\gamma(s)^\intercal =\gamma_{\I}(s)^\intercal= (1+sv)^2 \bar{g}^\intercal$ for all $s$. 
\item Let $H_{\gamma_{\I}(s)}, H_{\gamma(s)}$ be the mean curvatures of $\Sigma$ in $\big(\overline{\Omega}, \gamma_{\I}(s)\big), (\mathbb{R}^3\setminus \Omega, \gamma(s))$, respectively (both with respect to the unit normals that points away from $\Omega$). Then 
\begin{align*}
	\left. \2ds \right|_{s=0} \bigg( -16\pi m_{\mathrm{ADM}}(\gamma(s)) + &\int_{\mathbb{R}^3\setminus \Omega} R_{\gamma(s)}\, d\mathrm{vol}_{\gamma(s)}\\
	&+  \int_\Sigma 2\Big(H_{\gamma_{\I}(s)} - H_{\gamma(s)}\Big)\, d\sigma_{\gamma(s)} \bigg)&\ge 0.
\end{align*}	 
\end{itemize}
\end{lemma}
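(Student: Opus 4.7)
The plan is to build both families as conformal deformations of $\bar g$ with harmonic conformal factors, which automatically gives scalar-flatness in dimension three, and then exploit the specific structure to collapse the full functional to a single explicit boundary integral whose second variation is manifestly nonnegative.

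First I will define $\phi(s)$ to be the harmonic function on $\mathbb R^3\setminus\Omega$ that tends to $1$ at infinity and satisfies $\phi(s)|_\Sigma = (1+sv)^{1/2}$, and $\phi_{\I}(s)$ the harmonic function on $\overline{\Omega}$ with the same boundary value, and declare $\gamma(s) = \phi(s)^4 \bar g$ and $\gamma_{\I}(s) = \phi_{\I}(s)^4 \bar g$. Since in dimension three $R_{u^4\bar g} = -8u^{-5}\Delta u$, harmonicity of the conformal factors gives scalar-flatness throughout, and the induced boundary metrics match $(1+sv)^2 \bar g^\intercal$ by construction. Differentiating the boundary condition at $s=0$ gives $\phi'(0)|_\Sigma = v/2 = \phi_{\I}'(0)|_\Sigma$; uniqueness of the corresponding Dirichlet problem, together with harmonicity of $v$ and $v_{\I}$, forces $\phi'(0) = v/2$ in $\mathbb R^3\setminus\Omega$ and $\phi_{\I}'(0) = v_{\I}/2$ in $\Omega$, so $\gamma'(0) = 2v\bar g$ and $\gamma_{\I}'(0) = 2v_{\I}\bar g$ as required.

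Next I will reduce the functional. Denoting it by $F(s)$, scalar-flatness annihilates the bulk integral. Using the conformal formula $H_{u^4\bar g} = u^{-2}H_{\bar g} + 4u^{-3}\nu(u)$ on $\Sigma$, the equality $\phi|_\Sigma = \phi_{\I}|_\Sigma$, and $d\sigma_{\gamma(s)} = (1+sv)^2\, d\sigma$, the $H_{\bar g}$ contributions cancel cleanly and
\begin{align*}
2\int_\Sigma (H_{\gamma_{\I}(s)} - H_{\gamma(s)})\, d\sigma_{\gamma(s)} = 8\int_\Sigma (1+sv)^{1/2}\bigl(\nu(\phi_{\I}(s)) - \nu(\phi(s))\bigr)\, d\sigma,
\end{align*}
with $\nu$ pointing away from $\Omega$. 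A Green identity on $\mathbb R^3\setminus\Omega$ applied to the asymptotically decaying harmonic $\phi(s) - 1$ identifies the mass as $-16\pi\, m_{\mathrm{ADM}}(\gamma(s)) = 8\int_\Sigma \nu(\phi(s))\, d\sigma$, while the corresponding identity on $\Omega$ gives $\int_\Sigma \nu(\phi_{\I}(s))\, d\sigma = 0$. Combining these collapses $F(s)$ to the symmetric form
\begin{align*}
F(s) = 8\int_\Sigma \bigl((1+sv)^{1/2} - 1\bigr)\bigl(\nu(\phi_{\I}(s)) - \nu(\phi(s))\bigr)\, d\sigma.
\end{align*}

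Finally, both factors in the integrand vanish at $s=0$, so $F(0) = F'(0) = 0$ and only the cross term survives in the second derivative:
\begin{align*}
F''(0) = 16\int_\Sigma \tfrac{v}{2}\cdot \tfrac{1}{2}\bigl(\nu(v_{\I}) - \nu(v)\bigr)\, d\sigma = 4\int_\Sigma v\bigl(\nu(v_{\I}) - \nu(v)\bigr)\, d\sigma.
\end{align*}
Integration by parts on $\Omega$, using that $v_{\I}$ is harmonic and $v = v_{\I}$ on $\Sigma$, gives $\int_\Sigma v\,\nu(v_{\I})\, d\sigma = \int_\Omega |\nabla v_{\I}|^2$, and the analogous computation on $\mathbb R^3\setminus\Omega$ (where $\nu$ is inward from the exterior side, producing the opposite sign, and the boundary term at infinity vanishes by the decay of $v$) gives $\int_\Sigma v\,\nu(v)\, d\sigma = -\int_{\mathbb R^3\setminus\Omega}|\nabla v|^2$. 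Therefore
\begin{align*}
F''(0) = 4\Bigl(\int_\Omega|\nabla v_{\I}|^2 + \int_{\mathbb R^3\setminus\Omega}|\nabla v|^2\Bigr)\geq 0.
\end{align*}
The main obstacle I anticipate is the clean cancellation that reduces $F(s)$ to the symmetric boundary form; once that structural simplification is in hand, positivity is immediate from the Dirichlet energy identity.
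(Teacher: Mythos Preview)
Your proof is correct and the metrics you construct actually coincide with the paper's (their conformal factor $u(s)(1+sv)^{1/2}$ is harmonic with the same boundary value and asymptotics as your $\phi(s)$, hence equal to it). The routes to the inequality differ, however. The paper builds $\gamma(s)$ in two stages---first $\hat g(s)=(1+sv)^2\bar g$ with $R_{\hat g(s)}=2s^2|\nabla v|^2/(1+sv)^4\ge 0$, then a conformal correction $u(s)^4$ with $u'(0)=0$---and after a divergence-theorem reduction identifies the functional with $\int_{\overline\Omega} R_{\hat g_{\I}}u_{\I}+\int_{\mathbb R^3\setminus\Omega} R_{\hat g}u$, whose second derivative at $s=0$ is manifestly nonnegative since each $R_{\hat g}$ is $O(s^2)$ and nonnegative. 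You instead go straight to a harmonic conformal factor, collapse $F(s)$ to a product of two boundary quantities both vanishing at $s=0$, and read off $F''(0)$ as the cross term, which by Green's identity is the Dirichlet energy $4\int_\Omega|\nabla v_{\I}|^2+4\int_{\mathbb R^3\setminus\Omega}|\nabla v|^2$. Both arguments yield exactly this value; yours is shorter and more transparent here, while the paper's two-stage scheme (nonnegative-curvature approximation followed by conformal correction) is a standard template that ports to other settings.
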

\begin{proof}
Consider the asymptotically flat metric $\hat{g}(s)=(1+sv)^2 \bar{g}$. By the conformal  formula and that $v$ is harmonic, the scalar curvature of $\hat{g}(s)$ is
\[
	R_{\hat{g}(s)} = \frac{2s^2 |\nabla v|^2}{(1+sv)^4}.
\]
For $|s|$ sufficiently small, there exists a unique function $u(s)$ with  $\| u(s)-1\|_{\C^{2,\alpha}_{-q}(\mathbb{R}^3\setminus \Omega)}\ll 1$ that solves
\begin{align*}
	\Delta_{\hat{g}(s)} u(s) - \tfrac{1}{8} R_{\hat{g}(s)} u(s) &= 0 \quad \mbox{ in } \mathbb{R}^3\setminus \Omega \\
	u(s) &= 1 \quad \mbox{ on } \Sigma.
\end{align*}
By differentiating the above equations in $s$ and using that $\left. \ds\right|_{s=0}R_{\hat{g}(s)}=0$ and $u(0)=1$, we see that $u'(0)=O^{2,\alpha}(|x|^{-q})$ is harmonic with $u'(0)=0$ on $\Sigma$, and thus $u'(0)$ is identically zero. 
Define $\gamma(s) = u(s)^4 \hat{g}(s)$ in $\mathbb{R}^3\setminus \Omega$.  Then $\gamma(s)$ has zero scalar curvature by the conformal formula, and we have $\gamma'(0)  = 2v\bar{g}$. This proves  the first item.

Similarly, we define $\hat{g}_{\I}(s)=(1+sv_{\I})^2 \bar{g}$ in $\overline{\Omega}$ and have
\[
	R_{\hat{g}_{\I}(s)} = \frac{2s^2 |\nabla v_{\I}|^2}{(1+sv_{\I})^4}.
\]
Then, for $|s|$ small, we let $u_{\I}(s)$ with  $\| u_{\I}(s)-1\|_{\C^{2,\alpha}\big(\overline{\Omega}\big)}\ll 1$ be the unique solution to 
\begin{align*}
	\Delta_{\hat{g}_{\I}(s)} u_{\I}(s) - \tfrac{1}{8} R_{\hat{g}_{\I}(s)} u_{\I}(s) &= 0 \quad \mbox{ in } \Omega \\
	u_{\I}(s) &= 1 \quad \mbox{ on } \Sigma.
\end{align*}
The same argument  as above shows that $u_{\I}'(0)=0$, and we can define $\gamma_{\I}(s) = u_{\I}(s)^4 \hat{g}_{\I}(s)$ corresponding. It proves the second item. 

By the boundary conditions $v=v_{\I}$ and $u(s) = 1 = u_{\I}(s)$ on $\Sigma$, we have 
\[
	\gamma(s)^\intercal =\gamma_{\I}(s)^\intercal= \hat{g}(s)^\intercal
\]
and the third item follows.

We prove the last item. We begin by computing the ADM mass of $\gamma(s)$. 
\begin{align*}
	16\pi m_{\mathrm{ADM}}(\gamma(s)) &= \lim_{r\to \infty} \int_{|x|=r}\sum_{i,j} \left(\frac{\partial \gamma(s)_{ij}}{\partial x_i} - \frac{\partial \gamma(s)_{ii}}{\partial x_j} \right)\frac{x_j}{|x|} \, d\sigma\\
	&= -2  \lim_{r\to \infty} \int_{|x|=r}\left(  2s \frac{\partial v}{\partial r} + 4 \frac{\partial u(s)}{\partial r} \right)\, d\sigma.
\end{align*}

To compute the mean curvatures, by the conformal formula and that $u(s)=1=u_{\I}(s), v=v_{\I}$ on~$\Sigma$, we have 
\begin{align*}
H_{\gamma_{\I}(s)} &=(1+sv)^{-1} H + 2(1+sv)^{-2} s \nu (v_{\I}) + 4 \nu_{\hat{g}_{\I}(s)}\big(u_{\I}(s)\big)\\
	H_{\gamma(s)} &= (1+sv)^{-1} H + 2(1+sv)^{-2} s \nu (v) + 4\nu_{\hat{g}(s)}\big(u(s)\big)
\end{align*}
where $H$ is the mean curvature $\Sigma$ in $(\mathbb{R}^3,\bar{g})$ and note $\nu_{\hat{g}} = \nu_{\hat{g}_{\I}} = (1+sv)^{-1} \nu$. Thus, 
\[
	H_{\gamma_{\I}(s)} - H_{\gamma(s)}= 2(1+sv)^{-2} s \Big( \nu(v_{\I})-\nu(v) \Big) + 4\Big( \nu_{\hat{g}_{\I}(s)}\big(u_{\I}(s)\big)- \nu_{\hat{g}(s)}\big(u(s)\big)\Big).
\]
Integrating the previous identity over $(\Sigma, \gamma(s)^\intercal)$ and using that 
\[
	d\sigma_{\gamma(s)}=  (1+sv)^2 d\sigma
\]
where $d\sigma$ is the area form of $(\Sigma, \bar{g})$, we get 
\begin{align*}
	&\int_\Sigma \Big( H_{\gamma_{\I}(s)} -H_{\gamma(s)}\Big)\, d\sigma_{\gamma(s)} \\
	&= \int_\Sigma 2s \Big( \nu(v_{\I})-\nu(v) \Big)\, d\sigma + \int_\Sigma 4\Big( \nu_{\hat{g}_{\I}(s)}\big(u_{\I}(s)\big)- \nu_{\hat{g}(s)}\big(u(s)\big)\Big)\,d\sigma_{\gamma(s)}.
\end{align*}
Apply the divergence theorem to the right hand side  and note that the unit normals point away from $\Omega$ (and $\nu_{\hat{g}_{\I}(s)}=\nu_{\hat{g}(s)}=\nu_{\gamma(s)} = \nu_{\gamma_{\I}(s)}$ on $\Sigma$). Since $v, v_{\I}$ are harmonic functions of $\bar{g}$, we derive
\vspace{8pt}
\begin{align*} 
	\int_\Sigma \Big( H_{\gamma_{\I}(s)} -&H_{\gamma(s)}\Big)\, d\sigma_{\gamma(s)}= -\lim_{r\to \infty} \int_{S_r} \left( 2s\frac{\partial v}{\partial r}  + 4\frac{\partial u(s)}{\partial r}\right)\, d\sigma \\
	&\quad \;\;+ 4\int_{\overline{\Omega}} \Delta_{\gamma_{\I}(s)} u_{\I}(s)\, d\mathrm{vol}_{\gamma_{\I}(s)}+ 4\int_{\mathbb{R}^3\setminus \Omega} \Delta_{\gamma(s)} u(s) \, d\mathrm{vol}_{\gamma(s)} .
\end{align*}
Substituting the boundary integral with the ADM mass identity computed above and the Laplace terms using the differential equations that define $u(s)$ and $u_{\I}(s)$, we obtain
\begin{align*}
	&-16\pi m_{\mathrm{ADM}}(\gamma(s)) + \int_\Sigma 2\Big( H_{\gamma_{\I}(s)}-H_{\gamma(s)} \Big)\, d\sigma_{\gamma(s)}\\
	&\qquad \qquad =   \int_{\overline{\Omega}} R_{\hat{g}_{\I}(s)} u_{\I}(s)\, d\mathrm{vol}_{\gamma_{\I}(s)}+ \int_{\mathbb{R}^3\setminus \Omega} R_{\hat{g}(s)} u(s) \, d\mathrm{vol}_{\gamma(s)}.
\end{align*}
Taking two-derivative of the previous identity in $s$ and using  $u(0)=u_{\I}(0)=1, u'(0)=u_{\I}'(0)=0$, $\left.\2ds\right|_{s=0} R_{\hat{g}(s)}\ge 0$, $\left. \2ds\right|_{s=0} R_{\hat{g}_{\I}(s)}\ge 0$, and that $R_{\gamma(s)}=0$ for all $s$,   we finish the proof.  
\end{proof}

For the rest of the section, we assume that $\Sigma$ is convex, i.e., the Gauss curvature of $(\Sigma, \bar{g}^\intercal)$ is positive everywhere. If $g$ is an asymptotically flat metric such that $(\Sigma, g^\intercal)$ has positive Gauss curvature,  it can be isometrically embedded $(\Sigma, g^\intercal)$ into $(\mathbb{R}^3, \bar{g})$.  We define $ \mathring{A}_{g^\intercal}, \mathring{H}_{g^\intercal}$ to be, respectively, the second fundamental form and mean curvature of the isometric image in $(\mathbb{R}^3, \bar{g})$ with respect to the unit normal pointing to infinity. They are well-defined by uniqueness (up to a rigid motion) of isometric embeddings. Define the following functional
\begin{align*}
		\mathscr{G}(g) &= -16\pi m_{\mathrm{ADM}}(g) + \int_{\mathbb{R}^3\setminus \Omega} R_g \, d\mathrm{vol}_g + \int_\Sigma 2\big(\mathring{H}_{g^\intercal} - H_g\big) \, d\sigma_{g}.
\end{align*}

We will compute variations of $\mathscr{G}$ along a family of metrics $\gamma(s)$ whose induced metrics $\gamma(s)^\intercal$ vary smoothly in $s$. As a preparation, we note the following two lemmas on the smooth dependence of isometric embeddings and of diffeomorphism extensions.

\begin{lemma}[Smooth dependence of isometric embeddings]\label{lemma:embedding}
Let $s\in (-\epsilon, \epsilon)$ and let $\tau(s)\in \C^{2,\alpha}(S^2)$ be a family of metrics on $S^2$ with positive Gauss curvature such that $\tau(s)$ is $\C^k$ in $s$. Suppose the Gauss curvatures are bounded below by a positive constant uniformly in $s$. Then there exists a  family of isometric embeddings $f(s) : (S^2, \tau(s))\to \mathbb{R}^3$ such that each $f(s)\in \C^{2,\alpha}(S^2)$ and $f(s)$ is $\C^k$ in $s$. 
\end{lemma}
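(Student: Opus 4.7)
The strategy is to apply the implicit function theorem to the Weyl isometric embedding problem. By the classical Nirenberg--Pogorelov theorem, for each fixed $s$ there exists a $\C^{2,\alpha}$ isometric embedding $f(s):(S^2,\tau(s))\to(\mathbb{R}^3,\bar g)$, unique up to a rigid motion of $\mathbb{R}^3$; the remaining task is to select a family varying $\C^k$ in $s$.

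Fix $s_0\in(-\epsilon,\epsilon)$ and an isometric embedding $f_0:=f(s_0)$. Working in an appropriate Banach space setting, consider the map
\[
\Phi(f,s)\;=\;f^{*}\bar g\,-\,\tau(s),
\]
defined near $(f_0,s_0)$. To avoid the apparent loss of one derivative in the pullback $f\mapsto f^{*}\bar g$, I reformulate via the support function parametrization: near $f_0$, a $\C^{2,\alpha}$ convex embedding $f$ is encoded by its support function $h_f\in\C^{2,\alpha}(S^2)$, and the equation $f^{*}\bar g=\tau$ translates to a Monge--Amp\`ere-type equation $\mathcal{M}(h)=\tau$ on $S^2$. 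Since the Gauss curvatures of $(S^2,\tau(s))$ are bounded below uniformly in $s$, the operator $\mathcal{M}$ is uniformly elliptic in a neighborhood of $h_{0}:=h_{f_0}$, depends $\C^k$ on the parameter $s$ through $\tau(s)$, and is smooth in $h$.

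The kernel of the linearization $D_h\mathcal{M}|_{h_0}$ parametrizes infinitesimal isometric deformations of the convex surface $f_0(S^2)\subset\mathbb{R}^3$. By the classical Cohn--Vossen infinitesimal rigidity theorem for closed convex surfaces, this kernel consists exactly of infinitesimal Euclidean motions and is finite-dimensional. Since $D_h\mathcal{M}|_{h_0}$ is elliptic of index zero on the compact manifold $S^2$, quotienting by the kernel (by imposing finitely many linear normalization conditions on $h$, which amounts to choosing a unique representative in each orbit of the rigid motion group on embeddings) produces an isomorphism in $\C^{2,\alpha}$ by Fredholm theory and elliptic regularity. The implicit function theorem then delivers a $\C^k$-family of normalized support functions $h(s)$, hence a $\C^k$-family of embeddings $f(s)$, for $s$ in a neighborhood of $s_0$. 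Covering $[-\epsilon+\delta,\epsilon-\delta]$ by finitely many such neighborhoods and patching together the families (using uniqueness modulo rigid motions and the uniform ellipticity estimates from the Gauss curvature bound) produces the family globally.

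The main obstacle I expect is the careful identification of the kernel and cokernel of $D_h\mathcal{M}|_{h_0}$ and the verification that, after normalization, the restricted operator is a $\C^{2,\alpha}$-isomorphism; all ingredients (Nirenberg--Pogorelov existence, Cohn--Vossen rigidity, uniform ellipticity of the linearized Monge--Amp\`ere operator, Schauder theory on $S^2$) are classical, but the bookkeeping in the support function formulation, together with the patching across different base points $s_0$, requires some care.
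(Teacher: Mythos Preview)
Your strategy is sound in spirit but takes a genuinely different route from the paper. The paper does not set up an implicit function theorem from scratch; instead it recalls that Nirenberg's original proof already proceeds by constructing, for $s$ near any base point $s_0$, a contraction map $F_s$ on a ball $\mathcal{B}_R\subset \C^{2,\alpha}(S^2;\mathbb{R}^3)$ whose fixed point $Z(s)$ yields $f(s)=f(s_0)+Z(s)$. One then checks that the contraction constants are uniform in $s$ and that $s\mapsto F_s$ is $\C^k$, and invokes the general fact that fixed points of a $\C^k$ family of uniform contractions depend $\C^k$ on the parameter. This sidesteps any explicit kernel/cokernel analysis or normalization against rigid motions, because the derivative-loss issue is already absorbed into Nirenberg's specific linearized equation $2\,df(s_0)\cdot dF_s(Z)=\tau(s)-\tau(s_0)-dZ\cdot dZ$.

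Your IFT approach with Cohn--Vossen rigidity is a legitimate alternative, but one step deserves more care: the claim that passing to the support function $h$ converts $f^*\bar g=\tau$ into a Monge--Amp\`ere-type equation $\mathcal{M}(h)=\tau$ with no derivative loss. In the inverse-Gauss-map parametrization the induced metric involves second derivatives of $h$, so $\mathcal{M}:\C^{2,\alpha}\to \C^{0,\alpha}$ still drops two derivatives, and it is not evident why the linearization would be a $\C^{2,\alpha}\to\C^{2,\alpha}$ isomorphism after normalization. The support-function reduction is standard for the \emph{Minkowski} problem (prescribed Gauss curvature as a function of the normal), not the \emph{Weyl} problem (prescribed induced metric); for the latter the usual way to beat derivative loss is precisely Nirenberg's trick of solving the linear system $2\,df_0\cdot dZ=\sigma$ in $\C^{2,\alpha}$ for $\sigma\in\C^{2,\alpha}$. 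If you want to keep the IFT framework, it is cleaner to linearize $f\mapsto f^*\bar g$ directly, quote Nirenberg's solvability of the linearized system to show the differential is surjective with kernel equal to infinitesimal rigid motions, and then quotient by those---functionally the same as the paper's argument, just phrased as IFT rather than contraction mapping.
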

\begin{proof}
The existence of an isometric embedding  $f(s)$ for each $s$ is by Nirenberg~\cite{Nirenberg:1953}  (for  metrics of $\C^{2,\alpha}$ regularity, see, for example, \cite[Theorem 2]{Heinz:1962}  and \cite[Theorem 10.3.2]{Schulz:1990}).  Here we explain that if the family of metrics is $\C^k$ in the parameter $s$, then the embedding $f(s)$ obtained in ~\cite{Nirenberg:1953} is also $\C^k$ in~$s$. Let $\mathcal{B}_R $ consist of  maps from $S^2$ to $\mathbb{R}^3$ defined by $\mathcal{B}_R = \{Z\in \C^{2,\alpha}( S^2; \mathbb{R}^3): \| Z \|_{\C^{2,\alpha}(S^2)} \le R\}$. Let $s_0\in (-\epsilon, \epsilon)$. By Sections 5-9 in  \cite{Nirenberg:1953}, 
 there exists a constant $R>0$ such that for each $s$ sufficiently close to $s_0$ one can construct a contraction map  $F_s: \mathcal{B}_R\to \mathcal{B}_R$ that sends $Z\in \mathcal{B}_R$ to $F_s(Z)$ where $F_s(Z)$ solves
\[
	2 df(s_0)\cdot dF_s(Z) = \tau(s) - \tau(s_0) - dZ\cdot dZ \quad \mbox{ on } S^2.
\]
Moreover, going through the construction in \cite{Nirenberg:1953}, one can verify that the contraction constants of $F_s$ can be chosen uniformly in~$s$, and  the family of  contraction maps $\{F_s\}$ is $\C^k$ in $s$.  Let $Z(s)$ be the unique fixed point of $F_s$. A general fact about contraction maps then says that the fixed point $Z(s)$ is $\C^k$ in $s$. Lastly, define $f(s) = f(s_0)+Z(s)$. Then $f(s)$ is $\C^k$ in $s$ and  $df(s)\cdot df(s) = \tau(s)$, i.e., $f(s)$  is an isometric embedding of $(S^2, \tau(s))$.

\end{proof}

\begin{lemma}[Smooth dependence of diffeomorphism extensions]\label{lemma:extension}
 Let  $f_s:\Sigma \to \mathbb{R}^3$ be a smooth family of $\C^{2,\alpha}$ embeddings with $f_0={\rm Id}_{\Sigma}$. Denote by $\Sigma_s=f_s(\Sigma)$ and by $\Omega_s$ the region bounded by $\Sigma_s$.  Suppose that each $\Sigma_s$ is a convex surface enclosing  the origin and that $\{\Sigma_s\}$ is contained in a compact subset. Then, there is a smooth family of maps $\psi_s: \mathbb{R}^3\setminus \Omega \to \mathbb{R}^3\setminus \Omega_s$ such that $\psi_0$ is the identity map, each $\psi_s$ is a diffeomorphism for $|s|$ small, $\psi_s|_{\Sigma} = f_s$, and $\psi_s$ is the identity map outside a large compact set. 
 \end{lemma}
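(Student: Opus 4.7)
The plan is to realize $\psi_s$ as the time-$s$ flow on $\mathbb{R}^3$ of a smooth family of compactly supported vector fields $X_s$ whose restriction to each $\Sigma_s$ equals the velocity of the embedding $f_s$. First, define a vector field $Y_s$ along $\Sigma_s$ by $Y_s(f_s(p)) := \partial_s f_s(p)$ for $p \in \Sigma$; since $\{f_s\}$ depends smoothly on $s$ and each $f_s$ is a $\mathcal{C}^{2,\alpha}$ embedding of the compact surface $\Sigma$, this gives a $\mathcal{C}^{2,\alpha}$ vector field along $\Sigma_s$ depending smoothly on~$s$.

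Next, using the assumption that $\{\Sigma_s\}_{|s|<\epsilon}$ is contained in a fixed compact set together with $f_0 = \mathrm{Id}_\Sigma$, pick a large open ball $B$ containing $\overline{\Omega}$ and all $\Sigma_s$ in its interior, a smoothly varying tubular neighborhood $U_s$ of $\Sigma_s$ with nearest-point projection $\pi_s\colon U_s \to \Sigma_s$, and a smooth cutoff function $\chi$ supported in $B \cap \bigcup_s U_s$ that equals $1$ on a neighborhood of $\bigcup_s \Sigma_s$. Set
\[
X_s := \chi \cdot (Y_s \circ \pi_s).
\]
This yields a family of $\mathcal{C}^{2,\alpha}$ vector fields on $\mathbb{R}^3$ compactly supported in $B$, depending smoothly on $s$, and satisfying $X_s|_{\Sigma_s} = Y_s$.

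Let $\psi_s\colon \mathbb{R}^3 \to \mathbb{R}^3$ be the time-$s$ flow of $X_s$, so $\partial_s \psi_s = X_s \circ \psi_s$ with $\psi_0 = \mathrm{Id}$. Standard ODE theory gives, for $|s|$ small, a diffeomorphism of $\mathbb{R}^3$ that is smooth in $s$, $\mathcal{C}^{2,\alpha}$ in space, and equal to the identity outside $B$. For $p \in \Sigma$, both $s \mapsto \psi_s(p)$ and $s \mapsto f_s(p)$ satisfy the same ODE $\dot z = X_s(z) = Y_s(z)$ with initial value $p$ (using $X_s = Y_s$ on $\Sigma_s$), so by uniqueness $\psi_s|_\Sigma = f_s$ and in particular $\psi_s(\Sigma) = \Sigma_s$. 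Since $\psi_s$ agrees with the identity outside $B$ and is isotopic to $\mathrm{Id}$ through diffeomorphisms, it maps the bounded component $\Omega$ of $\mathbb{R}^3 \setminus \Sigma$ onto the bounded component $\Omega_s$ of $\mathbb{R}^3 \setminus \Sigma_s$. The restriction $\psi_s\colon \mathbb{R}^3 \setminus \Omega \to \mathbb{R}^3 \setminus \Omega_s$ is then the desired diffeomorphism. The main technical point lies in the second step, namely producing the extension $X_s$ smoothly in $s$; this is where the compact containment of $\{\Sigma_s\}$ and the fact that $f_0 = \mathrm{Id}_\Sigma$ are used, guaranteeing that a single tubular radius and single cutoff function work uniformly for all small~$s$.
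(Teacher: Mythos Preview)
Your argument is correct and takes a genuinely different route from the paper. The paper exploits the convexity hypothesis directly: since each $\Sigma_s$ is convex and encloses the origin, it is a radial graph, so the paper writes $f_s$ in spherical coordinates as $f_s(r(\theta),\theta)=(\hat\rho(\theta,s),\hat\xi(\theta,s))$, produces an explicit interpolation $\zeta_s(r,\theta)$ between these boundary values on an inner sphere and the identity outside, and sets $\psi_s=\zeta_s\circ\zeta_0^{-1}$. Your approach---integrating the time-dependent vector field obtained by extending the velocity $\partial_s f_s$ off $\Sigma_s$ via a tubular projection and cutoff---is more conceptual and never uses convexity (only embeddedness, for the tubular neighborhoods), so it in fact establishes a somewhat more general statement. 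The trade-off is regularity: the nearest-point projection $\pi_s$ onto a $\mathcal{C}^{2,\alpha}$ hypersurface is only $\mathcal{C}^{1,\alpha}$, so your $X_s$ and hence your flow $\psi_s$ are only $\mathcal{C}^{1,\alpha}$ in space, whereas the paper's explicit spherical-coordinate formulas keep $\psi_s$ at $\mathcal{C}^{2,\alpha}$. For the lemma as literally stated this is harmless, but the downstream second-variation computation pulls back $\bar g$ by $\psi_s$ and applies curvature formulas, where the extra derivative matters; if you wanted to recover it within your framework you would replace the nearest-point extension by a regularity-preserving one (e.g.\ local graph coordinates and a partition of unity), which is essentially what the paper's global spherical coordinates accomplish.
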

\begin{proof}
By convexity, we can express the points of $\Sigma\subset \mathbb{R}^3$ as $(r(\theta), \theta)$ in spherical coordinates. We can also write $f_s:\Sigma \to \mathbb{R}^3$ as $f_s(r(\theta), \theta) = \big(\hat{\rho}(\theta, s), \hat{\xi}(\theta, s)\big)$ in spherical coordinates where $\hat{\rho}, \hat{\xi}$  are smooth in $s$ and $\C^{2,\alpha}$ in~$\theta$. Since $f_0 = {\rm Id}_{\Sigma}$, we know $\hat{\xi}(\theta, 0)=\theta$. 
 
 Since $\{\Sigma_s\}$ is contained in a compact subset,  there is a positive number $a$ so that $\hat{\rho}(\theta, s)< a$ for all $(\theta, s)$. We define a family of maps  $\zeta_s: \mathbb{R}^3\setminus  B_a \to \mathbb{R}^3\setminus \Omega_s$ by
\[
	\zeta_s( r, \theta) = \big(\rho(r, \theta, s), \xi(r,\theta, s)\big)
\] 
such that $\rho(r, \theta, s)$ and $ \xi(r,\theta, s)$ are smooth in $s$ and $\C^{2,\alpha}$ in~$(r,\theta)$ satisfying the following properties:
\begin{itemize}
\item $\zeta_0$ is a diffeomorphism.
\item $\big(\rho(a, \theta, s), \xi(a, \theta, s) \big)= \big(\hat{\rho}(\theta, s), \hat{\xi}(\theta, s)\big)$.
\item For $r>2a$, $\big(\rho(r, \theta, s), \xi(r, \theta, s) \big) = (r,\theta)$ for all $s$. 
\end{itemize}
For example, we can define
\begin{align*}
\rho(r, \theta, s) &= \Big(\lambda(r) a^{-1} \hat{\rho}(\theta, s) + \big(1-\lambda(r)\big)\Big)r\\
\xi(r,\theta, s)&=\hat{\xi}(\theta, \lambda(r) s)
\end{align*}
where $\lambda(r)$ is a smooth, weakly decreasing function such that $\lambda(a)=1$ and $\lambda(r)\equiv 0$ for $r>2a$. 

It is direct to verify that  $\{ \zeta_s\}$ is smooth in $s$, each $\zeta_s$ is a diffeomorphism for $|s|$ sufficiently small,  $\left.\zeta_s\right|_{\partial B_a} = f_s$,  and~$\zeta_s$ equals to the identity map outside a compact set. Then we define $\psi_s = \zeta_s\circ \zeta_0^{-1}:\mathbb{R}^3\setminus \Omega\to \mathbb{R}^3\setminus \Omega_s$. 

\end{proof}

We now proceed to compute the first and second variations of $\mathscr{G}$. We'll omit the subscript~$\bar{g}$ when the geometric quantities and linearizations are taken with respect to $\bar{g}$. 

\begin{proposition}\label{proposition:variation2}
Let $g(s)$ be a smooth family of asymptotically flat metrics in $\mathbb{R}^3\setminus \Omega$ with $g(0)= \bar{g}$. Denote by $h = g'(0)$. Then we have the first and second variational formulas:
\begin{align*}
	\left.\ds\right|_{s=0} \mathscr{G}&(g(s))=0\\
	\left.\2ds\right|_{s=0} \mathscr{G}&(g(s))=\int_{\mathbb{R}^3\setminus \Omega} h\cdot\Big(-D\Ric(h)+\tfrac{1}{2} DR(h) \bar{g}\Big)\, d\mathrm{vol}\\
	& +\int_\Sigma \left[\Big( D\mathring{H}(h^{\intercal}) - DH(h)\Big)\tr h^\intercal + \Big(DA (h)- D \mathring{A}(h^\intercal)\Big)\cdot h^\intercal \right] \, d\sigma.
\end{align*}

\end{proposition}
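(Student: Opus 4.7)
The approach is to introduce an auxiliary family $\tilde g(s)$ of metrics on $\mathbb{R}^3\setminus\Omega$ that induces the same metric on $\Sigma$ as $g(s)$, but is flat and a compactly supported perturbation of $\bar g$. By Lemma~\ref{lemma:embedding} pick a smooth family of isometric embeddings $f_s : (\Sigma, g(s)^\intercal) \to (\mathbb{R}^3, \bar g)$ with $f_0 = \mathrm{Id}_\Sigma$; by Lemma~\ref{lemma:extension} extend each $f_s$ to a diffeomorphism $\psi_s : \mathbb{R}^3\setminus\Omega \to \mathbb{R}^3\setminus\Omega_s$ equal to the identity outside a large ball. Set $\tilde g(s) = \psi_s^*\bar g$ and $\tilde h = \tilde g'(0) = L_X \bar g$ with $X = \ps\psi_s|_{s=0}$. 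Three facts are then immediate: (a) $\tilde g(s)^\intercal = g(s)^\intercal$, so $\tilde h^\intercal = h^\intercal$ and $(g'')^\intercal = (\tilde g'')^\intercal$; (b) by naturality under the isometry $\psi_s$, $\mathring{H}_{g(s)^\intercal} = H_{\tilde g(s)}$ and $\mathring{A}_{g(s)^\intercal} = A_{\tilde g(s)}$ as tensors on $\Sigma$, hence $D\mathring{H}(h^\intercal) = DH(\tilde h)$ and $D\mathring{A}(h^\intercal) = DA(\tilde h)$; (c) $\tilde g(s)$ is flat and equals $\bar g$ near infinity, so $m_{\mathrm{ADM}}(\tilde g(s)) = 0$ and $R_{\tilde g(s)} = 0$, giving $\mathscr{F}(\tilde g(s), 1) \equiv 0$ in~$s$.

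For the first variation, apply Corollary~\ref{corollary:variations} to $g(s)$ and separately differentiate the boundary integral of $\mathscr{G}$ at $s=0$, using $\mathring H_{\bar g^\intercal} = H_{\bar g}$ so that the variation of the area form contributes nothing. Summing gives
\[
	\left.\ds\right|_{s=0}\mathscr{G}(g(s)) = \int_\Sigma \bigl(A\cdot h^\intercal + 2\,D\mathring H(h^\intercal)\bigr)\,d\sigma.
\]
Applying Corollary~\ref{corollary:variations} to $\tilde g(s)$, using facts (a), (b) and $\ds\mathscr{F}(\tilde g(s),1)|_{s=0}=0$ from fact (c), shows this expression vanishes.

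For the second variation, compute $\left.\2ds\right|_{s=0}\mathscr{F}(g(s),1)$ from Corollary~\ref{corollary:variations} and also compute $\left.\2ds\right|_{s=0}\int_\Sigma 2(\mathring H_{g(s)^\intercal} - H_{g(s)})\,d\sigma_{g(s)}$ by direct differentiation, again exploiting $\mathring H = H$ at $s=0$ to kill all contributions weighted by this factor. The sum produces an integrand with the terms $(DA(h) - 2A\circ h^\intercal + \tfrac{1}{2}\tr h^\intercal A)\cdot h^\intercal$, $\tr h^\intercal DH(h)$, $A\cdot(g'')^\intercal$, $2\mathring H''$, and $2(D\mathring H(h^\intercal) - DH(h))\tr h^\intercal$, together with the desired interior integral. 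The crucial simplification uses $\left.\2ds\right|_{s=0}\mathscr{F}(\tilde g(s),1) = 0$: because $\tilde h = L_X\bar g$ makes the interior integrand $\tilde h\cdot(-D\Ric(\tilde h) + \tfrac{1}{2} DR(\tilde h)\bar g)$ vanish identically, one obtains a purely boundary identity that isolates $\int_\Sigma 2\mathring H''\,d\sigma$ in terms of $D\mathring A(h^\intercal)\cdot h^\intercal$, $A\cdot(\tilde g'')^\intercal$, and the same $A\circ h^\intercal$/$\tr h^\intercal A$ corrections. Substituting and using $(g'')^\intercal = (\tilde g'')^\intercal$ cancels the $A\cdot(g'')^\intercal$, $-2A\circ h^\intercal\cdot h^\intercal$, and $\tfrac{1}{2}(\tr h^\intercal)A\cdot h^\intercal$ contributions, while the $\tr h^\intercal$-weighted terms combine to $(D\mathring H(h^\intercal) - DH(h))\tr h^\intercal$ and the $h^\intercal$-contracted terms reduce to $(DA(h) - D\mathring A(h^\intercal))\cdot h^\intercal$, yielding the claimed formula.

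The main obstacle is the algebraic bookkeeping in the second-variation step: several symmetric combinations involving $h^\intercal$, $\tr h^\intercal$, and second derivatives $g''$, $\tilde g''$ must be tracked carefully before invoking the identity from $\mathscr{F}(\tilde g(s),1)\equiv 0$. The substantive conceptual input is the construction of the comparison family $\tilde g(s)$ via Lemmas~\ref{lemma:embedding} and~\ref{lemma:extension}, which converts the non-local functional $\int_\Sigma 2\mathring H_{g^\intercal}\,d\sigma_g$ into $\int_\Sigma 2 H_{\tilde g}\,d\sigma_{\tilde g}$ for a flat, asymptotically trivial metric, making Corollary~\ref{corollary:variations} directly applicable.
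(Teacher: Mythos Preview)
Your proposal is correct and follows essentially the same approach as the paper's proof: both construct the comparison family $\tilde g(s) = \psi_s^*\bar g$ (which the paper calls $\xi(s)$) via Lemmas~\ref{lemma:embedding} and~\ref{lemma:extension}, use that $\mathscr{F}(\tilde g(s),1)\equiv 0$ together with Corollary~\ref{corollary:variations}, and cancel the resulting boundary expressions. The only difference is cosmetic: the paper groups the computation as $\big[\mathscr{F}(g(s),1) - \int_\Sigma 2H_{g(s)}\,d\sigma_{g(s)}\big] + \int_\Sigma 2\mathring H_{g(s)^\intercal}\,d\sigma_{g(s)}$, whereas you keep it as $\mathscr{F}(g(s),1) + \int_\Sigma 2(\mathring H - H)\,d\sigma_g$ and exploit $\mathring H = H$ at $s=0$ to kill the area-form contributions directly.
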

\begin{proof}
We first compute the variations for one of the boundary integrals in the functional $\mathscr{G}$:
\begin{align*}
	&\left.\ds \right|_{s=0} \int_\Sigma 2H_{g(s)} \, d\sigma_{g(s)} =\int_\Sigma \Big(2DH(h) + H\tr h^\intercal\Big) \, d\sigma\\
	&\left.\2ds \right|_{s=0} \int_\Sigma 2H_{g(s)} \, d\sigma_{g(s)} \\
	&\quad =\int_\Sigma \left(\big(2H_{g(s)}\big)'' + \Big(\tr_{g(s)}g'(s)^\intercal\Big)'H + 2DH(h)\tr h^\intercal +  \tfrac{1}{2}  \big(\tr h^\intercal \big)^2 H \right)\, d\sigma.
\end{align*}
Here and below,  $'$ denotes the $s$-derivative, evaluated at $s=0$. Combining those with the variations of $\mathscr{F}$ in Corollary~\ref{corollary:variations}, we have
\begin{align}\label{equation:variation-G}
\begin{split}
	&\left.\ds \right|_{s=0} \left[\mathscr{F}(g(s), 1) - \int_\Sigma 2H_{g(s)} \, d\sigma_{g(s)} \right]= \int_\Sigma \big(A\cdot h^\intercal - H \tr h^\intercal\big) \, d\sigma\\
		&\left.\2ds\right|_{s=0} \left[\mathscr{F}(g(s), 1) - \int_\Sigma 2H_{g(s)} \, d\sigma_{g(s)} \right]\\
		& \quad = \int_{\mathbb{R}^3\setminus \Omega} h\cdot \Big(-D\Ric(h)+\tfrac{1}{2} DR(h) \bar{g}\Big)\, d\mathrm{vol}\\
		& \quad  +\int_{\Sigma}\Big( \big(DA(h)  - 2A \circ h^\intercal + \tfrac{1}{2} \tr h^\intercal A \big)\cdot h^\intercal +  \tr h^\intercal DH(h) \Big)\, d\sigma \\
		&\quad + \int_\Sigma \left(A\cdot \big(g^\intercal \big)'' - \Big(\tr_{g(s)}g'(s)^\intercal\Big)' H - 2DH(h) \tr h^\intercal- \tfrac{1}{2} \big(\tr h^\intercal\big)^2 H\right) \, d\sigma.
	\end{split}
\end{align}

For $|s|$ sufficiently small,  $(\Sigma, g(s)^\intercal)$ is convex. From Lemma~\ref{lemma:embedding},  there is a smooth family of isometric embeddings  $f_s:(\Sigma, g(s)^\intercal)\to (\mathbb{R}^3, \bar{g})$.  By Lemma~\ref{lemma:extension}, there exists  $\psi_s: \mathbb{R}^3\setminus  \Omega\to \mathbb{R}^3$, with $\psi_0$ being the identity map,  that is diffeomorphic onto its image such that for each $s$,  $\psi_s|_\Sigma = f_s$ and $\psi_s$ is the identity map outside a compact subset. Define $\xi(s) = (\psi_s)^*(\bar{g})$ on $\mathbb{R}^3\setminus \Omega$, as the pull-back of the Euclidean metric, with $\xi(0)=\bar{g}$. By construction,  $\xi(s)$ is isometric to the Euclidean metric and induces the same metric as $g(s)$ on $\Sigma$:
 \begin{align*}
 	\xi(s)^\intercal&= g(s)^\intercal.  
\end{align*}
Furthermore, the second fundamental form $A_{\xi(s)}$ of $\Sigma$ in $(\mathbb{R}^3, \xi(s))$ is just $\mathring{A}_{\xi(s)^\intercal}$ since $\xi(s)$ is itself a Euclidean metric. Using that $\xi(s)^\intercal= g(s)^\intercal$, we have 
\begin{align*}
	A_{\xi(s)}= \mathring{A}_{g(s)^\intercal},\quad \mbox{and}\quad  \ H_{\xi(s)} = \mathring{H}_{g(s)^\intercal}.
\end{align*} 
By differentiating the above identities in $s$, we get
\begin{align}\label{equation:g-functional}
\begin{split}
	\big(\xi(s)^\intercal\big)' &= h^\intercal\\
	\big(\xi(s)^\intercal\big)''&= \big(g(s)^\intercal\big)''\\
	\big(A_{\xi(s)}\big)' &= D\mathring{A} (h^\intercal)\\
	\big(H_{\xi(s)}\big)' &=D\mathring{H}(h^\intercal). 
\end{split}
\end{align}

 Apply the computations of \eqref{equation:variation-G} to the family of asymptotically flat pairs $(\xi(s), 1)$, and substitute the terms by \eqref{equation:g-functional}. Because $\mathscr{F}(\xi(s), 1)\equiv 0$, $D\Ric(\xi'(0))=0$, $DR(\xi'(0))=0$,  we derive
\begin{align*}
	&\left.\ds \right|_{s=0}  \int_\Sigma 2\mathring{H}_{\xi(s)^\intercal} \, d\sigma_{g(s)} = \int_\Sigma (-A\cdot h^\intercal+ H \tr h^\intercal) \, d\sigma\\
		&\left.\2ds\right|_{s=0}  \int_\Sigma 2\mathring{H}_{\xi(s)^\intercal}\, d\sigma_{g(s)}\\
		&\quad=-  \int_{\Sigma}\Big( \big(D\mathring{A}(h^\intercal)  -2A \circ h^\intercal+ \tfrac{1}{2} \tr h^\intercal A \big)\cdot h^\intercal +\tr h^\intercal D\mathring{H}\big(h^\intercal \big)  \Big)\, d\sigma  \\
		&\quad  + \int_\Sigma \left(- A\cdot \big(g^\intercal \big)'' + \Big(\tr_{g(s)}g'(s)^\intercal\Big)' H + 2D\mathring{H}(h^\intercal) \tr h^\intercal+\tfrac{1}{2} \big(\tr h^\intercal\big)^2 H\right) \, d\sigma.
\end{align*}
The desired formulas follow from adding the above identities to \eqref{equation:variation-G} and then simplifying. 
\end{proof}

We now proceed to prove Proposition~\ref{proposition:harmonic-function}. We recall its statement.

\begin{manualproposition}{\ref{proposition:harmonic-function}}
Let  $\Omega$ be a bounded open subset in $\mathbb{R}^3$ whose boundary $\Sigma = \partial \overline{\Omega}$ has  positive Gauss curvature. Let $v\in\C^{2,\alpha}_{-q}(\mathbb R^3\setminus\Omega)$ satisfy $\Delta v=0$. Then
\[
\int_\Sigma 2v\Big(D \mathring{H}\big(2v\bar{g}^\intercal\big) - DH(2v\bar{g}) \Big) \, d\sigma\geq 0.
\]
\end{manualproposition}
\begin{proof}
Let $\gamma(s)$ be the family of metrics in $\mathbb{R}^3\setminus \Omega$ constructed from   the harmonic function $v$ as in Lemma~\ref{lemma:harmonic-generate}, and let $\gamma_{\I}$ be the corresponding  metrics in the compact region $\overline{\Omega}$.  Recall that $\gamma'(0) = 2v\bar{g}$. We will show that the functional~$\mathscr{G}$ is ``concave'' at $\bar{g}$ along $\gamma(s)$ in the following sense:
\begin{align}\label{equation:concave}
	\left.\2ds\right|_{s=0} \mathscr{G}(\gamma(s)) \ge 0.
\end{align}
The functional $\mathscr{G}$ along $\gamma(s)$ can be re-written as 
\begin{align*}
	\mathscr{G}(\gamma(s))   &=-16\pi m_{\mathrm{ADM}}(\gamma(s))+ \int_{\mathbb{R}^3\setminus \Omega} R_{\gamma(s)}\, d\mathrm{vol}_{\gamma(s)}+  \int_\Sigma 2\Big(H_{\gamma_{\I}(s)} - H_{\gamma(s)}\Big)\, d\sigma_{\gamma(s)}\\
	& \quad + \int_\Sigma 2\Big(\mathring{H}_{\gamma(s)^\intercal} - H_{\gamma_{\I}(s)}\Big)\, d\sigma_{\gamma(s)}.
\end{align*}
Since  $\left.\2ds\right|_{s=0} $ of first line in the right hand side is nonnegative by Lemma~\ref{lemma:harmonic-generate}, we just need to compute $\left.\2ds\right|_{s=0}  \int_\Sigma 2\Big(\mathring{H}_{\gamma(s)^\intercal} - H_{\gamma_{\I}(s)}\Big)\, d\sigma_{\gamma(s)}$.  Using that $(\overline{\Omega}, \gamma_{\I}(s))$ has zero scalar curvature and $\gamma_{\I}(s)^\intercal = \gamma(s)^\intercal$ on $\Sigma$, we can apply Shi-Tam's theorem, Theorem~\ref{theorem:Shi-Tam}, to see that 
\[
\left.\2ds\right|_{s=0}\int_\Sigma 2\Big(\mathring{H}_{\gamma(s)^\intercal} - H_{\gamma_{\I}(s)}\Big)\, d\sigma_{\gamma(s)}\ge 0
\]
as the integral achieves the global minimum at $s=0$. Thus we prove~\eqref{equation:concave}.

From Proposition~\ref{proposition:variation2},  we have the second variation formula \begin{align*}
	\left.\2ds\right|_{s=0} \mathscr{G}(\gamma(s))&=\int_{\mathbb{R}^3\setminus \Omega} h\cdot\Big(-D\Ric(h)+\tfrac{1}{2} DR(h) \bar{g}\Big)\, d\mathrm{vol}\\
	&\quad +\int_\Sigma \left[\Big( D\mathring{H}(h^{\intercal}) - DH(h)\Big)\tr h^\intercal + \Big(DA (h)- D \mathring{A}(h^\intercal)\Big)\cdot h^\intercal \right] \, d\sigma
\end{align*}
where $h=\gamma'(0) = 2v\bar{g}$. Since $v$ is harmonic, we have $\bar{g}\cdot D\Ric (2vg) = 0$ and $DR(2vg)=0$ by the linearized curvature formulas in Lemma~\ref{lemma:formula}. So the interior integral above is zero. For the boundary integral, obviously 
$\Big( D\mathring{H}(h^{\intercal}) - DH(h)\Big)\tr h^\intercal =4v\Big( D\mathring{ H} (2v\bar{g}^\intercal)- DH\big(2v\bar{g}\big)\Big)$  and 
\begin{align*}
	& \Big(D A(2v\bar{g}) - D\mathring{A}\big(2v\bar{g}^\intercal \big) \Big) \cdot h^\intercal \\
	 &= 2v \left. \ds\right|_{s=0} \Big(\big(A_{\gamma(s)} -\mathring{A}_{\gamma(s)^\intercal}\big) \cdot \gamma(s)^\intercal\Big)+ 2v \big(A_{\bar{g}} -\mathring{A}_{\bar{g}^\intercal}\big) \cdot \left. \ds\right|_{s=0} \gamma(s)^\intercal \\ 
	 &= 2v \left. \ds\right|_{s=0} \big(H_{\gamma(s)} -\mathring{H}_{\gamma(s)^\intercal}\big) \\
	 &= 2v \Big( D H (2v\bar{g})- D\mathring{H}\big(2v\bar{g}^\intercal\big)\Big).
\end{align*}
Therefore,
\[	
	\left.\p2s\right|_{s=0} \mathscr{G}(\gamma(s))=\int_\Sigma 2v\Big(D \mathring{H}\big(2v\bar{g}^\intercal\big) - DH(2v\bar{g}) \Big) \, d\sigma.
\]
Combining this with \eqref{equation:concave} gives the proposition. 
\end{proof}

\subsection{Rigidity of Ricci flat deformations}

In order to apply Proposition~\ref{proposition:harmonic-function} to prove Theorem~\ref{theorem:static-convex}, we will first derive  effective formulas to compute  $D\mathring H (h^\intercal)$.

Observe that for deformations that take the form $L_X\bar g$ (those ``generated from diffeomorphisms''), 
\begin{align}\label{equation:embedding}
	D\mathring{H}\big((L_X\bar{g})^\intercal\big) = DH(L_X\bar{g}). 
\end{align}
To see this, we let $\psi_t: \mathbb{R}^3\setminus \Omega\to \mathbb{R}^3$ be be the flow of $X$. Since $\psi_t^*\bar{g}$ is a Euclidean metric on $\mathbb{R}^3\setminus \Omega$, by definition of $\mathring{H}$, we have $\mathring{H}_{(\psi_t^*\bar{g})^\intercal} =H_{\psi_t^*\bar{g}}$. Differentiating in $t$ gives the above formula.

We will see in the next lemma that because of the convexity of $\Sigma$, any Ricci flat deformation $h$ must be generated from diffeomorphisms, i.e. $h=L_X\bar g$, and any tensor $\tau$ on $\Sigma$ is induced from a ``unique'' Ricci flat deformation $h$ such that $h^\intercal = \tau$. To state the lemma more precisely, we define the Banach spaces $\mathcal{S}_0$ and $\mathcal{S}_1$  that consist of symmetric (0,2)-tensors in $\mathbb{R}^n\setminus \Omega$:
 \begin{align*}
 \mathcal{S}_0&=\{ L_Z \bar{g}: \mbox{$Z\in \C^{3,\alpha}_{-q}(\mathbb{R}^3\setminus \Omega)$  satisfies $\Delta Z=0$ in $\mathbb{R}^3\setminus \Omega$}\}\\
  \mathcal{S}_1&=\{ h\in \C^{2,\alpha}_{-q}(\mathbb{R}^3\setminus \Omega): D\Ric(h)=0, \bi h=0 \mbox{ in } \mathbb{R}^3\setminus \Omega\}/\{L_X \bar{g}: X\in \mathcal{X}_0 \},
 \end{align*}
 where  the equivalence relation in $\mathcal{S}_1$ says that $h_1\sim h_2$ if and only if $h_1-h_2 = L_X \bar{g}$ for some  $X \in \mathcal X_0$. (Recall the definition of $\mathcal{X}_0$ in Definition~\ref{definition:harmonic}.) Geometrically, $\mathcal{S}_0$ consists of deformations generated from diffeomorphisms  satisfying the harmonic gauge, and  $\mathcal{S}_1$ consists of Ricci flat deformations satisfying the harmonic gauge (up to the equivalent relation).  We also define the Banach space of tensors on the boundary $\Sigma$:
  \begin{align*}
 \S_2=\{ \tau\in \C^{2,\alpha}(\Sigma): \tau \mbox{ is a symmetric $(0,2)$-tensor on } \Sigma\}.
 \end{align*}

 \begin{lemma}\label{lemma:convex}
Let  $\Sigma$ be convex. Then $\mathcal{S}_0$ is surjective onto both $\mathcal{S}_1$ and $\mathcal{S}_2(\Sigma)$, and $\mathcal{S}_1,\mathcal{S}_2(\Sigma)$ are isomorphic, via the maps given by
 \begin{align*}	
                         &b:L_Z\bar{g} \in \mathcal{S}_0\longrightarrow  [L_Z \bar{g}]\in \mathcal{S}_1\\
 	 		&b_0:L_Z \bar{g}\in  \mathcal{S}_0 \longrightarrow  (L_Z\bar{g})^\intercal\in \S_2\\
			&b_1:[h] \in \mathcal{S}_1 \longrightarrow  h^\intercal \in \S_2
 	 \end{align*}
 \end{lemma}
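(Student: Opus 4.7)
The plan is to prove the four assertions of the lemma as follows. First, I will establish surjectivity of $b_0$ using the convexity hypothesis and a geometric construction via isometric embeddings. Second, I will derive injectivity of $b_1$ from the Green-type identity \eqref{equation:Ricci-flat} combined with the surjectivity of $b_0$. Surjectivity of $b_1$ is then automatic because $b_1\circ b = b_0$. Finally, surjectivity of $b$ follows by a lifting argument: for $[h]\in\mathcal S_1$, one picks $L_Z\bar g\in\mathcal S_0$ with $(L_Z\bar g)^\intercal = h^\intercal$ using $b_0$; then $(h-L_Z\bar g)^\intercal = 0$ and the injectivity of $b_1$ force $[h] = [L_Z\bar g] = b(L_Z\bar g)$.

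For the surjectivity of $b_0$, given $\tau\in\S_2$ I would choose a smooth one-parameter family $g_s$ of metrics on $\Sigma$ with $g_0 = \bar g^\intercal$ and $\partial_s g_s|_{s=0} = \tau$. By convexity, $g_s$ retains positive Gauss curvature for $|s|$ small, so Lemma~\ref{lemma:embedding} produces a smooth family of isometric embeddings $f_s:(\Sigma, g_s)\to(\mathbb R^3, \bar g)$ with $f_0 = \mathrm{Id}_\Sigma$. By Lemma~\ref{lemma:extension} these extend to a smooth family of diffeomorphisms $\psi_s$ of $\mathbb R^3\setminus\Omega$ equal to the identity outside a fixed compact set. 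Setting $W = \partial_s\psi_s|_{s=0}$, one has
\[
(L_W\bar g)^\intercal = \partial_s(\psi_s^*\bar g)^\intercal\big|_{s=0} = \partial_s(f_s^*\bar g)\big|_{s=0} = \tau\quad\text{on }\Sigma.
\]
Because $(L_V\bar g)^\intercal|_\Sigma$ depends only on $V|_\Sigma$, I would then replace $W$ by the harmonic extension $Z$ of $W|_\Sigma$ in $\mathbb R^3\setminus\Omega$ (the exterior Dirichlet problem with decay at infinity is solvable by the standard theory underlying Lemma~\ref{lemma:PDE}). Then $L_Z\bar g\in\mathcal S_0$ satisfies $b_0(L_Z\bar g) = \tau$.

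For the injectivity of $b_1$, suppose $[h]\in\mathcal S_1$ with $h^\intercal = 0$. For each $\tau\in\S_2$, the previous step supplies a harmonic $Z_\tau$ with $(L_{Z_\tau}\bar g)^\intercal = \tau$; since $\Delta Z_\tau = 0$ and $\bar g$ is flat, $k := L_{Z_\tau}\bar g$ is a Ricci flat deformation at $\bar g$. Applying \eqref{equation:Ricci-flat} yields
\[
\int_\Sigma\bigl(DA(h)\cdot\tau - (\tr\tau)\,DH(h)\bigr)\,d\sigma = 0
\]
for every $\tau\in\S_2$, so $DA(h) = DH(h)\,\bar g^\intercal$ pointwise on $\Sigma$. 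Tracing on the two-dimensional $\Sigma$ gives $\tr DA(h) = 2\,DH(h)$, while $H = g^{ab}A_{ab}$ together with $h^\intercal = 0$ gives $DH(h) = \tr DA(h)$; combining, $DH(h) = 0$ and hence $DA(h) = 0$. Thus $h$ satisfies the full Cauchy boundary condition, so Theorem~\ref{theorem:trivial} produces $h = L_X\bar g$ with $X\in\C^{3,\alpha}_{1-q}$ equal to some Killing vector $K$ on the connected $\Sigma$. Subtracting the global extension of $K$ and invoking the harmonic gauge $\bi h = 0$, which reduces to $\Delta(X-K) = 0$, one obtains $h = L_{X-K}\bar g$ with $X-K\in\mathcal X_0$, i.e.\ $[h] = 0$ in $\mathcal S_1$.

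The hard part is the surjectivity of $b_0$, and this is where convexity enters essentially: positive Gauss curvature on $\Sigma$ (and on nearby metrics $g_s$) is exactly what permits the Weyl/Nirenberg isometric embedding into $(\mathbb R^3,\bar g)$ used to realize an arbitrary tangential deformation $\tau$. Without positive Gauss curvature no general embedding theorem is available, and surjectivity of $b_0$ would in general fail. The remaining ingredients---the Green-type identity \eqref{equation:Ricci-flat}, Theorem~\ref{theorem:trivial}, and the exterior Dirichlet theory for the Laplacian---are developed earlier in the paper and slot in directly.
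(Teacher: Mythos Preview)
Your proposal is correct and follows essentially the same route as the paper's proof: surjectivity of $b_0$ via the linearized isometric embedding (using convexity and Lemma~\ref{lemma:embedding}), injectivity of $b_1$ via the Green-type identity \eqref{equation:Ricci-flat}, and the remaining statements from $b_0=b_1\circ b$. Two small remarks. First, the detour through Lemma~\ref{lemma:extension} is unnecessary: since $(L_V\bar g)^\intercal$ on $\Sigma$ depends only on $V|_\Sigma$, the paper simply takes $V=\partial_s f_s|_{s=0}$ on $\Sigma$ and harmonically extends, without ever building the global diffeomorphisms $\psi_s$. Second, you skipped a regularity point the paper flags: Lemma~\ref{lemma:embedding} only gives $f_s\in\C^{2,\alpha}$, so the velocity $W|_\Sigma$ is a priori only $\C^{2,\alpha}$, whereas $\mathcal S_0$ requires $Z\in\C^{3,\alpha}_{-q}$; one must argue (from $(L_V\bar g)^\intercal=\tau\in\C^{2,\alpha}$ and ellipticity of the linearized embedding system on a convex surface) that in fact $V\in\C^{3,\alpha}(\Sigma)$ before harmonically extending. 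Finally, for the injectivity step you may invoke Corollary~\ref{corollary:trivial} directly (it already packages Theorem~\ref{theorem:trivial} with connectedness and the harmonic gauge) rather than reproving that $X-K\in\mathcal X_0$.
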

 \begin{remark}
 We note that the statement about surjectivity of $ b:\mathcal{S}_0\to \mathcal{S}_1$ can be  compared to Corollary~\ref{corollary:trivial}. It says that the Cauchy boundary condition in Corollary~\ref{corollary:trivial} can be dropped if $\Sigma$ is convex.
 \end{remark}
 \begin{proof}
 We will show  that $b_0$
 is surjective and $b_1$
 is injective.
Since $b_0=b_1\circ b$, it gives the desired properties. 
 
The map $b_0$ 
is surjective: Given $\tau \in \S_2$, we consider the family of metrics $ \bar{g}^\intercal + t\tau$ on $\Sigma$. Since $(\Sigma, \bar{g}^\intercal)$ has positive Gauss curvature, $ \bar{g}^\intercal + t\tau$ also has positive Gauss curvature for $|t|$  small. So there is a smooth family of isometric embeddings $f_t: (\Sigma, \bar{g}^\intercal + t\tau) \to (\mathbb{R}^3, \bar{g})$ by Lemma~\ref{lemma:embedding}. We identify $\Sigma$ with $f_0(\Sigma)$. Let $V: =\left. \frac{\partial }{\partial t}\right|_{t=0} f_t$ be the velocity vector field defined along $\Sigma$, and we have $V\in \C^{2,\alpha}(\Sigma)$ initially. Since  $(L_V \bar{g})^\intercal=\tau \in \C^{2,\alpha}(\Sigma)$ on $\Sigma$, we can argue that in fact $V\in \C^{3,\alpha}(\Sigma)$.  Solving $Z\in \C^{3,\alpha}_{-q}(\mathbb{R}^3 \setminus \Omega)$ to the Dirichlet boundary value problem 
 \begin{align*}
 	\Delta Z&=0 \mbox{ in } \mathbb{R}^3 \setminus \Omega\\
	Z &= V \mbox{ on } \Sigma,
 \end{align*}
we find $[L_Z\bar{g}]\in \mathcal{S}_0$ so that $(L_Z\bar{g})^\intercal = \tau$ on $\Sigma$.

The map $b_1$
is injective: If $h$ is a representative of $[h] \in \mathcal{S}_1$ and $h^\intercal=0$, then we will show that $h=L_X \bar{g}$ for some $X\in \mathcal{X}_0$, and thus $[h]=0$. For any $\tau \in \S_2$, we have shown that there is $L_Z \bar{g} \in \mathcal{S}_0$ such that $(L_Z \bar{g})^\intercal = \tau$. Since both $h$ and  $L_Z\bar{g}$ are Ricci flat deformations, we can apply \eqref{equation:Ricci-flat} (with $k=L_Z\bar{g}$ and hence $k^\intercal = \tau$)  to get
 \[
 	0=\int_\Sigma \Big(DA(h)\cdot \tau-  (\tr \tau) DH(h)\Big)\, d\sigma=\int_\Sigma \big(DA(h)-DH(h){\bar g}^\intercal\big)\cdot\tau \, d\sigma.
 \]
Since $\tau$ is arbitrary, $DA (h)- DH(h) \bar{g}^{\intercal}$ must vanish identically on $\Sigma$. Using the assumption that $h^\intercal=0$, we see that $DH(h)$ and hence $DA(h)=0$ both vanish on $\Sigma$. Then Corollary~\ref{corollary:trivial} implies that $h=L_X \bar{g}$ for some $X\in \mathcal{X}_0$. 
 \end{proof}
\begin{remark}
While we will not use it elsewhere in the paper, we note the following fact.  The map~$b:\mathcal{S}_0\to \mathcal{S}_1$ has an $N$-dimensional kernel space  
\[
\{ L_Z\bar{g}\in \mathcal{S}_0: Z= K \mbox{ on $\Sigma$ for some Killing vector $K$} \}.
\]
 Thus, the  maps $b$ and $b_0$  induce isomorphisms on the quotient space, $\mathcal{S}_0$ modulo the kernel space, to  $\mathcal{S}_1$ and $\mathcal{S}_2(\Sigma)$, respectively.
\end{remark}

Together with an earlier observation \eqref{equation:embedding}, Lemma~\ref{lemma:convex} gives the following effective formulas for $D\mathring{H}$.

\begin{corollary}\label{corollary:convex}
Let $\Sigma$ be convex and  $h\in \C^{2,\alpha}_{-q}(\mathbb{R}^3\setminus \Omega)$ be a symmetric $(0,2)$-tensor. Then the following holds:
\begin{enumerate}
\item  $h^\intercal = (L_X \bar{g})^\intercal$ on $\Sigma$ for some $X\in \C^{3,\alpha}_{-q}(\mathbb{R}^3 \setminus \Omega)$ and
\[
	D\mathring{H}(h^\intercal) = DH(L_X\bar{g})\quad  \mbox{ on }\Sigma.
\]
\item If $h$ solves $D\Ric(h)=0$ in $\mathbb{R}^3\setminus \Omega$, then $h = L_X \bar{g}$ in $\mathbb{R}^3\setminus \Omega$ for some $X\in \C^{3,\alpha}_{1-q}(\mathbb{R}^3 \setminus \Omega)$ and
\[
	D\mathring{H}(h^\intercal) = DH(h) \quad \mbox{ on } \Sigma.
\]
\end{enumerate}
 \end{corollary}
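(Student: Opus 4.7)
The plan is to derive both statements by combining the convex-surface rigidity in Lemma~\ref{lemma:convex} with the observation~\eqref{equation:embedding}, which asserts $D\mathring{H}((L_X\bar{g})^\intercal) = DH(L_X\bar{g})$ for any vector field $X$. The role of convexity is to guarantee that every tensor on $\Sigma$ is the restriction of a Lie derivative, so any boundary datum matches one coming from a diffeomorphism.

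For part~(1), take $h^\intercal \in \mathcal{S}_2(\Sigma)$. The surjectivity of $b_0: \mathcal{S}_0 \to \mathcal{S}_2(\Sigma)$ from Lemma~\ref{lemma:convex} produces a harmonic $X \in \C^{3,\alpha}_{-q}(\mathbb{R}^3\setminus \Omega)$ with $(L_X\bar{g})^\intercal = h^\intercal$. Since $D\mathring{H}$ depends only on the boundary metric, \eqref{equation:embedding} then yields $D\mathring{H}(h^\intercal) = D\mathring{H}((L_X\bar{g})^\intercal) = DH(L_X\bar{g})$.

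For part~(2), the Ricci-flat hypothesis is used to promote this boundary agreement to a global identification $h = L_X\bar{g}$. First invoke Lemma~\ref{lemma:harmonic-gauge} to find $V \in \C^{3,\alpha}_{1-q}$ vanishing on $\Sigma$ such that $k := h + L_V\bar{g}$ satisfies $\bi k = 0$. Since $L_V\bar{g}$ is itself a Ricci-flat deformation at $\bar g$, $k$ is a Ricci-flat deformation in the harmonic gauge, defining a class $[k] \in \mathcal{S}_1$. Using the surjectivity of $b_0$ in Lemma~\ref{lemma:convex} again, select a harmonic $Z \in \C^{3,\alpha}_{-q}$ with $(L_Z\bar{g})^\intercal = k^\intercal$; harmonicity of $Z$ together with~\eqref{equation:Bianchi-vector} at $\bar{g}$ gives $\bi(L_Z\bar{g}) = 0$, so $[L_Z\bar{g}] \in \mathcal{S}_1$ as well. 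The injectivity of $b_1: \mathcal{S}_1 \to \mathcal{S}_2(\Sigma)$ then forces $[k] = [L_Z\bar{g}]$, i.e.\ $k - L_Z\bar{g} = L_Y\bar{g}$ for some $Y \in \mathcal{X}_0$. Therefore $h = L_{Z + Y - V}\bar{g}$, and applying~\eqref{equation:embedding} to $X := Z + Y - V$ delivers the mean-curvature identity $D\mathring{H}(h^\intercal) = DH(h)$ at once.

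The sole delicate point is the regularity claim $X \in \C^{3,\alpha}_{1-q}$. By Lemma~\ref{lemma:X_0}, the summand $Y \in \mathcal{X}_0$ decomposes uniquely as $K + Y_0$ with $K$ a Euclidean Killing vector and $Y_0 \in \C^{3,\alpha}_{2-n}$; since Euclidean Killing vectors lie in the kernel of $L_\cdot\bar{g}$, one may replace $X$ by $X - K$ without altering $L_X\bar{g} = h$. The three surviving summands $Z \in \C^{3,\alpha}_{-q}$, $Y_0 \in \C^{3,\alpha}_{2-n}$, and $-V \in \C^{3,\alpha}_{1-q}$ all sit inside $\C^{3,\alpha}_{1-q}$, because for $n = 3$ and $q \in (\tfrac{1}{2}, 1)$ each of the exponents $-q$ and $2-n = -1$ is bounded above by $1-q$. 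This absorption of the asymptotic Killing piece is the only careful bookkeeping in the argument.
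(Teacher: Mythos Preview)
Your proof is correct and follows essentially the same approach as the paper. The only minor difference is that in part~(2) the paper invokes the surjectivity of $b:\mathcal{S}_0\to\mathcal{S}_1$ directly to obtain $[h+L_V\bar g]=[L_Z\bar g]$, whereas you factor through $b_0$ and $b_1$; since $b_0=b_1\circ b$ with $b_1$ an isomorphism, these are equivalent. Your explicit regularity bookkeeping (subtracting the Killing part of $Y$ and checking the weighted-space inclusions) spells out what the paper dismisses as ``straightforward to verify.''
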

 \begin{proof}
 The first statement is an immediate consequence from Lemma~\ref{lemma:convex} and \eqref{equation:embedding}. 

For the second statement,  by Lemma~\ref{lemma:harmonic-gauge}, there is a vector field $V\in \C^{3,\alpha}_{1-q}(\mathbb{R}^3\setminus \Omega)$ with $V=0$ on $\Sigma$ such that $\bi (h+L_V \bar{g})=0$ in $\mathbb{R}^3\setminus \Omega$, and thus $[h+L_V\bar{g}] \in \mathcal{S}_1$.  Since $\mathcal{S}_0$ is surjective onto $\mathcal{S}_1$ as shown in Lemma~\ref{lemma:convex}, there is  $Z\in \C^{3,\alpha}_{-q}(\mathbb{R}^3\setminus \Omega)$ such that 
\[
	[h+L_{V}\bar{g}]= [L_Z\bar{g}]. 
\]
It is straightforward to verify that $h=L_X \bar{g}$ for some $X= O^{3,\alpha}(|x|^{1-q})$. The identity for the linearized mean curvature follows \eqref{equation:embedding}.
\end{proof}

\begin{proof}[Proof of Theorem~\ref{theorem:static-convex}]
Suppose $(h, v)$ is a static vacuum deformation satisfying $h^\intercal=0, DH(h)=0$ on $\Sigma$.  We will show that $DA(h)=0$ on $\Sigma$. 

If $v$ is identically zero, then $h$ is a Ricci flat deformation.  We may assume that $h$ satisfies the harmonic gauge $\bi h=0$ by Lemma~\ref{lemma:harmonic-gauge}. Applying Lemma~\ref{lemma:convex} that $\mathcal{S}_1\to \mathcal{S}_2(\Sigma)$ is isomorphic, we have $[h]=0$ and thus $h = L_X\bar{g}$ for some $X\in \mathcal{X}_0$. In particular, $DA(h)=0$ on $\Sigma$.  

From now on, we assume that $v$ is not identically zero. 
By Lemma~\ref{lemma:geodesic}, we may assume that $h$ satisfies the geodesic gauge on $\Sigma$, and therefore, by \eqref{equation:normal}, \eqref{equation:linearized-sff},  and \eqref{equation:Riccati}, we have
\begin{align}\label{equation:linearization-geodesic}
\begin{split}
	\nu\big(DH(h) \big)&= -A \cdot DA(h)\\
	(L_\nu h)^\intercal &= 2DA(h).
\end{split}
\end{align}

Let $\nu$ be the outward unit normal to $\Sigma$ parallelly extended in a tubular neighborhood of $\Sigma$.  Let $Y$ be a smooth, compactly supported vector field on $\mathbb{R}^3$ such that $Y = \nu$ in a tubular neighborhood of $\Sigma$. Let $\psi_t$ be the flow of $Y$ for $ t\in (-\epsilon, \epsilon)$ and we denote by $\Sigma_t = \psi_t(\Sigma)$  and $\Omega_t = \psi_t(\Omega)$. Let $v_t\in \C^{2,\alpha}_{-q}(\mathbb{R}^3\setminus \Omega_t)$ be a $\C^1$-family of harmonic functions such that the Dirichlet boundary value $v_t|_{\Sigma_t} = v|_{\Sigma_t}$ for $t\ge 0$. In particular, $v_t = v|_{\mathbb{R}^3\setminus \Omega_t}$ for $t\ge 0$. We define the  scalar-valued  $\C^1$-function:
\begin{align}\label{equation:H1}
	\mathcal{H}(t)=\int_{\Sigma_t} \left[2v_t\Big(D\mathring{H}\big(2v_t\bar{g}^\intercal\big) - D H(2v_t\bar{g})\Big)\right] \, d\sigma.
\end{align}
Applying Proposition~\ref{proposition:harmonic-function} on each $\Sigma_t$,  we have  $\mathcal{H}(t)\ge 0$ for $t\in (-\epsilon, \epsilon)$. Since $(h, v)$ is a static vacuum deformation and thus $h+2v\bar{g}$ is a Ricci flat deformation in $\mathbb{R}^3\setminus\Omega$, we can apply Corollary~\ref{corollary:convex} to get, for each $t\ge 0$, 
\[
	 D\mathring{H}\big( (h+2v\bar{g})^\intercal \big)=DH(h+2v\bar{g})\quad \mbox{ on } \Sigma_t,
\]
and thus
\begin{align*}
	  D\mathring{H} \big(2v\bar{g}^\intercal\big)- D H(2v\bar{g}) =   DH(h) -  D\mathring{H} (h^\intercal)\quad \mbox{ on } \Sigma_t.
\end{align*}
This implies that the function $\mathcal{H}(t)$ has an alternative expression for $t\ge 0$:
\begin{align}\label{equation:H2}
\begin{split}
	\mathcal{H}(t) &= \int_{\Sigma_t} 2v \Big( DH(h) -  D\mathring{H} (h^\intercal)\Big) \, d\sigma\\
	&=\int_\Sigma 2\psi_t^* \bigg(v   \Big( DH(h) -  D\mathring{H} (h^\intercal)\Big)\bigg) \, d\sigma_{\psi_t^*(\bar{g})}.
\end{split}
\end{align}
Note that $\psi_t^* \Big(v   \big( DH(h) -  D\mathring{H} (h^\intercal)\big)\Big) =(\psi_t^* v )\Big( \psi_t^* DH(h) - \psi_t^* D\mathring{H}(h^\intercal)\Big)$ on $\Sigma$, where $\psi_t^*DH(h)$ on $\Sigma$ is the pull-back of the scalar-valued function $DH|_{\bar{g}}(h)$ on $\Sigma_t\subset \mathbb{R}^n\setminus \Omega_t$, and similarly for $\psi^*_tD\mathring{H}(h^\intercal)$. (See also Lemma~\ref{lemma:pullback}.)  From the alternative expression~\eqref{equation:H2}, we see that $\mathcal{H}(0)=0$ because of the assumption $h^\intercal=0, DH(h)=0$ on $\Sigma$. Thus, $t=0$ is a critical point $\mathcal{H}'(0)=0$. 

We \emph{claim} 
\begin{align*}
	\mathcal{H}'(0) &= 2\int_\Sigma |DA(h)|^2\, d\sigma.
\end{align*}
Once  the claim is verified, we can combine with the fact  $\mathcal{H}'(0)=0$ to conclude that $DA(h)$ vanishes on $\Sigma$, which completes the proof.

We compute $\mathcal{H}'(0)$. Differentiating \eqref{equation:H2} in $t$  for $t\ge 0$ and using that $DH (h) - D\mathring{H} (h^\intercal)=0$ on $\Sigma$, we get 
\begin{align*}
	\mathcal{H}'(0)&=\int_\Sigma 	2v \left.\pt\right|_{t=0}\psi_t^* \left( DH (h) - D\mathring{H} (h^\intercal)\right) \, d\sigma.
\end{align*}
Recall that $\left.\pt\right|_{t=0} \psi_t = Y = \nu$ on $\Sigma$.  By  \eqref{equation:linearization-geodesic}, 
\begin{align*}
	\left.\pt\right|_{t=0}\psi_t^* ( DH (h) )=\nu(DH(h))=  -A \cdot DA(h).
\end{align*}
To compute the term involving $D\mathring{H}$, we apply Lemma~\ref{lemma:convex}. For each $t$, there exists a vector field $X_t$ with $h^\intercal = (L_{X_t} \bar{g})^\intercal$ on $\Sigma_t$ such that 
\[	
	 D\mathring{H}\big(h^\intercal\big) = DH( L_{X_t} \bar{g}) \quad \mbox{ on } \Sigma_t.
\]
Then Lemma~\ref{lemma:pullback} implies that
\begin{align*}
 	\psi_t^* \left(D\mathring{H}\big(h^\intercal\big) \right)&=\psi_t^* \left( DH( L_{X_t} \bar{g})\right)\\
	&=DH|_{\psi_t^*\bar{g}} \big(\psi_t^* L_{X_t} \bar{g}\big)\\
	&=D\mathring{H}|_{\psi_t^*\bar{g}^\intercal} \big(\psi_t^* (L_{X_t} \bar{g} )^\intercal \big)\\
	&= D\mathring{H}|_{\psi_t^*\bar{g}^\intercal } \big(\psi_t^* h^\intercal \big) \quad \mbox{ on } \Sigma,
\end{align*}
where in the third equation we use that $\psi_t^* L_{X_t} \bar{g}  = L_{\psi_t^* X_t} (\psi_t^* \bar{g})$ is Ricci flat deformation  at $\psi_t^* \bar{g}$ and Corollary~\ref{corollary:convex}. Differentiating the previous identity in $t$ gives
\begin{align*}
	\left. \pt \right|_{t=0} \psi_{t}^* \left( D\mathring{H}\big(h^\intercal\big)\right) &=\left.\pt\right|_{t=0} D\mathring{H}|_{\psi_{t}^* \bar{g}^\intercal } \big(\psi_{t}^* h^\intercal \big) \\
	&=D^2 \mathring{H}\left((L_Y \bar{g})^\intercal, h^\intercal \right) + D\mathring{H} \big((L_\nu h)^\intercal \big)\\
	&= D\mathring{H} \big(2DA(h)\big)
\end{align*}
where we use that $h^\intercal=0$ on $\Sigma$   and \eqref{equation:linearization-geodesic}. By the first statement in Corollary~\ref{corollary:convex}, there is a vector $Z$ such that $ 2DA(h)=(L_Z\bar{g})^\intercal $ and  $D\mathring{H} \big(2DA(h)\big) = DH(L_Z\bar{g})$ on $\Sigma$. Apply \eqref{equation:Green-boundary} for static vacuum deformations $(h, v)$ and $(L_Z\bar{g}, 0)$. Since $h^\intercal=0, DH(h)=0$ on $\Sigma$ by assumption, we get
\begin{align*}
	0&=	\int_\Sigma  \Big\langle \big(vA + DA(h) -\nu(v) \bar{g}^\intercal, 2v \big), \big((L_Z\bar{g})^\intercal, DH(L_Z\bar{g})\big) \Big\rangle \, \da\\
	&=\int_\Sigma  \Big\langle \big(vA + DA(h) -\nu(v) \bar{g}^\intercal, 2v \big), \big(2DA(h), D\mathring{H} (2DA(h))\big)\Big\rangle \, \da.
\end{align*}	
Rearranging the terms yields
\[
	-\int_\Sigma 2v D\mathring{H}\big(2DA(h)\big) = \int_\Sigma \Big(2DA(h) \cdot \big(vA+ DA (h) - \nu(v) \bar{g}^\intercal \big)\Big)\, d\sigma.
\]

Combining the previous computations gives
\begin{align*}
		\mathcal{H}'(0)&=\int_\Sigma 	2v \left.\pt\right|_{t=0}\psi_t^* \left( DH (h) - D\mathring{H} (h^\intercal)\right) \, d\sigma\\
		&=\int_\Sigma \Big(-2v A\cdot DA(h) +2   DA(h)\cdot \big(vA+ DA (h) - \nu(v) \bar{g}^\intercal \big) \Big)\, d\sigma\\
		&=\int_\Sigma 2|DA(h)|^2\, d\sigma
\end{align*}
where we have used that $ DA(h)\cdot \bar{g}^\intercal  =DH(h) - A\cdot  h^\intercal=0$ on $\Sigma$. It completes the proof to the claim.  

\end{proof}

\section{Perturbations of hypersurfaces in $\mathbb{R}^n$}\label{section:generic}

We prove Theorem~\ref{theorem:static-generic} and Corollary~\ref{cor:dilation} in this section. Let us recall the statement in which we also spell out Definition~\ref{definition:static-regular}. 

\begin{manualtheorem}{\ref{theorem:static-generic}}
Let $t\in [-\delta, \delta]$ and each $\Omega_t\subset \mathbb{R}^n$ be a bounded open subset with embedded hypersurface boundary $\Sigma_t = \partial\overline{ \Omega_t}$.  Suppose  the boundaries $\{ \Sigma_t\}$ form a smooth generalized foliation. Then  there is an open dense subset $J\subset (-\delta,\delta)$ such that for any $t\in J$,  $\Sigma_t$ is static regular in $\mathbb{R}^n\setminus \Omega_t$. Namely, if $(h, v)\in \C^{2,\alpha}_{-q}(\mathbb{R}^n\setminus \Omega_t)$ is a static vacuum deformation at $\bar{g}$ such that $h$ satisfies the Bartnik boundary condition on $\Sigma_t$, then $h$ satisfies the Cauchy boundary condition on $\Sigma_t$.
\end{manualtheorem}

Let  $X$ be the deformation vector of $\{ \Sigma_t\}$. On each $\Sigma_t$, we can write $X= \zeta \nu + X^\intercal$, where  the scalar-valued function $\zeta>0$, except possibly zero on a set of $(n-1)$-dimensional Hausdorff measure zero, $\nu$ is the unit normal to $\Sigma_t$ pointing way from $\Omega_t$, and $X^\intercal$ is tangential.  We smoothly extend $X$ as a smooth, compactly supported vector field in $\mathbb{R}^n$ (still denoted by $X$). Let $\psi_t:\mathbb{R}^n\to \mathbb{R}^n$ be the flow of $X$. If we denote by $\Omega:=\Omega_0$ and $\Sigma:= \Sigma_0$, then $\Omega_t = \psi_t(\Omega)$ and $\Sigma_t = \psi_t (\Sigma)$.  Denote by $g_t = \psi_t^* (\bar{g}|_{\mathbb{R}^n\setminus \Omega_t})$,   the pull-back metric defined on $\mathbb{R}^n\setminus \Omega$, and note $g_0 = \bar{g}$. 
Define the family of maps $S_t$ as, for $(h, v)\in \C^{2,\alpha}_{-q}(\mathbb{R}^n\setminus \Omega)$,
\begin{align}\label{equation:generic}
 S_t(h,v)=
\begin{array}{l}
	\left\{ \begin{array}{l}-D\Ric|_{g_t}(h)+ \nabla^2_{g_t} v  \\
	\Delta_{g_t} v	\end{array} \right. \quad \mbox{ in } \mathbb{R}^n\setminus \Omega \\
	\left\{ \begin{array}{l} 
         h^\intercal\\
	DH|_{g_t}(h)
	\end{array} \right. \quad \mbox{ on } \Sigma.
\end{array}
\end{align}
To prove that $\Sigma_t$ is static regular, we will need  to show that for any $(h, v)\in \C^{2,\alpha}_{-q}(\mathbb{R}^n\setminus \Omega)$ solving $S_t(h,v)=0$,
 we must have $DA|_{g_t}(h)=0$ on $\Sigma$.

\subsection{Differentiation of kernel elements along perturbations}

We first prove the following theorem, which implies Theorem~\ref{theorem:static-generic} under an extra assumption~$(\star)$. Its proof provides an outline for the proof of the general theorem. In order  to remove that assumption, we will establish basic facts about the kernel spaces of elliptic PDE systems in Section~\ref{section:technical} and choose the open dense subset $J\subset (-\delta, \delta)$. In Proposition~\ref{proposition:kernel},  we will show that  a ``discrete version" of the assumption $(\star)$ holds for $t\in J$ and then give a complete proof  of Theorem~\ref{theorem:static-generic}.

\begin{manualtheorem}{\ref{theorem:static-generic}$^\prime$}\label{theorem:special}
Let $t\in [-\delta, \delta]$, $\Omega_t$, and $\Sigma_t$ be as in Theorem~\ref{theorem:static-generic}. Let $(h, v)\in \C^{2,\alpha}_{-q}(\mathbb{R}^n\setminus \Omega)$ solve $S_a(h,v)=0$ for some $a\in (-\delta,\delta)$.   Suppose the following holds:
\begin{align} \tag{$\star$}
\begin{split}
&\mbox{There exists  $(h(t), v(t))\in \C^{2,\alpha}_{-q}(\mathbb{R}^n\setminus \Omega)$ with $S_t(h(t), v(t))=0$ for all $t$}\\
&\mbox{such that $(h(t), v(t)) \to (h, v)$ in $\C^{2,\alpha}_{-q}(\mathbb{R}^n\setminus \Omega)$  as $t\searrow  a$,}\\
&\mbox{and $(h(t), v(t))$ is differentiable at $t=a$ }\\
&\mbox{with $(h'(a), v'(a)) = (p, z)\in \C^{2,\alpha}_{-q}(\mathbb{R}^n\setminus \Omega)$}. 
\end{split}
\end{align}
Then $DA|_{g_a}(h)=0$ on $\Sigma$. 
\end{manualtheorem}

\begin{proof}
By re-parametrizing $t$, we may assume that $a=0$. Recall that $g_0= \bar{g}$ in our notation. Also recall that we omit the subscript $\bar{g}$ in linearizations and geometric operators. By Lemma~\ref{lemma:geodesic}, we may also assume that $h$ satisfies the geodesic gauge on $\Sigma$, and thus by \eqref{equation:normal}, \eqref{equation:linearized-sff},   \eqref{equation:Riccati}, and $h^\intercal =0$ on $\Sigma$, we have
\begin{align}\label{equation:linearization-geodesic2}
\begin{split}
	\begin{array}{rl} 
	(L_{\zeta\nu} h)^\intercal &= \zeta(L_{\nu} h)^\intercal =2\zeta DA(h)\\
	\nu\big(DH(h) \big)&= -A \cdot DA(h)
	\end{array} \quad \mbox{ on } \Sigma.
\end{split}
\end{align}

We \emph{claim} that $(p-L_X h, z-L_X v)$ is a static vacuum deformation in $\mathbb{R}^n\setminus \Omega$ whose Bartnik boundary data on $\Sigma$ satisfy
\begin{subequations}
\begin{align}
	(p-L_X h\big)^\intercal &= -2\zeta DA(h)\label{equation:boundary1}\\
	DH(p-L_X h)&=\zeta A\cdot DA(h). \label{equation:boundary2}
\end{align}
\end{subequations}

Let  $(h(t), v(t))$, $t>0$, be the family from the assumption~$(\star)$. Then
\begin{align*}
	\left\{ \begin{array}{rl}-\left.D\Ric\right|_{g_t}(h(t)) + \nabla^2_{g_t} v(t)&=0\\
	\Delta_{g_t} v(t)&=0\end{array}\right. \quad \mbox{ in } \mathbb{R}^n\setminus \Omega.
\end{align*}
Note the assumption $t>0$ implies that $\mathbb R^n\setminus\Omega_t\subset\mathbb R^n\setminus\Omega$, so the pull-back pair $(\psi_t^*h, \psi_t^*v)$ in $\mathbb{R}^n\setminus \Omega$ from  the restriction $(h, v)|_{\mathbb{R}^n\setminus \Omega_t}$ is trivially a static vacuum deformation at $g_t$:
\begin{align*}
	\left\{\begin{array}{rl}-\left.D\Ric\right|_{g_t}(\psi_t^*h) + \nabla^2_{g_t} (\psi_t^* v)&=\psi^*_{t} \left(-\left.D\Ric\right(h) + \nabla^2  v\right)=0\\
	\Delta_{g_t} (\psi_t^* v)&=\psi^*_{t} (\Delta v)=0\end{array}\right. \; \mbox{ in } \mathbb{R}^n\setminus \Omega.
\end{align*}
Subtracting the previous two systems yields
\begin{align*}
	\left\{\begin{array}{rl}-\left.D\Ric\right|_{g_t}\big(h(t)-\psi_t^*h\big) + \nabla^2_{g_t}\big (v(t)-\psi_t^* v\big)&=0\\
	\Delta_{g_t} \big(v(t)-\psi_t^* v\big)&=0\end{array}\right. \quad \mbox{ in } \mathbb{R}^n\setminus \Omega.
\end{align*}
Differentiating it in $t$ at $t=0$ and noting that $h(0)-\psi_0^* h=0, v(0)-\psi_0^*v=0$, we prove that $(p-L_X h, z-L_X v)$  is a static vacuum deformation at $\bar{g}$ in $\mathbb{R}^n\setminus \Omega$. Next, we compute the boundary data. By using $h(t)^\intercal =0$ on $\Sigma$ by \eqref{equation:generic} and differentiating in $t$ at $t=0$, we see $p^\intercal =0$ on  $\Sigma$. Since $X= \zeta \nu$ on $\Sigma$, we apply \eqref{equation:linearization-geodesic2}  to get \eqref{equation:boundary1}:
\[
	(p-L_X h)^\intercal = -(L_Xh)^\intercal = -2\zeta DA(h).
\]
To prove \eqref{equation:boundary2}, by the  boundary condition of \eqref{equation:generic} and Lemma~\ref{lemma:pullback}, we have, on $\Sigma$,
\begin{align*}
	DH|_{g_t}(h(t))=0,\quad
	DH|_{g_t}(\psi_t^* h) = \psi_t^* \big(DH(h)\big).
\end{align*}
Subtracting the previous identities gives 
\[
DH|_{g_t}\big(h(t)-\psi_t^* h\big) = -\psi_t^* \big(DH(h)\big) \mbox{ on }  \Sigma.
\]
Differentiating it in $t$ at $t=0$ gives
\begin{align*}
	DH(p-L_Xh) =- \zeta \nu \big(DH(h)\big) = \zeta A\cdot DA(h) \quad \mbox{ on } \Sigma
\end{align*}
where we use \eqref{equation:linearization-geodesic2} in the last identity. It completes the proof of the claim.

Lastly, we apply \eqref{equation:Green-boundary} (by substituting $(k, w)=(p-L_X h, z-L_X v)$) and use the boundary condition \eqref{equation:boundary1}, \eqref{equation:boundary2} to obtain
\begin{align*}
	0&= \int_\Sigma \Big\langle \big(vA + DA(h) -\nu (v) \bar{g}^\intercal, 2v\big), \big (k^\intercal, DH(k)\big) \Big\rangle \, d\sigma\\
	&=\int_\Sigma \Big\langle \big(vA + DA(h) -\nu (v) \bar{g}^\intercal, 2v\big), \big (-2\zeta DA(h), \zeta A\cdot DA(h)\big) \Big\rangle \, d\sigma\\
	&=-\int_\Sigma  2 \zeta |DA(h)|^2 \, d\sigma
\end{align*}
where we have used that $\bar{g}^\intercal \cdot DA(h) = DH(h) + h^\intercal \cdot A= 0$ on $\Sigma$ by the Bartnik boundary condition.  Since $\zeta>0$, except an $(n-1)$-dimensional measure zero set, we conclude that $DA(h)=0$ on $\Sigma$.
\end{proof}


\subsection{Weak differentiation in an open dense parameter set}\label{section:technical}

To remove the assumption $(\star)$, we will need to understand how solutions of $ S_t(h(t), v(t))=0$ behave  as $t\to a$. Instead of analyzing general solutions, it suffices to analyze only those solutions satisfying the static-harmonic gauge,  as in Section~\ref{section:existence}. 

To begin, we define $L_t:\C^{2,\alpha}_{-q} (\mathbb{R}^n\setminus \Omega) \to \C^{0,\alpha}_{-q-2} (\mathbb{R}^n\setminus \Omega) \times \C^{1,\alpha}(\Sigma)\times \mathcal{B}(\Sigma)$ by
\begin{align}
\label{equation:linear-t}
L_t (h, v)=\begin{array}{l}
	\left\{ \begin{array}{l}-D\Ric|_{g_t}(h)+ \nabla^2_{g_t} v - \D_{g_t}(\bi_{g_t} h+ dv) \\
	\Delta_{g_t} v	\end{array} \right. \quad \mbox{ in } \mathbb{R}^n\setminus \Omega \\
 	\left\{ \begin{array}{l} \bi_{g_t} h + dv\\
	h^\intercal\\
	DH|_{g_t}(h)
	\end{array} \right. \quad \mbox{ on } \Sigma
\end{array}
\end{align}
where the covariant derivatives and the linearizations are taken with respect to~$g_t$ and we recall the definitions of the operators $ \D_{g_t}, \bi_{g_t}$ in \eqref{equation:operators}. For ease of notation, we shall denote the codomain of $L_t$ by 
\[
	\mathcal{Z}=\C^{0,\alpha}_{-q-2} (\mathbb{R}^n\setminus \Omega) \times \C^{1,\alpha}(\Sigma)\times \mathcal{B}(\Sigma).
\]
The operator $L_t$, defined for $\mathbb{R}^n\setminus \Omega$, can be viewed as the pull-back of the operator $L$ as defined in \eqref{equation:linear} for  $\mathbb{R}^n\setminus \Omega_t$ via the diffeomorphism $\psi_t$.  In particular, since $L$ is elliptic and Fredholm by Lemma~\ref{lemma:Fredholm}, so is each $L_t$. 

 The following lemma, essentially a linearized version of Lemma~\ref{lemma:gauge}, says that the space of solutions to $S_t(h,v)=0$ is equivalent to that of $L_t(h, v)=0$, up to deformations generated from diffeomorphisms.
  \begin{lemma}\label{lemma:static-harmonic}
If $(h, v)\in \C^{2,\alpha}_{-q} (\mathbb{R}^n\setminus \Omega) $ solves $L_t(h, v)=0$, then $(h,v)$ satisfies $S_t(h,v)=0$. In fact, we have $S_t(h+L_V g_t, v)=0$ 
and $DA|_{g_t}(h+L_V g_t)=DA|_{g_t}(h)$ on $\Sigma$, 
for any $V\in \C^{3,\alpha}_{1-q}(\mathbb{R}^n\setminus \Omega)$ satisfying $V=0$ on $\Sigma$.

Conversely, if $(h, v)$ solves $S_t(h,v)=0$,
then there exists a vector field $V\in \C^{3,\alpha}_{1-q}(\mathbb{R}^n\setminus \Omega)$ with $V=0$ on~$\Sigma$ such that $L_t(h+L_V g_t, v)=0$  and  $DA|_{g_t}(h+L_V g_t)=DA|_{g_t}(h)$ on $\Sigma$. 
\end{lemma}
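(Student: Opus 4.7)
The plan is to prove Lemma~\ref{lemma:static-harmonic} as a linearized analogue of Lemma~\ref{lemma:gauge}, exploiting the fact that $g_t = \psi_t^*\bar{g}$ is flat (hence Ricci-flat). For the forward direction, suppose $L_t(h,v) = 0$, and set $W := \bi_{g_t} h + dv \in \C^{1,\alpha}_{-q-1}(\mathbb{R}^n\setminus\Omega)$. The boundary block of $L_t(h,v) = 0$ gives $W = 0$ on $\Sigma$. The interior tensor equation reads $-D\Ric|_{g_t}(h) + \nabla^2_{g_t} v = \D_{g_t} W$, and I would apply $\bi_{g_t}$ to both sides and use three identities valid at a flat $g_t$: (i) $\bi_{g_t}(D\Ric|_{g_t}(h)) = 0$ (the linearized contracted second Bianchi identity: $\Div_{g_t}D\Ric|_{g_t}(h) = \tfrac{1}{2} d(DR|_{g_t}(h))$ while $\tr_{g_t} D\Ric|_{g_t}(h) = DR|_{g_t}(h)$ since $\Ric_{g_t} = 0$); (ii) $\bi_{g_t}(\nabla^2_{g_t} v) = -\tfrac{1}{2} d(\Delta_{g_t} v) = 0$ (commuting covariant derivatives at a flat metric together with the harmonicity of $v$); (iii) $2\bi_{g_t} \D_{g_t} W = -\Delta_{g_t} W$, which is \eqref{equation:Bianchi-vector} with $\Ric_{g_t} = 0$. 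Combining yields $\Delta_{g_t} W = 0$ in $\mathbb{R}^n\setminus\Omega$ with $W = 0$ on $\Sigma$. Pulling back through $\psi_t$ converts this into an asymptotically flat Laplace problem on $\mathbb{R}^n\setminus\Omega_t$, whose only solution in the weighted class is zero by Lemma~\ref{lemma:PDE} (applied componentwise in Cartesian coordinates). Hence $W\equiv 0$, which immediately gives $S_t(h,v) = 0$.

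For the addendum, given any $V\in \C^{3,\alpha}_{1-q}$ with $V = 0$ on $\Sigma$, the Ricci-flatness of $g_t$ forces $D\Ric|_{g_t}(L_V g_t) = L_V \Ric_{g_t} = 0$, so $(h + L_V g_t, v)$ continues to solve the interior equations in $S_t$. On the boundary, the explicit formulas recorded in Example~\ref{example:vector} show that both $(L_V g_t)^\intercal$ and $DA|_{g_t}(L_V g_t)$ are determined entirely by the normal and tangential parts of $V$ on $\Sigma$, both of which vanish; in particular $DH|_{g_t}(L_V g_t) = 0$ and $DA|_{g_t}(h + L_V g_t) = DA|_{g_t}(h)$ on $\Sigma$, so $S_t(h+L_V g_t, v) = 0$ with Cauchy data preserved.

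For the converse, suppose $(h,v)$ solves $S_t(h,v) = 0$. I would seek $V \in \C^{3,\alpha}_{1-q}(\mathbb{R}^n\setminus\Omega)$ with $V = 0$ on $\Sigma$ so that $(h + L_V g_t, v)$ additionally satisfies the static-harmonic gauge $\bi_{g_t}(h + L_V g_t) + dv = 0$ in $\mathbb{R}^n\setminus\Omega$; once this holds, the gauge also vanishes on $\Sigma$, the Cauchy data are unchanged by the addendum, and the interior equations remain valid, so $L_t(h+L_V g_t, v) = 0$. Using \eqref{equation:Bianchi-vector} with $\Ric_{g_t} = 0$, the gauge condition reduces to the elliptic boundary value problem
\[
-\Delta_{g_t} V = -\bi_{g_t} h - dv \quad\text{in } \mathbb{R}^n\setminus\Omega,\qquad V = 0 \quad\text{on } \Sigma,
\]
which is uniquely solvable in $\C^{3,\alpha}_{1-q}$ by Lemma~\ref{lemma:PDE} (after transporting to $\mathbb{R}^n\setminus\Omega_t$ via $\psi_t$).

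The one delicate step is (i)--(iii) above: verifying that $W$ alone satisfies a homogeneous elliptic equation, which relies crucially on $g_t$ being flat and on the Bianchi-type identities collapsing cleanly. This is what makes the linearized statement considerably simpler than the nonlinear Lemma~\ref{lemma:gauge}, where extra curvature and gauge-coupling terms had to be absorbed by smallness of $(g,u) - (\bar{g},1)$.
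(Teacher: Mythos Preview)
Your proof is correct and follows essentially the same approach as the paper: apply $\bi_{g_t}$ to the gauged interior equation to obtain $\Delta_{g_t}(\bi_{g_t} h + dv)=0$, use the boundary condition and decay to conclude $\bi_{g_t} h + dv\equiv 0$, then invoke Example~\ref{example:vector} for the $L_V g_t$ statements and solve a Dirichlet problem for $\Delta_{g_t}$ in the converse. Your version is simply more explicit about the Bianchi-type identities (i)--(iii), whereas the paper abbreviates this step by pointing to the first paragraph of the proof of Lemma~\ref{lemma:kernel}; note also that since $\psi_t$ is the identity outside a compact set, $g_t$ is already asymptotically flat on $\mathbb{R}^n\setminus\Omega$, so Lemma~\ref{lemma:PDE} applies directly without the transport to $\mathbb{R}^n\setminus\Omega_t$.
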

\begin{proof}
 Suppose $L_t(h,v)=0$. By the same argument as in the first paragraph in the proof of Lemma~\ref{lemma:kernel},  $\bi_{g_t} h + dv$ is identically zero in $\mathbb{R}^n\setminus \Omega$. Hence $(h, v)$ solves $S_t(h,v)=0$. For any vector field $V$ vanishing on $\Sigma$, we have $S_t(L_V g_t,0)=0$ and $DA|_{g_t}(L_V g_t)=0$ on $\Sigma$ (see Example~\ref{example:vector}).
We prove the first statement.

For the second statement, let $V$ solve $\Delta_{g_t} V=\bi_{g_t} h + dv$ in $\mathbb{R}^n\setminus\Omega$ and $V=0$ on $\Sigma$. Then we can verify that $\bi_{g_t} (h+L_V g_t)+dv =0$ everywhere. The rest of the statement follows. 
\end{proof}

In the next four lemmas, we establish general facts about the kernel spaces $\Ker L_t$. For an arbitrary $a\in (-\delta, \delta)$, $\Ker L_a$ is not necessarily the limit of $\Ker L_t$ as $t\to a$ because the nullity of $L_a$ may ``jump up'' in the limit. Nevertheless, we shall show that  such jumping  is rare, in a precise sense that the nullity  is upper semi-continuous in $t$.  While some facts are standard for elliptic operators on bounded regions with the standard H\"older norms, we include some proofs in our setting for completeness. Let $\overline{B}_R\subset \mathbb{R}^n$ denote the closed round ball of radius $R$.

\begin{lemma}\label{lemma:scale-broken}
There are positive constants $R, C>0$, uniformly in $t\in (-\delta, \delta)$, such that 
\[
	\| (h,v) \|_{\C^{2,\alpha}_{-q}(\mathbb{R}^n\setminus \Omega)} \le C\Big( \| (h,v) \|_{\C^0 (\overline{B}_R \setminus \Omega)} + \| L_t (h, v) \|_{\mathcal{Z}} \Big)
\]
\end{lemma}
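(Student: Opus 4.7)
The plan is to combine a uniform weighted Schauder estimate for the family $\{L_t\}$ with a weighted maximum principle for the flat Laplacian on an exterior region.

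Because the vector field $X$ is compactly supported, there exists $R_0 > 0$ (independent of $t$) such that $\psi_t$ equals the identity on $\mathbb{R}^n \setminus \overline{B}_{R_0}$, and hence $g_t = \bar g$ there. On the bounded region $\overline{B}_{R_0+1}\setminus \Omega$, the coefficients of $L_t$ depend smoothly on $t \in [-\delta, \delta]$ and form a uniformly bounded family in $\C^{k,\alpha}$ for any $k$; moreover, the ellipticity constants of both the interior operator and the Lopatinski-Shapiro condition for the boundary operator (already verified for $L_0$ in the proof of Lemma~\ref{lemma:Fredholm}) are uniformly bounded below, since $g_t^\intercal$ varies smoothly in $t$ and stays close to $\bar g^\intercal$. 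A standard dyadic-rescaling argument---scaling each annulus $\{2^j \le |x| \le 2^{j+2}\}$ to unit size and applying interior Schauder estimates for $L_t$, combined with one boundary Schauder estimate on $\overline{B}_{R_0+1}\setminus \Omega$---yields, for some constant $C$ independent of $t$,
\[
\|(h,v)\|_{\C^{2,\alpha}_{-q}(\mathbb{R}^n\setminus\Omega)} \leq C\Big(\|(h,v)\|_{\C^0_{-q}(\mathbb{R}^n\setminus\Omega)} + \|L_t(h,v)\|_{\mathcal{Z}}\Big).
\]

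To upgrade the weighted $\C^0$ norm to a compactly supported one, I would use a maximum principle argument at infinity. Since $g_t = \bar g$ on $\mathbb{R}^n\setminus \overline{B}_{R_0}$, the interior component of $L_t(h,v)$ reduces there to $(\tfrac{1}{2}\Delta h, \Delta v)$, where $\Delta$ denotes the flat Laplacian acting componentwise on the tensor $h$ in Cartesian coordinates. For each scalar component $u$ satisfying $\Delta u = f$ on $\mathbb{R}^n\setminus \overline{B}_R$ with $R \geq R_0$ and $u(x) \to 0$ at infinity, the function $|x|^{-q}$ is superharmonic whenever $q \in (0, n-2)$ and serves as a barrier: applying the maximum principle to $A|x|^{-q} \pm u$ with $A$ of size $R^q \|u\|_{\C^0(\partial B_R)} + \|f\|_{\C^0_{-q-2}}$ yields
\[
\|u\|_{\C^0_{-q}(\mathbb{R}^n\setminus \overline{B}_R)} \leq C\Big(R^q \|u\|_{\C^0(\partial B_R)} + \|f\|_{\C^0_{-q-2}(\mathbb{R}^n\setminus \overline{B}_R)}\Big).
\]
Summing over the components of $h$ and $v$ and combining with the trivial estimate $\|(h,v)\|_{\C^0_{-q}(\overline{B}_R\setminus\Omega)} \leq (1+R)^q\|(h,v)\|_{\C^0(\overline{B}_R\setminus\Omega)}$ produces
\[
\|(h,v)\|_{\C^0_{-q}(\mathbb{R}^n\setminus\Omega)} \leq C'\|(h,v)\|_{\C^0(\overline{B}_R\setminus\Omega)} + C\|L_t(h,v)\|_{\mathcal{Z}}
\]
for any fixed $R \geq R_0$. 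Substituting into the Schauder estimate of the first step concludes the proof.

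The main obstacle is ensuring the uniformity of the Schauder constant in $t$. This rests on two points: (i) $\psi_t$ is supported in a compact set controlled uniformly in $t$, so the coefficients of $L_t$ vary in a bounded equicontinuous family; and (ii) the Lopatinski-Shapiro condition remains uniform under the smooth family $g_t$, which follows because $g_t^\intercal$ varies smoothly on $\Sigma$ and stays uniformly close to $\bar g^\intercal$. Both points are straightforward consequences of $t \mapsto \psi_t$ being smooth on the compact interval $[-\delta,\delta]$, but they must be tracked carefully so that the constant in the final estimate does not depend on~$t$.
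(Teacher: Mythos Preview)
Your proof is correct and shares the same two–step skeleton as the paper: first a weighted Schauder estimate uniform in $t$, then an improvement of the $\C^0_{-q}$ term to a compactly supported $\C^0$ term. The difference lies in the second step. The paper simply invokes the cut-off and interpolation argument of \cite[Theorem~1.10]{Bartnik:1986}, which works for any asymptotically flat operator close to $\Delta$. You instead exploit the special feature of this family---that $g_t=\bar g$ outside a fixed compact set---and run a direct barrier argument with the superharmonic function $|x|^{-q}$ for the flat Laplacian. Your route is more elementary and entirely self-contained, at the cost of relying on the exact Euclidean structure at infinity; the paper's citation is terser but covers the more general asymptotically flat setting without modification. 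Both yield the same estimate with constants uniform in~$t$.
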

\begin{proof}
By the boundary Schauder estimate \cite{Agmon-Douglis-Nirenberg:1964} in the standard H\"older norms and a scaling argument, we have the boundary Schauder estimate in the weighted H\"older norms:
\[
	\| (h,v) \|_{\C^{2,\alpha}_{-q}(\mathbb{R}^n\setminus \Omega)} \le C\Big( \| (h,v) \|_{\C^0 _{-q}(\mathbb{R}^n \setminus \Omega)} + \| L_t (h, v) \|_{\mathcal{Z}} \Big),
\]
where the constant $C$ can be chosen uniformly in $t$ because the coefficients of $L_t$ are uniformly bounded in suitable weighted norms.  Following closely the arguments involving  the cut-off trick and an interpolation inequality as in \cite[Theorem 1.10]{Bartnik:1986},  the $\C^{0}_{-q}(\mathbb{R}^n \setminus \Omega) $-norm in the right hand side can be replaced by the ``scale-broken'' $\C^{0}(\overline{B}_R\setminus \Omega)$-norm, provided that $R$ is sufficiently large.
\end{proof}

\begin{lemma}\label{lemma:subsequence}
Let $\{t_j\}\subset (-\delta,\delta)$ be a sequence converging to $a\in (-\delta,\delta)$, and let $C$ be a positive real number. Suppose $(h_j, v_j)\in \C^{2,\alpha}_{-q}(\mathbb{R}^n\setminus \Omega)$ and $f_j, f\in \mathcal{Z}$ satisfy
\begin{align*}
	\| (h_j, v_j) \|_{\C^{2,\alpha}_{-q}(\mathbb{R}^n\setminus \Omega)} &< C\\
	L_{t_j} (h_j, v_j)&=f_j\\
	\| f_j - f\|_{\mathcal{Z}} &\to 0\mbox{ as } j\to \infty.
\end{align*}
Then there is a subsequence $(h_{j_k}, v_{j_k})$ and $(h, v)\in \C^{2,\alpha}_{-q}(\mathbb{R}^n\setminus \Omega)$ such that 
\begin{align*}
	\| (h_{j_k}, v_{j_k}) - (h,v)\|_{\C^{2,\alpha}_{-q}(\mathbb{R}^n\setminus \Omega)} &\to 0\mbox{ as } k\to \infty\\
	L_{a} (h, v)&=f.
\end{align*}
Furthermore, if $\| (h_j, v_j) \|_{\C^{2,\alpha}_{-q}(\mathbb{R}^n\setminus \Omega)} =1$ for all $j$, then $\| (h, v) \|_{\C^{2,\alpha}_{-q}(\mathbb{R}^n\setminus \Omega)} =1$. 
\end{lemma}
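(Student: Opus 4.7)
The plan is a standard compactness-plus-upgrade argument using the scale-broken Schauder estimate from Lemma~\ref{lemma:scale-broken}. First I would extract a subsequence, still denoted $(h_{j_k}, v_{j_k})$, that converges in $\C^0(\overline{B}_R\setminus\Omega)$, where $R$ is chosen as in Lemma~\ref{lemma:scale-broken}. This is possible because the uniform bound $\|(h_j,v_j)\|_{\C^{2,\alpha}_{-q}(\mathbb{R}^n\setminus\Omega)}<C$ restricts to a uniform bound in $\C^{2,\alpha}(\overline{B}_R\setminus\Omega)$, so Arzelà–Ascoli provides a subsequence converging in $\C^0(\overline{B}_R\setminus\Omega)$.

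Next I would upgrade this to Cauchy convergence in the weighted norm. Applying Lemma~\ref{lemma:scale-broken} to the difference $(h_{j_k}-h_{j_\ell}, v_{j_k}-v_{j_\ell})$ gives
\[
	\|(h_{j_k}-h_{j_\ell}, v_{j_k}-v_{j_\ell})\|_{\C^{2,\alpha}_{-q}(\mathbb{R}^n\setminus\Omega)}
	\le C\Big(\|(h_{j_k}-h_{j_\ell}, v_{j_k}-v_{j_\ell})\|_{\C^0(\overline{B}_R\setminus\Omega)} + \|L_{t_{j_k}}(h_{j_k}-h_{j_\ell}, v_{j_k}-v_{j_\ell})\|_{\mathcal{Z}}\Big).
\]
Rewriting the $L_{t_{j_k}}$ term using the given equations,
\[
	L_{t_{j_k}}(h_{j_k}-h_{j_\ell}, v_{j_k}-v_{j_\ell}) = (f_{j_k}-f_{j_\ell}) + (L_{t_{j_\ell}}-L_{t_{j_k}})(h_{j_\ell}, v_{j_\ell}),
\]
and noting that the first term tends to zero since $f_j\to f$ in $\mathcal{Z}$, while the second tends to zero because the coefficients of $L_t$ depend continuously on $t$ (through $g_t=\psi_t^*\bar g$ and the smooth flow $\psi_t$) and $(h_{j_\ell}, v_{j_\ell})$ is uniformly bounded in $\C^{2,\alpha}_{-q}$. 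Combined with $\C^0(\overline{B}_R\setminus\Omega)$-convergence, this shows $(h_{j_k}, v_{j_k})$ is Cauchy in $\C^{2,\alpha}_{-q}(\mathbb{R}^n\setminus\Omega)$, hence converges to some $(h,v)$ in this norm.

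It remains to pass to the limit in the equation. Write
\[
	L_{t_{j_k}}(h_{j_k}, v_{j_k}) - L_a(h,v) = L_{t_{j_k}}(h_{j_k}-h, v_{j_k}-v) + (L_{t_{j_k}}-L_a)(h,v).
\]
The first term tends to zero because $L_{t_{j_k}}$ has uniformly bounded operator norm and $(h_{j_k}-h, v_{j_k}-v)\to 0$ in $\C^{2,\alpha}_{-q}$; the second tends to zero by continuity of $t\mapsto L_t$ (on a fixed element). Since the left-hand side equals $f_{j_k}-L_a(h,v)$ and $f_{j_k}\to f$, we conclude $L_a(h,v)=f$. Finally, for the last assertion, if $\|(h_j,v_j)\|_{\C^{2,\alpha}_{-q}}=1$ for all $j$, then norm-continuity yields $\|(h,v)\|_{\C^{2,\alpha}_{-q}}=\lim\|(h_{j_k},v_{j_k})\|_{\C^{2,\alpha}_{-q}}=1$.

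The only delicate point is verifying that the family $\{L_t\}$ depends continuously on $t$ as bounded operators $\C^{2,\alpha}_{-q}(\mathbb{R}^n\setminus\Omega)\to\mathcal{Z}$, both in the sense needed to kill the $(L_{t_{j_\ell}}-L_{t_{j_k}})(h_{j_\ell}, v_{j_\ell})$ term uniformly in $\ell$ and to treat the fixed element $(h,v)$. This follows from the smoothness of $\psi_t$ and $g_t=\psi_t^*\bar g$ (with compactly supported deformation vector $X$), which gives uniform continuity of the coefficients of $L_t$ in $t$ in any appropriate weighted Hölder norm; apart from this bookkeeping the argument is entirely routine.
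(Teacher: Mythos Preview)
Your argument is correct and follows essentially the same approach as the paper: extract convergence on a bounded region by compactness, then use the scale-broken estimate of Lemma~\ref{lemma:scale-broken} together with the continuity of $t\mapsto L_t$ in operator norm to upgrade to convergence in $\C^{2,\alpha}_{-q}(\mathbb{R}^n\setminus\Omega)$. The only cosmetic difference is that the paper invokes the compact embedding $\C^{2,\alpha}_{-q}\hookrightarrow\C^{2,\alpha'}_{-q'}$ (for $\alpha'<\alpha$, $q'<q$) to obtain a limit $(h,v)$ directly and then applies the estimate to $(h_{j_k}-h,v_{j_k}-v)$, whereas you use Arzel\`a--Ascoli on $\overline{B}_R\setminus\Omega$ and run a Cauchy argument in the strong norm; both variants are standard and equivalent here.
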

\begin{proof}
In this proof, all the weighted H\"older norms are taken in the region $\mathbb{R}^n\setminus \Omega$.
Fix $\alpha'<\alpha$ and $q'<q$. By compact embedding of $\C^{2,\alpha}_{-q}$ in $\C^{2,\alpha'}_{-q'}$, there is a subsequence $(h_{j_k}, v_{j_k})$ and $(h, v)\in \C^{2,\alpha}_{-q}$ such that $\| (h_{j_k}, v_{j_k}) - (h,v)\|_{\C^{2,\alpha'}_{-q'}}\to 0$. In particular, $\| (h_{j_k}, v_{j_k}) - (h,v)\|_{\C^0(\overline{B}_R \setminus \Omega)}\to 0$. Together with Lemma~\ref{lemma:scale-broken} and the fact that the operator norm $\|L_{t_{j_k}} -L_a \|_{\mathrm{op}}\to 0$, we obtain $\| (h_{j_k}, v_{j_k}) - (h,v)\|_{\C^{2,\alpha}_{-q}}\to 0$  as $k\to \infty$. The other statements follow 
directly. 
\end{proof}

The following coercivity-type estimate for $(h, v)$ in a complementing space to $\Ker L_t$ is standard if~$t$ is fixed. By slightly extending the standard argument, we show that  a \emph{uniform} coercivity estimate holds.
\begin{lemma}\label{lemma:no-kernel}
Let $\{t_j\}$ be a sequence in $(-\delta,\delta)$ converging to $a\in (-\delta,\delta)$.
Suppose in $\C^{2,\alpha}_{-q}(\mathbb{R}^n\setminus \Omega)$ there is a closed subspace $\mathcal{Y}$ complementing to $\Ker L_a$ and to $\Ker L_{t_j}$ for each $j$, i.e.
\begin{align*}
	\C^{2,\alpha}_{-q}(\mathbb{R}^n\setminus \Omega) =\mathcal{Y}\oplus \Ker L_{a}= \mathcal{Y}\oplus \Ker L_{t_j}.
\end{align*}
 Then there is a positive constant $C$, uniformly in $j$, such that for all $(h, v)\in \mathcal{Y}$, 
\[
	\| (h, v) \|_{\C^{2,\alpha}_{-q}(\mathbb{R}^n\setminus \Omega)} \le C \| L_{t_j} (h, v) \|_{\mathcal{Z}}.
\]
\end{lemma}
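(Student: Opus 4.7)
The plan is to argue by contradiction, using the compactness-type statement in Lemma~\ref{lemma:subsequence} to pass from the individual (fixed $t_j$) coercivity estimates to a uniform one. Suppose no such uniform constant $C$ exists. Then for each integer $k \ge 1$ there exist an index $j_k$ and an element $(h_k, v_k) \in \mathcal{Y}$ with
\begin{align*}
\|(h_k, v_k)\|_{\C^{2,\alpha}_{-q}(\mathbb{R}^n\setminus\Omega)} = 1 \quad \text{and} \quad \|L_{t_{j_k}}(h_k, v_k)\|_{\mathcal{Z}} < \tfrac{1}{k}.
\end{align*}

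First I would observe that the sequence $\{j_k\}$ must be unbounded. Indeed, for any single index $j$, the restriction $L_{t_j}|_{\mathcal{Y}} : \mathcal{Y} \to \mathcal{Z}$ is injective (since $\mathcal{Y}$ complements $\Ker L_{t_j}$) and has closed range (since $L_{t_j}$ is Fredholm by Lemma~\ref{lemma:Fredholm}). Hence the open mapping theorem gives a bounded inverse on its range, yielding a coercivity estimate $\|(h,v)\|_{\C^{2,\alpha}_{-q}} \le C_j \|L_{t_j}(h,v)\|_{\mathcal{Z}}$ for all $(h,v) \in \mathcal{Y}$, for some $C_j < \infty$. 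If $\{j_k\}$ were bounded, some fixed index $j$ would appear infinitely often and the corresponding subsequence of $(h_k, v_k)$ would violate this single-$j$ estimate. Therefore, after passing to a subsequence, we may assume $j_k \to \infty$, and thus $t_{j_k} \to a$.

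Now apply Lemma~\ref{lemma:subsequence} with $f_{j_k} := L_{t_{j_k}}(h_k, v_k)$ and $f := 0$; the hypotheses are satisfied since $\|(h_k, v_k)\|_{\C^{2,\alpha}_{-q}} = 1$ and $\|f_{j_k}\|_{\mathcal{Z}} < 1/k \to 0$. Extracting a further subsequence, we obtain $(h, v) \in \C^{2,\alpha}_{-q}(\mathbb{R}^n\setminus\Omega)$ such that $(h_k, v_k) \to (h, v)$ in $\C^{2,\alpha}_{-q}$, with $L_a(h, v) = 0$ and $\|(h,v)\|_{\C^{2,\alpha}_{-q}} = 1$. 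Because $\mathcal{Y}$ is a closed subspace of $\C^{2,\alpha}_{-q}$ and each $(h_k, v_k) \in \mathcal{Y}$, the limit satisfies $(h, v) \in \mathcal{Y}$. On the other hand, $L_a(h,v)=0$ says $(h,v) \in \Ker L_a$, so $(h,v) \in \mathcal{Y} \cap \Ker L_a = \{0\}$, contradicting $\|(h,v)\|_{\C^{2,\alpha}_{-q}}=1$.

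I do not anticipate a major obstacle here; the main subtlety is purely bookkeeping, namely verifying that the failing sequence $\{j_k\}$ must drift to infinity so that Lemma~\ref{lemma:subsequence} applies with the right limiting operator $L_a$. Once that point is handled, the rest is the standard Fredholm/compactness contradiction built on the scale-broken Schauder estimate of Lemma~\ref{lemma:scale-broken} (which underlies Lemma~\ref{lemma:subsequence}) and on closedness of $\mathcal{Y}$.
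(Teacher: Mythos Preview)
Your proof is correct and follows essentially the same contradiction argument as the paper: normalize a violating sequence, apply Lemma~\ref{lemma:subsequence} to extract a limit in $\mathcal{Y}\cap\Ker L_a$, and reach a contradiction. You are slightly more careful than the paper in explicitly arguing that $\{j_k\}$ must be unbounded (using the single-$j$ coercivity from Fredholmness), a point the paper glosses over by simply asserting ``there is a subsequence of $t_j$.''
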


\begin{proof}
Suppose, to get a contradiction, that there is a subsequence of $t_j$, still labelled by $j$, and  $(h_j, v_j)\in \mathcal{Y}$ with $\|(h_j, v_j)\|_{\C^{2,\alpha}_{-q}(\mathbb{R}^n\setminus \Omega)}=1$ such that  
\[
 \| L_{t_j}(h_j, v_j)\| \le \tfrac{1}{j} \|(h_j, v_j)\|_{\C^{2,\alpha}_{-q}(\mathbb{R}^n\setminus \Omega)}= \tfrac{1}{j} \to 0 \quad \mbox{ as } j\to \infty.
 \]
By Lemma~\ref{lemma:subsequence}, a subsequence of $(h_j ,v_j)$ converges to $(h, v)\neq 0$ and $(h, v) \in \mathcal{Y}\cap \Ker L_a$. A contradiction. 
\end{proof}

For the next lemma, we define the nullity function $N(t)$ for $t\in (-\delta,\delta)$ by
\[
	N(t) = \Dim \Ker L_t.
\]
We give some definitions in order to describe the kernel spaces. Let $\rho(x) =(1+ |x|^2)^{-1}$. We define the $\mathcal{L}^2_\rho$-inner product for $(h, v), (k, w)\in \C^{2,\alpha}_{-q}(\mathbb{R}^n\setminus \Omega)$ by
\[
	\big\langle (h, v), (k, w)\big \rangle_{\mathcal{L}^2_\rho} = \int_{\mathbb{R}^n\setminus \Omega} (h\cdot k + vw)\rho \, d\mathrm{vol},
\]
where the dot product is with respect to $\bar{g}$. The elements $(h, v)$ and $(k,w)$ are said to be \emph{orthogonal} if $\big\langle (h, v), (k, w)\big \rangle_{\mathcal{L}^2_\rho} =0$. We say that the set $\{ (h_1, v_1),\dots, (h_\ell, v_\ell)\}$  is \emph{orthonormal}  if $\|(h_i, v_i)\|_{\C^{2,\alpha}_{-q}(\mathbb{R}^n\setminus \Omega)} =1$ for all $i$, and $\big\langle (h_i, v_i), (h_k, v_k)\big \rangle_{\mathcal{L}^2_\rho} =0$ for all $i\neq k$. 

Let  $ S_j\subset \C^{2,\alpha}_{-q}(\mathbb{R}^n\setminus \Omega)$ be an $\ell$-dimensional linear space, spanned by an orthonormal basis $\big\{ (h_1^{(j)}, v_1^{(j)}),\dots, (h_\ell^{(j)}, v_\ell^{(j)})\big\}$. We say that a sequence  \emph{$\{S_j\}$ converges to a subspace of the linear space $S\subset \C^{2,\alpha}_{-q}(\mathbb{R}^n\setminus \Omega)$} if there are $ (h_1, v_1),\dots, (h_\ell, v_\ell)\in S$ such that $(h_i^{(j)}, v_i^{(j)})\to (h_i, v_i)$ in $\C^{2,\alpha}_{-q}(\mathbb{R}^n\setminus \Omega)$ as $j\to \infty$ for each $i= 1,\dots, \ell$. As a consequence,  $\{ (h_1, v_1),\dots, (h_{\ell}, v_\ell)\}$ is also orthonormal and hence $\Dim S\ge \ell$. If $\Dim S=\ell$, we simply say that \emph{$\{S_j\}$ converges to $S$}, and note that in this case, any element $(h,v)\in S$ is the limit of a sequence $(h^{(j)}, v^{(j)})\in S_j$ in the $\C^{2,\alpha}_{-q}(\mathbb{R}^n\setminus \Omega)$-norm.

\begin{lemma}\label{lemma:nullity}
The nullity function $N(t)$ satisfies the following properties:
\begin{enumerate}
\item For any $a\in (-\delta,\delta)$, $\limsup_{t\to a} N(t) <+\infty$. \label{item:finite}
\item Suppose $\limsup_{t\to a} N(t)=\ell$ for some  $\ell$. Then there is a sequence $\{t_j\}$ in $(-\delta,\delta)$ with $t_j\to a$ such that $N(t_j) =\ell$ and that $\Ker L_{t_j}$ converges to an $\ell$-dimensional subspace of $\Ker L_a$.  \label{item:convergence}
 \item $N(t)$ is upper semi-continuous. \label{item:upper}
 \item Define the subset $J\subset (-\delta,\delta)$ by
 \begin{align}\label{equation:J}
 \begin{split}
 	J=\big\{ t\in (-\delta,\delta): &  \mbox{ the nullity function is }\\ 
	&\mbox{constant in an open neighborhood of $t$}\big\}.
\end{split}
 \end{align}
 Then $J$ is open and dense. \label{item:J}
 \end{enumerate}
\end{lemma}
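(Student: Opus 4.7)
The plan is to build all four parts on a single compactness principle that is essentially a reformulation of Lemma~\ref{lemma:subsequence}: along any $t_j\to a$, any sequence $(h_j,v_j)\in \Ker L_{t_j}$ with $\|(h_j,v_j)\|_{\C^{2,\alpha}_{-q}}=1$ admits a $\C^{2,\alpha}_{-q}$-convergent subsequence whose limit lies in $\Ker L_a$ and still has unit norm. The weight condition $q>\tfrac{n-2}{2}$ guarantees that $\int(1+|x|)^{-2q}\rho\, d\mathrm{vol}<\infty$, so $\C^{2,\alpha}_{-q}$-convergence implies $\mathcal{L}^2_\rho$-convergence and the $\mathcal{L}^2_\rho$-orthogonality in the mixed definition of ``orthonormal'' passes to the limit. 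Moreover, a unit $\C^{2,\alpha}_{-q}$-norm limit is a nonzero continuous pair, so it has strictly positive $\mathcal{L}^2_\rho$-norm; pairing a hypothetical linear relation $\sum c_i(h_i,v_i)=0$ with each $(h_k,v_k)$ in $\langle\cdot,\cdot\rangle_{\mathcal{L}^2_\rho}$ then forces $c_k=0$, so any such orthonormal set is linearly independent.

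For (\ref{item:finite}) I would argue by contradiction. If $\limsup_{t\to a}N(t)=+\infty$, pick $t_j\to a$ with $N(t_j)\ge j$ and, for each $j$, an orthonormal basis $\{(h_i^{(j)},v_i^{(j)})\}_{i=1}^{N(t_j)}$ of $\Ker L_{t_j}$. A diagonal extraction using the compactness principle above produces a subsequence along which $(h_i^{(j)},v_i^{(j)})\to (h_i,v_i)\in\Ker L_a$ for every fixed $i$, with $\{(h_i,v_i)\}_{i\ge 1}$ an infinite orthonormal set in $\Ker L_a$. This contradicts finite-dimensionality of $\Ker L_a$, which holds because $L_a$ is Fredholm (Lemma~\ref{lemma:Fredholm} applied to the pull-back of $L$ by $\psi_a$).

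For (\ref{item:convergence}), since $N$ is integer-valued, $\limsup_{t\to a}N(t)=\ell$ yields a sequence $t_j\to a$ with $N(t_j)=\ell$ for all $j$. Running the same diagonal extraction with exactly $\ell$ basis elements produces $\ell$ limits $(h_1,v_1),\dots,(h_\ell,v_\ell)\in\Ker L_a$ that form an orthonormal and hence linearly independent set, so $\Ker L_{t_j}$ converges to the $\ell$-dimensional subspace they span. Part (\ref{item:upper}) is immediate: (\ref{item:convergence}) exhibits an $\ell$-dimensional subspace of $\Ker L_a$, so $N(a)\ge\ell=\limsup_{t\to a}N(t)$.

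For (\ref{item:J}), openness of $J$ is immediate from the definition. For density, let $(c,d)\subset(-\delta,\delta)$ be arbitrary; since $N$ takes values in the nonnegative integers, its infimum $m$ on $(c,d)$ is attained at some $t_0\in(c,d)$. Upper semi-continuity at $t_0$ yields a neighborhood $U\subset(c,d)$ with $N\le m$ on $U$, while $N\ge m$ throughout $(c,d)$; hence $N\equiv m$ on $U$ and $t_0\in J\cap(c,d)$. The main technical step is clearly (\ref{item:finite}): both the diagonal extraction and the claim that mixed orthonormality survives the limit rely on Lemma~\ref{lemma:subsequence} together with the weight condition $q>\tfrac{n-2}{2}$ needed to control the $\mathcal{L}^2_\rho$ integrals. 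Once (\ref{item:finite}) is in hand, items (\ref{item:convergence})--(\ref{item:J}) follow as essentially formal consequences.
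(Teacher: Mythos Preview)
Your proposal is correct and follows essentially the same approach as the paper's proof: the compactness principle from Lemma~\ref{lemma:subsequence} drives (\ref{item:finite}) and (\ref{item:convergence}) via extraction of orthonormal bases, (\ref{item:upper}) follows immediately, and (\ref{item:J}) uses the integer-valued upper semi-continuous structure. Your extra care in verifying that the mixed orthonormality (unit $\C^{2,\alpha}_{-q}$-norm with $\mathcal{L}^2_\rho$-orthogonality) passes to the limit and implies linear independence fills in a point the paper leaves implicit, and your direct density argument for (\ref{item:J}) is a minor cosmetic variant of the paper's contradiction argument.
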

\begin{proof}
We prove Item~\eqref{item:finite}: Suppose on the contrary that $\limsup_{t\to a}N(t) = +\infty$ for some $a$. Then there is a sequence $t_j\to a$ such that $N(t_j)$ is increasing in $j$ and $N(t_j)\ge j$ for all positive integers~$j$.  For each~$j$, denote an orthonormal basis for $\Ker L_{t_j}$ by
\[
	\left\{ \big(h_1(t_j), v_1(t_j)\big),  \dots, \big(h_{N(t_j)}(t_j), v_{N(t_j)}(t_j)\big)\right\}.
\]
Fix a positive integer $\ell$. For $j\ge \ell$, consider the $\ell$-dimensional subspace $S_j$ of $\Ker L_{t_j}$, spanned by $\left\{ \big(h_1(t_j), v_1(t_j)\big), \dots, \big(h_\ell(t_j), v_\ell(t_j)\big)\right\}$. Letting $j\to \infty$ and applying Lemma~\ref{lemma:subsequence}, after passing to a subsequence, $S_j$ converges to an $\ell$-dimensional subspace of  $\Ker L_a$. It implies $\Dim \Ker L_a\ge \ell$. Since $\ell$ is arbitrary, we get  $\Ker L_a = +\infty$, which contradicts to that $L_a$ is Fredholm.

We prove Items~\eqref{item:convergence} and \eqref{item:upper}: Since $N(t)$ takes values on integers, if $\limsup_{t\to a} N(t) = \ell$,  there is a sequence $t_j\to a$ such that $N(t_j) =\ell$ for all $j$. A similar argument as above proves Item~\eqref{item:convergence}  (after passing to a subsequence). It also shows that 
\[
	\limsup_{t\to a} N(t) = \ell \le N(a),
\]
which implies Item~\eqref{item:upper}.

We prove Item~\eqref{item:J}: The subset $J$ is open by definition. We show that $J$ is dense. Suppose, on the contrary, that $J$ is not dense. Then there is an interval $[a,b] \subset (-\delta,\delta)\setminus J$ where $a<b$. Since $N(t)$ takes values on nonnegative integers, it attains its minimum at some $s\in [a,b]$, say $N(s) = \min_{[a,b]} N(t) = m$. Define the sublevel set $U = \{ t\in [a,b]: N(t) < m+\frac{1}{2} \}\neq \emptyset$. By upper semi-continuity of $N(t)$, $U$ is a nonempty open subset of $[a,b]$, and thus $\Int U = U\cap (a,b)$ is a nonempty open subset of $(-\delta,\delta)$.  Since $N(t)$ is constant, equal to the minimum $m$, on $\Int U$, we see that $\Int U\subset J$. It contradicts to the assumption that $[a,b]$ lies in the complement of $J$.
\end{proof}

We have shown that for $t$ in the open dense subset $J$, the spaces $\Ker L_t$ behaves well. In particular, for any $a\in J$, $\Ker L_t$ converges to $\Ker L_a$ as $t\to a$. This convergence fulfills the first part of the assumption $(\star)$. The other half of the assumption $(\star)$ requires the $t$-derivative of the kernel elements also converge, which we can replace with the convergence of the difference quotients. Thus, the following proposition can be viewed as a ``discrete version'' of the assumption $(\star)$.

\begin{proposition}~\label{proposition:kernel}
Let $J\subset (-\delta,\delta)$ be the open dense subset defined by \eqref{equation:J}. Then for every $a\in J$, $(h,v) \in \Ker L_a$,  there is a sequence $\{t_j\}$ in $J$ with $t_j \searrow a$,  $(h(t_j), v(t_j))\in \Ker L_{t_j}$, and $(p, z)\in \C^{2,\alpha}_{-q}(\mathbb{R}^n\setminus \Omega)$ such that, as $t_j \searrow a$, 
\begin{align*}
	(h(t_j), v(t_j)) &\to (h,v) \\
	 \frac{(h(t_j), v(t_j) )- (h, v)}{t_j - a} &\to (p, z),
\end{align*}
where both convergences are taken in the $\C^{2,\alpha}_{-q}(\mathbb{R}^n\setminus \Omega)$-norm. 
\end{proposition}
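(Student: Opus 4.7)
Fix $a\in J$ and $(h,v)\in \Ker L_a$, and set $N=\Dim \Ker L_a$. By Lemma~\ref{lemma:nullity}(4), $J$ contains an open interval $I$ around $a$ on which the nullity is identically $N$. The plan is a Lyapunov--Schmidt reduction: I will split source and target spaces using $\Ker L_a$ and a finite-dimensional complement of $\Range L_a$, verify that both splittings persist for $t\in I$, and then apply the Implicit Function Theorem to produce a smooth curve of kernel elements through $(h,v)$. To this end, let $\mathcal{Y}\subset \C^{2,\alpha}_{-q}(\mathbb{R}^n\setminus \Omega)$ be the $\mathcal{L}^2_\rho$-orthogonal complement of $\Ker L_a$; since $L_a$ is Fredholm of index $0$ (Lemma~\ref{lemma:Fredholm}), its cokernel has dimension $N$, and I fix an $N$-dimensional subspace $\mathcal{V}\subset \mathcal{Z}:=\C^{0,\alpha}_{-q-2}(\mathbb{R}^n\setminus \Omega)\times \C^{1,\alpha}(\Sigma)\times \mathcal{B}(\Sigma)$ with $\mathcal{Z}=\Range L_a\oplus \mathcal{V}$. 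Denote by $P_R$ and $P_V$ the associated projections.

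Next I would establish, after possibly shrinking $I$, the two transversality statements
\[
\mathcal{Y}\cap \Ker L_t=\{0\}\qquad \text{and}\qquad \mathcal{V}\cap \Range L_t=\{0\}\qquad \text{for all }t\in I,
\]
which, combined with $\Dim \Ker L_t=N=\operatorname{codim}\mathcal{Y}$ and $\Dim \mathcal{V}=N=\Dim(\mathcal{Z}/\Range L_t)$, upgrade to direct sum decompositions $\C^{2,\alpha}_{-q}=\mathcal{Y}\oplus \Ker L_t$ and $\mathcal{Z}=\Range L_t\oplus \mathcal{V}$. The first transversality is a standard contradiction: a unit-norm sequence in $\mathcal{Y}\cap \Ker L_{t_j}$ with $t_j\to a$ would, by Lemma~\ref{lemma:subsequence}, subconverge to a unit-norm element of the closed subspace $\mathcal{Y}\cap \Ker L_a=\{0\}$. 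Lemma~\ref{lemma:no-kernel} then supplies a uniform estimate $\|(h,v)\|_{\C^{2,\alpha}_{-q}}\le C\,\|L_t(h,v)\|_{\mathcal{Z}}$ on $(h,v)\in \mathcal{Y}$ for $t\in I$. For the second transversality, any unit-norm $v_j\in \mathcal{V}\cap \Range L_{t_j}$ equals $L_{t_j}(x_j)$ for some $x_j\in \mathcal{Y}$ whose norm is uniformly bounded by the preceding estimate; Lemma~\ref{lemma:subsequence} yields a subsequential limit $x_j\to x\in \mathcal{Y}$ with $L_a x=\lim v_j\in \mathcal{V}\cap \Range L_a=\{0\}$, contradicting $\|v_j\|_{\mathcal{Z}}=1$.

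With both splittings in place, I would consider the smooth map $F:\mathcal{Y}\times I\to \Range L_a$ defined by $F(y,t):=P_R\, L_t(y+(h,v))$. Then $F(0,a)=0$ and $D_yF(0,a)=L_a|_{\mathcal{Y}}:\mathcal{Y}\to \Range L_a$ is a Banach space isomorphism by Lemma~\ref{lemma:no-kernel}, so the Implicit Function Theorem produces a $C^\infty$ curve $t\mapsto y(t)\in \mathcal{Y}$ on a neighborhood $I'\subset I$ of $a$ with $y(a)=0$ and $L_t(y(t)+(h,v))\in \mathcal{V}$. Since this element also lies in $\Range L_t$, the second splitting forces $L_t(y(t)+(h,v))=0$. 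Setting $(h(t),v(t)):=(h,v)+y(t)\in \Ker L_t$ and $(p,z):=y'(a)\in \mathcal{Y}$, any sequence $t_j\in I'$ with $t_j\searrow a$ (automatically in $J$, since $I'\subset I\subset J$) satisfies both $(h(t_j),v(t_j))\to (h,v)$ and $\bigl((h(t_j),v(t_j))-(h,v)\bigr)/(t_j-a)\to (p,z)$ in $\C^{2,\alpha}_{-q}$.

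The main obstacle is the persistence of $\mathcal{V}$ as a complement to $\Range L_t$: this is precisely where the constancy of nullity on $I$ (i.e., the hypothesis $a\in J$) enters. If the dimension of the cokernel of $L_t$ were to drop near $a$, then $L_t(y(t)+(h,v))$ could be a nonzero element of $\mathcal{V}$ for arbitrarily small $|t-a|$, the reduced finite-dimensional equation on $\mathcal{V}$ would not vanish, and no curve of kernel elements through $(h,v)$ could be continued.
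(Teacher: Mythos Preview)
Your proof is correct and takes a genuinely different route from the paper's. The paper argues directly with sequences: it picks arbitrary kernel elements $(\hat h(t_j),\hat v(t_j))\in\Ker L_{t_j}$ converging to $(h,v)$, then corrects the difference quotients by subtracting off their $\Ker L_{t_j}$-components so that they lie in a fixed complement $\mathcal{Y}$, applies the uniform coercivity estimate of Lemma~\ref{lemma:no-kernel} to bound these corrected quotients, and finally extracts a convergent subsequence via Lemma~\ref{lemma:subsequence}. Your approach instead packages the same ingredients into a Lyapunov--Schmidt reduction and applies the Implicit Function Theorem to the map $F(y,t)=P_R L_t(y+(h,v))$, producing not just a sequence but a $C^1$ (in fact smooth) curve $t\mapsto (h(t),v(t))\in\Ker L_t$ through $(h,v)$.

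The two arguments use the hypothesis $a\in J$ in essentially dual ways: the paper uses constancy of $\Dim\Ker L_t$ to ensure $\mathcal{Y}$ complements every $\Ker L_{t_j}$, while you additionally use constancy of the cokernel dimension to ensure $\mathcal{V}$ remains transverse to $\Range L_t$, which is what forces the $\mathcal{V}$-valued obstruction $L_t(y(t)+(h,v))$ to vanish. Your method is the standard bifurcation-theoretic argument and yields a stronger conclusion (a smooth branch rather than a subsequence with convergent difference quotients); the paper's method is more hands-on and self-contained, avoiding the IFT and making the role of compactness explicit. One minor point: in your second transversality argument you should first pass to a subsequence so that the unit vectors $v_j\in\mathcal{V}$ converge (finite-dimensionality of $\mathcal{V}$) before invoking Lemma~\ref{lemma:subsequence}, but this is a routine fix.
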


\begin{proof}
 Since $a\in J$, there is an integer $\ell$ such that $ N(t) = \ell$ for $t$ in an open neighborhood of $a$ in $(-\delta,\delta)$. Then there is a sequence $t_j\searrow a$ such that $N(t_j)=\ell=N(a)$, and thus by Lemma~\ref{lemma:nullity}, $\Ker L_{t_j}$ converges to $\Ker L_a$. Then for any $(h, v) \in \Ker L_a$, there is a sequence $(\hat{h}(t_j), \hat{v}(t_j))$ in $\Ker L_{t_j}$ such that $(\hat{h}(t_j), \hat{v}(t_j))\to (h, v)$ in $\C^{2,\alpha}_{-q}(\mathbb{R}^n\setminus \Omega)$. 

Since the difference quotient $\frac{ (\hat{h}(t_j), \hat{v}(t_j))-(h,v)} {t_j -a}$ 
need not converge in general,  we will  modify the sequence $  (\hat{h}(t_j), \hat{v}(t_j))$ by adding a sequence of ``correction'' terms from $\Ker L_{t_j}$.  To proceed, observe that since $\Ker L_{t_j}$ converges to $\Ker L_a$, there is a closed subspace $\mathcal{Y}$ such that, for all $j$ sufficiently large,  $\mathcal{Y}$ is complementing to $\Ker L_{t_j}$ and to $\Ker L_{a}$. Therefore, there is $\big(k(t_j), w(t_j)\big)\in \Ker L_{t_j}$ such that 
\begin{align} \label{equation:E}
	\frac{ (\hat{h}(t_j), \hat{v}(t_j))-(h,v)} {t_j -a} - \big(k(t_j), w(t_j)\big) \in \mathcal{Y}.
\end{align}
We apply $L_{t_j}$ to the above element: 
\begin{align*}
	&L_{t_j} \left(\frac{ (\hat{h}(t_j), \hat{v}(t_j))-(h,v)} {t_j -a} - \big(k(t_j), w(t_j)\big)  \right)\\
	&=-\frac{1}{t_j-a} L_{t_j}(h,v) = -\frac{1}{t_j-a} (L_{t_j} - L_a)(h,v).
\end{align*}
Note that $f_j:= -\frac{1}{t_j-a} (L_{t_j} - L_a)(h,v)$ converges to some $f$ in $\mathcal{Z}$ as $j\to \infty$ because $L_{t}$ is a differentiable family of operators.  Thus, $\| f_j \|_{\mathcal{Z}}$ is bounded above by a constant uniformly in $j$. By Lemma~\ref{lemma:no-kernel} and \eqref{equation:E}, there is a positive constant $C$, uniformly in $j$, such that 
\begin{align*}
	\left\|\frac{ (\hat{h}(t_j), \hat{v}(t_j))-(h,v)} {t_j -a} - \big(k(t_j), w(t_j)\big)   \right\|_{\C^{2,\alpha}_{-q}(\mathbb{R}^n\setminus \Omega)}&\le C \left\|f_j \right\|_{\mathcal{Z}}.
\end{align*}
Since the right hand side is bounded from above, uniformly in $j$, we see that multiplying the inequality above with $(t_j-a)$ gives
\[
	\|(t_j -a) \big(k(t_j), w(t_j)\big)\|_{\C^{2,\alpha}_{-q}(\mathbb{R}^n\setminus \Omega)}\to 0 \quad \mbox{ as } j\to \infty.
\]	
Also, by uniform boundedness of $\| f_j \|_{\mathcal{Z}}$ and  Lemma~\ref{lemma:subsequence}, there is a subsequence of $j$, which we still label by $j$, such that 
\[
	\frac{ (\hat{h}(t_j), \hat{v}(t_j))-(h,v)} {t_j -a} - \big(k(t_j), w(t_j)\big) \to (p, z)
\]
 in $\C^{2,\alpha}_{-q}(\mathbb{R}^n\setminus \Omega)$. Finally, let
\[
	(h(t_j), v(t_j)) =  (\hat{h}(t_j), \hat{v}(t_j))-(t_j-a) \big(k(t_j), w(t_j)\big).
\]
We can verify that $(h(t_j), v(t_j))$ satisfies the desired properties. 

\end{proof}

We show how to apply Proposition~\ref{proposition:kernel}  to remove the assumption $(\star)$ in Theorem~\ref{theorem:special} and thus establish Theorem~\ref{theorem:static-generic}.

\begin{proof}[Proof of Theorem~\ref{theorem:static-generic}]

Let $J$ be the open dense subset of $(-\delta, \delta)$ from Proposition~\ref{proposition:kernel}, and let  $a\in J$.  By re-parametrizing~$t$, we may without loss of generality assume  $a=0$ and hence $g_a = \bar{g}, \Sigma_a = \Sigma, L_a= L$.  We will show that if $(\hat{h}, v)$ solves $S_0(\hat{h}, v)=0$,  then $DA(\hat{h})=0$ on $\Sigma$. 

By the second statement in Lemma~\ref{lemma:static-harmonic}, after changing to the static-harmonic gauge, we may assume that $(\hat{h}, v)\in \Ker L$. By Proposition~\ref{proposition:kernel}, there exist  a sequence $(\hat{h}(t_j), v(t_j))\in \Ker L_{t_j}$ and $(\hat{p}, z)\in \C^{2,\alpha}_{-q}(\mathbb{R}^n\setminus \Omega)$ such that, as $t_j\searrow 0$, 
\begin{align*}
	(\hat{h}(t_j), v(t_j)) &\to (\hat{h}, v)\\
	\left( \frac{\hat{h}(t_j)-\hat{h}}{t_j},  \frac{v(t_j) - v}{t_j}\right)&\to (\hat{p}, z)
\end{align*}
 in the $\C^{2,\alpha}_{-q}(\mathbb{R}^n\setminus \Omega)$-norm. 
 
 To follow closely the proof of Theorem~\ref{theorem:special}, we will modify $(\hat{h}, v)$ to satisfy the geodesic gauge: by Lemma~\ref{lemma:geodesic}, there is  a  vector field $V\in \C^{3,\alpha}_c(\mathbb{R}^n\setminus \Omega)$ with $V=0$ on $\Sigma$ such that $h:=\hat{h}+L_V \bar{g}$ satisfies the geodesic gauge on $\Sigma$. We correspondingly define
\[
	h(t_j) = \hat{h}(t_j) + L_V g_{t_j}.
\]
Note that $h(t_j)$ need not satisfy the geodesic gauge on $\Sigma$ for $t_j\neq 0$. By the first statement of Lemma~\ref{lemma:static-harmonic}, $S_0(h, v)=0$ 
and $S_{t_j}(h(t_j),v(t_j))=0$.
Furthermore, we have, as $t_j\searrow 0$, 
\begin{align*}
	(h(t_j), v(t_j)) &\to (h, v)\\
	\left( \frac{h(t_j)-h}{t_j},  \frac{v(t_j) - v}{t_j}\right)&\to (p, z)
\end{align*}
 in the $\C^{2,\alpha}_{-q}(\mathbb{R}^n\setminus \Omega)$-norm, where $p:= \hat{p} + L_V L_X \bar{g}$.

Follow exactly the same argument as  in the proof of Theorem~\ref{theorem:special}, except that wherever we need to ``differentiate in $t$ at $t=0$", instead we ``take the limit of the difference quotients as $t_j \to 0$''.  We see that $(p-L_X h, z-L_X v)$ is a static vacuum deformation at $\bar{g}$ in $\mathbb{R}^n\setminus \Omega$ whose  Bartnik boundary data on $\Sigma$ satisfy
\begin{align*}
	(p-L_X h\big)^\intercal &= -2\zeta DA(h)\\
	DH(p-L_X h)&=\zeta A\cdot DA(h).
\end{align*}
Then we apply \eqref{equation:Green-boundary} exactly as in the proof of Theorem~\ref{theorem:special}  for $(h, v)$ and $(k, w) = (p-L_X h, z-L_X v)$ to conclude that $DA(h)=0$, and thus $DA(\hat{h})=0$ on $\Sigma$.

\end{proof}

\begin{proof}[Proof of Corollary~\ref{cor:dilation}]
Let $\Omega$ be a bounded open subset in $\mathbb{R}^n$ such that the boundary $\Sigma$ is a star-shaped hypersurface (with respect to the origin).  Let $\psi_t: \mathbb{R}^n\to \mathbb{R}^n$ be the dilation map defined by $\psi_t(x_1,\dots, x_n) = t(x_1,\dots, x_n)$, and define $\Sigma_t = \psi_t(\Sigma)$ and $\Omega_t=\psi_t(\Omega)$. 
Since $\Sigma$ is star-shaped, for $0<\delta<1$ small and for $ t\in [1-\delta, 1+\delta]$, $\Sigma_t$ is a smooth foliation in a tubular neighborhood of $\Sigma$.

By Theorem~\ref{theorem:static-generic}, there exists an open dense subset $J\subset [1-\delta, 1+\delta]$ such that $\Sigma_t$ is static regular in $(\mathbb{R}^n\setminus \Omega_t, \bar g)$ for $t\in J$. We will use scaling from $\Sigma_t$ to show that  $\Sigma$ is static regular in $(\mathbb{R}^n\setminus \Omega, \bar g)$. Namely, for any $(h, v)\in \C^{2,\alpha}_{-q}(\mathbb{R}^n\setminus \Omega)$ satisfies 
\begin{align*}
	&\left\{ \begin{array}{l}  D\Ric(h) - \nabla^2 v =0\\
	\Delta v=0 \end{array}\right. \quad \mbox{ in } \mathbb{R}^n\setminus \Omega \\
	&\left\{ \begin{array}{l} h^\intercal =0\\
	DH(h)=0 \end{array}\right. \quad \mbox{ on } \Sigma,
\end{align*}
we will show $DA(h)=0$ on $\Sigma$, where the linearizations are all  taken at $\bar g$.

For a fixed $t\in J$, we push forward the above system by $\psi_t$. Define $(h_t, v_t) = (\psi_t)_* (h, v)$ and  
\[
	g_t = (\psi_t)_* \bar g = t^{-2} \bar g. 
\]
Then the above system becomes the system for $(h_t, v_t)$ at $g_t$:
\begin{align*}
	&\left\{ \begin{array}{l} - D\Ric|_{g_t}(h_t) + \nabla^2_{g_t} v_t =0\\
	\Delta_{g_t} v_t=0 \end{array}\right. \quad \mbox{ in } \mathbb{R}^n\setminus \Omega_t \\
	&\left\{ \begin{array}{l} h_t^\intercal =0\\
	DH|_{g_t}(h_t)=0 \end{array}\right. \quad \mbox{ on } \Sigma_t,
\end{align*}
Using $g_t = t^{-2} \bar g$, the linearization formulas for $D\Ric$ and $DH$ in Lemma~\ref{lemma:formula}, and the conformal transformation formulas for $\nabla_{g_t}^2, \Delta_{g_t}$, we have that $(h_t, t^{-2} v_t)$ satisfies the following linearized system at $\bar g$:
\begin{align*}
	&\left\{ \begin{array}{l}- D\Ric|_{\bar g}(h_t) + \nabla^2_{\bar g} (t^{-2} v_t )=0\\
	 \Delta_{\bar g} v_t=0 \end{array}\right. \quad \mbox{ in } \mathbb{R}^n\setminus \Omega_t \\
	&\left\{ \begin{array}{l} h_t^\intercal =0\\
	 DH|_{\bar g}(h_t)=0 \end{array}\right. \quad \mbox{ on } \Sigma_t.
\end{align*}
Since $\Sigma_t$ is static regular in $(\mathbb{R}^n\setminus \Omega_t, \bar g)$, we have $DA|_{\bar g} (h_t)$. To see that it implies $DA|_{\bar g}(h)=0$ on~$\Sigma$, we push forward it by $\psi_t$ and get  
\[
	(\psi_t)_* (DA|_{\bar g} (h)) = DA|_{g_t} (h_t) = t DA|_{\bar g} (h_t)=0 \quad \mbox{ on } \Sigma_t.
\]	

\end{proof}

\bibliographystyle{amsplain}
\bibliography{2020}
\end{document}